 \newtheorem{thm}{Theorem}[section]
 \newtheorem{cor}[thm]{Corollary}
 \newtheorem{lem}[thm]{Lemma}
 \newtheorem{prop}[thm]{Proposition}
 \theoremstyle{definition}
 \newtheorem{defn}[thm]{Definition}
 \theoremstyle{remark}
 \newtheorem{rem}[thm]{Remark}
 \newtheorem*{ex}{Example}
 \numberwithin{equation}{section}
\let\d\partial
\def\A{\mathbb A}
\def\H{\mathbb H}
\def\C{\mathbb C}
\def\Q{\mathbb Q}
\def\R{{\mathbb R}}
\def\Z{\mathbb Z}
\def\N{\mathbb N}
\def\CC{\mathcal C}
\def\DD{\mathcal D}
\def\FF{\mathcal F}
\def\EE{\mathcal E}
\def\HH{\mathcal H}
\def\LL{\mathcal L}
\def\MM{\mathcal M}
\def\NN{\mathcal N}
\def\OO{\mathcal O}
\def\PP{\mathcal P}
\def\WW{{\mathcal W}}
\def\VV{\mathcal V}
\def\n{\noindent}
\newcommand{\ke}{ \hbox{\rm Ker} }
\title {Variation of Mixed Hodge Structures}
\author{ Patrick Brosnan  }
\address{Department of Mathematics,The university of British  Columbia,
1984 Mathematics road, Vancouver, B.C. Canada V6T 1Z2}
\email{brosnan@math.ubc.ca}
\author{Fouad El Zein}
\address{Institut de Math\'ematiques de Jussieu.\\
Paris, France}
 \email{elzein@math.jussieu.fr}
 \keywords{Hodge theory, algebraic geometry, Mixed Hodge structure}
 \subjclass{Primary
 14D07, 32G20; Secondary 14F05}
\begin{document}
\begin{abstract}
 Variation of mixed Hodge structures(VMHS),
introduced by P. Deligne, is a linear structure reflecting the
geometry on cohomology of the fibers of an algebraic family,
generalizing variation of Hodge structures for smooth proper
families, introduced by P. Griffiths. Hence, it is a strong tool
to  study the variation of the geometric structure of fibers of a
morphism. We describe here the degenerating properties of a VMHS
of geometric origin and the existence of a relative monodromy
filtration, as well the definition and properties of abstract
admissible VMHS.
 \end{abstract}
 \maketitle
\section*{Introduction}
The object of the paper is to discuss the definition of admissible
 variations of
mixed Hodge structure (VMHS), the results in \cite{K} and
applications to the proof of algebraicity of the locus of Hodge
cycles \cite{B-P}, \cite{B-P-S}. Since we wish to present an
expository article we did choose to accompany the evolution of the
ideas from the geometric properties of algebraic families with
their singularities, to their representation by VMHS degenerating
at the discriminant locus of the family when the fibres acquire
singularities. In the last section we present a summary of the
results mentioned above.

 The study of morphisms in algebraic geometry is at the origin
 of the theory
of VMHS. To begin with smooth proper morphisms of smooth varieties
$f: X \to V$, the underlying differentiable structure of the
various fibers does not vary, by Ehreshman's theorem; the fibers
near a point of the parameter space $V$ are diffeomorphic in this
case, but the algebraic or analytic structure on the fibers do
vary.

 From another point of
view, locally, near a point on the parameter space, we may think
of a morphism as being given by a fixed differentiable manifold
and a family of analytic structures parameterized by the
neighborhood of the point.

 So the cohomology of the fibers does not vary, but, in general the
Hodge structure, which is sensitive to the analytic structure,
does. In this case, the cohomology groups of the fibers form a
local system $\LL_{\Z}$. So we start by the study of the structure
of local systems and its relation to flat connections
corresponding to the study of linear differential equations on
manifolds.  In the geometric case, the local system of cohomology
of the fibres define the Gauss-Manin connection.

The theory of variation of Hodge Structure (VHS) adds to the local
system the Hodge structures on the cohomology of the fibers, and
 transforms geometric problems concerning smooth proper morphisms
 into linear algebra problems involving the Hodge filtration by complex
 subspaces of cohomology vector spaces  of
the fibers. The data of VHS denoted by $(\LL, F)$ has three levels
of definition: the local system  of groups $\LL_{\Z}$, the Hodge
filtration $ F$ varying holomorphically with the fibers  defined
as a filtration by sub-bundles of $\LL_V := \OO_V \otimes
\LL_{\Z}$ on the base $V$, while the Hodge decomposition is on the
differentiable bundle $\LL_{\infty} := \CC_V^{\infty} \otimes
\LL_{\Z} = \oplus_{p+q = i}\LL^{p,q}$.

In general, a morphism onto the smooth variety $V$ is smooth
outside a divisor $D$ called its discriminant, in which case the
above description apply on the complement $V-D$, then the VHS is
said to degenerate along $D$ which means it acquires
singularities. The study  of the singularities may be carried in
two ways, either by introducing the theory of mixed Hodge
structure (MHS)on the singular fibers, or  by the study of the
asymptotic behavior of the $VHS$ in the neighborhood of $D$, but
in this case we may blow-up closed subvarieties in $D$ without
modifying the family on $V-D$, hence we may suppose $D$ a normal
crossing divisor (NCD) by Hironaka's results. We may also suppose
the parameter space  reduced to a disc and $D$ to a point, since
many arguments are carried over an embedded disc in $V$ with
center a point in $D$. In this setting, Grothendieck proved first
in positive characteristic that the local monodromy around points
in $D$ of a local system of geometric origin is quasi-unipotent.
Deligne explains a set of arguments to deduce geometric results on
varieties over a field in characteristic zero  from the case of
positive characteristic \cite{BBD}. Direct proofs exist using
desingularization and spectral sequences \cite {C}, \cite {L}.

 Technically it is easier to write this expository article if we
suppose the local monodromy unipotent, although this does not
change basically the results. \\
For a local system with unipotent local monodromy, we need to
introduce Deligne's canonical extension of the analytic flat
vector bundle $\LL_{V-D} $ into a bundle ${\LL}_V$ on $V$
characterized by the fact that the extended connection has
logarithmic singularities with nilpotent residues. It is on $\LL_V
$ that the Hodge filtrations $F$ will extend as a filtration by
subbundles, but they do not define anymore a Hodge filtration on
the fibres of the bundle over points in $D$. Instead, combined
with the local monodromy around the components of $D$ through a
point of $D$, a new structure called the limit mixed Hodge
structure (MHS)and the companion results on the Nilpotent orbit
and $SL(2)-$orbit \cite {Sc} describe in the best way the
asymptotic behavior of the VHS near a point of $D$.

The above summary is the background needed  to understand the
motivations, the definitions and the problems raised in the
theory.  That is why we recall in the first section, the relations
between local systems and linear differential equations as well
Thom-Whitney's results on the topological properties of morphisms
of algebraic varieties. The section ends with the definition of a
VMHS on a smooth variety.

  After introducing the theory of
mixed Hodge structure (MHS) on the cohomology of algebraic
varieties, Deligne proposed  to study the variation of such linear
MHS structure (VMHS) reflecting the variation of the geometry on
cohomology of families of algebraic varieties  ( \cite{WII}, Pb.
1.8.15).
 In the second section we study the properties of
degenerating geometric VMHS.

In the last section we give the definition and properties of
admissible VMHS and describe important local results of Kashiwara
\cite {K}. In this setting we recall the definition of normal
functions and we explain recent results on the algebraicity of the
zero set of normal functions to answer a question raised by
Griffiths and Green.

\vskip 0.2cm \centerline {Contents}

\n 1  Variation of mixed Hodge structures \hfill p 3\\
1.1 Local systems and representations of the fundamental
  group \hfill p 3\\
 1.2  Connections and Local Systems \hfill  p 4 \\
 1.3 VMHS of geometric origin \hfill p 7 \\
 1.4  Singularities of local systems \hfill p 10 \\
 2 Degeneration of VMHS \hfill  p 15\\
 2.1 Diagonal degeneration of geometric  VMHS \hfill p 15\\
2.2  Filtered mixed Hodge complex(FMHC) \hfill p 17\\
2.3 Diagonal  direct image of a simplicial cohomological FMHC
\hfill p 19 \\
2.4 Construction of a Limit MHS on the unipotent  nearby cocycles \hfill p 20\\
2.5 Smooth morphism \hfill p 21\\
2.6 Polarized Hodge-Lefschetz  structures  \hfill p 24\\
2.7 Quasi-projective case \hfill p 26\\
2.8 Alternative construction, existence and uniqueness \hfill p 27\\
3 Admissible variation of mixed Hodge structure \hfill p 29\\
3.1 Definition and results\hfill p 29\\
3.2 Local study of Infinitesimal Mixed Hodge structures \hfill p 30 \\
3.3 Deligne-Hodge theory on the cohomology of a smooth variety
\hfill p 32 \\
\section{ Variation of mixed Hodge structures}
The classical theory of linear differential equations on an open
 subset of $\C$ has developed into the theory of  connections on
manifolds, while the monodromy of the solutions developed into
representation theory of the fundamental group of a space.  With
the development of sheaf theory, a third definition of local
system as locally constant sheaves, appeared to be a powerful tool
to study the cohomology of families of algebraic varieties. In his
modern lecture notes \cite{HI} with a defiant classical title, on
linear differential equations with regular singular points,
Deligne proved the equivalence between these three notions and
studied their singularities. The applications in the study of
singularities of morphisms lead to the problems on degeneration of
VMHS.

\subsection{Local systems and representations of the fundamental
  group} We refer to  \cite{HI} for this section; the notion of local system
coincides with the theory of representations of the fundamental
group of a topological space.

In this section, we suppose the topological
space {\it $M$ locally path connected and locally simply connected}
(each point has a basis of connected neighborhoods $(U_i)_{i \in I}$
with trivial fundamental groups i.e $\pi_0 (U_i) = e $ and $\pi_1
(U_i) = e $).
In particular, on complex algebraic varieties, we refer
to the transcendental topology and not the Zariski topology to define
local systems.

\begin{defn}
  Let $\Lambda$ be a ring.  A local system of $\Lambda$-modules on a
  topological space $X$ is a sheaf $\mathcal{L}$ of $\Lambda_X$-modules
  on $X$ such that, for each $x\in X$, there is a neighborhood $U$ and a
  non-negative integer $n$ such
  that $\mathcal{L}_{|U}\cong \Lambda_U^n$.   A local system of
  $\Lambda$-modules is said to be \emph{constant} if it is
  isomorphic on $X$
  to $\Lambda_X^r$ for some fixed $r$.
\end{defn}

\begin{defn} Let $L$ be a finitely generated $\Z-$module. A
representation of  a group $G$  is a homomorphism of groups

\smallskip
\n  \centerline {$G \stackrel {\rho}{ \rightarrow} Aut_{\Z} (L )$}

\smallskip
\n from $G$ to the  group of $\Z-$linear automorphisms of $L$,
 or equivalently a linear action of $G$ on $L$.
 \end{defn}

We will also use the definition for $\Q-$vector spaces instead
of $\Z$-modules.

\subsubsection{Monodromy}
If $\mathcal{L}$ is a local system of $\Lambda$-modules on a
topological space $M$ and $f:N\to M$ is a continuous map, then
$f^{-1}(\mathcal{L})$ is a local system of $\Lambda$-modules on
$N$.
\begin{lem} A local system  $\mathcal L$  on the
interval $[0,1]$ is necessarily constant.
\end{lem}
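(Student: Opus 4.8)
The plan is to use a compactness argument on the unit interval to reduce to the local triviality that is built into the definition of a local system. First I would observe that, by definition, every point $t\in[0,1]$ has an open neighborhood $U_t$ (in the subspace topology of $[0,1]$) on which $\mathcal{L}$ is isomorphic to $\Lambda_{U_t}^{n_t}$; since $[0,1]$ is connected, a standard continuity argument on the rank shows that $n_t$ is a constant $n$ independent of $t$. Shrinking, I may assume each $U_t$ is an interval. By compactness of $[0,1]$ I extract a finite subcover, and after relabeling I obtain a partition $0=t_0<t_1<\dots<t_k=1$ together with open intervals $V_1,\dots,V_k$ such that $[t_{i-1},t_i]\subset V_i$ and $\mathcal{L}|_{V_i}$ is constant (a so-called Lebesgue-number argument).

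Next I would glue the trivializations one subinterval at a time. Fix an isomorphism $\phi_1:\mathcal{L}|_{V_1}\xrightarrow{\sim}\Lambda_{V_1}^n$. Inductively, suppose $\mathcal{L}$ has been trivialized over $[0,t_{i-1}]$, i.e.\ we have an isomorphism $\psi_{i-1}:\mathcal{L}|_{[0,t_{i-1}]}\xrightarrow{\sim}\Lambda_{[0,t_{i-1}]}^n$. On the overlap, which near $t_{i-1}$ is an interval contained in $V_i$, the two trivializations $\psi_{i-1}$ and a chosen $\phi_i:\mathcal{L}|_{V_i}\xrightarrow{\sim}\Lambda_{V_i}^n$ differ by an automorphism of $\Lambda^n$ over that overlap interval; the key point is that $\Gamma$ of an interval in $\Lambda$ is just $\Lambda$ (the constant sheaf on a connected space has global sections equal to $\Lambda$), so this automorphism is given by a single element of $GL_n(\Lambda)$, which is locally constant on the connected overlap and hence constant. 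Composing $\phi_i$ with that constant matrix makes it agree with $\psi_{i-1}$ on the overlap, and then the sheaf gluing axiom produces $\psi_i:\mathcal{L}|_{[0,t_i]}\xrightarrow{\sim}\Lambda_{[0,t_i]}^n$. After $k$ steps we reach $[0,1]$.

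The step I expect to be the main (though still modest) obstacle is the verification that the transition automorphism between two constant trivializations over a connected interval is genuinely a single constant matrix rather than merely a locally constant gluing datum — this is exactly where connectedness of $[0,1]$ and of each overlap is used, and it is also where one sees that the statement would fail on, say, a circle. Once that is granted, everything else is a routine compactness-plus-induction bookkeeping. I would remark that an entirely equivalent formulation is that $[0,1]$ is simply connected (indeed contractible), so by the equivalence between local systems and representations of $\pi_1$ recalled in this section, any local system on $[0,1]$ corresponds to a representation of the trivial group and is therefore constant; I would likely mention this as the conceptual reason while giving the compactness argument as the self-contained proof.
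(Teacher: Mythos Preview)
Your argument is correct but takes a different route from the paper. The paper gives a one-line proof via covering-space theory: the \'etal\'e space of a locally constant sheaf is a covering space of the base, and since $[0,1]$ is contractible every covering of it is a product, which forces $\mathcal{L}$ to be constant. Your compactness-and-gluing argument is more elementary and self-contained; it avoids invoking the classification of coverings of contractible spaces and instead unwinds that fact by hand in this one-dimensional situation. The trade-off is length for prerequisites: the paper's proof is shorter but presumes the reader is comfortable with the correspondence between locally constant sheaves and coverings, whereas yours uses only the sheaf axioms, compactness of $[0,1]$, and the observation that an automorphism of the constant sheaf $\Lambda^n$ over a connected set is a single element of $GL_n(\Lambda)$. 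Your closing remark about $\pi_1$ is essentially the conceptual content of the paper's argument, since contractibility is precisely what trivializes the covering (equivalently, makes $\pi_1$ trivial).
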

\begin{proof}
Let $\mathcal{L}^{\hbox{\'et}}$ denote the \'etal\'e space of
$\mathcal{L}$. Since $\mathcal{L}$ is locally constant,
$\mathcal{L}^{\hbox{\'et}}\to [0,1]$ is a covering space.   Since
$[0,1]$ is contractible, $\mathcal{L}^{\hbox{\'et}}$ is a product.
This implies that $\mathcal{L}$ is constant.
\end{proof}
Let $\gamma: [0,1] \to M $ be a loop in $M$ with origin  a point
$v$ and let $\mathcal L$ be a $\Z-$local system on $M$ with fiber
$L$  at $v$. The inverse image $\gamma^{-1}({\mathcal L})$ of the
local system is isomorphic to the constant  sheaf defined by $L$
on $[0,1]$: $\gamma^{-1}{\mathcal L} \simeq L_{[0,1]} $, hence we
deduce from this property the notion of monodromy.
\begin{defn}[Monodromy]
  The composition of the linear
isomorphisms \\
\n  \centerline {$ L = {\mathcal L}_v = {\mathcal L}_{\gamma (0)}
\simeq \Gamma ([0,1],{\mathcal L} ) \simeq {\mathcal L}_{\gamma
(1)} = {\mathcal L}_v = L$}

\n is denoted by $T$ and  called  the monodromy along $\gamma$.
 It depends only on the homotopy class of $\gamma$.
 \end{defn}
\n The monodromy  of a local system ${\mathcal L}$ defines a
representation of the fundamental group  $\pi_1(M,v)$ of a
topological space $M$ on the stalk at $v$, ${\mathcal L}_v = L$

\smallskip \n \centerline
{$\pi_1(M,v)\stackrel {\rho}{ \rightarrow} Aut_{\Z} ({\mathcal
L}_v )$}

\smallskip \n which  characterizes  local systems on connected spaces in the
following sense
\begin{prop} Let $M$  be  a connected topological space. The
above correspondence  is an equivalence
 between the following categories\\
i)  $\Z-$local systems with finitely generated $\Z-$modules $L$ on
$M$\\
ii)  Representations of the fundamental group $\pi_1(M,v) $ by
linear automorphisms of finitely generated $\Z-$modules $L$.
  \end{prop}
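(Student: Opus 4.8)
The plan is to construct a quasi-inverse to the monodromy functor $\mathcal L\mapsto(\mathcal L_v,\rho)$ already described above, and then to check that the correspondence is bijective on morphisms. Since $M$ is connected, locally path connected and locally simply connected, it admits a universal covering $p\colon\widetilde M\to M$, on which $\pi_1(M,v)$ acts by deck transformations with quotient $M$; fix a point $\widetilde v$ over $v$. Given a representation $\rho\colon\pi_1(M,v)\to Aut_\Z(L)$ on a finitely generated $\Z$-module $L$, define $\mathcal L_\rho$ to be the sheaf of sections of the associated covering $\widetilde M\times_{\pi_1(M,v)}L$, the quotient of $\widetilde M\times L$ (with $L$ discrete) by the diagonal action $\gamma\cdot(\widetilde x,\ell)=(\gamma\widetilde x,\rho(\gamma)\ell)$. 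Over an evenly covered open $U\subseteq M$, a choice of sheet of $p^{-1}(U)$ identifies $\mathcal L_\rho|_U$ with $L_U$, so $\mathcal L_\rho$ is a $\Z$-local system with finitely generated stalks; a morphism of representations induces a $\pi_1$-equivariant map of total spaces, hence a morphism $\mathcal L_\rho\to\mathcal L_{\rho'}$, so $\rho\mapsto\mathcal L_\rho$ is a functor from (ii) to (i).

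Next I would verify that the two composites are naturally isomorphic to the identity. The stalk of $\mathcal L_\rho$ at $v$ is canonically $L$ (using $\widetilde v$), and parallel transport along a loop $\gamma$ drags $\widetilde v$ to $\gamma\widetilde v$, which in the quotient reinserts the factor $\rho(\gamma)$; hence the monodromy attached to $\mathcal L_\rho$ is $\rho$ again. Conversely, if $\mathcal L$ is a $\Z$-local system with monodromy $\rho$ on $L=\mathcal L_v$, the pullback $p^{-1}\mathcal L$ is a local system on the simply connected, locally path connected space $\widetilde M$; its \'etal\'e space is a covering of $\widetilde M$, hence trivial (this is the Lemma above that a local system on $[0,1]$ is constant, applied after the patching argument that makes a local system on any simply connected locally path connected space constant). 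Trivializing $p^{-1}\mathcal L\cong L_{\widetilde M}$ compatibly with $\widetilde v$ turns the deck action into $\rho$, and taking $\pi_1$-invariants of $p_*p^{-1}\mathcal L$ (descent along $p$) recovers $\mathcal L$, naturally isomorphic to $\mathcal L_\rho$.

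For the bijection on morphisms I would first prove the rigidity statement: if $\phi,\psi\colon\mathcal L\to\mathcal L'$ are morphisms of local systems with $\phi_v=\psi_v$, then $\{x\in M:\phi_x=\psi_x\}$ is open and closed by local constancy of $\mathcal L$ and $\mathcal L'$, hence equal to $M$ by connectedness; so $\phi\mapsto\phi_v$ is injective, and by naturality of the monodromy isomorphisms $\phi_v$ is $\pi_1(M,v)$-equivariant, which yields faithfulness. Fullness then follows from the explicit model: a $\pi_1(M,v)$-equivariant homomorphism $f\colon L\to L'$ pulls back to the constant map $f$ on $p^{-1}\mathcal L\cong L_{\widetilde M}\to L'_{\widetilde M}\cong p^{-1}\mathcal L'$, which is equivariant and therefore descends to a morphism $\mathcal L\to\mathcal L'$ inducing $f$ on the stalk at $v$. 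The one genuinely delicate ingredient, and the place where the standing hypotheses on $M$ are used essentially, is the existence of the universal cover together with the fact that local systems on it are constant; everything else is bookkeeping with evenly covered neighborhoods and the connectedness of $M$.
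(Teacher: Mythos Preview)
Your proof is correct and follows the standard route via the universal covering. Note, however, that the paper does not actually prove this proposition: it is stated as a known fact, with the section referring to \cite{HI} for details. So there is no ``paper's own proof'' to compare against here; you have supplied what the authors chose to omit. The one place you could tighten slightly is the sentence invoking the Lemma on $[0,1]$ to justify constancy of $p^{-1}\mathcal L$ on $\widetilde M$: you already give the cleaner argument immediately before (the \'etal\'e space is a covering of the simply connected $\widetilde M$, hence trivial), and that suffices on its own without the detour through path-lifting and patching.
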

\subsection{Connections and Local Systems}
 The concept of connections  on
analytic manifolds (resp. smooth complex algebraic variety) is a
generalization of the concept of system of $n-$linear first order
differential equations.
\begin{defn} Let $\FF$ be a locally free holomorphic
 ${\mathcal O}_X -$module
on a complex analytic manifold $X$ (resp. smooth algebraic complex
variety). A connection on $\FF$ is a $\C_X-$linear map

\smallskip
\n  \centerline { $\nabla: \FF \to \Omega^1_X \otimes_{{\mathcal
O}_X} \FF $ }

\n satisfying the following condition for all sections $f $ of
$\FF$ and $\varphi $ of $ {\mathcal O}_X $\ :

\smallskip
\n  \centerline { $ \nabla (\varphi f) = d \varphi \otimes f +
\varphi \nabla f$}

\n known as Leibnitz condition.
 \end{defn}
 We define a morphism of connections as
 a morphism of ${\mathcal O}_X-$modules which commutes with $\nabla$.
 \subsubsection{} The definition of $\nabla$ extends to
differential forms in degree $p$ as a $\C-$linear map

\smallskip \n \centerline { $ \nabla^p :\Omega^p_X \otimes_{{\mathcal O}_X} \FF
 \to
\Omega^{p+1}_X \otimes_{{\mathcal O}_X} \FF$ s.t. $\nabla^p
(\omega \otimes f) = d \omega \otimes f + (-1)^p \omega \wedge
\nabla f $}

\smallskip \noindent  {\em The connection is said to be integrable if its curvature
$ \nabla^{1} \circ
 \nabla^0: F \to \Omega^2_X \otimes_{{\mathcal O}_X} \FF$ vanishes}
 ($\nabla = \nabla^0$, and the curvature is a linear morphism).\\
  Then it follows that
the composition of maps $ \nabla^{i+1} \circ
 \nabla^i = 0$ vanishes for all  $i \in \N$ for an integrable
 connection.
 In this case a
  de Rham complex is associated to $ \nabla$

\smallskip \n \centerline { $ (\Omega^*_X \otimes_{{\mathcal O}_X} \FF, \nabla)
\colon= \FF \to \Omega^1_X \otimes_{{\mathcal O}_X} \FF \cdots
\Omega^p_X \otimes_{{\mathcal O}_X} \FF
\stackrel{\nabla^p}{\to}\cdots \Omega^n_X \otimes_{{\mathcal O}_X}
\FF$}
\begin{prop} The horizontal sections $\FF^\nabla$ of a connection $\nabla$ on
 a module $\FF$ on an  analytic (resp. algebraic) smooth variety $X$ ,
  are defined as the solutions of the differential equation on $X$
  (resp. on the analytic associated manifold $X^h$)

\smallskip
\n  \centerline { $\FF^\nabla = \{ f : \nabla (f) = 0 \}$.}

\n When the connection is integrable, $\FF^\nabla$ is a local
system of dimension $dim \, \FF$.
 \end{prop}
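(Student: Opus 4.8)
The plan is to prove both assertions by working locally and using the formal theory of differential equations. First I would establish the local picture: since the statement is local on $X$ (resp. $X^h$), I may assume $X$ is a polydisc with coordinates $z_1,\dots,z_n$ and that $\FF \cong \OO_X^m$ is free, so that $\nabla$ is determined by a matrix of holomorphic $1$-forms $\omega = (\omega_{ij})$ via $\nabla(e_j) = \sum_i \omega_{ij}\otimes e_i$, and a horizontal section $f = \sum f_j e_j$ is a solution of the linear system $df + \omega f = 0$. The integrability condition $\nabla^1\circ\nabla^0 = 0$ translates into the flatness equation $d\omega + \omega\wedge\omega = 0$, which is precisely the Frobenius integrability condition for this Pfaffian system.

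Next I would invoke the holomorphic Frobenius theorem (equivalently, the Cauchy–Kovalevskaya/Cauchy existence theorem for the associated integrable system of ODEs obtained by restricting to lines through a point): under the integrability hypothesis, for any point $x_0\in X$ and any prescribed value $v\in\C^m$ there is a unique holomorphic solution $f$ near $x_0$ with $f(x_0)=v$. This gives, on a small enough polydisc $U$ around each point, an isomorphism $\FF^\nabla(U)\xrightarrow{\ \sim\ }\C^m$ by evaluation at $x_0$; in particular $\FF^\nabla$ is a locally constant sheaf of $\C$-vector spaces, i.e. a local system, whose rank equals $m = \operatorname{rk}\FF = \dim\FF$. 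For the algebraic case one passes to $X^h$, where the same statement holds since the analytic de Rham complex computes the horizontal sections.

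Finally I would record why integrability is needed (and why the de Rham complex was introduced just above): the equation $df = -\omega f$ is solvable with arbitrary initial condition exactly when the $1$-form $-\omega f$ is "closed enough", and the obstruction is the curvature $\nabla^1\circ\nabla^0$; when it vanishes one also gets $\nabla^{i+1}\circ\nabla^i=0$ for all $i$, so the de Rham complex $(\Omega_X^*\otimes_{\OO_X}\FF,\nabla)$ is genuinely a complex and, by the holomorphic Poincaré lemma applied fiberwise to this flat system, is a resolution of $\FF^\nabla$. The main obstacle is purely the analytic input: proving existence and uniqueness of local holomorphic solutions of the integrable Pfaffian system $df+\omega f=0$ — that is, the holomorphic Frobenius theorem. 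Everything else (the translation of Leibnitz and integrability into matrix form, and deducing local constancy and the rank count from local solvability) is routine bookkeeping that I would not spell out in detail.
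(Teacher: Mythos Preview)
Your proposal is correct and follows essentially the same approach as the paper: localize to a polydisc where $\FF$ is free, write the connection via a matrix of $1$-forms so that horizontal sections become solutions of a linear Pfaffian system, and then invoke the Frobenius integrability criterion (which the paper phrases as ``the Frobenius condition is satisfied by the integrability hypothesis on $\nabla$'') to conclude that the solution sheaf is locally constant of rank $m$. The only cosmetic difference is that the paper expands the system into explicit scalar PDEs $\partial y_i/\partial x_k + \sum_j \Gamma^k_{ij} y_j = 0$, while you keep the matrix form $df + \omega f = 0$; your extra remarks on the de Rham resolution correspond to the Remark immediately following the proposition rather than to the proof itself.
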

\begin{proof} This result is based on the relation between
differential equations and connections.
 Locally, we consider  a small open subset $U \subset X $  isomorphic
 to an open
set of $\C^n$ s.t. $ \FF_{\vert U}$ is isomorphic to ${\mathcal
O}_U^m$. This isomorphism is defined by the choice of a basis of
sections $( e_i )_{ i \in [1,m]}$ of $\FF$ on $U$ and extends to
the tensor product of $\FF$ with the module of differential forms:
$\Omega^1_U \otimes \FF \simeq (\Omega^1_U)^m$.\\
 In terms of the basis $e = (e_1,\cdots,e_m)$  of $F_{\vert U}$,
 a section $ s$ is written as

 \smallskip \n $ s = \sum_{i\in [1,m]}y_i e_i$ and
$\nabla s = \sum_{i\in [1,m]} dy_i \otimes e_i + \sum_{i\in
[1,m]}y_i \nabla e_i$ where \\
$\nabla e_i = \sum_{j\in [1,m]}\omega_{ij}\otimes e_j$.

\smallskip \n The connection
 matrix $\Omega_U$ is the matrix of differential forms
 $ ( \omega_{ij} )_{ i,j \in [1,m]}
 $, sections of $
\Omega^1_U$; its  $i-$th column is the transpose of the line image
of  $\nabla (e_i)$ in $(\Omega^1_U)^m$. Then the restriction of
$\nabla$ to $U$ corresponds to a connection on ${\mathcal O}_U^m$
denoted $\nabla_U$ and defined on  sections $y = (y_1,\cdots,y_m)$
of ${\mathcal O}_U^m $ on $U$, written in column as $\nabla_U
{^ty} = d ({^ty}) + \Omega_U \, {^ty}$ or

\smallskip \noindent \centerline {$\nabla_U$ \, $ \left(
\begin{array}{c}
y_1  \\
\vdots  \\
y_m  \\
\end{array}\right)$=$\left(
\begin{array}{c}
dy_1  \\
\vdots  \\
dy_m  \\
\end{array}\right)$ + $\Omega_U$ \, $\left(
\begin{array}{c}
y_1  \\
\vdots  \\
y_m  \\
\end{array}\right)$ }

\smallskip
 \n the equation is in $End (T, \FF)_{\vert U} \simeq (\Omega^1 \otimes \FF)_{\vert
 U}$ where $T$ is the tangent bundle to $X$.\\
  Let $(x_1,\cdots,x_n)$
 denotes the coordinates of $\C^n$,
 then  $\omega_{ij}$ decompose as

 \smallskip \noindent \centerline {$\omega_{ij} =  \sum_{k\in
 [1,n]}\Gamma^k_{ij}(x) \, dx_k$}

\smallskip \noindent so that the equation of the coordinates of horizontal
 sections is given by
linear partial differential equations for $i \in [1,m]$ and $k\in
[1,n]$
 $$\frac {\partial y_i}{\partial x_k} +
\sum_{j\in [1,m]}\Gamma^k_{ij} (x) \, y_j = 0$$
 \n The solutions form a local system of dimension $m$,
since the Frobenius condition is satisfied by the integrability
hypothesis on $\nabla$.
\end{proof}
 The connection appears as a
global version of linear differential equations, independent of
the choice of local coordinates on $X$.
\begin{rem} The  natural morphism  for a complex local system $ {\mathcal L} \to (\Omega^*_X
\otimes_{\C}{\mathcal L}, \nabla)$ defines a  resolution of
 ${\mathcal L}$ by coherent modules, hence  induces  isomorphisms on cohomology
$$ H^i (X,{\mathcal L}) \simeq H^i (R\Gamma(X, (\Omega^*_X
\otimes_{\C}{\mathcal L}, \nabla)))$$ where we take
hypercohomology on the right. On a smooth differentiable manifold
$X$, the  natural morphism $ {\mathcal L}\to (\EE^*_X
\otimes_{\C}{\mathcal L}, \nabla)$ defines a soft resolution of
${\mathcal L}$ and induces  isomorphisms on cohomology
$$ H^i (X,{\mathcal L}) \simeq H^i (\Gamma(X, (\EE^*_X
\otimes_{\C}{\mathcal L}, \nabla))$$
 \end{rem}

 \begin{thm}[Deligne] \cite{HI} The functor $(\FF,\nabla) \mapsto \FF^\nabla$ is
 an equivalence
 between the category of integrable connections on an analytic manifold $X$ and the category of
 complex local systems on $X$ with quasi-inverse defined by
 $ {\mathcal L} \mapsto {\mathcal L}_X $.
 \end{thm}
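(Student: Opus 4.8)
The plan is to construct two mutually quasi‑inverse natural transformations. Write $\Phi(\FF,\nabla)=\FF^\nabla$ for the horizontal–sections functor, and $\Psi(\LL)=(\LL_X,\nabla_\LL)$ for the functor sending a complex local system $\LL$ to $\LL_X:=\OO_X\otimes_\C\LL$ equipped with the canonical connection $\nabla_\LL(\varphi\otimes s)=d\varphi\otimes s$, where $\varphi$ is a local section of $\OO_X$ and $s$ a local section of $\LL$. First I would check that both functors are well defined. For $\Phi$ this is exactly the previous Proposition: integrability makes $\FF^\nabla$ a local system of rank $\dim\FF$, and a morphism of connections commutes with $\nabla$, hence carries horizontal sections to horizontal sections. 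For $\Psi$: locally $\LL\cong\C_U^n$, so $\LL_X|_U\cong\OO_U^n$ is locally free; $\nabla_\LL$ is well defined because the transition matrices of $\LL$ are locally constant and therefore commute with $d$; Leibniz holds by construction; and the curvature is locally $d\circ d=0$, so $\nabla_\LL$ is integrable. Functoriality of $\Psi$ is immediate.

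Next I would produce the natural transformation $\Phi\circ\Psi\Rightarrow\mathrm{id}$. For each local system $\LL$ there is a canonical map $\LL\to(\LL_X)^{\nabla_\LL}$, $s\mapsto 1\otimes s$, evidently natural in $\LL$. That it is an isomorphism is a local statement: over a small $U$ with $\LL|_U\cong\C_U^n$ one has $(\LL_X)^{\nabla_\LL}|_U=\{\,y\in\OO_U^n:dy=0\,\}$, and a holomorphic function with vanishing differential is locally constant — this is the case $(\FF,\nabla)=(\OO_X,d)$ of the previous Proposition — so $(\LL_X)^{\nabla_\LL}|_U=\C_U^n=\LL|_U$.

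The substantive direction is $\Psi\circ\Phi\Rightarrow\mathrm{id}$. For integrable $(\FF,\nabla)$ there is an evaluation morphism $e\colon\OO_X\otimes_\C\FF^\nabla\to\FF$, $\varphi\otimes s\mapsto\varphi s$; by Leibniz and $\nabla s=0$ for horizontal $s$ it intertwines the canonical connection on $\OO_X\otimes_\C\FF^\nabla$ with $\nabla$, so it is a morphism of connections, plainly natural in $(\FF,\nabla)$. It remains to show $e$ is an isomorphism, which is local. Fix $x_0\in X$ and shrink to a polydisc $U$ on which $\FF$ is free of rank $n=\dim\FF$ and $\FF^\nabla$ is constant of rank $n$ (both possible by the previous Proposition, a local system on a simply connected set being constant). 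Choose a $\C$-basis $s_1,\dots,s_n$ of $\FF^\nabla(U)$. I claim the values $s_1(x_0),\dots,s_n(x_0)$ are linearly independent in the fibre of $\FF$ at $x_0$: if $\sum_i c_i s_i(x_0)=0$ with $c_i\in\C$, then $\sum_i c_i s_i$ is a horizontal section vanishing at $x_0$, hence by the uniqueness part of the existence theorem for the integrable first–order linear system $\partial y_i/\partial x_k+\sum_j\Gamma^k_{ij}y_j=0$ with prescribed value at $x_0$ (the Frobenius condition holding by integrability) it vanishes identically on $U$, so all $c_i=0$. By Nakayama the $s_i$ generate the stalk $\FF_{x_0}$, and $n$ generators of a free module of rank $n$ form a basis after shrinking $U$ further; this is precisely the assertion that $e$ is an isomorphism near $x_0$.

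Finally I would check that the two natural transformations are mutually inverse, which again reduces to inspecting the standard local models $(\OO_U^n,d)$ and $\C_U^n$, giving the asserted equivalence of categories. The main obstacle is the direction $\Psi\circ\Phi\Rightarrow\mathrm{id}$: one must manufacture locally a rank-$\dim\FF$ space of horizontal sections and know it trivialises $\FF$. Granting the previous Proposition this amounts to the fibrewise independence argument above, whose engine is the uniqueness of solutions of the associated integrable linear PDE system; without that Proposition one would instead invoke the Frobenius integrability theorem in the holomorphic category directly, building a fundamental solution matrix near the base point and arguing its invertibility there.
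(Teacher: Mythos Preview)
Your argument is correct and is essentially the classical proof of the Riemann--Hilbert correspondence in the integrable case: build the two functors, produce the evaluation map $\OO_X\otimes_\C\FF^\nabla\to\FF$, and show it is a local isomorphism by using that a horizontal section vanishing at a point vanishes identically (uniqueness in the Frobenius-integrable linear system). Your use of the preceding Proposition to guarantee that $\FF^\nabla$ is locally constant of the right rank is exactly the intended input.

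Note, however, that the paper does not supply its own proof of this theorem: it is stated with a citation to Deligne \cite{HI} and left at that. So there is no ``paper's proof'' to compare against beyond the preparatory Proposition you already invoke. What you have written is the standard argument one finds in \cite{HI} or in Malgrange's exposition \cite{M}, and it would serve perfectly well as the omitted proof. One small stylistic point: your appeal to Nakayama is fine but slightly heavier than needed; since the $s_i$ are already global sections on $U$ whose values at $x_0$ form a basis of the fibre, the determinant of the matrix expressing them in a local $\OO_U$-frame is a holomorphic function nonvanishing at $x_0$, hence nonvanishing on a neighbourhood, which gives the local isomorphism directly without shrinking twice.
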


\subsubsection{ Local system of geometric origin} The structure of local
system appears naturally on the cohomology of a smooth and proper
family of varieties.
 \begin{thm}[differentiable fibrations ]  Let $f : M \to
N $ be a proper differentiable submersive morphism of manifolds.
For each point $v \in N$ there exists an open neighbourhood $U_v$
of $v$ such that  the differentiable structure of the inverse
image $M_{U_v} = f^{-1}(U_v)$
  decomposes as a  product of a fibre at $v$ with $U_v$:
$$f^{-1}(U_v) \stackrel{ \varphi }{ \stackrel {\simeq }{\longrightarrow}}
 U_v \times M_v \quad  \hbox {s.t. } \quad  pr_1 \circ \varphi = f_{\vert U_v}$$
\end{thm}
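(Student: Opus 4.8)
The plan is to prove this by the standard Ehresmann argument: lift the coordinate vector fields from the base, integrate their flows, and use properness to guarantee that these flows are complete.

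Since the assertion is local on $N$, I would first shrink $U_v$ so that it carries coordinates $(t_1,\dots,t_k)$, $k=\dim N$, identifying $U_v$ with an open cube $(-\varepsilon,\varepsilon)^k\subset\R^k$ with $v$ at the origin. On the base we have the constant vector fields $\d/\d t_j$. Because $f$ is a submersion, $df$ is surjective at every point; choosing a Riemannian metric on $M$ (or, equivalently, a partition of unity subordinate to charts in which $f$ is a linear projection), I would lift each $\d/\d t_j$ to a smooth vector field $\xi_j$ on $f^{-1}(U_v)$ that is $f$-related to $\d/\d t_j$, namely $\xi_j(x)$ is the unique horizontal vector at $x$ with $df_x(\xi_j(x))=(\d/\d t_j)_{f(x)}$. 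The $f$-relatedness is the only property of the $\xi_j$ that will be used.

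The heart of the proof — and the step I expect to be the main obstacle — is showing that the flow $\Phi^j_s$ of $\xi_j$ through a point $x$ with $f(x)\in U_v$ is defined for every $s$ for which the segment $s\mapsto f(x)+s\,e_j$ stays in $U_v$. Here properness is indispensable: if the maximal interval of definition of the integral curve $\gamma$ through $x$ were $(a,b)$ with $b$ smaller than the exit time of that segment, then $\gamma([0,b))$ would eventually leave every compact subset of $M$; but $f(\gamma(s))=f(x)+s\,e_j$ lies in a compact segment $S\subset U_v$, so $\gamma([0,b))\subset f^{-1}(S)$, which is compact because $f$ is proper — contradicting the escape lemma for maximal integral curves. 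Hence each $\Phi^j$ is defined on the full relevant range and is smooth there, and the same argument handles the backward flows. (Note that $M_v=f^{-1}(v)$ is itself compact for the same reason.)

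Finally I would assemble the trivialization
\begin{gather*}
\varphi\colon U_v\times M_v\longrightarrow f^{-1}(U_v),\\
\varphi(t_1,\dots,t_k,x)=\Phi^k_{t_k}\circ\Phi^{k-1}_{t_{k-1}}\circ\cdots\circ\Phi^1_{t_1}(x).
\end{gather*}
Every intermediate point has base coordinates of the form $(t_1,\dots,t_j,0,\dots,0)\in U_v$, and each flow moves only along a coordinate line contained in the cube, so $\varphi$ is well defined and smooth. Since $\xi_j$ is $f$-related to $\d/\d t_j$, applying $\Phi^j_{t_j}$ increments the $j$-th base coordinate by $t_j$ and fixes the others; hence $f(\varphi(t,x))=t$, i.e. $pr_1\circ\varphi=f_{\vert U_v}$ under the coordinate identification. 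A two-sided inverse is given by $y\mapsto\bigl(f(y),\ \Phi^1_{-t_1}\circ\cdots\circ\Phi^k_{-t_k}(y)\bigr)$ with $(t_1,\dots,t_k)=f(y)$, which is smooth and inverts $\varphi$ by the one-parameter group law of each $\Phi^j$. Therefore $\varphi$ is the desired diffeomorphism, and it restricts to the identity on $\{v\}\times M_v$.
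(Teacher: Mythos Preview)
Your argument is the standard Ehresmann proof and is correct: the horizontal lifts of the coordinate fields, the properness-implies-completeness step via the escape lemma, and the assembly of the trivialization (with the ordered composition of flows and its explicit smooth inverse) are all sound. The paper itself does not supply a proof but simply refers to \cite{Vo}, Theorem~9.3, remarking that the result ``follows from the existence of a tubular neighbourhood of the submanifold $M_v$''; Voisin's treatment there is essentially the same flow-based construction you carry out, so your approach matches what the paper is pointing to.
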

 The proof follows from the existence of a tubular
neighbourhood of the submanifold $M_v$ (\cite{Vo} thm. 9.3).
\begin{cor}
[Locally constant cohomology] In each degree $i$, the cohomology
sheaf of the fibers  $ R^i f_* \Z$ is constant on a small
neighbourhood $U_v$ of any point $v$ of fiber $H^i ( M_v, \Z)$ i.e
there exists an isomorphism between the restriction $(R^i f_*
\Z)_{\vert U_v} $ with the constant sheaf $ H^i_{U_v}$ defined on
$U_v$ by the vector space $H^i = H^{i}(M_v, \Z)$.
\end{cor}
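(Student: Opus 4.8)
The plan is to reduce the statement to the differentiable fibration theorem just stated. Fix $v\in N$ and let $U_v$ be the neighbourhood it provides, together with the product decomposition $\varphi: f^{-1}(U_v)\stackrel{\sim}{\longrightarrow} U_v\times M_v$ with $pr_1\circ\varphi=f_{\vert U_v}$. Shrinking $U_v$, I may assume it is diffeomorphic to an open ball, hence contractible and in particular simply connected. Under $\varphi$ the restriction $(R^if_*\Z)_{\vert U_v}$ is identified with $R^i(pr_1)_*\Z$, so it suffices to describe the higher direct image of the constant sheaf along the first projection of a product whose second factor $M_v$ is \emph{compact} (this is where the properness of $f$ enters).

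First I would check that $R^i(pr_1)_*\Z$ is locally constant on $U_v$ with stalk $H^i(M_v,\Z)$. For $w\in U_v$ the stalk is $\varinjlim_{W\ni w}H^i(W\times M_v,\Z)$, the limit taken over open neighbourhoods $W$ of $w$; restricting to the cofinal family of balls $W$, the projection $W\times M_v\to M_v$ is a homotopy equivalence, so $H^i(W\times M_v,\Z)\cong H^i(M_v,\Z)$, and for $W'\subset W$ the transition map is the identity under these identifications. Because $M_v$ is compact, $H^i(W\times M_v,\Z)$ does not change as $W$ shrinks and the presheaf $W\mapsto H^i(f^{-1}(W),\Z)$ is already locally constant before sheafification; this is the K\"unneth / proper base change argument for the projection of a product with compact fibre (cf. \cite{Vo}). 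Equivalently one may invoke proper base change for the proper submersion $f$ to get $(R^if_*\Z)_w\cong H^i(M_w,\Z)$ and use the product structure to trivialise this identification over each small $W$.

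It then remains to pass from locally constant to constant. Since $R^if_*\Z_{\vert U_v}$ is a $\Z$-local system on the connected space $U_v$ and $\pi_1(U_v)=e$ (as $U_v$ is contractible), the equivalence between local systems and representations of the fundamental group stated above shows that the associated representation is trivial; hence $R^if_*\Z_{\vert U_v}$ is isomorphic to the constant sheaf on $U_v$ with fibre the stalk $H^i(M_v,\Z)=H^i$, which is exactly $H^i_{U_v}$.

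The main obstacle is the middle step: upgrading the pointwise computation of the stalks to an honest local isomorphism of $R^i(pr_1)_*\Z$ with a constant sheaf. This is precisely the point at which compactness of $M_v$ is indispensable — it is what guarantees that the cohomology of $W\times M_v$ is insensitive to shrinking $W$ and that the isomorphisms $H^i(W\times M_v,\Z)\cong H^i(M_v,\Z)$ are compatible with restriction. The two surrounding steps — the reduction to a product via Ehresmann's theorem and the passage from locally constant to constant on a simply connected base — are formal.
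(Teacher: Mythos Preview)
Your argument is correct and follows essentially the same route as the paper: reduce to a product via Ehresmann, then use that the projection $W\times M_v\to M_v$ is a homotopy equivalence for contractible $W$ so that restriction maps $H^i(M_{U_v},\Z)\to H^i(M_{B_\rho},\Z)$ are isomorphisms; the paper does this in one line via deformation retracts, while you take the extra (harmless) step of concluding ``locally constant'' first and then invoking $\pi_1(U_v)=e$.

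One correction: your final paragraph misidentifies where properness enters. Compactness of $M_v$ is \emph{not} what makes $H^i(W\times M_v,\Z)$ insensitive to shrinking $W$ --- that is pure homotopy invariance and holds for any $M_v$ once $W$ is contractible. Properness is used earlier, in Ehresmann's theorem, to obtain the product decomposition in the first place; after that the cohomological step is formal. So the ``main obstacle'' you single out is not an obstacle at all, and the reasoning you attach to it should be removed.
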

\begin{proof} Let $U_v$ be isomorphic to a ball
 in $\R^n$ over which $f$
is trivial, then for any small ball $B_\rho$ included in $U_v$,
the restriction $H^{i}(M_{U_v}, \Z) \to H^{i}(M_{B_\rho}, \Z)$ is
an isomorphism since $M_{B_\rho}$ is a deformation retract of
$M_{U_v}$.
 \end{proof}

\begin{rem}[ Algebraic family of complex varieties] Let $f: X \rightarrow V$
be a smooth proper morphism  of complex algebraic varieties , then
$f$ defines a differentiable locally trivial fiber bundle on $V$.
 We still  denotes
by $f$  the differentiable morphism $ X^{dif} \to V^{dif} $
associated to $f$, then the complex of real differential forms
${\mathcal E}_X^*$ is a fine resolution of the constant sheaf $\R$
and $R^i f_* \R \simeq {\mathcal H}^i (f_* {\mathcal E}_X^*)$ is a
local system called of geometric origin. Such  local systems carry
additional structures and have specific properties which are the
subject of study in this article.
\end{rem}

 We give now the abstract definition of VMHS,
   (\cite{WII},1.8.14) and then explain how the geometric
   situation leads to such structure.

\begin{defn} A VMHS on an analytic manifold $X$ consists of\\
1) A local system $\LL_{\Z}$ of $\Z-$modules of finite type,\\
 2) A finite increasing filtration $\WW $ of $\LL_{\Q}:= \LL_{\Z} \otimes \Q$
 by sublocal systems of rational vector spaces,\\
  3) A finite decreasing filtration $\FF$  by locally free analytic
  subsheaves
   of $ \LL_X := \LL_{\Z} \otimes  \OO_X $ whose
sections on $X$ satisfy the infinitesimal (Griffiths)
transversality relation with respect to the  connection $\nabla$
defined by the structure of local system on $\LL_{\C} :=
\LL_{\Z}\otimes \C$ on
 $ \LL_X $
\[ \nabla (\FF^p ) \subset \Omega^1_X \otimes
_{\OO_X}\FF^{p-1}
\]
such that $\WW $ and $\FF$ define a $MHS$ on each fiber
 ($ \LL_X (t), \WW (t), \FF(t)$) at $t$ of
the bundle ~$ \LL_X$.
\end{defn}
 The definition of VHS is obtained in the particular pure  case when the
 weight filtration is constant but for one index. The induced filtration by
$\FF$ on the graded objects $Gr^{\WW}_m \LL$ form a VHS.  A
morphism of VMHS is a morphism of local systems compatible with
the filtrations.

\begin{defn} The VMHS is graded polarizable if
the graded objects
  $Gr^{\WW}_m \LL$ are
polarizable variation of Hodge structure.
\end{defn}

 \subsection{VMHS of geometric origin}
In the above situation of smooth algebraic morphisms, the
cohomology of the fibers carry  a Hodge structure (HS) which leads
to the theory of variation of Hodge structure (VHS) on an
underlying local system and which is the subject of another
course.  We describe here structural theorems of algebraic
morphisms and as a consequence the variation of mixed Hodge
structure (VMHS) they define on the cohomology of the fibers over
strata of the parameter space. The asymptotic  properties of a
VMHS near the boundary of a strata is studied under the
terminology of degenerating VMHS and will be discussed here.

  The
study of the whole data, including many strata has developed in
the last twenty years after the introduction of perverse sheaves.
In the projective case, a remarkable decomposition result is
proved in \cite{BBD}. In the transcendental case, this result
apply for Hodge differential modules \cite{Sa}. These results are
beyond the scope of this article. Instead we discuss preliminary
results needed to understand such theory.

 \subsubsection{Background on morphisms of algebraic varieties and
 local systems} We describe here structural theorems of algebraic
morphisms in order to deduce later VMHS on various strata of the
parameter space.

Stratification theory on a variety consists of the decomposition
of a  variety into the disjoint union of smooth locally closed
algebraic (or analytic)  subvarieties  called strata ( a strata is
smooth but the variety may be singular along a strata). By
construction, the closure of a strata is a union of additional
strata of lower dimensions. A Whitney stratification satisfies two
more conditions named after H.~Whitney. We are interested here in
their consequence, after the work of J. Mather: the local
topological trivial property at any point of a strata that will be
useful in the study of local cohomology. Thom described in
addition the topology of the singularities of algebraic morphisms.
Next, we summarize these results.
 \subsubsection{Thom-Whitney's stratifications}
 Let $f: X \rightarrow V$ be an
algebraic morphism. There exist finite Whitney stratifications
 ${\mathfrak X}$ of $X$
and $\mathcal {S} = \{S_l\}_{l\leq d}$
 of  $V$ (dim. $S_l$ = $l$, dim. $V$ = $d$) such that for each connected
component S of an $\mathcal {S}$  stratum $S_l$ of $V$\\
  i)$f^{-1}S$ is a  topological fibre bundle over $S$, union of
connected components of strata of ${\mathfrak X}$, each mapped
submersively to~$S$.\\
 ii) Local topological triviality: for all
$v \in S$, there exist an open neighborhood $U(v)$ in $S$ and a
stratum preserving homeomorphism $h: f^{-1} (U) \simeq f^{-1} (v)
\times U $ s.t. $ f|U  = p_U \circ h $ where $p_U$ is the
projection
on $U$.\\
 This statement can be
found in an article by D. T. L\^e  and B. Teissier  \cite{L-T} and
\cite{g-m} by Goretsky and MacPherson.
 Since the restriction  $f/S $ to a stratum $S$ is a locally
trivial topological bundle, we  deduce
\begin{cor} For each integer $i$, the  higher direct cohomology
sheaf $(R^i f_* \Z_X)/S$  is locally constant on each stratum $S$
of $V$.
\end{cor}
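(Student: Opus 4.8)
The plan is to deduce the corollary directly from part (i) of the Thom--Whitney stratification theorem together with the differentiable-fibration corollary already established (the ``Locally constant cohomology'' corollary). First I would fix a connected component $S$ of a stratum $S_l$. By part (i), the restriction $f^{-1}S \to S$ is a topological fibre bundle; in particular it is proper (as a restriction of the algebraic, hence proper-on-strata, morphism in the relevant sense) and locally trivial over $S$. Thus for each point $v \in S$ there is an open neighbourhood $U$ of $v$ in $S$ and a homeomorphism $f^{-1}(U) \cong U \times f^{-1}(v)$ commuting with the projections (this is exactly statement (ii), local topological triviality, but already (i) gives a bundle chart).

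Next I would invoke proper base change (or, more elementarily, the argument in the proof of the ``Locally constant cohomology'' corollary): over the trivialising neighbourhood $U$, which we may take homeomorphic to a ball, the inclusion of any smaller ball $B_\rho \subset U$ induces, via the product structure, a deformation retract $f^{-1}(B_\rho) \hookrightarrow f^{-1}(U)$, so the restriction maps $H^i(f^{-1}(U),\Z) \to H^i(f^{-1}(B_\rho),\Z)$ are isomorphisms. Equivalently, for the sheaf $\RR := (R^i f_* \Z_X)|_S$ one has $\RR(U) \xrightarrow{\ \sim\ } H^i(f^{-1}(v),\Z)$ and all restriction maps between such neighbourhoods are isomorphisms. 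Hence $\RR$ is locally isomorphic on $U$ to the constant sheaf $H^i(f^{-1}(v),\Z)_U$, which is precisely the assertion that $(R^i f_* \Z_X)|_S$ is locally constant on $S$. One small technical point worth spelling out: one must check that $(R^i f_* \Z_X)|_S$ really is the sheaf associated to the presheaf $U \mapsto H^i(f^{-1}(U),\Z)$ for $U$ ranging over opens of $S$ — this is base change for the open embedding $S \hookrightarrow V$ combined with properness of $f$ over $S$, so that $(R^i f_* \Z_X)_{|S} \cong R^i f_*(\Z_{X_S})$ where $f : X_S := f^{-1}(S) \to S$; I would state this and reduce to it.

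The main obstacle is essentially bookkeeping rather than a deep point: making precise that the base-change identification $(R^i f_* \Z_X)|_S \cong R^i (f|_{f^{-1}S})_* \Z$ holds, since $f$ itself need not be proper globally — only its restriction to $f^{-1}S \to S$ is a (proper) fibre bundle. Once that reduction is in place, the argument is identical to the proof of the ``Locally constant cohomology'' corollary above: a locally trivial topological fibre bundle over a locally contractible base has locally constant higher direct image sheaves, because over a contractible (e.g. ball) neighbourhood the fibre bundle is a product and cohomology is computed by the fibre via the Künneth/homotopy invariance. I would therefore organise the proof as: (1) reduce to $X_S \to S$ using base change and part (i); (2) use local triviality of this bundle over balls; (3) apply homotopy invariance exactly as in the earlier corollary to conclude local constancy.
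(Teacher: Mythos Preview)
Your approach matches the paper's, which gives no detailed proof beyond the single line ``Since the restriction $f/S$ to a stratum $S$ is a locally trivial topological bundle, we deduce'' the corollary; you are essentially filling in exactly the argument the paper leaves implicit, by reducing to the earlier ``Locally constant cohomology'' corollary via the fibre-bundle structure of $f^{-1}S\to S$.

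One small caution on a point you yourself flag: the base-change identification $(R^i f_*\Z_X)|_S \simeq R^i(f|_{X_S})_*\Z_{X_S}$ is not a consequence of properness of $f|_{X_S}$ (which need not hold for a general algebraic $f$), and in fact the two sheaves can differ for a non-open stratum, since the stalk of the former at $v\in S$ sees $H^i(f^{-1}(W),\Z)$ for $W$ open in $V$, not just in $S$. What actually makes $(R^i f_*\Z_X)|_S$ locally constant is the stronger local topological triviality coming from the full Thom--Mather theory (Thom's first isotopy lemma, as in \cite{g-m}): over a suitable neighbourhood of $v$ in $V$ the map $f$ itself is a stratified product along $S$, so the stalks of $R^i f_*\Z_X$ are locally constant along $S$ even though they record more than $H^i(f^{-1}(v),\Z)$. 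The paper silently assumes this, and your write-up should too; just replace the appeal to properness by an appeal to this stratified local triviality.
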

 Then we say that $R^i f_* \Z_X$ is constructible
on $V$ and   $ R f_* \Z_X$ is cohomologically constructible on
$V$.

 \subsubsection{Geometric Variation of
Mixed Hodge Structures} The abstract definition of VMHS above
summarizes in fact properties of  the variation of MHS  defined on
the cohomology of the fibers over strata of the parameter space.
We suppose next the parameter space a   complex disc $D$ that we
can suppose small enough to have a topological fibration on the
punctured disc $D^*$. Hence the cohomology  groups $( H^i(
X_t,\Q))$ form a local system $ (R^i f_* \Q_X)$ on $D^*$ with an
associated flat  analytic bundle $(R^i f_* \C_X)/D^* \otimes
\OO_{D^*} $ endowed with an analytic connection $\nabla$ whose
flat sections form a local system isomorphic to $R^i f_*
\C_X)/D^*$.

 Suppose the fibers of $f$ are algebraic
varieties, then a $MHS$ exists on the cohomology  groups $( H^i(
X_t,\Z), W, F)$ of the fibers $X_t$.
 The following proposition
describes properties of the weight and Hodge filtrations: the
variation of the weight filtration $W$ is locally constant in $t$
and the
 variation  of the  Hodge filtration $F$ is  analytic in $t$.
\begin{prop}
 Suppose the fibers of the above morphism $f: X \to D$
are algebraic and the radius of $D$  small enough \\
i) For all integers $i \in \N$, the restriction to $D^*$ of the
higher  direct image cohomology $ (R^i f_* \Z_X)/D^*$ (resp. $ R^i
f_* \Z_X/ {\rm Torsion})/D^*$ are local systems of
free $\Z_{D^*}-$modules of finite type.\\
ii) The weight filtration $W$ on the cohomology $H^i(X_t, \Q)$ of
a fiber $X_t$ at $t \in D^*$ define a filtration $\WW$ by sublocal
systems of $
(R^i f_* \Q_X)/D^*$.\\
 iii) The Hodge filtration $F$ on the cohomology $H^i(X_t, \C)$
  define a filtration $\FF$ by analytic sub-bundles  of $
(R^i f_* \C_X)/D^* \otimes \OO_{D^*} $ whose  locally free sheaf
of sections on $D^*$ satisfy the infinitesimal Griffiths
transversality with respect to the (Gauss Manin)connection
$\nabla$
\[ \nabla \FF^p \subset \Omega^1_{D^*} \otimes
_{\OO_{D^*}}\FF^{p-1}.
\]
iv) If we suppose $f$ projective, then the induced filtration by
$\FF$ on the graded objects $Gr^{\WW}_m (R^if_* \C_X)/D^*$ are
polarizable variation of Hodge structure (VHS).
\end{prop}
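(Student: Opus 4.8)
The plan is to reduce the three remaining assertions to the classical smooth proper case, which is due to Griffiths and Deligne. I would first dispose of (i): after shrinking $D$ the restriction of $f$ to $f^{-1}(D^*)$ is a topological fibre bundle --- this is the situation already fixed above, and in any case follows from Thom--Whitney's theorem, since a small enough $D^*$ lies in a single stratum of the base --- so by the Corollary on locally constant cohomology $(R^if_*\Z_X)/D^*$ is a local system with stalk $H^i(X_t,\Z)$. As $X_t$ is a complex algebraic variety it has the homotopy type of a finite CW complex, hence $H^i(X_t,\Z)$ is a finitely generated $\Z$-module; the torsion subsheaf is a sub-local system because the monodromy is $\Z$-linear, and the quotient is the asserted local system $(R^if_*\Z_X/{\rm Torsion})/D^*$ of free $\Z$-modules of finite type.

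For (ii)--(iv) the first step --- and the one I expect to be the main obstacle --- is to produce the geometric data \emph{over} $D$: after possibly shrinking $D$, a simplicial smooth hyperresolution $a_\bullet\colon X_\bullet\to X$ together with compatible open immersions $X_n\hookrightarrow\overline X_n$ with normal crossing complement, all lying over $D$ and with each $\overline X_n\to D$ smooth and proper. Over a one-dimensional base this can be arranged by Hironaka and the usual simplicial (or cubical) descent, but one must then verify that Deligne's fibrewise construction of the mixed Hodge structure on $H^i(X_t)$ coincides with the structure induced by the resulting global filtered complex on $D^*$; this compatibility is the technical heart of the matter, and is precisely what the relative filtered mixed Hodge complex formalism (developed in \S2.2--2.3) is designed to deliver.

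Granting this, the remaining steps are formal and rest on the smooth proper input for each $\overline X_n\to D$ over $D^*$: a local system $R^j\overline f_{n*}\Z$ by Ehresmann and the differentiable fibration theorem above; a Hodge filtration by holomorphic subbundles, because the logarithmic Hodge--de Rham spectral sequence degenerates at $E_1$ fibrewise (Deligne), so $\FF^p$, the image of $R^j\overline f_{n*}\sigma_{\ge p}\Omega^\bullet_{\overline X_n/D}(\log)$ in $R^j\overline f_{n*}\Omega^\bullet_{\overline X_n/D}(\log)$, is locally free of locally constant rank; Griffiths transversality, from the Katz--Oda description of the Gauss--Manin connection via the filtration of $\Omega^\bullet_{\overline X_n}(\log)$ by powers of $f^*\Omega^1_D$; and, when $f$ is projective, a polarization from cup product with a relative hyperplane class together with hard Lefschetz. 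Passing to the simple complex that computes $Rf_*\Q_X$, its weight filtration --- the d\'ecalage of the combined simplicial and logarithmic weight filtration --- is by subcomplexes of local systems, which gives (ii): the $W_kH^i(X_t,\Q)$ assemble into sub-local systems $\WW_k$ of $(R^if_*\Q_X)/D^*$. Its Hodge filtration is the filtration b\^ete on the de Rham realization, an analytic filtration $\FF$ by subbundles of $(R^if_*\C_X)/D^*\otimes\OO_{D^*}$, which satisfies $\nabla\FF^p\subset\Omega^1_{D^*}\otimes_{\OO_{D^*}}\FF^{p-1}$ by the same Katz--Oda argument applied to the simple complex; this is (iii).

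For (iv), passing to $Gr^{\WW}_m$ kills the weight filtration, so Griffiths transversality on the graded objects is automatic and one needs only polarizability. When $f$ is projective the weight spectral sequence is a spectral sequence of polarizable variations of Hodge structure degenerating at $E_2$, so $Gr^{\WW}_m(R^if_*\C_X)/D^*$ is a subquotient of a direct sum of the polarizable VHS $R^j\overline f_{n*}$ occurring on $E_1$, and a subquotient of a polarizable VHS (in the category of VHS) is again polarizable. In summary, once the relative hyperresolution and the fibrewise-versus-global compatibility of the preceding paragraph are in hand, (ii) is the fact that the weight filtration of the relative mixed Hodge complex is by subcomplexes of local systems, (iii) is the $E_1$-degeneration together with the Katz--Oda package for its Hodge filtration, and (iv) is the $E_2$-degeneration of the weight spectral sequence plus the functoriality of polarizations.
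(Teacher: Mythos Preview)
Your proposal is correct and follows essentially the same approach as the paper: reduce to the smooth proper case via a simplicial hyperresolution with NCD compactifications, invoke the Katz--Oda description of the Gauss--Manin connection for Griffiths transversality, and use the relative logarithmic de Rham complex on each term of the simplicial variety over $D^*$. The paper's own proof is in fact sketchier than yours---it summarizes the smooth proper input (relative connections, Katz--Oda) and then defers the details of the general case to the later treatment of degenerations in \S2, noting only that the key point is that the NCD at infinity forms a relative NCD over $D^*$ for $D$ small enough and for the finitely many simplicial indices that matter.
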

The proposition is a generalization to the non proper case of
results in the smooth proper case. The proof follows the
historical developments of the theory, and it is in two steps. In
the first step we summarize the results for $VHS$ and in the
second step we use the technique introduced by Deligne by covering
$X$ by simplicial smooth varieties \cite {HIII}.

\subsubsection*{$VHS$ defined by a smooth proper morphism}
The main point is to prove  that the variation of the Hodge
filtration is  analytic. The original proof by Griffiths  is based
on the description of the Hodge filtration $F$ as a map to the
classifying space of all filtrations of the cohomology vector
space
 of a fiber at a reference point. Here the Hodge  filtration of
the cohomology at a fiber $y$ are transported horizontally to the
reference point.

 We summarize here a proof based on the use of
relative connections by Deligne (\cite{HI}, 2.20.3), and Katz-Oda
\cite{K-O}. Let $f: X \to V$ denotes a smooth morphism of analytic
varieties, $f^{-1}\FF$ the sheaf theoretic inverse of a sheaf
$\FF$ on $V$, and let $\Omega^1_{X/V}$ denotes the sheaf of
relative differential forms.

\begin{defn} i) A relative connection on  a coherent sheaf
   of modules on $X$
 \[\nabla: \VV \to \VV \otimes \Omega^1_{X/V} \]
 is defined by an $f^{-1}\OO_V$ linear map $\nabla$ satisfying for
 all local sections
\[ f \in \OO_X, v \in \VV: \quad \nabla (f v)=
 f.\nabla v + df. v \]

ii) A relative local system on $X$ is a sheaf $\LL$ with a
structure of $f^{-1}\OO_V-$module, locally isomorphic on $X$ to
the inverse image of a coherent sheaf on a subset of $V$
\[ \forall x \in X, \,\exists x \in U \subset X, W \subset S: f_{|U}:U \to W,
\, \exists \,  \FF_W  \, \hbox{a coherent sheaf}:
 \LL_{|U} \simeq (f^{-1} \FF)_{|U}.
 \]
\end{defn}
Then the theory is similar to the absolute case and there is an
equivalence between relative local systems and flat relative
connections. Moreover, the relative de Rham complex
$\Omega^*_{X/V}$ is defined in the flat case and the flat sections
of $\VV$ form a relative local system.

\begin{ex} Let $\LL_{\Z}$ be a local system on $X$, then
$\LL_{\hbox{rel}} := f^{-1}\OO_V \otimes \LL_{\Z}$ is a relative
local system and $\Omega^*_{X/V} \otimes \LL_{\Z}$ is its de Rham
resolution. In particular, we have  (\cite {HI}, 2.27.2):
 \[ \OO_V \otimes R^if_* \LL_{\Z} \simeq R^i f_* (\Omega^*_{X/V}
  \otimes \LL_{\Z}) \]
\end{ex}

To prove that the Hodge filtration vary holomorphically with the
paramaters and that the connection satisfy the infinitesimal
transversality, we consider the exact sequence of differential
forms
 \[
   0 \to f^* \Omega_V^1 \to \Omega_X^1 \to \Omega_{X/Y}^1 \to 0,
\]
 and the exact
sequence of complexes
 \[  0 \to f^* \Omega_V^1\otimes  F^{p-1}\Omega_X^{*-1}
 \to F^p \Omega_X^*  \to F^p \Omega_{X/Y}^* \to 0
 \]
taking the higher direct image, Katz-Oda \cite {K-O} prove that
the associated connecting morphism
 \[ R^i f_* F^p \Omega_{X/Y}^* \to \Omega_V^1 \otimes F^{p-1} R^i f_*
 \Omega_{X/Y}^*\]
 coincide with the connection. More generally, there exists a
  filtration $ L^r \Omega^i_X:= f^* \Omega_V^r
\otimes \Omega^{i-r}_X \subset \Omega^i_X  $ of the complex
$\Omega^*_X  $.

\subsubsection*{General case} The proof of the proposition is  deduced from the
smooth proper case via Deligne's simplicial resolutions \cite
{HIII}: there exists  a smooth simplicial variety $ X_*$ defined
by a family $\{X_n\}_{n \in \N}$ over $X$ defining a cohomological
hyperresolution of $X$, which gives in particular an isomorphism
between the cohomology groups of the simplicial variety $X_*$ and
$X$. Applying the structural theorem to the morphisms $f_i: X_i
\to D$, we deduce that the restriction of $f_i$ to $D^*$ is smooth
so that we can use the relative de Rham complex of each $X_i$ over
$D^*$ as in the above case. \\
We do not give the details of the proof here, since we will treat
a similar case later to study the asymptotic behavior of the VMHS
near the origin of the disc, in presence of singularities of $f$
on the fiber at the origin. Precisely, there are two different
cases. In the first case of a proper  morphism $f: X \to D$, the
varieties are proper over $D$.  In the second case when $f$ is not
proper, but with   algebraic fibers, the construction of MHS is
based on the completion of $X$ by a divisor $Z$ over which the
varieties $X_i$ of $X_*$ are completed by  normal crossing
divisors
 $Z_i$, so we can use  the de Rham complex with logarithmic
singularities along $Z_i$.
 The key point here is that the NCD form a family of relative NCD
 over $D^*$ for $D$ small enough,
for a finite number of indices which determine the cohomology.

\begin{cor} For each integer $i$ and each stratum $S \subset V$
 of a Thom-Whitney stratification of $f: X \to V$, the restriction of the
  higher direct cohomology
sheaf $(R^i f_* \Z_X)/S$  underly a VMHS on $S$.
\end{cor}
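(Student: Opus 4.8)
The plan is to reduce the assertion to the smooth proper case treated in the Proposition, but with the stratum $S$ in place of a disc, via a simplicial resolution of $f^{-1}(S)$ over $S$. Being a VMHS on $S$ amounts to: (a) $(R^i f_* \Z_X)/S$ is a local system, which is the previous Corollary; (b) each fibre $X_s=f^{-1}(s)$ is a complex algebraic variety, so $H^i(X_s,\Z)$ carries Deligne's MHS $(W,F)$; and it then remains to show that the fibrewise $W$ is the specialization of a filtration of the local system $(R^i f_*\Q_X)/S$ by sublocal systems, and that the fibrewise $F$ is the specialization of a filtration of the analytic bundle $(R^i f_*\C_X)/S\otimes\OO_S$ by holomorphic subbundles satisfying Griffiths transversality for the Gauss--Manin connection $\nabla$.

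To this end I would first fix the data compatibly. Choose a cohomological hyperresolution $X_\bullet\to X$ by smooth simplicial varieties (\cite{HIII}); in the non-proper case also fix relative compactifications $\bar X_\bullet$ with complements $Z_\bullet$ a simplicial NCD. By the Thom--Whitney statement combined with Verdier's theorem on the stratification of morphisms, one may refine the stratification $\mathcal S$ of $V$ so that, over each stratum $S$ and for the finitely many simplicial indices $n$ that govern cohomology in degree $\le i$, the restriction $\bar X_{n,S}:=\bar X_n\times_V S\to S$ is smooth and proper and $Z_{n,S}:=Z_n\times_V S$ is a relative NCD over $S$. This is exactly the device used in \S2 to study the degeneration of a geometric VMHS, and I would refer to that treatment for the details; fix such a stratum $S$.

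Over $S$ each $f_n\colon\bar X_{n,S}\to S$ is now smooth and proper, so the smooth proper theory recalled above applies verbatim with $S$ in place of $D$: holomorphy of the Hodge filtration via the relative de Rham complex and the Katz--Oda \cite{K-O} identification of $\nabla$ with the connecting map of $0\to f^*\Omega^1_S\otimes F^{p-1}\Omega^{*-1}\to F^p\Omega^*\to F^p\Omega^*_{\bar X_{n,S}/S}\to 0$, Griffiths transversality, and polarizability of the VHS on $R^jf_{n,*}\C$ when $f$ is projective; in the open case one uses the logarithmic de Rham complex $\Omega^*_{\bar X_{n,S}/S}(\log Z_{n,S})$ instead. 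Assembling these over the simplicial index yields, on $f^{-1}(S)$, a simplicial filtered mixed Hodge complex over $S$, whose total complex carries the weight filtration $W$ (built from the simplicial degree and, in the open case, the NCD weight filtration) and the Hodge filtration $F$. Its $i$-th cohomology sheaf is $(R^if_*\Q_X)/S$, with $W$ a filtration by sublocal systems: the weight spectral sequence is a spectral sequence of local systems, its $E_1$-terms being shifted Tate twists of cohomologies of the smooth proper families $\bar X_{n,S}\to S$, and it degenerates at $E_2$ fibrewise, hence over $S$; and with $F$ a filtration by holomorphic subbundles satisfying $\nabla F^p\subset\Omega^1_S\otimes_{\OO_S}F^{p-1}$, since each stage has this property and the comparison maps are $\nabla$-compatible.

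Restricting everything to a point $s\in S$ recovers exactly Deligne's MHS on $H^i(X_s)$, so $(W,F)$ induces a MHS on every fibre of the bundle, which is the last axiom; thus $(R^if_*\Z_X)/S$ underlies a VMHS on $S$, graded-polarizable when $f$ is projective. The main obstacle is the second step together with the control it requires downstream: producing the resolution \emph{relative} to $S$ --- i.e. refining the Whitney stratification so that the resolution terms are smooth proper over each stratum, with NCD complements relative over $S$ --- and then ensuring that the weight and Hodge spectral sequences behave uniformly in the fibre, so that the fibrewise filtrations genuinely glue to sublocal systems and to subbundles with Griffiths transversality; once that is secured, the remainder is the smooth proper theory applied over a base of arbitrary dimension.
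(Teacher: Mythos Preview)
Your argument is correct in substance but takes a different route from the paper, and the difference matters for exactly which statement you end up proving.

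The paper does not try to build the simplicial machinery globally over $S$. Instead it only uses the Proposition as stated, over an embedded disc $D\subset S$ with center $v$: this gives a VMHS on $D^*$, and the whole content of the proof is the extension step across $v$. The local system already extends (trivial monodromy at $v$), and the sublocal systems $\WW_r$ inherit trivial monodromy, so the weight filtration extends. For the Hodge filtration the paper invokes forward references to the degeneration theory of \S 2: the limit MHS exists, and because the monodromy is trivial the local invariant cycle theorem identifies the limit MHS with Deligne's MHS on $H^i(X_v)$; hence the limit Hodge filtration agrees with the fibrewise one and $\FF$ extends holomorphically over $v$. This works for the \emph{given} Thom--Whitney stratification of $f$, without any refinement.

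Your approach, by contrast, refines $\mathcal S$ so that the finitely many simplicial pieces $\bar X_{n,S}\to S$ are smooth proper with $Z_{n,S}$ a relative NCD, and then assembles the VMHS directly over the refined strata. This is cleaner and avoids the forward reference to limit MHS, but as written it only yields a VMHS on each stratum of the \emph{refined} stratification. To recover the Corollary for the original $S$ you would still need to extend the Hodge bundles across the refining locus, where the local monodromy is again trivial --- and that is precisely the paper's argument via the limit MHS and the invariant cycle theorem. So either you accept the weaker conclusion (for some refinement), or you feed your construction into the paper's extension step; the two approaches are complementary rather than interchangeable.
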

{\it Proof}. By definition of $S$, the restriction of the higher
direct image
 is a local system $\LL_i$.  By the above argument via
simplicial coverings, for any embedded
 disc in $S$ with center a point $v \in S$, we know only that
the restriction to $D^*$ is a VMHS. We prove that this VMHS extends
across $v$. \\
 Since the local system
extends to the whole disc, the local monodromy is trivial at $v$,
but we don't know yet about the extension of the weight and the
Hodge filtrations. Since the local monodromy on the weight local
subsystems $\WW_r$ is induced from $\LL_i$, it is trivial at $v$
and $\WW_r$ extend at $v$.

The proof of the extension of the Hodge filtration to an analytic
bundle over $D$ can be deduced from the study of the asymptotic
behavior of the Hodge filtration in the next section. The
extension of the Hodge filtration has been proved for the proper
smooth case \cite{S}, \cite{Sc}.

 In the next section we will define the limit MHS of the VMHS
on $D^*$, and  a comparison theorem via the natural morphism from
the MHS on the fiber $X_v$ to this limit MHS, is stated  as the
basic local invariant cycle theorem (\cite{G},VI). Since the local
monodromy is trivial in our case, both MHS coincide above the
point $v$, hence the Hodge filtration extends also by analytic
bundles over $D$ since it is isomorphic to the limit Hodge
filtration.

\subsection{Singularities of local systems } In general the local system
$(R^i f_* \Z_X)/S $ over a strata $S$ does not extend as a local
system to the boundary of $S$.

The study of the singularities at the boundary may be carried
through the study of the singularities of the associated
connection. It is important to distinguish in the geometric case
between the data over the closure $\partial S :=\overline {S} - S$
of $S$ and the data that can be extracted from the asymptotic
behavior on $S$, which are linked by the local invariant cycle
theorem.

The degeneration can be studied locally at points of $\partial S$
  or globally, in which case  we suppose  the boundary
 of $S $ a  NCD, since we are often reduced to such case by the
 desingularization theorem of Hironaka.

We discuss in this section, the quasi-unipotent property of the
monodromy and
 Deligne's  canonical extension of the connection.

\subsubsection{Local monodromy}
To study the local properties of $ R^if_* \Z_X$ at a point $v\in
V$, we consider an embedding of a small disc $D$ in $V$ with
center  $v$. Then, we reduce the study to the case of a proper
analytic morphism $f: X \to D$ defined on an analytic space to a
complex disc $D$. The inverse image $f^{-1}(D^*)$ of the punctured
disc $D^*$, for $D$ small enough, is a topological fiber bundle
(\cite{SGA7}, Exp. 14, (1.3.5)). It follows that a monodromy
homeomorphism is defined on the fiber $X_t$ at a point $t \in D^*$
by restricting $X$ to a closed path $\gamma: [0,1] \to D^*, \gamma
(u) = exp(2i \pi u)t $. The inverse fiber bundle  is trivial over
the interval: $\gamma^* X \simeq [0,1] \times X_t $ and the
monodromy  on $X_t$ is defined by the path $\gamma $ as follows
\[ T: X_t \simeq (\gamma^* X)_0\simeq (\gamma^* X)_1 \simeq
X_t \] The monodromy is independent of the choice of the
trivilization, up to homotopy, and can be achieved  for singular
$X$ via the integration of a special type of vector fields
compatible with a Thom stratification of  $X$ \cite{ V}.
\begin{rem} The following construction, suggested in the introduction of
(\cite{SGA7}, Exp.13, Introduction) shows how $X$ can be recovered
as a topological space from the monodromy. There exists a
retraction $r_t: X_t \to X_0$ of the general fiber $X_t$ to the
special fiber $X_0$ at $0$, satisfying $r_t \circ T = r_t$, then
starting with the system
$(X_t, X_0, T, r_t)$ we define\\
i) $X'$ and $f:X' \to S^1$ by gluing the boundaries of $X_t \times
[0,1]$, $X_t \times 0$ and $X_t \times 1$ via $T$. A map $r': X'
\to X_0$ is deduced from $r_t$.\\
ii) then $f: X \to D $ is defined as the cone of $f'$
\[ X \simeq X' \times [0,1]/(X' \times 0 \xrightarrow{r'} X_0) \to
S^1 \times [0,1]/(S^1 \times 0 \to 0) \simeq D \]
\end{rem}
\subsubsection{Quasi-unipotent monodromy}
If we suppose  $D^* \subset S $, then the monodromy of the local
system $(R^i f_* \Z_X)/D^* $ along a generator of $D^*$, as a
linear operator on the cohomology $H^i(X_t, \Q)$ may have only
roots of unity as eigenvalues. This condition discovered first by
Grothendieck for algebraic varieties over a field of positive
characteristic, is also true over $\C$ \cite {G-S}.
 \begin{prop} The monodromy $T$ of the local
system $(R^i f_* \Z_X)/D^* $ defined by  an algebraic (resp.
proper analytic) morphism $f:X \to D$ is quasi-unipotent
\[ \exists \, a, b \in \N \quad {\rm s.t.}  \quad (T^a - Id)^b = 0 \]
\end{prop}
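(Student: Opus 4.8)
The plan is to reduce the statement to the case of a projective morphism with smooth total space, where the result is accessible via a residue/Gysin computation, and then to bootstrap to the general algebraic or proper analytic case using Deligne's simplicial (hyper)resolutions, exactly as was done for the structural Proposition above. First I would normalize the situation: by shrinking $D$ we may assume $f^{-1}(D^*) \to D^*$ is a $C^\infty$ fiber bundle, so $\LL_i := (R^i f_* \Q_X)/D^*$ is a genuine local system on $D^*$ and $T$ is a single automorphism of the stalk $H^i(X_t,\Q)$. Since quasi-unipotence is insensitive to passing to a finite cover of $D^*$ and to the weight/Hodge refinements, it suffices to prove it for the pieces of a hyperresolution. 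Invoking Deligne's construction of a smooth simplicial variety $X_\bullet \to X$ (and, in the non-proper case, a good compactification by relative NCDs over $D^*$ as recalled after the General-case discussion), the cohomology $H^i(X_t,\Q)$ is computed by a spectral sequence whose terms are $H^q$ of the fibers $X_{n,t}$ of the smooth projective pieces $f_n\colon X_n \to D$ (resp. their log-complements). The monodromy $T$ acts compatibly on this spectral sequence, so if each $T_n$ on $H^\bullet(X_{n,t})$ is quasi-unipotent, then $T$ permutes a finite filtration whose graded pieces carry quasi-unipotent operators, and a product/composition of finitely many quasi-unipotent operators that are simultaneously block-triangularized is again quasi-unipotent; this gives the claim for $X$.

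For the base case — $f\colon X \to D$ projective with $X$ smooth and $X_0$ a reduced NCD after base change (achieved by a ramified cover $D \to D$, $t \mapsto t^m$, and Hironaka) — I would use the classical nearby-cycle picture. Over $D^*$, $H^i(X_t,\C) \cong H^i(X, \Omega^\bullet_{X/D}(\log X_0))_0$ via the logarithmic relative de Rham complex, and the monodromy $T$ is computed as $\exp(-2\pi i \operatorname{Res}\nabla)$, where the residue of the Gauss–Manin connection along $t=0$ acts on the specialization. The crucial input is that this residue is \emph{nilpotent}: it is the logarithm $N$ of the unipotent part, and its nilpotence follows from a weight argument on the Steenbrink/limit spectral sequence (the $E_1$-page is built from cohomology of intersections of components of $X_0$, and $N$ strictly decreases a weight filtration there, hence is nilpotent). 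Then $T = e^{N}$ up to semisimple eigenvalues that are roots of unity coming from the $t \mapsto t^m$ cover, so some power $T^a$ is unipotent and $(T^a - \mathrm{Id})^b = 0$ with $b$ bounded by the dimension.

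The main obstacle, and the step I would spend the most care on, is the bookkeeping in the reduction: passing from the simplicial/log setup back to the original (possibly singular, possibly non-proper) $X$ while keeping track of how $T$ acts on the hypercohomology spectral sequence, and checking that finiteness of the number of relevant simplicial indices (needed so that only finitely many $T_n$ contribute) really holds — this is the point flagged above as ``the NCD form a family of relative NCD over $D^*$ for $D$ small enough, for a finite number of indices which determine the cohomology.'' The nilpotence of the residue in the base case is standard once the weight filtration on the limit complex is in place, but I would present the weight/strictness argument rather than merely cite it, since it is the conceptual heart of why roots of unity (and not arbitrary eigenvalues) appear. An alternative to the whole argument — worth remarking on but not pursuing in detail — is Grothendieck's original route: spread the family out over a finitely generated $\Z$-algebra, reduce mod $p$, use the fact that over a finite field Frobenius eigenvalues are algebraic with controlled absolute values to force the $\ell$-adic local monodromy to be quasi-unipotent, and then transfer back to $\C$ by the comparison between $\ell$-adic and topological monodromy; this is the approach indicated by the citation to \cite{G-S} and \cite{BBD} in the text.
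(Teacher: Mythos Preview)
The paper does not give its own proof here: it only cites Clemens \cite{C} and Landman \cite{L} for the analytic argument, Schmid (\cite{Sc}, 4.5) for the abstract VHS case (the $\Z$-structure together with Hodge-norm estimates forces the eigenvalues to be algebraic integers all of whose conjugates have absolute value $1$, hence roots of unity by Kronecker), and then deduces the VMHS case from the VHS case via the weight filtration. Your write-up therefore goes well beyond what the paper supplies, and your simplicial/spectral-sequence reduction to the smooth projective case is correct and entirely in the spirit of the paper's other arguments.

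In the base case your reasoning works but is heavier than necessary, and the logical order is slightly tangled. To obtain a \emph{reduced} NCD special fibre after a finite base change you are implicitly invoking the semistable reduction theorem, not Hironaka alone; once you grant that, your observation that $N=\operatorname{Res}\nabla$ shifts Steenbrink's weight filtration by $-2$ and is therefore nilpotent is fine, and $T^m=\exp(-2\pi i N)$ is then unipotent. A more economical route, closer to \cite{C}, \cite{L} and to Katz's local monodromy theorem, avoids semistable reduction entirely: Hironaka alone makes $X_0$ an NCD possibly with multiplicities, and a direct local computation with the relative logarithmic de~Rham complex shows that the eigenvalues of $\operatorname{Res}\nabla$ are rational, so those of $T=\exp(-2\pi i\operatorname{Res}\nabla)$ are roots of unity without any weight-filtration input. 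The Grothendieck mod-$p$ alternative you sketch at the end is indeed the original argument the paper alludes to in the paragraph preceding the proposition.
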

A proof in the analytic setting is explained in \cite{C} and \cite
{L}. It is valid for an abstract VHS (\cite {Sc}, 4.5) where the
definition of the local system over $\Z$ is used in the proof;
hence it is true for VMHS. Finally, we remark that the theorem is
true also for the local system defined by the Milnor fiber.
\subsubsection{Universal fiber} A canonical way to study the general fiber,
independently of the choice of $t \in D^*$, is to introduce the
universal covering $\widetilde D^*$  of $D^*$ which can be defined
by Poincar\'e half plane $\H = \{ u \in \C: \hbox{Im} \, u > 0 \},
\,\pi: \H \to D^* : u \mapsto exp \, 2i \pi u$. The inverse image
$\widetilde X^* := \H \times_D X $ is a topological fibre bundle
trivial over $\H$ with fibre homeomorphic to $X_t$. Let $H:
\H\times X_t \to \widetilde X^* $ denotes a trivialization of the
fiber bundle with fiber $X_t$ at $t$,  then the translation $ u
\to u+1$ extends to $\H \times X_t$ and induces via $H$, a
transformation $ T $ of $\widetilde X^*$ s.t. the following
diagram commutes
$$
\begin{array}{ccc}
X_t&\xrightarrow{I_0}&\widetilde X^*\\
\downarrow T_t&&\downarrow{ T }\\
X_t&\xrightarrow{I_1}&
\widetilde X^*\\
\end{array}
$$
where $I_0$ is defined by the choice of a point $u_0 \in \H$
satisfying  $e^{2i\pi u_0} = t$ s.t. $I_0: x \mapsto H(u_0, x)$,
then $I_1: x \mapsto H(u_0 + 1, x)$ and $T_t$ is the monodromy on
$X_t$. Hence $T$ acts on $\widetilde X^*$ as a universal monodromy
operator.

\subsubsection{Canonical extension with logarithmic singularities}
 For the global study of the asymptotic behavior of the local system on
 a strata $S$ near $\partial
S$, we introduce a general construction by Deligne \cite {HI} for
an abstract local system.

\subsubsection*{Hypothesis}   Let $Y$ be a normal crossing divisor (NCD)
 with smooth  irreducible components $Y =
\cup_{i\in I} Y_i$    in a smooth analytic variety $X$, $j: X^* :=
X-Y \to X$  the
  open embedding, and  $\Omega^*_X(log Y)$ the complex of sheaves
 of differential forms  with logarithmic poles  along $Y$. It is a
 complex of subsheaves  of $j_* \Omega^*_{X-Y} $.
 There exists a global residue
morphism $Res_i: \Omega^1_X(log Y)\to
 {\mathcal O}_{Y_i}$ defined locally as follows: given a point $y
 \in Y_i$ and $z_i$ an analytic local coordinate  equation of $Y_i$ at $y$,
 a differential  $\omega = \alpha \wedge d z_i /z_i + \omega'$
 where $\omega'$  does not contain $dz_i$ and $\alpha$ is regular
 along $Y_i$, then $Res_i (\omega) = \alpha_{|Y_i}$.

\begin{defn}Let  $F$ be a vector bundle on $X$. An analytic connection on $F$ has
logarithmic poles along $Y$ if the entries of the connection
matrix are one forms in $\Omega^1_X (Log Y)$, hence define a $\C-$
linear map satisfying Leibnitz condition
$$\nabla:
 F \to \Omega^1_X (Log Y) \otimes_{{\mathcal O}_X} F$$
\end{defn}
Then  the definition of $\nabla$  extends to
$$\nabla^i: \Omega^i_X
( Log Y) \otimes_{{\mathcal O}_X} F \to \Omega^{i+1}_X ( Log Y)
\otimes_{{\mathcal O}_X} F$$
 it is integrable if $\nabla^1 \circ
\nabla = 0 $, then a logarithmic complex $\Omega^*_X ( Log Y)( F)
\colon= (\Omega^*_X ( Log Y)
\otimes_{{\mathcal O}_X} F, \nabla)$ is defined in this case.\\
 The composition of $\nabla$ with the residue map:
  $$ (Res_i \otimes Id) \circ \nabla: F \to
 \Omega^1_X (Log Y) \otimes_{{\mathcal O}_X} F \to
  {\mathcal O}_{Y_i}\otimes F $$
vanishes on the product ${\mathcal I}_{Y_i} F$ of $F$ where
${\mathcal I}_{Y_i} $ is the ideal of definition of  $Y_i$. It
induces a linear map:
\begin{lem}(\cite {HI},3.8.3) The residue  of the connection is a linear
endomorphism of analytic bundles on $Y_i$
 $$ Res_i (\nabla): F \otimes_{{\mathcal O}_X} {\mathcal O}_{Y_i}
 \to F \otimes_{{\mathcal O}_X} {\mathcal O}_{Y_i}.$$
 \end{lem}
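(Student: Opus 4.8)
The plan is to reduce the statement to the single fact asserted just above it, namely that the composite $(Res_i\otimes Id)\circ\nabla\colon F\to\OO_{Y_i}\otimes_{\OO_X}F$ annihilates the subsheaf $\II_{Y_i}F$, and then simply read off the induced map. The assertion is local on $Y_i$, so I would fix a point $y\in Y_i$ together with a local equation $z_i$ of $Y_i$ at $y$; then $\II_{Y_i}$ is the principal ideal $(z_i)$ near $y$, so $\II_{Y_i}F=z_iF$ there, and every local section of $\II_{Y_i}F$ near $y$ has the form $z_i s$ with $s$ a local section of $F$. Hence it is enough to evaluate $(Res_i\otimes Id)\circ\nabla$ on such a section $z_is$.

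Next I would apply the Leibnitz rule, $\nabla(z_i s)=dz_i\otimes s+z_i\,\nabla s$, and kill the two terms separately. For the first, since $dz_i=z_i\cdot(dz_i/z_i)$ is holomorphic, it has no logarithmic pole along $Y_i$; in the local normal form $\omega=\alpha\,dz_i/z_i+\omega'$ recalled before the lemma it corresponds to $\alpha=0$, so $Res_i(dz_i)=0$ and $(Res_i\otimes Id)(dz_i\otimes s)=0$. For the second, I would write $\nabla s=\sum_k\omega_k\otimes s_k$ with $\omega_k$ local sections of $\Omega^1_X(\log Y)$ and use that $Res_i$ is $\OO_X$-linear --- visibly from its local description, $Res_i(g\omega)=\bar g\,Res_i(\omega)$, where $\bar g$ denotes the restriction of $g$ to $Y_i$ --- applied with $g=z_i$, which restricts to $0$ on $Y_i$. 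This gives $Res_i(z_i\omega_k)=0$, hence $(Res_i\otimes Id)(z_i\nabla s)=\sum_k Res_i(z_i\omega_k)\otimes s_k=0$. So $(Res_i\otimes Id)\circ\nabla$ vanishes on $\II_{Y_i}F$ and therefore factors through $F/\II_{Y_i}F=F\otimes_{\OO_X}\OO_{Y_i}$; as the target $\OO_{Y_i}\otimes_{\OO_X}F$ is this same sheaf, the factorization is the desired endomorphism $Res_i(\nabla)$.

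Finally I would record that $Res_i(\nabla)$ is not merely $\C$-linear but $\OO_{Y_i}$-linear: for $g\in\OO_X$ one computes $(Res_i\otimes Id)\nabla(gs)=Res_i(dg)\otimes s+\bar g\,(Res_i\otimes Id)\nabla(s)$, and $Res_i(dg)=0$ because $dg$ is holomorphic, so the induced map commutes with multiplication by $\OO_{Y_i}$; independence of the local endomorphism from the choice of $z_i$ follows from the corresponding property of $Res_i$, so the local maps glue to the global $Res_i(\nabla)$. I do not expect a genuine obstacle in this argument: the only thing that needs care is the bookkeeping with the residue map --- that $dz_i$ and $dg$ have vanishing residue and that $Res_i$ is $\OO_X$-linear --- all of which are immediate consequences of the local normal form $\omega=\alpha\,dz_i/z_i+\omega'$, $Res_i(\omega)=\alpha|_{Y_i}$.
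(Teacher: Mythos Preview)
Your proposal is correct and follows exactly the approach the paper indicates: the paper does not give a detailed proof but simply asserts, just before the lemma, that $(Res_i\otimes Id)\circ\nabla$ vanishes on $\II_{Y_i}F$ and then cites \cite{HI}; you have supplied the straightforward verification of that vanishing via the Leibnitz rule and the local description of $Res_i$, together with the $\OO_{Y_i}$-linearity, which is precisely what is needed.
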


\begin{thm}[Logarithmic
extension] (\cite {HI}, 5.2) Let  ${\LL}$ be a complex local
system on the complement of the NCD: $Y$ in $X$ with locally
unipotent monodromy along the components $Y_i, i \in I$ of $Y$.
Then there exists a  locally free module $\LL_X$ on $X$ which
extends $\LL_{X^*}:= \LL \otimes \OO_{X^*}$, moreover the
extension is unique if the connection $\nabla$ has logarithmic
poles with respect to $\LL_X$
 \[\nabla \colon \LL_X
\rightarrow {\Omega}^1_X (Log Y) \otimes \LL_X \]
 with nilpotent residues along $Y_i$.
 \end{thm}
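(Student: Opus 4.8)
The plan is to construct $\LL_X$ explicitly in a local model and then glue, deducing the global statement from uniqueness. First I would reduce to the case $X=\Delta^n$ a polydisc with coordinates $(z_1,\dots,z_n)$ and $Y=\{z_1\cdots z_k=0\}$, so that $X^*\simeq(\Delta^*)^k\times\Delta^{n-k}$ and $\pi_1(X^*)\simeq\Z^k$, generated by the loops $\gamma_i$ around $Y_i$. By Proposition on local systems and representations, $\LL$ corresponds to commuting automorphisms $T_i=\rho(\gamma_i)$ of a fixed finite-dimensional space $V$; by hypothesis each $T_i$ is unipotent, so $N_i:=\tfrac{1}{2\pi i}\log T_i=\tfrac{-1}{2\pi i}\sum_{m\ge1}\tfrac{(\mathrm{Id}-T_i)^m}{m}$ is a well-defined nilpotent endomorphism of $V$, and the $N_i$ commute.

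Next, to produce single-valued sections I would use the covering $\H^k\times\Delta^{n-k}\to X^*$, $(u,w)\mapsto(e^{2\pi iu_1},\dots,e^{2\pi iu_k},w)$, on which $\LL$ becomes the constant sheaf $V$. For a flat multivalued section $v\in V$ of $\LL$ on $X^*$ I set $\tilde v(z)=\exp\!\bigl(\sum_{j=1}^k(\log z_j)\,N_j\bigr)\cdot v$; since continuing $v$ around $Y_j$ multiplies it by $T_j^{-1}$ while the prefactor acquires $\exp(2\pi iN_j)=T_j$, the product $\tilde v$ is single-valued and holomorphic on $X^*$. Defining $\LL_X$ to be the $\OO_X$-submodule of $j_*\LL_{X^*}$ generated by $\{\tilde v_\alpha\}$ for a basis $(v_\alpha)$ of $V$, one checks it is locally free of rank $\dim V$, the $\tilde v_\alpha$ forming a frame. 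Differentiating, $\nabla\tilde v=\bigl(\sum_{j=1}^kN_j\,\tfrac{dz_j}{z_j}\bigr)\tilde v$, so in this frame the connection matrix has entries in $\Omega^1_X(\mathrm{Log}\,Y)$: the connection is logarithmic with respect to $\LL_X$, and by the residue formula of the preceding lemma $\Res_i(\nabla)$ is the endomorphism of $\LL_X\otimes\OO_{Y_i}$ induced by $N_i$, hence nilpotent. This gives existence.

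For uniqueness, suppose $\LL_X$ and $\LL_X'$ both extend $\LL_{X^*}$, are locally free, and carry $\nabla$ with logarithmic poles and nilpotent residues. The identity on $X^*$ extends to an $\OO_X$-linear map $\LL_X\to\LL_X'$ (replacing one by the other if necessary); working at a generic point of a component $Y_i$, I would compare the two lattices by elementary divisors and observe that $\Res_i(\nabla)$ on the two sides differs by the diagonal matrix of vanishing orders, which are integers. Since both residues are nilpotent — all eigenvalues $0$ — these integers vanish, so the two lattices agree in codimension one along every $Y_i$; being reflexive they agree on all of $X$. This uniqueness both finishes the local statement and lets the local extensions glue to a global locally free $\LL_X$ on $X$, the canonical extension being characterized intrinsically.

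The routine parts are the verification of local freeness and the differentiation computing the connection matrix. The delicate point is the uniqueness/gluing argument: making the elementary-divisor comparison precise, so that nilpotence of both residues forces the comparison map to be an isomorphism across each $Y_i$, together with the bookkeeping of conventions (the choice $N_i=\tfrac{1}{2\pi i}\log T_i$ versus $-\tfrac{1}{2\pi i}\log T_i$ and the corresponding sign in $\Res_i(\nabla)$).
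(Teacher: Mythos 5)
Your existence construction is exactly the paper's: reduce to a polydisc with $Y$ given by coordinate hyperplanes, pass to the covering $\H^{p}\times D^{n-p}$, twist the multivalued flat sections by $\exp\bigl(\sum_j(\log z_j)N_j\bigr)$ to make them single-valued, and let $\LL_X$ be the $\OO_X$-submodule of $j_*\LL_{X^*}$ they generate; the computation $\nabla\widetilde v=\sum_j \widetilde{N_j v}\otimes dz_j/z_j$ giving logarithmic poles and nilpotent residues is the same formula the paper records (up to the sign convention for $N_j$, which you rightly flag). Where you go beyond the paper is the second step: the paper only states that the local frames patch and refers to Malgrange for the details, whereas you derive the patching from a uniqueness statement. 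That is a legitimate (indeed the standard) route, but your codimension-one comparison is not quite right as written: after putting the comparison map in elementary-divisor form $U\,\mathrm{diag}(z_i^{a_\alpha})\,V$, the two residues do \emph{not} simply ``differ by the diagonal matrix of vanishing orders'' --- conjugation by the unit factors intervenes, and the off-diagonal entries of $A^{-1}\Omega A$ need not even stay logarithmic. The clean repair is the usual intertwining argument: write the extension of the identity as $\varphi=z_i^{-m}\psi$ near a generic point of $Y_i$ with $\psi$ holomorphic and $\psi|_{Y_i}\neq 0$; flatness forces $\psi|_{Y_i}$ to intertwine $\Res_i(\nabla)+m\,\mathrm{Id}$ with $\Res_i(\nabla')$, and since both residues are nilpotent the spectra $\{m\}$ and $\{0\}$ are disjoint unless $m\le 0$, so $\varphi$ is holomorphic, and symmetrically so is its inverse; reflexivity then gives agreement off codimension two as you say. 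With that step corrected your argument is complete and matches the intended proof, giving in addition the intrinsic characterization that the paper leaves to the reference.
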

\begin{proof}
  The local system ${\mathcal L}$
 is  {\it locally unipotent along $Y$ if at any point $y \in Y$ all $T_j$
  are
 unipotent}, in which case the extension we describe is {\it canonical}.
 The construction has two steps, the first describes a local extension
 of the bundle, the second consists to show that local coordinates patching    \\
of the bundle over $X^*$ extends to a local coordinates patching
of the bundle over $X$. The  property of flat bundles is important
here since it is not known how to extend any analytic bundle on
$X^*$. We rely on a detailed exposition of Malgrange \cite{M}. We
describe explicitly the first step, since it
will be useful in applications. \\
 Let $y$ be a point in $Y$, and let $U(y)$ be a polydisc $D^{n} $ with
center $y$ and $\LL \to U(y)^*:= (D^*)^p \times D^{n-p} $ the
restriction of the local system.
 The universal covering $\widetilde U(y)^*$ of $(D^*)^p\times D^{n-p} $
 is defined by
 \[
 \H^{n-p}\times D^p = \{t = (t_1, \cdots, t_n) \in \C^n : \forall i \leq p, \, \hbox { Im}\, t_i > 0
 \, \hbox { and }\,
 \forall i > p, |t_i| < \varepsilon\}
 \]
 where the covering map is
\[ \pi: \H^{n-p}\times D^p \to (D^*)^{n-p}\times D^p:
 t \to (e^{2i \pi t_1},\cdots, e^{2i \pi t_p}, t_{p+1},
\cdots,t_n). \]

  We fix a reference point  $t_0
\in U(y)^*$ so that the local system is determined by a vector
space $L$ with the action of the various  monodromy  $T_j$ for $j
\leq p$ corresponding to the generators $\gamma_j$  around $Y_j$
of the fundamental group of $U(y)^*$.

The  inverse image $\widetilde \LL := \pi^*(\LL_{|U(y)})$ on
$\widetilde U(y)^*$
  is trivial with global sections a vector space  $\widetilde L $ isomorphic
  to $L$.
The action of the monodromy  $T_j$ for $j \leq p$  on $\widetilde
L $ is  defined by the formula:

 \[  T_j \, v (t) = v(t_1,
\cdots,t_j + 1, \cdots, t_n), \quad \hbox {for a section } v \in
\widetilde L.
\]
 We define  $N_j = - \frac{1}{2i \pi } Log T_j = \frac{1}{2i \pi } \sum _{ k
> 0}(I-T_j)^k/k$ as a nilpotent endomorphism of $\widetilde L$.
We consider the following embeddedding of the  vector space
$\widetilde L$ of multivalued sections of
   ${\LL} $ into the subspace of  analytic sections of the sheaf
   $ \LL_{\widetilde U(y)^*} $
   on $\widetilde U(y)^*$ by the formula:
\[ \widetilde L \to  \LL_{\widetilde U(y)^*}: v \mapsto \widetilde {v}
= (exp ( 2i \pi \Sigma_{i \leq p}   t_i N_i)). v
\]
 Notice that the  exponential is a linear sum of
  multiples of $Id - T_j$ with analytic coefficients, hence its
  action  defines an analytic section.
\begin{lem} Let $j_y: U(y)^* \to U(y)$, then the analytic section
$\widetilde v \in \LL_{\widetilde U(y)^*}$ descends to an analytic
section of $j_{y,*}\LL_{U(y)^*}$.
\end{lem}

 We show $\widetilde {v} (t+e_j) = \widetilde {v}
 (t)$ for all
$  t \in \widetilde {U}(y)^*$ and all vectors $e_j = (0, \cdots,
 1_j, \cdots, 0)$. We have\\
 $ \widetilde {v} (t+e_j) =
( [exp ( 2i \pi \Sigma_{i \leq p}  t_{i} N_{i}]exp (2i \pi
N_j).v)(t+e_j) $ \\
 where \\
 $(exp (2i \pi N_j).v)(t+e_j) = (T_j^{-1}.v)(t+e_j) = (T_j^{-1} T_j.v )(t)
 = v(t)$\\
 since $(T_j.v )(t) = v(t+e_j)$.

 {\it  The bundle $\LL_X$ is defined by the locally free subsheaf  of
 $j_* \LL_{X^*} $ with fibre $\LL_{X,y}$ generated as an $\OO_{X,y}-$module
 by the  the sections
   $\widetilde {v}$ for $v \in  \widetilde L$. }

   In terms of the local coordinates $z_i, i \in [1,n]$ of $U(y)$,
   the analytic function on $U(y)^*$  defined by a vector $v \in L$
   is given by the formula
\[
  \widetilde v (z)  = (exp ( \Sigma_{i \leq p} ( log z_{i}) N_{i})). v
 \]
  where $2i \pi t_i= log z_i $ is a determination of the logarithm, moreover $ \nabla
\widetilde {v} = \Sigma_{i \leq p} \widetilde { N_{i}.v } \otimes
\frac {dz_i}{z_i}. $
 \end{proof}
\begin{rem} i) The residue along a component $Y_i$ of the logarithmic
connection $\nabla$ is a nilpotent linear endomorphism of the
analytic bundle $ \LL_{Y_i} := \LL_X \otimes \OO_{Y_i}$
\[ \NN_i:= Res_i (\nabla): \LL_{Y_i}  \to \LL_{Y_i}, \quad  \]
ii) Let $Y_{i,j} := Y_i\cap Y_j$, then the restrictions of $
\NN_{Y_i}$ and $\NN_{Y_j}$ commute (\cite {HI} 3.10).\\
 iii) Let $Y_i^* := Y_i - \cup_{k \in I-i}Y_i \cap Y_k$.
 There is no global local system underlying  $\LL_{Y_i^*}$.
 Locally, at any point $y \in Y_i$,  a section
 $ s_i: Y_i \cap U(y)^* \to U(y)^*$ may be defined by the  hyperplane
 parallel to $Y_i \cap U(y)^* $ through the reference point $t_0$.
 Then $\LL_{Y_i}$ is isomorphic to the
  canonical extension of the inverse local system
  $s_i^* \LL$ (\cite {HI}, 3.9.b).

  Equivalently, the formula: $T_i= exp (-2 i\pi Res_i (\nabla))$
  (\cite {HI}, 1.17, 3.11)
holds on  the sheaf $\Psi_{z_i} \LL$ of nearby
  cycles.
\end{rem}

 \subsubsection{Relative monodromy filtration}
In the above formula the endomorphism $T^a - Id$ is nilpotent.
This was one of  the starting point
 to study the degeneration, when  Deligne introduced a monodromy
filtration satisfying some kind of degenerating Lefschetz formula
and representing the Jordan form of a nilpotent endomorphism
 by a canonical filtration.

 Let $V$ be a vector space, $W$ a finite increasing
filtration of $V$ by subspaces and $N$ a nilpotent endomorphism of
$V$ compatible with $W$, then Deligne proves in (\cite{WII}, prop.
1.6.13)
\begin{lem} There exists at most
a unique increasing filtration $M$ of $V$ satisfying
 $ N M_i \subset M_{i-2} $ s.t. for all integers $k$ and $l \geq
0$
 \[
N^{k}:Gr^M_{k+l}VGr^W_{l}V \xrightarrow{\sim} Gr^M_{l-k}Gr^W_l V
\]
\end{lem}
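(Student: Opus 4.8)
The plan is to prove this uniqueness statement by a dévissage on the weight filtration $W$, reducing to the classical case where $W$ has a single jump. So let $M$ and $M'$ be two increasing filtrations of $V$, each compatible with $N$ (i.e. $NM_i\subseteq M_{i-2}$) and each inducing the isomorphisms $N^k\colon Gr^M_{k+l}Gr^W_lV\xrightarrow{\ \sim\ }Gr^M_{l-k}Gr^W_lV$ of the statement; I want to show $M=M'$, and I would induct on the number $\ell(W)$ of indices $l$ with $Gr^W_lV\neq 0$. In the base case $\ell(W)\le 1$, say $W_{w-1}=0$ and $W_w=V$, the hypothesis says precisely that the shifted filtration $(M[w])_j:=M_{j+w}$ is the monodromy weight filtration of the nilpotent endomorphism $N$ of $V$; its uniqueness is classical (\cite{WII}, 1.6), and I would either invoke it or reprove it by induction on the nilpotency index of $N$ --- the outer steps being forced once $N^\nu\neq 0=N^{\nu+1}$, the inner ones reconstructed from the $N$-primitive decomposition. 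Hence $M=M'$ when $\ell(W)\le 1$.

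For the inductive step, assume $\ell(W)\ge 2$ and choose an index $a$ such that $W_a$ is a non-zero proper subspace of $V$. I would pass to the subobject $V'=W_a$, with $N'=N|_{V'}$ and the induced filtration $W'$, and to the quotient $V''=V/W_a$, with induced $N''$ and $W''$; both $W'$ and $W''$ have $\ell<\ell(W)$. The first, routine point is that $M\cap V'$ and the image of $M$ in $V''$ again satisfy the relative monodromy conditions, for $(V',N',W')$ and $(V'',N'',W'')$ respectively: this is immediate because $Gr^{W'}_lV'=Gr^W_lV$ for $l\le a$, $Gr^{W''}_lV''=Gr^W_lV$ for $l>a$, and in these ranges the filtration induced on $Gr^W_lV$ is unchanged. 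Applying the induction hypothesis to both $M$ and $M'$ then yields $M_k\cap W_a=M'_k\cap W_a$ and $M_k+W_a=M'_k+W_a$ for every $k$.

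The hard part --- and the step I expect to be the main obstacle --- is to upgrade these two equalities to $M_k=M'_k$, since two subspaces that agree on both their intersection and their sum with $W_a$ need not coincide. To close the gap I would use that $M$ and $M'$ are genuine monotone filtrations, that they are $N$-compatible, and that on each $Gr^W_lV$ they induce the monodromy weight filtration \emph{centered at $l$}, so that the jumps of this induced filtration lie in the bounded window $[l-\nu_l,\,l+\nu_l]$, with $\nu_l$ the nilpotency index of $N$ on $Gr^W_lV$. Concretely I would rerun the previous paragraph with $W_b$ in place of $W_a$ for every index $b$, obtaining $M_k\cap W_b=M'_k\cap W_b$ and $M_k+W_b=M'_k+W_b$ for all $k$ and $b$; feeding these into the monotonicity $M_k\subseteq M_{k+1}$ and the window constraints, one should be able to pin down each graded piece $M_k\,Gr^W_bV$ and hence each $M_k$, the obstruction to extending from the pieces $Gr^W_bV$ back to $V$ being an $N$-equivariant homomorphism whose source and target graded pieces sit in incompatible weight ranges, so that it must vanish. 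Interchanging $M$ and $M'$ wherever a containment is proved in only one direction, this gives $M=M'$ and completes the induction.
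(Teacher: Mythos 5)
The paper does not actually prove this lemma --- it is quoted from Deligne (\cite{WII}, prop.\ 1.6.13) --- so your argument has to stand on its own, and as written it has a genuine gap at exactly the step you yourself flag as the main obstacle. Everything up to that point is fine: the base case is the classical uniqueness of the monodromy filtration centered at $w$, the d\'evissage along $W_a$ is legitimate (the induced data on $W_a$ and $V/W_a$ do satisfy the hypotheses, by the modular-law check you call routine), and the inductive hypothesis does give $M_k\cap W_a=M'_k\cap W_a$ and $M_k+W_a=M'_k+W_a$ for all $k$. But passing from these two equalities to $M_k=M'_k$ is precisely the content of Deligne's proposition, and the way you propose to close it does not work. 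First, ``pinning down each graded piece $M_k\,Gr^W_bV$'' gains nothing: the filtration that $M$ induces on each $Gr^W_bV$ is already forced by the hypothesis together with the length-one (classical) uniqueness, with no d\'evissage at all; the whole difficulty is that a filtration of $V$ is \emph{not} determined by the filtrations it induces on the $Gr^W_bV$, nor by its intersections and sums with all the $W_b$ (two distinct lines in a plane, both transverse to a fixed line $W_b$, have the same intersection and the same sum with it).

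Second, the proposed vanishing of the obstruction ``because source and target sit in incompatible weight ranges'' is not true in general: the $M$-weights occurring inside $W_{a}$ and in $V/W_{a}$ overlap as soon as $N$ has Jordan blocks of comparable sizes on different $W$-graded pieces (e.g.\ $W$ with jumps at $0$ and $2$ and blocks of size $2$ on both graded quotients puts weight $1$ on both sides), so no vanishing can be extracted from weight bookkeeping alone; if mere weight ranges sufficed, one would not need the centering conditions and $N$-compatibility at all, and then uniqueness is simply false. A correct argument must use $N$ quantitatively across the extension --- for instance via the primitive decomposition of the induced filtration on the top $W$-graded piece and the fact that, below (resp.\ above) the local center, the monodromy filtration is built from images (resp.\ preimages) of powers of $N$ --- and this interplay is exactly what Deligne's proof supplies. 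So your proposal reduces the lemma correctly to its crux, but then asserts the crux rather than proving it; to make the write-up complete you should either carry out that comparison argument in detail or, as the paper does, invoke \cite{WII}, prop.\ 1.6.13.
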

\begin{rem}  The lemma is true  for an object $V$ of
 an abelian category  $\A$, a finite increasing filtation $W$ of $V$
 by subobjects of
$V$ in $\A$ and a nilpotent endomorphism  $N$. Such generalization
is particularly interesting when applied to the abelian category
of perverse sheaves on a complex variety.
\end{rem}
\begin{defn}  When it exists, such  filtration is called the
relative monodromy filtration of  $N$ with respect to $W$ and
denoted by $M(N,W)$.
 \end{defn}
\begin{lem} If the filtration $W$ is trivial of weight $a$,
the relative monodromy filtration $M$  exists on the vector space
$V$ and satisfy
 \[ N M_i \subset M_{i-2}; \quad
N^{k}:Gr^M_{a+ k}V \xrightarrow{\sim} Gr^M_{a -k} V
\]
\end{lem}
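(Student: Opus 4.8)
The statement to prove is the existence and Lefschetz property of the relative monodromy filtration $M$ when $W$ is trivial of weight $a$ (i.e.\ $\operatorname{Gr}^W_l V = 0$ for $l \neq a$). The plan is to reduce this to the classical Jacobson–Morozov construction for a single nilpotent endomorphism. First I would recall that for a nilpotent $N$ on a finite-dimensional vector space $V$ there is a classical increasing filtration, the \emph{monodromy weight filtration} $M = M(N)$ centered at $0$, characterized uniquely by the two conditions $N M_i \subset M_{i-2}$ and $N^k \colon \operatorname{Gr}^M_k V \xrightarrow{\sim} \operatorname{Gr}^M_{-k} V$ for all $k \geq 0$. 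This is exactly the previous lemma (\cite{WII}, 1.6.13) applied to the trivial filtration $W$ of weight $0$, or can be built by hand: decompose $V$ into Jordan blocks for $N$, and on a single Jordan block of size $m+1$ declare the filtration to jump by one at each of the integers $-m, -m+2, \dots, m-2, m$, then take the direct sum over all blocks. Existence and uniqueness are then immediate, and the isomorphism $N^k\colon \operatorname{Gr}^M_k \xrightarrow{\sim} \operatorname{Gr}^M_{-k}$ is checked block by block.

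Next I would handle the general weight $a$ simply by a shift of indexing. If $W$ is trivial of weight $a$, set $M_i := M(N)_{i-a}$, where $M(N)$ is the $0$-centered monodromy filtration of $N$ on $V$. The condition $N M_i \subset M_{i-2}$ is preserved under the shift. For the graded pieces, $\operatorname{Gr}^M_{a+k} V = \operatorname{Gr}^{M(N)}_{k} V$, so the required isomorphism $N^k\colon \operatorname{Gr}^M_{a+k} V \xrightarrow{\sim} \operatorname{Gr}^M_{a-k} V$ is precisely the classical statement $N^k\colon \operatorname{Gr}^{M(N)}_{k} V \xrightarrow{\sim} \operatorname{Gr}^{M(N)}_{-k} V$. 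Finally I would observe that this shifted $M$ satisfies the defining property of the previous lemma with respect to the trivial $W$ of weight $a$: since $\operatorname{Gr}^W_l V$ vanishes unless $l = a$, the only nontrivial instance of $N^k\colon \operatorname{Gr}^M_{k+l}\operatorname{Gr}^W_l V \xrightarrow{\sim} \operatorname{Gr}^M_{l-k}\operatorname{Gr}^W_l V$ is at $l = a$, which is exactly what we just verified. By the uniqueness half of the previous lemma, $M$ is \emph{the} relative monodromy filtration $M(N,W)$, establishing existence in this case.

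I do not expect a serious obstacle here; the content is entirely the classical $\mathfrak{sl}_2$-representation-theory fact that a nilpotent operator admits a canonical weight filtration with the Lefschetz-type hard-Lefschetz isomorphisms on its graded pieces. The only point requiring a little care is bookkeeping with the centering index $a$ and checking that the shifted filtration genuinely satisfies the \emph{relative} characterization (not merely the absolute one), i.e.\ that triviality of $W$ makes the two characterizations coincide after the shift. Everything else is a direct citation of the preceding lemma together with the standard Jordan-block computation, which I would not reproduce in detail.
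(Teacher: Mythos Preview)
Your proposal is correct and the overall strategy matches the paper's: both reduce the lemma to the classical existence of the monodromy weight filtration of a single nilpotent $N$ (centered at $0$), then recover the general centering $a$ by a shift of indices. The one difference is in how the $0$-centered filtration is actually built. You appeal to a Jordan-block decomposition and assign weights $-m,-m+2,\ldots,m$ on each block of size $m+1$; the paper instead constructs $M$ by induction on the nilpotency index, setting $M_l=V$, $M_{l-1}=\ker N^l$, $M_{-l}=N^l(V)$, $M_{-l-1}=0$ for $N^{l+1}=0$, checking $N^l\colon Gr^M_l\simeq Gr^M_{-l}$, and then passing to $V/M_{-l}$ where the induced operator has smaller nilpotency index. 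The paper's construction has the mild advantage of being basis-free (no choice of Jordan basis), while yours makes the Lefschetz isomorphisms $N^k\colon Gr^M_k\xrightarrow{\sim}Gr^M_{-k}$ transparent on each block; either way the outcome and the level of difficulty are the same. Your explicit handling of the shift by $a$ and the verification that the shifted filtration satisfies the \emph{relative} characterization with respect to the trivial $W$ of weight $a$ is a point the paper leaves implicit, so that part of your write-up is a useful addition.
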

The above definition has a striking similarity with Hard Lefschetz
theorem.
\begin{ex} Let $V = \oplus_i H^i(X, \Q)$ denotes the direct sum of
cohomology spaces of  a smooth  projective complex variety $X$ of
dimension  $ n$ and $N = \cup c_1 (H)$ the nilpotent endomorphism
defined by the cup product with the cohomology class of an
hyperplane section $H$ of $X$. Consider the increasing filtration
of $V$
$$ M_i(V) = \oplus_{j \geq n - i} H^j(X, \Q)$$
By hard Lefschetz theorem, the repeated action  $ N^{i}:Gr^M_{i}V
\simeq H^{n-i}(X, \Q) \to Gr^M_{-i} V \simeq H^{n+i}(X, \Q)$
 is an isomorphism. Hence $M_i$ coincides
 with the the monodromy filtration
 defined by $N$ and centered at $0$.
Following this example, the  property of the relative monodromy
filtration appears as a  degenerate  form of Lefschetz result on
cohomology.
\end{ex}
{\it The monodromy filtration centered at $0$}. Let $N$ be
nilpotent on $V$ s.t. $N^{l+1} = 0$. This is the case  $W$ is
trivial s.t. $W_0 = V$ and $W_{-1} = 0$, then $M$ is constructed as follows. Let $M_l := V$, $M_{l-1}:=\ke N^l$  and $M_{-l} := N^l(V)$, $M_{-l-1} := 0$ then $N^l: Gr^lV \simeq Gr^{-l}V $ and  the induced morphism $N'$ by $N$ on the quotient space $ V/ M_{-l}$  satisfy $(N')^l = 0$ so that the definition of $M$ is by induction on the index of nilpotency. The
primitive part of $ Gr^M_{i}V$ is defined for $i \geq 0$ as
\[P_i := \ke N^{i+1}: Gr^M_i \to Gr^M_{-i-2}, \quad Gr^M_i V \simeq \oplus_{k \geq 0} N^k
P_{i+2k} \] The decomposition at right
 follows from this definition, and the proof is similar to the existence of
 a primitive decomposition following the hard Lefschetz theorem on
 compact k\"{a}hler manifolds. In this case the  filtration $M$
  gives a description  of the Jordan form
of the nilpotent endomorphism $N$, independent of the choice of a
Jordan basis as follows. 
For all $i\geq 0$, we have the following properties:\\
- $N^i: M_i \to M_{-i}$ is surjective, \\
- $ \ke N^{i+1} \subset M_i $ and\\
- $ \ke N^{i+1}$ projects surjectively onto the primitive
subspace $P_i \subset Gr^M_i $.\\
 Let  $(e_i^j)_{j \in I_i} $ denotes a subset of elements in $ \ke N^{i+1}$
which lift a basis of $P_i \subset Gr^M_i V$, then the various
elements $ N^k (e_i^j)_{j \in I_i, 0\geq 0 k \leq i}$ define a
Jordan basis of $V$ for $N$. In particular, each element
$(e_i^j)_j$ for fixed $i$ gives rise to a Jordan block of length
$i+1$ in the matrix of $N$.

\subsubsection{Limit Hodge filtration} Let $(\LL, F)$ be an abstract
polarized VHS  on a punctured disc $D^*$, where the local system
is defined by a unipotent endomorphism $T$ on a $\Z-$module $L $;
then W. Schmid \cite {G-S} showed that such VHS is asymptotic to a
`` Nilpotent orbit'' defined by a
 filtration $F$ called limit or asymptotic such that
 for $N = Log T$  the nilpotent logarithm of $T$, and  $W(N)$ the monodromy
 filtration, the data $(L, W(N), F)$ form a MHS.\\
 This positive answer
  to a question of Deligne was one of the
 starting point of the linear aspect of degeneration  theory
 developed here, but the main development occurred with the discovery
 of Intersection cohomology.

 There is no such limit filtration $F$ in general  for a VMHS.

\section{Degeneration of VMHS} Families of
algebraic varieties parameterized by a non singular algebraic
curve, acquire in general singularities changing their topology at
a finite number of points of the curve. If we center a disc $D$ at
one of these points, we are in the above case of a family over
$D^*$ which extends over the origin in an algebraic family over
$D$. The fiber at the origin may be changed by modification along
a subvariety,
which do not change the family over $D^*$. \\
The study of the degeneration follows the same pattern as the
definition of the VMHS. The main results have been established
first for the degeneration of abstract VHS \cite {Sc}, then a
geometric construction   has been given in  the case of
degeneration of smooth families \cite{S}. These results will be
assumed since we concentrate our attention on the singular case
here.

The degeneration of  families of singular varieties is reduced to
the case of smooth families by the technique of simplicial
 coverings already  mentioned.
Such covering by simplicial smooth algebraic varieties with NCD
 above the origin and satisfying the descent cohomological
property, induce  a covering of the fibers over $D^*$ which is fit
to study the degeneration of the  cohomology of the fibers.

 In the case of open families, we use the fact that we can complete
algebraic varieties with a NCD at infinity $Z$, which moreover can
be supposed a relative NCD over the punctured disc.

\subsection{Diagonal degeneration of geometric  VMHS }
 The term diagonal refers to a type of construction of the
weight as diagonal with respect to a simplicial covering. The next
results
 describes the cohomological degeneration of the MHS
  of an algebraic family over a
disc. Here $\Z(b)$ will denote the MHS  on the group $(2i\pi)^{b}
\Z \subset \C $  of type $(-b,-b)$ and its tensor
 product with   a MHS on a group $V$ is denoted $V(b):=
V \otimes \Z(b)$ and called twisted MHS on $V$.\\
{\it Hypothesis}. Let $f: X \to D$ be a proper analytic morphism
defined on an analytic manifold $X$ to a complex disc $D$, $Z$  a
closed analytic subspace of $X$  and suppose the fibers of $X$ and
$Z$ over $D$ algebraic,
 then for $D$ small enough: \\
 the weight filtration  on the family $H^n( X_t- Z_t, \Q)$
define a filtration by sub-local systems $\WW$ of $R^nf_*
\Q_{|D^*}$ on $D^*$. The graded objects $Gr^{\WW}_i R^nf_*
\Q_{|D^*}$ underly a variation of Hodge
 structures (VHS) on $D^*$ defining a limit MHS at the origin
 \cite{Sc}, \cite{S}.
 The construction below gives back this limit MHS in the VHS case
 for $Z = \emptyset$ and is  deduced from this case
 by the diagonalization process for a simplicial family of varieties.\\
Let ${\widetilde X^*}:= X \times_D \widetilde D^* $ (resp.
${\widetilde Z^*} := Z \times_D \widetilde D^*$) be the inverse
image on the universal covering $ \widetilde D^*$ of $D^*$, then
the inverse image $\widetilde \WW$ of $\WW$ on $ \widetilde D^*$
is trivial and defines a filtration $W^f$ by subspaces of
$H^n({\tilde X^*}-{\widetilde Z^*},\Q)$, called here the finite
weight filtration.
 \begin{thm} There exists a MHS on the
cohomology $H^n(\widetilde X^*-\widetilde Z^*, \Z)$ with weight
filtration $W$ defined over $\Q$ and Hodge filtration defined over
$\C$ satisfying\\
i) the finite filtration $W^f$ is a filtration by sub-MHS of
$H^n(\widetilde
X^*-\widetilde Z^*,\Q)$.\\
 ii) the induced $MHS$ on $ Gr^{W^f}_i H^n (\widetilde
X^*-\widetilde Z^*, \Q)$ coincides with the limit MHS defined by
the VHS on the family $
Gr^{W}_i H^n ( X_t- Z_t, \Q)$.\\
 iii) Suppose moreover that $f$ is quasi-projective, then for all
 integers $a$ and $b$, the logarithm of the unipotent part $T^u$
  of the monodromy $N =
 \frac{1}{2i \pi} Log T^u$ induces an isomorphism for $b \geq 0$
 \[ Gr N^b: Gr^W_{a+b} Gr^{W^f}_a H^n(\widetilde X^*-\widetilde Z^*,
\Q)  \simeq  Gr^W_{a-b} Gr^{W^f}_a H^n(\widetilde X^*-\widetilde
Z^*, \Q) ( - b) \]
 \end{thm}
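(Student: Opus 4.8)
The plan is to construct the desired mixed Hodge structure on $H^n(\widetilde X^* - \widetilde Z^*,\Z)$ by the simplicial descent technique outlined in the introduction, reducing the general singular case to the known limit MHS for a variation of Hodge structure coming from a smooth proper family, and then extracting the finite weight filtration $W^f$ from the diagonal structure of the simplicial weight. Concretely, I would first replace $(X,Z) \to D$ by a smooth simplicial hyperresolution $(X_\bullet, Z_\bullet) \to D$ in which each $X_i$ is smooth and proper over $D$, each $Z_i \subset X_i$ is a relative normal crossing divisor over $D^*$ (using Hironaka and the fact, emphasized in \S2 of the excerpt, that for $D$ small enough only finitely many indices matter and the NCD becomes relative over $D^*$), and such that the cohomological descent property gives $H^n(\widetilde X^* - \widetilde Z^*,\Z) = \H^n$ of the total complex associated to $i \mapsto H^\bullet(\widetilde X_i^* - \widetilde Z_i^*, \Z)$. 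For each $i$, the family $X_i^* - Z_i^* \to D^*$ is a smooth family with a relative NCD compactification, so by Schmid's theorem (\cite{Sc}, \cite{S}, assumed here) each cohomology $H^\bullet(\widetilde X_i^* - \widetilde Z_i^*,\cdot)$ carries a limit MHS, with weight filtration the relative monodromy filtration attached to $N_i$, compatibly through the simplicial face maps.

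Next I would assemble these term-wise limit mixed Hodge structures into a \emph{filtered mixed Hodge complex} in Deligne's sense (this is exactly the content advertised in \S2.2--2.3 of the excerpt, on FMHC and the diagonal direct image of a simplicial cohomological FMHC): the simplicial index furnishes one filtration $L$ (the ``finite'' or diagonal filtration, built from the stupid filtration on the simplicial direction shifted appropriately), and the limit-MHS data furnish the rational weight filtration $W$ and the complex Hodge filtration $F$ term by term. Applying the machinery that produces a MHS on the cohomology of a cohomological MHC — with the diagonal combination of the simplicial filtration and the term-wise weight filtrations giving the true weight filtration $W$ on $\H^n$ — yields the asserted MHS on $H^n(\widetilde X^* - \widetilde Z^*,\Z)$, and the spectral sequence of the filtration $L$ degenerates at $E_2$ (as is standard for such diagonal filtrations on a MHC), which is precisely statement~(i): $W^f := \operatorname{Dec}(L)$ (or $L$ itself up to renumbering) is a filtration by sub-MHS, with $\Gr^{W^f}_i H^n = E_2^{-i,\,\cdot}$-type pieces. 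Statement~(ii) then follows by inspecting these $E_2$ pieces: they are computed from the term-wise limit MHS, i.e. from $\Gr^W_i H^n(X_t - Z_t)$ of the original family, so the induced MHS on $\Gr^{W^f}_i$ is by construction the limit MHS of the VHS $\Gr^W_i H^n(X_t - Z_t,\Q)$.

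For part~(iii), assume $f$ quasi-projective. The unipotent monodromy $T^u$ acts compatibly with the simplicial structure, hence $N = \tfrac{1}{2i\pi}\log T^u$ acts on the whole FMHC preserving both $L$ and the term-wise weight filtrations, and on each graded piece $\Gr^{W^f}_a$ it induces the nilpotent operator coming from the limit MHS of the VHS $\Gr^W_a H^n(X_t - Z_t,\Q)$. Since that VHS is polarizable (quasi-projectivity, via \cite{BBD}/\cite{Sa}-type weight arguments, or directly Schmid in the proper smooth graded pieces), the relative monodromy filtration of $N$ with respect to that weight filtration exists and satisfies the hard-Lefschetz-type isomorphism of Lemma~1.22 / the polarized limit statement — which, read off on $\Gr^{W^f}_a$, is exactly the isomorphism $\Gr N^b: \Gr^W_{a+b}\Gr^{W^f}_a \xrightarrow{\sim} \Gr^W_{a-b}\Gr^{W^f}_a(-b)$ claimed. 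The Tate twist $(-b)$ appears as the usual weight shift in the polarization pairing.

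I expect the main obstacle to be the verification that the simplicial/diagonal construction genuinely produces a \emph{filtered mixed Hodge complex} to which Deligne's formalism applies — that is, checking that the several filtrations ($L$ from the simplicial direction, $W$ from the term-wise relative monodromy filtrations, $F$ from the limit Hodge filtrations) are mutually compatible and that the relevant spectral sequences degenerate where they must, so that $W^f$ and $W$ are \emph{a priori} independent filtrations inducing MHS on successive graded pieces. The subtlety is that on each simplicial term the relevant weight filtration is not the usual geometric weight but the \emph{relative monodromy filtration} of $N_i$, and one must know (from the proper-smooth degeneration theory assumed here, and its compatibility with the weight spectral sequence of a relative NCD) that these fit together into an actual MHC. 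Once that is in place, parts (i)--(iii) are a matter of transporting known properties of the limit MHS through the degenerate $E_2$ spectral sequence of $L$, with part (iii) additionally invoking polarizability of the graded VHS in the quasi-projective case.
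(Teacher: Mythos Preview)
Your overall strategy---simplicial hyperresolution, term-wise limit MHS, diagonal assembly into a filtered mixed Hodge complex, then reading off (i)--(iii) from the FMHC formalism---is exactly the paper's plan, and your final paragraph correctly locates the hard verification. But there is one genuine misidentification that breaks (ii) and (iii) as you state them. You set $W^f := \operatorname{Dec}(L)$, the simplicial filtration alone. That is only correct when each simplicial term is smooth \emph{proper} (the NCD situation of the paper's Remark in \S2.6). In your setup each $X_i - Z_i$ is open smooth, so it already carries a nontrivial geometric weight $W^{Z_i}$ from the logarithmic complex along $Z_i$. The filtration $W^f$ of the theorem is the weight of the fiberwise MHS on $H^n(X_t - Z_t,\Q)$; on the total complex it is not $L$ but the \emph{diagonal} $\delta(W^{Z_\bullet}, L)$ of the term-wise weight with the simplicial index (this is exactly what \S2.3 of the paper sets up: both $W$ and $W^f$ are diagonalized with $L$). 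With your identification, $\Gr^{W^f}_{-p}$ would be the limit MHS of a single open simplicial term, which underlies a VMHS, not a VHS; neither the comparison in (ii) with the VHS $\Gr^{W}_i H^n(X_t-Z_t)$ nor the polarization argument in (iii) then applies. The fix is to carry \emph{three} filtrations $(W^f, W, F)$ on each simplicial term---$W^f = W^{Z_i}$ the usual weight, $W$ the limit (relative monodromy) weight---and diagonalize both increasing ones with $L$.

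Once $W^f$ is corrected, your route to (iii) via Schmid's theorem on the polarized VHS $\Gr^{W^f}_a$ is a legitimate shortcut. The paper does it more explicitly: it builds a concrete trifiltered complex $\Psi^u_Y(\mathrm{Log}\,Z)$ on each smooth term, checks the isomorphism $\nu^b: \Gr^W_{a+b}\Gr^{W^f}_a \simeq \Gr^W_{a-b}\Gr^{W^f}_a$ already at the sheaf level, and then passes to cohomology by the polarized Hodge--Lefschetz theorem of Saito and Guill\'en--Navarro applied to the $E_1$-terms of the $W$-spectral sequence (cohomologies of the smooth projective intersections $Z^j \cap Y^s$). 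Either path works, but both require the three-filtration bookkeeping you omitted.
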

\begin{rem}[Category of limit MHS]
 The above MHS will be called the limit MHS of the  VMHS
 defined by the fibers of $f$. In general we  define a structure called
 limit MHS, by the following data: $(V, W^f, W, F, N)$ where $(V, W, F)$
 form a MHS, $W^f$ is an increasing filtration by sub-MHS and $N$
 is a nilpotent endomorphism of MHS: $(V, W, F) \to (V, W, F)(-1)$
 compatible with $W^f$ such that $W$ is the relative weight
 filtration of $(V, W^f, N)$.

 Limit MHS form an additive category with kernel and cokernel
 but which is not abelian since $W^f$ is not necessarily strict.
 \end{rem}
 The assertion iii) characterizes the weight $W$ as the monodromy
 filtration of $N$ relative to $W^f$, which proves its existence in the case
 of geometric variations.
 The proof will occupy this  section and is based on the reduction to the smooth case,
  via a simplicial hypercovering resolution of $X$, followed by
 a diagonalization process of the weight of each term of the
 hypercovering  [5],  as in the case of the weight in the MHS
 of a singular variety [4],[5]. \\
{\it Plan of the proof}. Precisely, let  $Y = f^{-1}(0)$ and
consider a
 smooth hypercovering $\pi: X_* \to X $ over $X$ where each term
 $X_i$ is smooth and proper over $X$, such that $Z_* := \pi^{-1}(Z)$,
 $Y_* := \pi^{-1}(Y)$ and $Z_* \cup Y_* $ are NCD in $X_*$
 with no common irreducible component in $Y_*$ and $Z_*$.
 Let $V := X-Z$,  $V_i:= X_i-Z_i$, then $\pi|V_*: V_* \to V $
 is an hypercovering. Notice that only a finite number of
 terms  $X_i$ (resp. $V_i$) of the hypercovering  are needed to compute the
 cohomology of $X$ (resp. $V$). Then
 by Thom-Whitney theorems, for $D$ small enough, $X_i$ and $Z_i$
 are topological fibre bundle over $D^*$ as well  $ Z_i^*$
 is a relative NCD in  $ X_i^*$ for a large number of indices $i$,
 and  for each $t \in D^*$, $(X_*)_t$ (resp. $(V_*)_t$)
 is an hypercovering of $X_t$ (resp. $V_t$).
  Then for each index $i$, $X_i$ and the various intersections of
  the irreducible
  components of $Z_i$, are proper and smooth families over $D^*$,
  so that  we can apply  in this situation  the results of
  J. Steenbrink
  on the degeneration process for a
 geometric family of HS \cite {S}.
The method consists first to compute  the
   hypercohomology of  the sheaf of  the nearby
cycles ${\Psi}_f (\C_V)$ on  $Y-Y \cap Z$ as the hypercohomology
of a simplicial nearby cycles ${\Psi}_{f \circ \pi}( \C_{V_*})$
 \[  \H^n(Y-Y \cap Z,{\Psi}_f (\C_V)) \simeq H^n({\widetilde V^*},
\C)  \simeq H^n({\widetilde V_*^*}, \C)\simeq \H^n(Y_*-Y_* \cap
Z_* ,{\Psi}_f( \C_{V_*})) \] where we denote by tilde the inverse
image of a space over $D^*$ to the universal cover  $\widetilde
D^*$   and where  the third term is the cohomology of the
simplicial space ${\widetilde V_*^*}$.

We restrict the construction to   the unipotent cohomology,
 denoted by an index $u$ (that is the
subspace where the action of the monodromy is unipotent) although
the theorem is true without this condition. The cohomology is
computed  as the
   hypercohomology of the simplicial variety $Y_*$
with value in some sheaf denoted  $ \Psi_{f \circ \pi}(Log Z_*)$
that we  {\it define here}. Such complex is a logarithmic version
of the nearby cycle complex of sheaves   on $Y_*$ satisfying a
simplicial cohomological mixed Hodge complex data, such that
 \[ H^n({\widetilde V^*},
\C)^u  \simeq \H^n(Y_*, \Psi_{f \circ \pi}(Log Z_*))\]
 Then
  the assertion ii) refers to the
case of  geometric VHS, which is first generalized on each $X_i$
to the non proper case and then applied to each term $X_i$. \\
With this in mind, the method of proof
 use  general results on {\it simplicial trifiltered complexes } that we
 develop now; still we need later to describe
 explicitly the complexes involved.
 \subsection{Filtered mixed Hodge complex(FMHC)}
 The proof   involves abstract results
concerning FMHC, with three filtrations $W^f, W$ and $ F $, where
$W^f$ induces on  cohomology a filtration by sub-MHS defined by
$W$ and $F$. We define first the category of complexes with three
filtrations.
 \subsubsection{}Let $\A$ be an abelian category, $F_3 \A$ the
 category of three filtered objects of $\A$ with finite
 filtrations, and $K^+F_3\A$ the category of three filtered
 complexes of objects of $\A$ bounded at left, with morphisms
 defined up to homotopy respecting the filtrations.
 \begin{defn} A morphism  $f: (K,F_1,F_2,F_3) \to (K',F'_1,F'_2,F'_3)$
  in $K^+F_3 \A$ where $F_1, F'_1$ are increasing,
   is called a quasi-isomorphism if the following
  morphisms $f^i_j$ are bi-filtered quasi - isomorphisms for all $i \geq j$
  \[ f^i_j: (F^i_1K/F^j_1K,F_2,F_3) \to (F'^i_1K/F'^j_1K,F'_2,F'_3)\]
\end{defn}
The category $D^+F_3 \A$ is obtained from $K^+F_3 \A$ by inverting
the above quasi-isomorphisms;  the objects in $D^+F_3 \A$ are
trifiltered complexes but  the group of morphisms $Hom(K,K')$ of
complexes  change, since we add to a quasi-isomorphism $f$ in
$Hom(K,K')$
 an inverse element in $Hom (K', K)$ denoted $1/f$ s.t. $f \circ (1/f) =
Id$ (resp. $ (1/f) \circ f = Id$ ), where equal to the identity
 means homotopic to the identity of $K'$ (resp. $K$). In fact
this changes completely the category since different objects, not
initially isomorphic, may become isomorphic in the new category.
\begin{defn}[Filtered  mixed Hodge complex (FMHC)] A FMHC is given
by\\
 i) A complex $K_{\Z} \in Ob D^+(\Z)$ s.t. $H^k(K_{\Z})$ is a
 $\Z-$module of finite type for all $k$.\\
  ii)   A bi-filtered complex $(K_{\Q}, W^f, W) \in Ob D^+F_2(\Q)$
  and an isomorphism  $K_{\Q} \simeq K_{\Z}\otimes \Q$  in $D^+(\Q)$
  where $W$ (resp. $W^f$) is an increasing filtration by weight
  (resp. finite weight).\\
   iii)   A tri-filtered complex $(K_{\C}, W^f, W, F) \in Ob D^+F_3(\C)$
  and an isomorphism  $(K_{\C}, W^f, W) \simeq (K_{\Q}, W^f, W)\otimes \C$
   in $D^+F_2(\C)$ where $F$  is a decreasing filtration called Hodge filtration.\\
The following axiom is satisfied: for all $j \leq i$, the
following system is a MHC
 \begin{equation*}
\begin{split}
&(W_i^f K_{\Q}/W_j^f K_{\Q} , W),\, (W_i^f K_{\C}/W_j^f K_{\C} ,
W, F)\\
& {\alpha_i^j}:(W_i^f K_{\Q}/W_j^f K_{\Q} , W)\otimes \C \simeq
 (W_i^f K_{\C}/W_j^f K_{\C} , W)
\end{split}
\end{equation*}
\end{defn}
\subsubsection{} We define as well a sheaf version as
a cohomological FMHC on a topological space $X$,
 \begin{equation*}
\begin{split}
& K_{\Z} \in Ob D^+(X, \Z),\, (K_{\Q}, W^f, W) \in Ob D^+F_2(X,
\Q)\\
& (K_{\C}, W^f, W, F) \in Ob D^+F_3(X,\C); \, {\alpha}:( K_{\Q},
W^f, W)\otimes \C \simeq
 (K_{\C}, W^f,   W)
\end{split}
\end{equation*}
  s.t. $W^f_i/W^f_j$ is a cohomological
MHC on $X$.\\
The global section functor $\Gamma$ on $X$ can be filtered derived
using acyclic tri-filtered canonical resolutions such as Godement
flabby resolutions.
\begin{lem} The derived global section functor $R\Gamma$ of a cohomological
FMHC on $X$ is a FMHC.
\end{lem}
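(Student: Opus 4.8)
The plan is to reduce the statement to the corresponding facts about mixed Hodge complexes (MHC), applied degree-by-degree along the finite weight filtration $W^f$. Recall that a FMHC is, roughly speaking, a family of MHC's indexed by pairs $j\le i$, namely the $(W_i^f/W_j^f)$, glued along the obvious maps. The key input is the classical theorem of Deligne (\cite{WII}) that the derived global sections $R\Gamma$ of a cohomological MHC on $X$ is again a MHC, so that in particular $H^n(R\Gamma(W_i^f K_\bullet/W_j^f K_\bullet))$ carries a functorial MHS and the spectral sequences attached to $W$ and $F$ degenerate appropriately.

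First I would choose, simultaneously for all three levels $K_\Z, (K_\Q,W^f,W), (K_\C,W^f,W,F)$, a single system of Godement (or more generally canonical flabby) resolutions compatible with all the filtrations; this is possible because the Godement construction is functorial and exact, hence preserves filtered quasi-isomorphisms and the comparison isomorphisms $\alpha, \alpha_i^j$. Applying $\Gamma(X,-)$ to these resolutions gives complexes $R\Gamma K_\Z$, $(R\Gamma K_\Q, W^f, W)$, $(R\Gamma K_\C, W^f, W, F)$ together with the induced comparison isomorphisms in the appropriate derived filtered categories. The finiteness of $H^k(R\Gamma K_\Z)$ follows since $X$ is assumed (in the cases of interest) to have finite cohomological dimension with finitely generated cohomology, or one simply inherits it from the hypothesis on the FMHC via the standard argument.

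Next I would verify the one axiom that must be checked: that for every $j\le i$ the system
\[
(W_i^f R\Gamma K_\Q / W_j^f R\Gamma K_\Q,\, W),\quad
(W_i^f R\Gamma K_\C / W_j^f R\Gamma K_\C,\, W, F)
\]
together with the induced $\alpha_i^j$ is a MHC. Here one must be slightly careful: $W^f R\Gamma$ means the filtration on $R\Gamma$ induced from the filtered resolution, and the point is that since $\Gamma$ applied to our chosen resolutions is exact on the $W^f$-graded pieces (flabbiness gives acyclicity), one has a natural filtered quasi-isomorphism
\[
W_i^f R\Gamma K / W_j^f R\Gamma K \;\simeq\; R\Gamma\bigl(W_i^f K / W_j^f K\bigr)
\]
in $D^+F_2$. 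By the FMHC axiom the right-hand side is $R\Gamma$ of a cohomological MHC, hence a MHC by Deligne's theorem; transporting the MHC structure across this quasi-isomorphism gives the axiom for $R\Gamma K$.

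The main obstacle is precisely this interchange of $R\Gamma$ with the quotients $W_i^f/W_j^f$: one must know that the chosen resolution is simultaneously a resolution of $K$ and, on passing to $W^f$-subquotients, still a (flabby, hence $\Gamma$-acyclic) resolution of the subquotient complex — i.e. that the filtration $W^f$ on the resolution is the \emph{good} (canonically induced) one and that flabbiness is inherited by subquotients. This is a standard but delicate point about canonical filtered resolutions; once it is in place, everything else is bookkeeping. Finally, one notes that the construction is functorial in the FMHC and independent, up to the appropriate isomorphism in $D^+F_3$, of the choice of resolution, so that $R\Gamma$ is well defined as a functor from cohomological FMHC's on $X$ to FMHC's.
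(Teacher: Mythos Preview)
Your proposal is correct and follows the same route as the paper: the paper gives no proof beyond the single sentence preceding the lemma, namely that $\Gamma$ can be tri-filtered derived using Godement flabby resolutions, and you have simply spelled out the details this sentence encodes (exactness of Godement on subquotients, the identification $W_i^f R\Gamma K/W_j^f R\Gamma K \simeq R\Gamma(W_i^f K/W_j^f K)$, and the reduction to Deligne's theorem for cohomological MHC). There is nothing to add.
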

\begin{prop} Let $(K, W^f, W, F)$ denotes a FMHC, then\\
i) The filtrations $W[n]$ and $F$ define a MHS on the cohomology
$H^n(K)$.\\
ii) The terms of the spectral sequence defined by the filtration
$W^f$ on $K$, with induced weight $W$ and Hodge $F$ filtrations
 \[  ({}_{W^f}E^{p,q}_r, W, F)= Gr^{W^f}_{-p}
   H^{p+q}(W^f_{-p+r-1}K/W^f_{-p-r}K), W, F)
   \]
   form a MHS and the differentials $d_r$ are
   morphisms of MHS  for $r \geq 1$.\\
iii) The filtration $W^f$ is a filtration by sub-MHS and we have
\[ (Gr^{W^f}_{-p} H^{p+q}(K), W[p+q], F) \simeq ({}_{W^f}E^{p,q}_{\infty},
W, F). \]
\end{prop}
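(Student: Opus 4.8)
The plan is to reduce everything to the classical theory of mixed Hodge complexes (MHC) as developed by Deligne in Hodge~II and Hodge~III. The key observation is that a FMHC $(K, W^f, W, F)$ carries, for each pair $j \leq i$, an honest cohomological/absolute MHC structure on $W^f_i K / W^f_j K$ by the defining axiom. So all three assertions will follow by applying Deligne's MHC machinery to these subquotients and then tracking the $W^f$-filtration through the resulting spectral sequences.

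\medskip
\noindent\emph{Step 1 (assertion i).} Apply the axiom with $j = -\infty$ and $i = +\infty$ (or rather with $j$ below and $i$ above the range of the finite filtration $W^f$, which exists since $W^f$ is finite): then $(K_\Q, W)$, $(K_\C, W, F)$ together with $\alpha$ is a MHC in the sense of Hodge~III. Deligne's theorem then gives that $W[n]$ and $F$ define a MHS on $H^n(K)$, that the weight spectral sequence ${}_W E_r$ degenerates at $E_2$, and that the Hodge-de~Rham spectral sequence degenerates at $E_1$. This is assertion i) verbatim.

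\medskip
\noindent\emph{Step 2 (assertion ii).} Consider the spectral sequence ${}_{W^f} E_r^{p,q}$ of the filtered complex $K$ with respect to $W^f$. I would first identify its $E_1$ (and more generally $E_r$) terms with cohomology of the subquotients $W^f_{-p+r-1} K / W^f_{-p-r} K$, as written in the statement; this is a formal property of spectral sequences of filtered complexes. Now each such subquotient is a MHC by the axiom, so by Step~1 applied to it, its cohomology carries a MHS with weight $W$ and Hodge $F$. The point is that the differentials $d_r$ of the $W^f$-spectral sequence are induced by the differential of $K$, hence are morphisms of the underlying complexes compatible with $W$ and $F$; by strictness of morphisms of MHS (Deligne, Hodge~II, Theorem~2.3.5) they are morphisms of MHS, and passing to cohomology at each page preserves the MHS structure (kernels, cokernels, subquotients of MHS are MHS). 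One subtlety: to see that the inductively-defined $E_r$ really inherits the MHS from $E_1$ one uses again that $d_{r-1}$ is strictly compatible, so $E_r = \ker d_{r-1}/\operatorname{im} d_{r-1}$ is a sub quotient in the category of MHS. This gives assertion ii).

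\medskip
\noindent\emph{Step 3 (assertion iii).} From Step~2, the spectral sequence ${}_{W^f} E_r$ lives in the abelian category of MHS from page $1$ on, and the differentials are strict; a spectral sequence of MHS with strict differentials degenerates — but here we do not even need degeneration of ${}_{W^f}E$, only that the filtration $W^f$ on $H^n(K)$ it induces is a filtration by sub-MHS. Indeed $Gr^{W^f}_{-p} H^{p+q}(K) = {}_{W^f}E_\infty^{p,q}$ is a subquotient of ${}_{W^f}E_1^{p,q}$ in the category of MHS, hence a MHS; and the extensions $0 \to W^f_{-p-1}H^n \to W^f_{-p}H^n \to Gr^{W^f}_{-p}H^n \to 0$ are extensions of MHS because all maps in sight are induced by maps of MHC and hence strict for $W$ and $F$. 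Matching up the weights: on $Gr^{W^f}_{-p}H^{p+q}(K)$ the relevant total weight shift is $W[p+q]$, which is exactly the normalization appearing in the statement, so the isomorphism $(Gr^{W^f}_{-p}H^{p+q}(K), W[p+q], F) \simeq ({}_{W^f}E^{p,q}_\infty, W, F)$ of assertion iii) holds as MHS.

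\medskip
\noindent\emph{Main obstacle.} The one genuinely delicate point is the compatibility of the two weight filtrations: $W^f$ is \emph{a priori} not strict (this is exactly what makes the category of limit MHS non-abelian, as remarked after the degeneration theorem), so one cannot blindly assert that ${}_{W^f}E$ degenerates. The resolution is that what we actually need is only the strictness of the differentials of $K$ for the \emph{Hodge} and \emph{absolute weight} filtrations $(W,F)$ — which is guaranteed termwise by the MHC axiom on each $W^f_i/W^f_j$ — and this is enough to run the subquotient argument entirely inside the abelian category of MHS. So the proof is really: "each graded piece of $W^f$ is a MHC; morphisms of MHS are strict; therefore the $W^f$-spectral sequence is a spectral sequence in the abelian category of MHS," and assertions i)–iii) are the standard consequences. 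I would be careful to cite Hodge~III for the MHC-to-MHS passage and Hodge~II~2.3.5 for strictness, and to note that the lemma preceding this proposition (that $R\Gamma$ of a cohomological FMHC is a FMHC) is what lets us reduce the sheaf case to the complex case.
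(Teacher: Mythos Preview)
Your overall strategy matches the paper's: use Deligne's formula to realize each ${}_{W^f}E^{p,q}_r$ as cohomology of the MHC $W^f_{-p+r-1}K/W^f_{-p-r}K$, endow it directly with a MHS, and then argue that the differentials and the passage $E_r \to E_{r+1}$ respect this structure.

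There is, however, one genuine gap in Step~2. You write that ``the differentials $d_r$ \ldots\ are induced by the differential of $K$, hence are morphisms of the underlying complexes compatible with $W$ and $F$.'' For $r=1$ this is fine, but for $r\geq 2$ the map $d_r$ is \emph{not} induced by a morphism of complexes between the two subquotients in question; it is a connecting (zig-zag) morphism, and one must exhibit it as such to see it respects the MHS. The paper does this explicitly: one writes $d_r = \pi\circ(\varphi|)\circ(\partial|)$, where $\partial$ is the connecting morphism of the short exact sequence of MHCs
\[
0 \to W^f_{-p-r}K/W^f_{-p-2r}K \to W^f_{-p+r-1}K/W^f_{-p-2r}K \to W^f_{-p+r-1}K/W^f_{-p-r}K \to 0,
\]
$\varphi$ is induced by the inclusion $W^f_{-p-r}K \hookrightarrow W^f_{-p-1}K$, and $\pi$ is projection to $Gr^{W^f}_{-p-r}$. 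Each factor is a morphism between cohomologies of MHCs (the connecting morphism of a short exact sequence of MHCs is a morphism of MHS), hence $d_r$ is a morphism of MHS. Without this decomposition your claim about $d_r$ is unjustified.

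A second point you gloss over: there are \emph{two} MHS on $E_{r+1}$ in play---the one coming directly from Deligne's formula, and the recurrent one as $\ker d_r/\mathrm{im}\, d_r$---and you need them to coincide for the induction in Step~2 and the identification in Step~3 to go through. The paper observes that the isomorphism $H({}_{W^f}E^{*,*}_r, d_r)\simeq {}_{W^f}E^{p,q}_{r+1}$ is itself induced by the inclusion $W^f_{-p+r-1}K \hookrightarrow W^f_{-p+r}K$, hence compatible with $(W,F)$; so the recurrent and direct filtrations agree. Your ``main obstacle'' paragraph correctly identifies that strictness of $W^f$ is not available, but the resolution is precisely these two concrete compatibilities, not the abstract remark that ``each graded piece is a MHC.''
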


The proof is in (\cite {EI}, thm 2.8). The formula for
${}_{W^f}E^{p,q}_r $ above coincides with Deligne's definition for
the spectral sequence. On this formula, the MHS on
${}_{W^f}E^{p,q}_r$ is defined as on the cohomology of any MHC. We
prove that the differential
\[ {}_{W^f}E^{p,q}_r \xrightarrow  {}_{W^f}E^{p+r,q-r+1}_r =
 Gr^{W^f}_{-p-r}H^{p+q+1}(W^f_{-p-1}K/W^f_{-p-2r}K) \]
is compatible with MHS.  It is deduced from the connection
morphism $\d$ defined by the exact sequence of complexes
\begin{equation*}
 0 \rightarrow W^f_{-p-r}K/W^f_{-p-2r}K \rightarrow
W^f_{-p+r-1}K/W^f_{-p-2r}K \rightarrow W^f_{-p + r-1}K/W^f_{-p-r}K
\rightarrow 0
\end{equation*}
Let $ \varphi: H^{p+q+1} (W^f_{-p-r}K/W^f_{-p-2r}K) \rightarrow
H^{p+q+1}(W^f_{-p-1}K/W^f_{-p-2r}K)$ denotes the morphism induced
by the natural embedding $W^f_{-p - r}K \to W^f_{-p -1}K$ and\\
 $ \pi \circ (\varphi |): W^f_{-p} H^{p+q+1} (W^f_{-p-r}K/W^f_{-p-2r}K)
 \rightarrow
 Gr^{W^f}_{-p-r}H^{p+q+1}(W^f_{-p-1}K/W^f_{-p-2r}K)$ \\ the composition of
 the restriction $\varphi |$ of $\varphi$ to $W^f_{-p}$ with the projection
 $\pi$ onto $ {}_{W^f}E^{p+r,q-r+1}_r$
 and $W^f_{-p}H^{p+q} (W^f_{-p +
r-1}K/W^f_{-p-r}K) \xrightarrow{\partial |}W^f_{-p} H^{p+q+1}
(W^f_{-p-r}K/W^f_{-p-2r}K)$ the restriction of $\d$ to the
subspace $W^f_{-p}$ then $ d_r = \pi \circ \varphi | (\circ
\partial |)$.
 Since the connection $\d$ is compatible with MHS, as
is  $\pi \circ (\varphi |) $,  so is $d_r$.\\
 The
isomorphism $ H^{p,q} ({}_{W^f}E^{*,*}_r, d_r)\to
{}_{W^f}E^{p,q}_{r +1}$ is induced by the embedding\\
$W^f_{-p+r-1}K \to W^f_{-p+r}K$, hence it is also compatible with
$W$ and $F$. We deduce that the recurrent filtrations on
$E^{p,q}_{r+1}$  induced by $W$ and $F$ on $E^{p,q}_r$ coincide
with $W$ and $F$ on ${}_{W^f}E^{p,q}_{r+1}$.
\begin{defn}[Limit mixed Hodge complex (LMHC)]
i) A  LMHC: $(K, W^f, W, F)$ in $D^+F_3 \A$ is given by the above
data i) to iii) of a FMHC,
satisfying the following:\\
1) The sub-complexes $(W^f_i K, W, F)$ are MHC for all indices
$i$.\\
2) For all $n\in \Z$, we have induced MHC
\[ (Gr^{W^f}_n K_{\Q}, W), (Gr^{W^f}_n K_{\C}, W, F), {Gr_n \alpha}:
(Gr^{W^f}_n K_{\Q}, W)\otimes \C \simeq (Gr^{W^f}_n K_{\C}, W). \]
3) The spectral sequence of $(K_{\Q}, W^f)$ degenerates at rank
$2$:\\ $E_2(K_{\Q}, W^f) \simeq E_{\infty}(K_{\Q}, W^f)$.\\
ii) A cohomological LMHC  on a space $X$ is given by a sheaf
version of the data i) to iii) s.t. $R \Gamma (X, K, W^f, W, F)$
is a LMHC.
\end{defn}
\begin{prop} Let $(K, W^f, W, F)$ denotes a LMHC, then\\
i) The filtrations $W[n]$ and $F$ define a MHS on the cohomology
$H^n(K)$, and $W^f$ induces a filtration by sub-MHS.\\
ii) The MHS deduced from i) on $ E^{p,q}_{\infty} (K, W^f) =
Gr^{W^f}_{-p} H^{p+q}(K)$ coincide with the MHS on the
   terms of the spectral sequence $ {}_{W^f}E^{p,q}_2 $ deduced
 from the MHS on the
   terms  $ {}_{W^f}E^{p,q}_1 = H^{p+q} (Gr^{W^f}_{-p}K)$.
\end{prop}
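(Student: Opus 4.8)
The plan is to deduce this from the Proposition on FMHC of \cite{EI}, using the three axioms that distinguish a LMHC from a general FMHC: the sub-complexes $W^f_i K$ are MHC, the graded pieces $Gr^{W^f}_n K$ are MHC, and the $W^f$-spectral sequence degenerates at $E_2$.

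For part (i) I would argue as follows. Since $W^f$ is a finite filtration, $K=W^f_N K$ for $N$ large, so axiom (1) makes $K$ itself an MHC, and Deligne's theory of mixed Hodge complexes endows $H^n(K)$ with the MHS whose weight filtration is $W[n]$ and whose Hodge filtration is $F$. For each $a$ the inclusion $W^f_a K\hookrightarrow K$ is compatible with $W$ and $F$, hence by axiom (1) a morphism of MHC, so $H^n(W^f_a K)\to H^n(K)$ is a morphism of MHS; the induced filtration $W^f_a H^n(K)=\im\bigl(H^n(W^f_a K)\to H^n(K)\bigr)$ is then a sub-MHS, being the image of a morphism in the abelian category of mixed Hodge structures. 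One can equally note that every $W^f_i K/W^f_j K$ is, up to quasi-isomorphism, the shifted cone of the morphism of MHC $W^f_j K\to W^f_i K$, hence an MHC, so that $(K,W^f,W,F)$ is a FMHC in the earlier sense and parts (i) and (iii) of the FMHC Proposition apply verbatim.

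For part (ii) I would first record, from parts (ii)--(iii) of the FMHC Proposition applied to $(K,W^f,W,F)$, that each ${}_{W^f}E^{p,q}_r$ carries a MHS, that the $d_r$ are morphisms of MHS for $r\ge 1$, that the MHS on ${}_{W^f}E^{p,q}_{r+1}$ given by the intrinsic formula is the subquotient of the MHS on ${}_{W^f}E^{p,q}_r$ via $d_r$, and that there is an isomorphism of MHS $\bigl(Gr^{W^f}_{-p}H^{p+q}(K),W[p+q],F\bigr)\simeq{}_{W^f}E^{p,q}_\infty$. Specializing the term formula to $r=1$ gives ${}_{W^f}E^{p,q}_1=Gr^{W^f}_{-p}H^{p+q}(W^f_{-p}K/W^f_{-p-1}K)=H^{p+q}(Gr^{W^f}_{-p}K)$ --- the induced $W^f$-filtration on $Gr^{W^f}_{-p}K$ being concentrated in a single degree --- and by axiom (2) the MHS on it is that carried by the cohomology of the MHC $Gr^{W^f}_{-p}K$. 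I would also note that $d_1$ is, up to sign, the connecting morphism of the short exact sequence of MHC
\[ 0\to Gr^{W^f}_{-p-1}K\to W^f_{-p}K/W^f_{-p-2}K\to Gr^{W^f}_{-p}K\to 0, \]
which explains why it is a morphism of MHS, so that ${}_{W^f}E^{p,q}_2=\ke d_1/\im d_1$ carries the subquotient MHS deduced from that of ${}_{W^f}E^{p,q}_1$.

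To finish I would invoke axiom (3): $E_2(K_\Q,W^f)\simeq E_\infty(K_\Q,W^f)$. As $E^{p,q}_\infty$ is a subquotient of $E^{p,q}_2$, equality of the underlying rational dimensions forces $d_r=0$ for all $r\ge 2$, so ${}_{W^f}E^{p,q}_2={}_{W^f}E^{p,q}_\infty$; since the $d_r$ are morphisms of MHS this is an isomorphism of MHS. Composing with the isomorphism of MHS $Gr^{W^f}_{-p}H^{p+q}(K)\simeq{}_{W^f}E^{p,q}_\infty$ from the FMHC Proposition then gives an isomorphism of MHS between $Gr^{W^f}_{-p}H^{p+q}(K)$, carrying the MHS of part (i), and ${}_{W^f}E^{p,q}_2$, carrying the subquotient MHS coming from ${}_{W^f}E^{p,q}_1=H^{p+q}(Gr^{W^f}_{-p}K)$, which is assertion (ii). The essential content sits in the FMHC Proposition; the only genuinely new points are the identification ${}_{W^f}E^{p,q}_1=H^{p+q}(Gr^{W^f}_{-p}K)$ afforded by axioms (1)--(2), and the bookkeeping by which degeneration at $E_2$ promotes the abstract ${}_{W^f}E^{p,q}_\infty$ to the concrete ${}_{W^f}E^{p,q}_2$. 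The only place calling for care --- the main, if modest, obstacle --- is that degeneration is assumed only over $\Q$: but this is equivalent to the vanishing of all higher differentials, a condition insensitive to $W$ and $F$, so the identification ${}_{W^f}E^{p,q}_2={}_{W^f}E^{p,q}_\infty$ automatically respects the mixed Hodge structures.
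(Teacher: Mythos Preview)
Your overall strategy --- reduce to the FMHC Proposition and then invoke axiom (3) --- is the right one, and your direct argument for part (i) via axiom (1) is correct. The gap is in the sentence ``every $W^f_i K/W^f_j K$ is, up to quasi-isomorphism, the shifted cone of the morphism of MHC $W^f_j K\to W^f_i K$, hence an MHC''. The cone (or quotient) of a morphism of mixed Hodge complexes, taken with the \emph{induced} weight filtration, is not in general a MHC; the standard mixed-cone construction uses a diagonally shifted weight, which is not the filtration the FMHC axiom demands. Concretely, $W^f_iK/W^f_jK$ is a MHC only if each $H^n\bigl(Gr^W_k(W^f_iK/W^f_jK)\bigr)$ is pure of weight $n+k$, and in the long exact sequence
\[
\cdots\to H^n(Gr^W_k W^f_jK)\to H^n(Gr^W_k W^f_iK)\to H^n\bigl(Gr^W_k(W^f_iK/W^f_jK)\bigr)\to H^{n+1}(Gr^W_k W^f_jK)\to\cdots
\]
the connecting map lands in weight $n+1+k$, so purity holds only if $H^*(Gr^W_kW^f_jK)\to H^*(Gr^W_kW^f_iK)$ is injective --- and axiom (1) alone does not give this.

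The repair uses axiom (2). From the short exact sequence $0\to Gr^W_kW^f_{m-1}K\to Gr^W_kW^f_mK\to Gr^W_kGr^{W^f}_mK\to 0$, the connecting morphism $H^n(Gr^W_kGr^{W^f}_mK)\to H^{n+1}(Gr^W_kW^f_{m-1}K)$ is a rational, $F$-compatible map between pure Hodge structures of weights $n+k$ and $n+1+k$ (by axioms (2) and (1) respectively), hence vanishes; thus $H^n(Gr^W_kW^f_{m-1}K)\hookrightarrow H^n(Gr^W_kW^f_mK)$, and by induction on $m$ the required injectivity follows for all $j\le i$. With this in hand your reduction LMHC $\Rightarrow$ FMHC is legitimate, and the remainder of your argument --- applying parts (ii)--(iii) of the FMHC Proposition and using axiom (3) to collapse $E_2$ onto $E_\infty$ --- is correct. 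The paper itself does not spell out a proof, saying only that it is ``similar to the above case of FMHC'' and referring to \cite{EI}, thm.~2.13; your route via an explicit reduction to the FMHC Proposition is in the same spirit, once the gap above is closed.
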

The proof, similar to the above case of FMHC, is in  (\cite {EI},
thm. 2.13).

\subsection{Diagonal  direct image of a simplicial cohomological FMHC}
  We define naturally the  direct image $ (R \pi_* K, W,
  W^f,F)$ of a simplicial cohomological FMHC $K$ on a simplicial space
  $\pi: X_* \to X$ over $X$ \cite {HIII}. The important point here is
  that the weight $W$ is in fact a diagonal sum $\delta ((\pi)_*
  W, L)$ with respect to a filtration $L$ . This operation is of the same
  nature as the mixed cone over a morphism of MHC
  where the sum of the weight in the cone
  is also diagonal.
\begin{defn} A simplicial  cohomological FMHC
 on a simplicial  (resp. simplicial strict ) space $X_*$  is given
by a complex $K_{\Z} $,
 a bi-filtered complex $(K_{\Q}, W^f, W) $,
   an isomorphism  $K_{\Q} \simeq K_{\Z}\otimes \Q$
 and a tri-filtered complex $(K_{\C}, W^f, W, F) $ on $X_*$
  with an isomorphism  $(K_{\C}, W^f, W) \simeq (K_{\Q}, W^f, W)\otimes \C$
 \\ such that
the following axiom is satisfied: the restriction $K_p$ of $K$ to
each $X_p$ is a cohomological FMHC
\end{defn}
\subsubsection{Differential graded cohomological FMHC defined by a
simplicial cohomological FMHC}
 Let $\pi: X_* \to X $
be a simplicial space over $X$. We define  $R \pi_* K $ as a
cosimplicial cohomological FMHC  by deriving first $\pi_i$ on each
space $X_i$, on which  we deduce an intermediary structure called
a differential graded cohomological FMHC on $X$ as follows. Let $
I_* ^*$ be an injective or $\pi-$acyclic  resolution of $K$ on
$X_*$, that is a resolution $I_i^{*}$ on $X_i$ varying
functorially with the index $i$, then $\pi_* I_* ^*$ is a
cosimplicial complex of abelian sheaves
 on $X$ with
 double indices  where $p$ on $\pi_* I^p_i $ is the complex degree
and $i$  the cosimplicial degree. It is endowed by the structure
of a double complex with the differential $\sum_i (-1)^i \delta_i
x^{pq}$ deduced from the cosimplicial structure, as in \cite
{HIII}.
 Such structure is known as  a
cohomological differential graded $DG^+-$complex. If we do this
operation on the various levels, rational and complex we obtain
the following structure.\\
{\it Differential graded cohomological FMHC.} A differential
graded $DG^+-$complex $C^*_*$ is a bounded below complex of graded
objects, with two degrees, the first defined by the complex and
the second  by the gradings. It is endowed with two differentials
and viewed as a double
 complex.\\
 A  differential graded cohomological MHC
  is defined by a system of a
 $DG^+-$complex, a filtered and a bifiltered complex with
 compatibility isomorphisms
\[C^*_*, (C^*_*, W), (C^*_*, W, F)\]
 s.t. for each  degree
$n$ of the grading, the component  $(C^*_n, W, F) $  is a CMHC.
\subsubsection{ The higher direct image of a  simplicial cohomological FMHC}
  It is defined by the
 simple complex associated to the double complex  $(\pi_{i,*} I_i ^*)$
  with total differential
 involving the face maps $\delta_i$ of the simplicial structure and the differentials $d_q$ on $I_q^*$:
\[  s(\pi_{q,*} I_q ^*)_{q \in \N}^n := \oplus _{p+q=n} \pi_{q,*} I^p_q;  \quad d(x^p_q) =
d_q (x^p_q) + \Sigma_i (-1)^i \delta_i x^p_q \] The filtration $L$
with respect to the second degree will be useful
\[ L^r(s(\pi_{q,*} I_q ^*)_{q \in \N}) = s(\pi_{q,*} I_q ^*)_{q\geq r} \]
 so we can  deduce
a cohomological  FMHC on $X$ by summing into a simple complex
\[ R \pi_* K := s (R \pi_{q,*} K|X_q)_{q \in \N}:=
s (\pi_{q,*} I_q ^*)_{q \in \N} \]
\begin{defn} The diagonal direct image $ (R \pi_* K, W, W^f,F)$
of $K$ on $X_*$ is defined as follows\\
i)  The weight diagonal filtration $W$ (resp. the  finite weight
diagonal filtration $W^f$~) is
\[   \delta (W,L)_n R \pi_* K:= \oplus_p W_{n+p}
\pi_{p,*} I_{p}^*, \quad   \delta (W^f,L)_n R \pi_* K:= \oplus_p
W^f_{n+p} \pi_{p,*} I_{p}^*\]
 ii) The simple  Hodge
filtration $F$ is \,
 $ F^n  R \pi_* K:= \oplus_p F^n \pi_{p,*} I_p^*$.
\end{defn}
\begin{lem} The complex $ (R \pi_* K, W^f, W, F)$ is a
 cohomological FMHC and we have
\begin{equation*}
\begin{split}
&(W^f_i/W^f_j) ( R \pi_* K, W, F)\simeq \oplus_p   (
 (W^f_{i+p}/W^f_{j+p}) \pi_{p,*} I_{p}^*,\delta (W,L), F), \\
  &(Gr^W_i (W^f_i/W^f_j)R \pi_*
 K,F)
 \simeq \oplus_p (Gr^W_{i+p} (W^f_{i+p}/W^f_{j+p})\pi_{p,*} I_p^*[-p],F).
 \end{split}
\end{equation*}
\end{lem}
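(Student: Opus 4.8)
The plan is to unwind the construction of $R\pi_* K$ as a total complex and check that each of the three filtrations interacts with the $W^f$-graded pieces exactly as asserted. First I would note that by construction $W^f_i/W^f_j$ of $R\pi_*K$ is the total complex of the double complex $(W^f_{i+p}/W^f_{j+p})\pi_{p,*}I_p^*$ (shifted by $p$ in the grading direction), because $\delta(W^f,L)_n R\pi_*K = \oplus_p W^f_{n+p}\pi_{p,*}I_p^*$ is itself a sub-double-complex compatible with the simplicial differentials $\delta_i$; this gives the first displayed isomorphism once one records that the induced weight filtration on the $p$-th column is $\delta(W,L)$ restricted to that column, i.e.\ $W_{n+p}$ on $\pi_{p,*}I_p^*$, and that $F$ restricts to $F$ on each column. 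The point to verify here is that $\delta(W,L)$ and $\delta(W^f,L)$ are \emph{compatible} filtrations on the total complex — that $W^f_i$ is stable under the differential $d(x^p_q)=d_q(x^p_q)+\sum_i(-1)^i\delta_i x^p_q$ — which follows because the face maps $\delta_i$ are morphisms of cohomological FMHC (so preserve $W^f$) and the complex differential $d_q$ on $I_q^*$ preserves $W^f$ by the sheaf version of the FMHC axioms applied levelwise.

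Next I would establish the second isomorphism by taking $Gr^W_i$ of the first. Since taking an associated graded of a total complex with respect to a filtration that is "diagonal" (a direct sum of column-filtrations shifted by the column index) commutes with the direct sum decomposition, we get $Gr^W_i(W^f_i/W^f_j)R\pi_*K \simeq \oplus_p Gr^{\delta(W,L)}_i\big((W^f_{i+p}/W^f_{j+p})\pi_{p,*}I_p^*\big)$, and because $\delta(W,L)$ puts $W_{i+p}$ on the $p$-th column (with the shift), $Gr^{\delta(W,L)}_i$ of that column is $Gr^W_{i+p}(W^f_{i+p}/W^f_{j+p})\pi_{p,*}I_p^*$; the shift by $[-p]$ appears because the column sits in total-complex degree shifted by $p$ relative to its internal complex degree. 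One then checks $F$ restricts to $F$ on each graded column, giving the stated formula. The remaining assertion of the lemma — that $R\pi_*K$ is actually a cohomological FMHC — requires verifying that $W^f_i/W^f_j$ is a cohomological MHC, and by the first displayed isomorphism this reduces to knowing that a finite direct sum of cohomological MHC (here the columns $((W^f_{i+p}/W^f_{j+p})\pi_{p,*}I_p^*, \delta(W,L), F)$, each of which is a cohomological MHC because $\pi_{p,*}$ applied to an injective/$\pi$-acyclic resolution of a cohomological MHC yields one, by \cite{HIII}) is again a cohomological MHC. Here "diagonal weight" is exactly what makes this work: the diagonal sum $\delta(W,L)$ is the standard device (as in the mixed cone) that turns a sum of MHC with shifted weights into a single MHC.

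The main obstacle, and the place I would be most careful, is the \emph{finiteness} issue hidden in "$\oplus_p$": the simplicial space $X_*$ is a priori infinite, so $R\pi_*K$ is an infinite total complex, and one must invoke that only finitely many $X_p$ contribute in each cohomological degree (the cohomological-descent/hypercovering truncation already used in the plan of the proof of the theorem) so that the sums are effectively finite and the MHC axioms — which are not obviously stable under infinite direct sums because $E_1$-degeneration and the existence of the limit MHS are finiteness statements — do apply. Concretely, I would fix a degree $n$, pass to a suitable truncation $\mathrm{sk}_m X_*$ with $m\gg n$, run the finite-sum argument there, and check independence of $m$. Granting this, the two displayed isomorphisms are then a bookkeeping unwinding of the definitions of $\delta(W,L)$, $\delta(W^f,L)$ and the simple complex, together with the elementary fact that associated-graded and cohomology commute with finite direct sums; I would cite \cite{EI} for the precise MHC-theoretic statement that a finite diagonal sum of cohomological MHC is a cohomological MHC, exactly as in the construction of the MHS on the cohomology of a simplicial variety in \cite{HIII}.
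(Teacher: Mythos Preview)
Your proposal is correct, and in fact the paper does not supply a proof of this lemma at all: it is stated as an immediate consequence of the definition of the diagonal filtrations $\delta(W,L)$, $\delta(W^f,L)$ together with Deligne's simplicial MHC formalism \cite{HIII}. Your unwinding is the standard argument --- the key point being that the cosimplicial differential drops the diagonal weight by one, so that on $Gr^{\delta(W,L)}$ the columns genuinely decouple into the direct sum with shifts $[-p]$ --- and your care about finiteness (truncating the simplicial direction) is the right precaution, already implicit in the paper's remark that only finitely many $X_i$ are needed to compute the cohomology.
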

 \subsection{ Construction of a Limit MHS on the unipotent  nearby cycles}
 We illustrate the above theory by an explicit construction of a LMHC on
the nearby cycles that is
  applied to define the limit MHS of a geometric  VMHS. Let $f: V
\to D$ be a quasi-projective morphism to a disc. If $D$ is small
 enough, the morphism is a topological bundle on $D^*$, hence the higher
 direct images $R^if_*\C$ underly a variation of the MHS defined
 on the cohomology of the fibers. In order to obtain at the limit
a canonical  structure not depending on the
 choice of the general fiber at a point $t\in D^*$, we introduce
 what we call here the universal fiber to define
 the {\em nearby cycle
complex of sheaves }$\Psi_f (\C)$, of which we recall the
definition in the complex analytic setting. Let $ V_0 =
f^{-1}(0)$, $V^*= V -V_0$, $p: \widetilde D^* \rightarrow D^*$
 a universal cover of the punctured unit disc $D^*$, and
 consider the following diagram
$$
\begin{array}{ccccccc}
      \widetilde {V}^* & \xrightarrow{ \widetilde{p} } &
      V^*&\xrightarrow {j} & V &\xleftarrow i & V_0 \\
      \downarrow&&\downarrow&&\downarrow &&\downarrow \\
  \widetilde D^*&\xrightarrow {p}   & D^* &  \rightarrow  &  D & \hookleftarrow&
 \{0\}
\end{array}
$$
where  $\widetilde V^*:= V^*\times_D\widetilde D^*$. For each
complex of sheaves $\FF$ of abelian groups  on $V^*$,
 the nearby cycle complex of  sheaves $\Psi_f(\FF) $ is defined as:
$$
\Psi_f(\FF):=i^{*}R \,j_*
R\,\widetilde{p}_{*}\widetilde{p}^*(\FF).
$$
Let $\widetilde D^* = \{ u =x+iy \in \C: x < 0 \}$ and the
exponential map $p (u)={\rm exp} \, u$.
 The
translation $u \to u+2i\pi$ on $\widetilde D^*$  lifts to an
action on $\widetilde V^* $, inducing an action on
$R\,\widetilde{p}_{*}\widetilde{p}^*(\FF)$ and finally a monodromy
action $T$ on $ \Psi_f(\FF)$.\\
  The method to construct the limit MHS is to explicit a structure of
mixed Hodge complex on  nearby cycles  $ R\Gamma( Y, \Psi_f(\C)) =
R\Gamma ( \widetilde V^*, \C)$. The technique  used here puts such
structure on the complex of subsheaves $\Psi_f^u(\C)$ where the
action of $T$ is unipotent, then the problem may be reduced to
this case.  In view of recent development, the existence of the
weight filtration with rational coefficients  becomes clear in the
frame of the abelian category of perverse sheaves since the weight
filtration is exactly  the monodromy filtration defined by the
nilpotent action of the logarithm of $T$ on  the perverse sheaf
$\Psi_f^u(\Q)$, up to a shift in indices. Hence we will
concentrate here on the construction of the weight filtration on
the complex $\Psi_f^u(\C)$. The construction is carried first for
a smooth morphism, then applied to each space of a smooth
simplicial covering of $V$.
 \subsection{Smooth morphism}
For a  smooth morphism $f: V \to D $, the work of Deligne
\cite{SGA7} and the smooth proper case \cite{S} suggests to
construct the limit MHS on the universal fiber $\widetilde V^*$,
however the MHS depends on the properties at infinity of the
fibers, so we need to introduce a compactification of the morphism
$f$. Then, we suppose there exists a proper  morphism called also
$f: X \to D$ with algebraic fibres  which induces the given
morphism on $V$. This will apply to a quasi-projective morphism in
which case we may suppose the morphism $f: X \to D$ projective.
Moreover, we suppose the divisor at infinity $Z = X - V$, the
special fiber $Y = f^{-1}(0)$ and their union $Z \cup Y$ normal
crossing divisors in  $X$. To study the case of the smooth
morphism on $V = X-Z$, we still cannot use the logarithmic complex
$\Omega^*_{\widetilde X^*}(Log \widetilde Z^*)$ since $\widetilde
X^*$ is analytic in nature (it is defined via the exponential
map).
 So we need to introduce a sub-complex
of sheaves of $\Omega^*_{\widetilde X^*}(Log \widetilde Z^*)$,
essentially described in \cite{SGA7}, which underly the structure
of
cohomological FMHC.\\
{\it Construction of a FMHC}. We may  start with  the following
result.
 Let $c$ be a generator  of the cohomology $H^1(D^*, \Q )$
 and denote by $\eta_{\Q} = \cup f^*( c )$ the cup product with
the inverse image
 $f^*( c ) \in H^1 (X^*, \Q)$.
 Locally at a point $y \in Y$, a neighborhood $X_y$ is a product of  discs
 and $X_y^*:= X_y - ( Y \cap X_y)$ is homotopic to a product of $n$
punctured discs, hence $H^i(X_y^*, \Q) \simeq \wedge^i H^1(X_y^*,
\Q) \simeq \wedge^i (\Q^n)$.
 The morphism $\eta_{\Q}$
 $$H^i(X_y^*, \Q)  \xrightarrow { \eta_{\Q} }  \;H^{i+1}(X_y^*, \Q )$$
is the differential of an acyclic  complex ($H^i(X_y^*, \Q)_{i\geq
0},\, d = \eta_{\Q}$). A truncation $H^{*\geq i+1}(X_y^*, \Q )$ of
this complex defines a resolution (\cite{SGA7},lecture 14,  lemma
4.18.4)
 \begin{equation*}
 H^i(\widetilde X_y^*, \Q)^u \rightarrow H^{i+1}(X_y^*, \Q ) \to
\cdots \to H^p(X_y^*, \Q) \xrightarrow { \eta_{\Q} }
\;H^{p+1}(X_y^*, \Q )\to \cdots
\end{equation*}
of the cohomology in degree $i$ of the space $\widetilde X_y^* =
X_y \times_{D^*}\widetilde D^*$ which is homotopic to a Milnor
fiber. Dually, we have an isomorphism $
 H^i({X_y}^*, \Q)/ \eta_{\Q}
H^{i-1}({X_y}^*, \Q) \simeq H^i(\widetilde X_y^*, \Q)^u$. \\
 This construction can be lifted to
the complex level, to produce in our case a cohomological FMHC on
$Y$ computing the cohomology of the space $\widetilde {X}^* = X
\times_{D^*}\widetilde{D}^*$  homotopic to a  general fiber as follows.\\
 We use first  the logarithmic complex
${\Omega}^*_X (Log (Y \cup Z))$ to compute
 $ \hbox {R j}_* \C_{X-(Y\cup Z)}$ ($j: X-(Y\cup Z) \to X$).
On the
  level of differential forms,
 $ df/f $  represents the class   $ 2 i \pi f^*(c)$, since
 $\int_{|z|=1} dz/z = 2 i \pi$, and $\wedge df/f $
  realizes the cup product as a morphism (of  degree 1)
$$ i^*_Y ({\Omega}^*_X (Log
Y \cup Z) \xrightarrow {\eta:= \wedge df/f} i^*_Y (\Omega^*_X (Log
Y \cup Z) [1 ]$$
 satisfying
${\eta}^2 = 0$ so to get a double complex. {\it By the above local
result, the simple associated complex  is quasi-isomorphic to the
sub-sheaf of unipotent  nearby cycles ${\Psi}^u_f$} (\cite{SGA7},
(\cite{EI}, thm 2.6)). \\
 Since $Y$ and $Z$
are NCD, we can write the logarithmic complex as $\Omega^*_X (Log
Y) (Log Z)$ so to introduce {\em  the weight filtration $W^Y$
(resp. $W^Z$) with respect to $Y$ (resp. $Z$) in addition to   the
weight filtration $W^{Y \cup Z}$.} The simple complex of interest
to us is realized as a sub-complex of $\Omega^*_{\widetilde
X^*}(Log \widetilde Z^*)$ generated by $\Omega^*_X (Log Y) (Log
Z)$ and the variable $u $ on $\widetilde X^*$.  It is the image of
the complex $\C [U] \otimes \Omega^*_X (Log Y) (Log
Z)$ by the embedding $I$ defined by \\
\centerline {$I(U^p):= (-1)^{p-1} (p-1)! u^{-p}, \quad d (u\otimes
1)= 1\otimes \frac{df}{f}$}

\n since  $u = Log f$ on $\widetilde X^*$. The monodromy acts as
$T(u^p \otimes \omega) = (u + 2i \pi)^p \otimes \omega$. Now if we
put $N= \frac{1}{2i \pi} Log T$, then for $p \geq 0$, $N((Log
f)^{-p}) = - p (Log f)^{-(p+1)}$(\cite{SGA7}, examples 4.6). If we
define an action $\nu$ on $\C [U] \otimes \Omega^*_X (Log Y) (Log
Z)$  by $\nu (U^p) = U^{p+1}$,  the embedding $I$ satisfy
 $I (\nu (U^p)) = N(I(U^{p+1}))$, that is the action $\nu$ corresponds to
$N$ (we may use as Kashiwara the variable $N$ instead of $U$ to
emphasize that the action of $N$ is induced by the multiplication
by the variable $N$).
\begin{rem} If we use  $\widetilde D^* = \H $ with $p(u) = e^{2i\pi u} \in
D^*$ for covering space, then  $u =\frac{1}{2i \pi} Log f$ on
$\widetilde X^*$ and $d (u\otimes 1)= 1 \otimes \frac{1}{2i \pi}
\frac{df}{f}$ while  the monodromy  is given as  $T(u^p \otimes
\omega) = (u + 1)^p \otimes \omega $. From the embedding we deduce
the differential as  $\cup f^*(c)$, and we need to define $N = Log
T$ to get for $p \geq 0$, $N((Log f)^{-p}) =  - p (Log
f)^{-(p+1)}$.
\end{rem}
 Still to get regular filtrations we need to work on a finite complex
 deduced as a quotient modulo an
acyclic sub-complex, hence we construct  the following trifiltered
complex on which the filtrations   are regular
 \[(\Psi^{u}_Y(Log Z), W^f, W, F) \]
as follows
\begin{equation*}
\begin{split}
& (\Psi^u_Y)^r(Log Z):= \oplus_{p \geq 0, q \geq 0, p+q = r}
\Omega^{p+q+1}_X (Log Y \cup Z)/W^Y_p\\
& W_i (\Psi^u_Y)^r(Log Z):= \oplus_{ p+q = r} W^{Y \cup
Z}_{i+2p+1}\Omega^{p+q+1}_X
(Log Y \cup Z)/W^Y_p,\\
& W^f_i= \oplus_{ p+q = r} W_i^Z, W^Y_i= \oplus_{ p+q = r}
W^Y_{i+p+1}, F^i = \oplus_{ p+q = r}
F^{i+p+1}\\
\end{split}
\end{equation*}
It is the simple complex associated to the  double complex
\[(\Psi^u_Y)^{p,q}(Log Z):= \Omega^{p+q+1}_X (Log Y \cup
Z)/W^Y_p, d, \eta), p \geq 0, q \geq 0 \]
 with the usual differential $d$  of forms for
fixed $p$ and the differential $\wedge df/f$ for fixed $q$, hence
the total
differential is $D \omega = d \omega + (df/f) \wedge \omega$. \\
The projection map $(\Psi^u_Y)^{p,q}(Log Z) \to
(\Psi^u_Y)^{p+1,q-1}(Log Z)$ is the action of an endomorphism  on
the term of degree  $(p+q)$ of the complex  commuting with the
differential, hence an endomorphism $\nu:\Psi^u_Y \to \Psi^u_Y$ of
the complex. The study of such complex is reduced  to the smooth
proper case applied to $X$ and the intersections of the components
of $Z$ via the residue $Res_Z$ on $Gr_*^{W^f}\Psi^u_Y(Log Z)$.
 \begin{rem}
 By construction, the differentials  are
compatible with the above  embedding. We take the quotient by
various submodules $W^Y_p$ which
 form an acyclic sub-complex, hence we have an
isomorphism $\H^* (Y,\Psi^{u}_Y(Log Z)) \simeq H^*({\widetilde
V}^*, \C)^u $ s.t. the action of $\nu$ induces $N= \frac{1}{2i
\pi} Log T$.
\end{rem}
\begin{thm} The trifiltered complex
 $(\Psi^{u}_Y(Log Z), W^f, W, F) $ is a cohomological limit mixed Hodge complex
 (LMHC) which endows the
 cohomology $H^*({\widetilde V}^*, \C)^u $ with a limit MHS such that the
the weight  filtration $W$ is  equal to the monodromy weight
filtration relative to $W^f$.
  \end{thm}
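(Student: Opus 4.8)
The plan is to verify the defining axioms of a cohomological LMHC (Definition in §2.2) for the trifiltered complex $(\Psi^u_Y(Log Z), W^f, W, F)$, and then to deduce the statement about the relative monodromy filtration as a formal consequence of the structure theory developed in Proposition on LMHC and the Hodge--Lefschetz package of §2.6. The key observation, already flagged in the construction, is that the finite weight filtration $W^f$ is precisely the $W^Z$-filtration coming from the divisor $Z$ at infinity, while $W$ is a shifted/relative version of the $W^{Y\cup Z}$ weight filtration of the logarithmic complex $\Omega^*_X(Log\, Y\cup Z)$. So the residue maps along components of $Z$ reduce everything to the smooth proper situation on the various strata of $Z$, where Steenbrink's results \cite{S} (and in the abstract setting \cite{Sc}) already supply the limit MHS.

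**Steps.** First I would check that each subcomplex $(W^f_i\Psi^u_Y(Log Z), W, F)$ is a cohomological MHC. Using the identification $W^f_i = \oplus W^Z_i$, the graded pieces $Gr^{W^f}_n\Psi^u_Y(Log Z)$ are, via the residue isomorphism $Res_Z$, direct sums (over the $n$-fold intersections of components of $Z$) of truncated logarithmic complexes relative to $Y$ alone, twisted appropriately — precisely the complexes whose hypercohomology computes the unipotent nearby cycles on a smooth proper family, as in §2.5's local computation with $\eta_\Q = \cup f^*(c)$. By Steenbrink's theorem these are MHC, so axiom (2) of the LMHC definition holds; axiom (1) then follows since an extension (in the derived filtered category) of MHC's built from $W^f$-graded MHC's is again a MHC, using the compatibility of $d=\eta$ with all filtrations noted in the construction. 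Second, I would establish degeneration of the $(K_\Q, W^f)$-spectral sequence at $E_2$: this is exactly the statement that the differentials $d_r$ for $r\geq 2$ are morphisms of MHS between MHS's of different weights, hence vanish — which is the content of Proposition on FMHC, part (ii), applied here; alternatively it follows from the strictness properties of the residue filtration. This gives axiom (3).

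**The monodromy filtration.** Granting that $(\Psi^u_Y(Log Z), W^f, W, F)$ is a cohomological LMHC, Proposition on LMHC endows $H^*(\widetilde V^*,\C)^u$ with a MHS for which $W^f$ induces a filtration by sub-MHS and the induced MHS on $Gr^{W^f}$ agrees with the limit MHS of the VHS on $Gr^{W^f}H^*(X_t - Z_t)$. It remains to identify $W$ with $M(N, W^f)$, the monodromy filtration of $N = \frac{1}{2i\pi}\log T$ relative to $W^f$. By the uniqueness in the Lemma of §1.4 (Deligne, \cite{WII} 1.6.13), it suffices to check $N W_i \subset W_{i-2}$ and that $N^k$ induces isomorphisms $Gr^W_{a+k}Gr^{W^f}_a \xrightarrow{\sim} Gr^W_{a-k}Gr^{W^f}_a$. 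The first is immediate from the bidegree of $\nu$ (the endomorphism inducing $N$) against the explicit formula for $W$ in the construction. For the isomorphism statement, one passes to $Gr^{W^f}$, where via $Res_Z$ the situation becomes the smooth proper nearby-cycle situation; there $W$ restricts to the ordinary (absolute) monodromy weight filtration, and the required $N^k$-isomorphisms are exactly the Hodge--Lefschetz isomorphisms of §2.6 — equivalently, this is the weight part of Steenbrink's limit MHS theorem applied termwise.

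**Main obstacle.** The delicate point is not any single axiom but the bookkeeping tying $W$ (defined via $W^{Y\cup Z}$ with the shift $i \mapsto i+2p+1$) to the \emph{relative} monodromy filtration rather than an absolute one: one must verify that taking $Gr^{W^f}$ and applying $Res_Z$ correctly shears the weight indices so that the Hodge--Lefschetz isomorphisms on the $Z$-strata assemble into the relative statement on $\widetilde V^*$. Concretely, the hard part is showing that the spectral sequence of $(\,\cdot\,, W)$ on $Gr^{W^f}$ degenerates compatibly with $N$ and that no weights "leak" across the $W^f$-filtration — which is where the $E_2$-degeneration of the $W^f$-spectral sequence (axiom (3)) is genuinely used, since it is what guarantees $Gr^W Gr^{W^f} H^* \simeq H^*(Gr^W Gr^{W^f}(\Psi^u_Y(Log Z)))$ and hence lets the termwise Hodge--Lefschetz isomorphisms compute the associated graded of the relative monodromy filtration. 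Once this compatibility is in place, assertion (iii) of the Diagonal Degeneration Theorem — the relative monodromy isomorphism $Gr\, N^b$ — follows by the diagonalization lemma of §2.3 applied to the simplicial covering $X_* \to X$, reducing the general (singular, quasi-projective) case to the smooth case treated here.
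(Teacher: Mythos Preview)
Your approach is essentially the paper's: reduce via the residue $Res_Z$ along the components of $Z$ to the smooth proper situation on the strata $Z^{-p}$, invoke Steenbrink's MHC there, and obtain the relative monodromy isomorphisms from the polarized Hodge--Lefschetz machinery of \S 2.6. The paper packages this as a proposition with parts (i)--(iv); the crucial (iv), the isomorphism $\nu^i: Gr^W_{a+i}Gr^{W^f}_a H^n \xrightarrow{\sim} Gr^W_{a-i}Gr^{W^f}_a H^n$, is argued exactly as you outline --- checked first on the complex level, then descended to cohomology via the proper case on each $Z^{-p}$ together with compatibility with the Gysin differential $d_1$. Your ``main obstacle'' paragraph correctly identifies the bookkeeping issue and how the $E_2$-degeneration is used to resolve it.

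One point needs correction. Your argument for $E_2$-degeneration of the $W^f$-spectral sequence --- ``differentials $d_r$, $r\geq 2$, are morphisms of MHS between MHS of different weights, hence vanish'' --- does not work as stated. The FMHC proposition indeed gives that $d_r$ is a morphism of MHS, but the terms ${}_{W^f}E_r^{p,q}$ carry genuinely \emph{mixed} Hodge structures (defined by $W$ and $F$), not pure ones of a prescribed weight, so there is no a priori weight mismatch forcing $d_r=0$; this is precisely the difference between a $W^f$-spectral sequence and a $W$-spectral sequence. The paper handles this differently (remark after the corollary, citing \cite{EI}, prop.\ 3.5): one identifies ${}_{W^f}E_r^{p,q}$ with the weight spectral sequence of a general fiber $X_t - Z_t$, and \emph{that} spectral sequence degenerates at $E_2$ by Deligne's theory for algebraic varieties. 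Your alternative phrase ``strictness of the residue filtration'' gestures toward this but would need to be unpacked along the same lines. Similarly, your justification of axiom (1) --- ``an extension of MHC's is again a MHC'' --- is not a valid general principle; what actually works here is the explicit residue formula for $Gr^W_m(W^f_b/W^f_a)$ given in the proposition, expressing it as a direct sum of shifted Hodge complexes on the various $Z^j \cap Y^{m-j+2p+1}$.
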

  The theorem results from the following proposition where
 $Z = \cup_{i \in I_1} Z_i$ denotes a decomposition into components
 of $Z$, $Z^J = Z_{i_1} \cap \ldots \cap Z_{i_r} $  for
  $ J =\{ i_1,\cdots, i_r
\}\subset I_1$, $Z^r := \coprod_{J \subset I_1, |J|=r} Z^J$ .
 \begin{prop} i) Let $k_Z: Y- (Z \cap Y) \to Y$, and $ j_Z: (X^*- Z^*) \to X^*$.
  There  exists a natural quasi-isomorphism
\begin{equation*}\Psi^u_Y (Log Z) \xrightarrow{\approx} R k_{Z,*} (k_Z^* \Psi^u_f
(\C))\xrightarrow{\approx}\Psi^u_f( R j_{Z,*}\C_{X^*-Z^*})
\end{equation*}
ii) The graded part for $W^f$ is expressed with the LMHC for the
various proper smooth maps $Z^{-p}  \to D$ for $p < 0$, with
singularities along the NCD: $Z^{-p} \cap Y $
\begin{equation*}
Res_{Z}: (Gr_{-p}^{W^f}(\Psi^u_Y)(Log Z), W, F) \simeq
(\Psi^u_{Z^{-p}\cap Y}[p], W[-p], F[p])
\end{equation*}
and the spectral sequence with respect to $W^f$ is given by the
limit MHS of the unipotent cohomology of
 $(\widetilde {Z}^{-p})^* = Z^{-p}\times_D
{\widetilde D}^*$    twisted by $(p)$
\begin{equation*}
\begin{split}
 &({_{W^f}}E^{p,q}_1, W,
F) \simeq \H^{p+q}(Z^{-p} \cap Y, (\Psi^u_{Z^{-p}\cap Y}[p],
W[-p], F[p]))\simeq \\
&(\H^{2p+q}((\widetilde {Z}^{-p})^*, \C)^u, W[-2p], F[p]) \simeq
(\H^{2p+q}((\widetilde {Z}^{-p})^*, \C)^u, W, F)(p)
\end{split}
\end{equation*}
iii) The endomorphism $\nu$ shift the weight by $-2$: $\nu (W_i
\Psi^u_Y) \subset W_{i-2} \Psi^u_Y$ and preserves $W^f$. It
induces an isomorphism
\begin{equation*} \nu^i: Gr^W_{a+i}Gr^{W^f}_a \Psi^u_Y(Log Z) \xrightarrow {\sim}
Gr^W_{a-i}Gr^{W^f}_a \Psi^u_Y(Log Z)
\end{equation*}
Moreover, the action of $\nu$ corresponds to the logarithm of the
monodromy on the cohomology $H^*({\widetilde V}^*, \C)^u $ \\
iv) The induced monodromy action ${\bar \nu}$ defines an
isomorphism
 \begin{equation*} {\bar \nu}^i: Gr^W_{a+i}Gr^{W^f}_a H^n( Y,\Psi^u_Y(Log Z))
  \xrightarrow {\sim} Gr^W_{a-i}Gr^{W^f}_a H^n( Y,\Psi^u_Y(Log Z))
\end{equation*}
\end{prop}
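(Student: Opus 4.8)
The plan is to reduce every assertion of the Proposition to the already established case of a proper morphism which is smooth over the punctured disc, namely Steenbrink's limit mixed Hodge complex \cite{S} (resting on Schmid \cite{Sc}). Parts (i) and (ii) perform this reduction: (i) identifies the explicitly built logarithmic complex $\Psi^u_Y(Log\,Z)$ with a genuine unipotent nearby cycle complex, legitimizing the cohomological manipulations and the passage to the quasi-projective setting; (ii) computes $Gr^{W^f}$ by an iterated Poincar\'e residue and recognizes each graded piece as the \emph{same} construction carried out for the proper morphisms $Z^{-p}\to D$, which are smooth over $D^*$ with normal crossing special fibre $Z^{-p}\cap Y$. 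Parts (iii) and (iv) are then the relative Hard--Lefschetz statement: at the level of complexes it is local and follows from the proper case through (ii), while at the level of hypercohomology it follows from the polarized Hodge--Lefschetz formalism of \S2.6 together with the $E_2$-degeneration built into the LMHC axioms.

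\emph{Part (i).} By construction the quotients by the subcomplexes $W^Y_p$ (in form-degree $p+q+1$) are exactly the truncation which, applied to the $\eta=\wedge\,df/f$ resolution of $i_Y^* Rj_{Y\cup Z,*}\C$, isolates the unipotent part of the nearby cycles: this is the local lemma of \cite{SGA7}, Exp.~14 (see also \cite{EI}, Thm.~2.6) already invoked in the construction, now read in the presence of the extra horizontal divisor $Z$. This yields the first quasi-isomorphism $\Psi^u_Y(Log\,Z)\xrightarrow{\approx}Rk_{Z,*}(k_Z^*\Psi^u_f(\C))$, where $\Psi^u_f(\C)$ is the unipotent nearby cycle sheaf of the \emph{smooth} morphism $f\colon V=X-Z\to D$ on $V_0=Y-(Y\cap Z)$ and $k_Z$ pushes it forward to $Y$. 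The second quasi-isomorphism is a base change: $Z$ being a normal crossing divisor with no component in common with $Y$ is horizontal for $f$, so the unipotent nearby cycle functor for $f\colon X\to D$ commutes with $Rj_{Z,*}$ along the complement of $Z$; concretely $\Psi^u_f(Rj_{Z,*}\C_{X^*-Z^*})$ is obtained on $Y$ by restriction to $Y-(Y\cap Z)$ followed by $Rk_{Z,*}$, which one checks stalkwise using that $j_Z$ and $k_Z$ sit in the evident Cartesian square.

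\emph{Part (ii).} The filtration $W^f$ is, by definition, the weight filtration $W^Z$ of $\Omega^*_X(Log\,Y\cup Z)$ along $Z$ (reindexed as in the construction), so the Poincar\'e residue along the components of $Z$ gives the standard isomorphism between $Gr^{W^f}_{-p}$ of the logarithmic complex and the logarithmic complex of $Z^{-p}$ along $Z^{-p}\cap Y$, suitably twisted and shifted; carrying this out on the whole double complex, and using that $W^Z$ and $W^{Y\cup Z}$ interact correctly so that the residue of $W$ is $W$ and of $F$ is $F$ up to the indicated shifts, one obtains $Res_Z\colon (Gr^{W^f}_{-p}\Psi^u_Y(Log\,Z),W,F)\simeq(\Psi^u_{Z^{-p}\cap Y}[p],W[-p],F[p])$. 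The right-hand side is precisely the complex $\Psi^u_{Z^{-p}\cap Y}$ of the proper morphism $Z^{-p}\to D$, which by the proper smooth-over-$D^*$ case \cite{S} is a LMHC whose hypercohomology is the limit MHS of the unipotent cohomology $\H^{2p+q}((\widetilde Z^{-p})^*,\C)^u$; tracing the shift $[p]$ and the Tate twist $(p)$ produced by the $(-p)$-fold residue then identifies $({}_{W^f}E_1^{p,q},W,F)$ with that limit MHS twisted by $(p)$, as stated.

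\emph{Parts (iii), (iv), and the main obstacle.} That $\nu$ shifts $W$ by $-2$ and preserves $W^f$ is immediate: $\nu$ carries the $(p,q)$-component to the $(p+1,q-1)$-component without changing the underlying logarithmic form, and the index $W^{Y\cup Z}_{i+2p+1}$ matches $W^{Y\cup Z}_{(i-2)+2(p+1)+1}$ while the $W^Z$-index is untouched. For the complex-level isomorphism $\nu^i\colon Gr^W_{a+i}Gr^{W^f}_a\xrightarrow{\sim}Gr^W_{a-i}Gr^{W^f}_a$, apply (ii): the left side becomes, up to the shift $[a]$ and $W\mapsto W[-a]$, the object $Gr^W_i\Psi^u_{Z^{-a}\cap Y}$ centred at $0$, and the claim reduces to $\nu^i\colon Gr^W_i\Psi^u_{Z^{-a}\cap Y}\xrightarrow{\sim}Gr^W_{-i}\Psi^u_{Z^{-a}\cap Y}$, which is part of Steenbrink's theorem for the proper morphism $Z^{-a}\to D$, since on $Gr^W$ the complex $\Psi^u$ is an explicit finite direct sum of shifted constant sheaves on the intersection strata of the special fibre on which $\nu$ acts as the Lefschetz $\mathfrak{sl}_2$, so the statement is checked locally; that $\nu$ induces $\tfrac1{2\pi i}\log T$ on $H^*(\widetilde V^*,\C)^u$ is the content of the embedding $I$ of the construction. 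For (iv), the LMHC Proposition applied to $R\Gamma(Y,\Psi^u_Y(Log\,Z))$ shows the $W^f$-spectral sequence degenerates at $E_2$ and $(Gr^{W^f}_a H^n,W,F)$ is the $E_2$-term, i.e. the cohomology of $({}_{W^f}E_1^{p,\bullet},d_1)$ computed in (ii); each ${}_{W^f}E_1^{p,q}$ is the twisted limit MHS of $(\widetilde Z^{-p})^*$ carrying, by the proper case, the operator $\bar\nu$ and a polarization making it a polarized Hodge--Lefschetz structure, and since $\nu$ preserves $W^f$ it acts on the whole spectral sequence, so $d_1$ is a morphism of such structures commuting with $\bar\nu$; by the formalism of \S2.6, namely that the cohomology of a complex of polarized Hodge--Lefschetz structures is again one, the $E_2$-term inherits Hard--Lefschetz for $\bar\nu$, which is the asserted isomorphism $\bar\nu^i\colon Gr^W_{a+i}Gr^{W^f}_a H^n\xrightarrow{\sim}Gr^W_{a-i}Gr^{W^f}_a H^n$, equivalently $W=M(\bar N,W^f)$ on $H^n$. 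The genuinely nontrivial point is this last step in (iv): Hard--Lefschetz does not survive passage to the cohomology of the $W^f$-spectral sequence by formal nonsense, and one must use the polarized Hodge--Lefschetz package of \S2.6 (descent of the Lefschetz decomposition through a $\bar\nu$-equivariant complex of polarized structures) together with the $E_2$-degeneration, so that the positivity coming from the polarizations of the graded VHS is what does the work; secondary care is needed in (ii) for the bookkeeping of the shifts $[p]$, $[-p]$ and the Tate twist $(p)$ generated by iterating the residue, and in (i) for the precise matching of the $W^Y_\bullet$-quotients with the unipotent truncation and for the horizontality of $Z$ underlying the base-change quasi-isomorphism.
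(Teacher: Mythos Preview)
Your proposal is correct and follows essentially the same route as the paper: both reduce the open case to Steenbrink's proper case via the Poincar\'e residue along $Z$ (your (ii) is exactly the paper's bifiltered isomorphism $Res_Z$), verify (iii) by the explicit index shift on the double complex, and obtain (iv) from the proper-case Hard--Lefschetz on each $Z^{-p}$ together with the fact that $\nu$ commutes with the Gysin differential $d_1$ on the ${}_{W^f}E_1$-page. The only cosmetic difference is that for (iv) the paper invokes the already established Corollary for the proper $Z^{-p}\to D$ and then argues directly that $\nu^b$ is an isomorphism of the complexes $(Gr^W_*\,{}_{W^f}E_1^{\bullet},d_1)$, whereas you phrase the same step as a second application of the polarized Hodge--Lefschetz package of \S2.6; since $d_1$ is a morphism of MHS (hence strict for $W$), the two formulations are equivalent.
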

\begin{cor} The weight filtration induced by $W$ on the cohomology
$H^*({\widetilde V}^*, \C)^u $ satisfies the characteristic
property of the monodromy weight filtration relative to the weight
filtration $W^f$.
\end{cor}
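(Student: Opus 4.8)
The plan is to read the corollary off part iv) of the preceding proposition, off the fact just established that $(\Psi^u_Y(Log Z),W^f,W,F)$ is a cohomological LMHC, and off the uniqueness of the relative monodromy filtration (\cite{WII}, 1.6.13). Put $H^n:=\H^n(Y,\Psi^u_Y(Log Z))$. By the theorem above this carries the limit MHS whose weight filtration is the one induced by $W$ (up to the customary shift $[n]$ in degree $n$) and on which $W^f$ induces a filtration by sub-MHS; by the Remark preceding the theorem, the quasi-isomorphisms of part i) identify $H^n$ with $H^n(\widetilde V^*,\C)^u$ so that the endomorphism $\bar\nu$ induced by $\nu$ becomes $N=\frac1{2i\pi}\log T$. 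Hence it suffices to verify, on $H^n$, the two conditions defining $M(N,W^f)$, after which \cite{WII}, 1.6.13 identifies $W$ with that filtration.

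First, $N$ must shift $W$ by $-2$. At the level of complexes part iii) gives $\nu(W_i\Psi^u_Y)\subset W_{i-2}\Psi^u_Y$ and $\nu(W^f_i)\subset W^f_i$; since $\Psi^u_Y(Log Z)$ underlies a mixed Hodge complex, the weight filtration is strict on its cohomology, so the filtered chain endomorphism $\nu$ induces $\bar\nu(W_iH^n)\subset W_{i-2}H^n$, and $\bar\nu$ preserves the induced $W^f$. Second, $N^i$ must induce isomorphisms $Gr^W_{a+i}Gr^{W^f}_aH^n\xrightarrow{\sim}Gr^W_{a-i}Gr^{W^f}_aH^n$ for all $a$ and $i\geq0$: this is exactly part iv), and the shift $[n]$ is harmless because $W^f$ is unshifted and a uniform shift of $W$ moves the indices $a+i$ and $a-i$ by the same amount. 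This establishes the characteristic property; equivalently $W=M(N,W^f)$.

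The substance of the argument is not in this deduction but in part iv) of the proposition, i.e.\ in upgrading the complex-level isomorphism of part iii) to one on cohomology. That step rests on the $E_2$-degeneration of the spectral sequence of $W^f$ (an axiom of an LMHC), on each $Gr^{W^f}_a\Psi^u_Y(Log Z)$ being a genuine MHC on whose $E_1$-terms $\bar\nu$ acts by strict morphisms of MHS --- where the required isomorphism is the nilpotent-orbit (Hodge--Lefschetz) isomorphism for the proper smooth families $Z^{-p}\to D$ of part ii) --- and on the strictness of the connecting morphisms of the spectral sequence. All of this is carried out in the proof of the proposition, so the corollary is a formal consequence.
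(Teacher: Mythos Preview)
Your proposal is correct and follows the same line as the paper, which treats the corollary as an immediate consequence of parts iii) and iv) of the preceding proposition together with the uniqueness of the relative monodromy filtration. One small over-complication: you do not need strictness to conclude $\bar\nu(W_iH^n)\subset W_{i-2}H^n$ --- this is automatic from the fact that $\nu$ is a filtered endomorphism of the complex (the induced filtration on cohomology is functorial); strictness would only be needed if you wanted exactness statements for the induced map on $Gr^W$.
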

The main argument  consists to deduce iv) from the corresponding
isomorphism on the complex level in iii) after a reduction to the
proper case. We remark also (\cite{EI}, prop. 3.5) that the
spectral sequence ${_{W^f}}E^{p,q}_r$ is isomorphic to the weight
spectral sequence of any fiber $X_t-Z_t$ for $t \in D^*$ and
degenerates at rank $2$.
 \begin{proof}[ Proof of the proposition in the proper smooth case (VHS)]
  The
complex $ \Psi^u_Y(Log Z)$ for $Z = \emptyset$ coincides with
Steenbrink's complex $\Psi^u_Y $ on $Y$ in $X = V$ \cite{S}. In
this case $W^f$ is trivial, $W^{Y \cup Z} = W^Y$ on $\Omega^*_X
(Log Y )$ and $(\Psi^u_Y, W, F)$ is a  MHC since  its graded
object is expressed in terms of the Hodge complexes defined by the
embedding of the various $s$ intersections of components $Y_i$ of
$Y = \cup_{i \in I} Y_i$ denoted as\\
 $a_s: Y^s := \coprod_{J \subset
I, |J|= s} Y^J \to X $  where $Y^J = Y_{i_1} \cap \ldots \cap
Y_{i_s} $  for $ J =\{ i_1,\cdots, i_s \}$. \\ The residue
\begin{equation*} (Gr^W_r \Psi^u_Y, F)\xrightarrow{Res}
\oplus_{p\geq \,sup\, (0, -r)} a_{r+2p+1,
*}(\Omega^*_{Y^{r+2p+1}}[-r-2p], F[-p-r])
\end{equation*}
defines an isomorphism with  the HC of weight $r$ at right, then
the assertion i) of the proposition for $Z = \emptyset$  reduces
to  the quasi-isomorphism
\begin{equation*}\Psi^u_Y \xrightarrow{\approx}
\Psi^u_f (\C)
\end{equation*}
 Locally, the cohomology $ H^i(\Psi^u_Y (\C)_y) $
of the stalk at $y$ is equal to the unipotent cohomology of the
universal Milnor fiber ${\widetilde X_y}^*$ of $f$ at $y$,  hence
the quasi-isomorphism  above follows by  construction of
$\Psi^u_{Y,y}$ since
\[ H^i(\Psi^u_f(\C)_y) \simeq H^i({\widetilde X_y}^*, \C)^u \simeq
H^i(\Psi^u_{Y,y})\] To prove this local isomorphism,  we use   the
spectral sequence of
 $\Psi^u_Y$ with respect to the columns
of the underlying double complex. Since the $p-$th column  is
isomorphic to $(R j_* \C / W^Y_p) [p+1]$, the  stalk at $y$ is :
$E^{-p,q}_{1, y} \simeq H^{p+q+1}(X_y^*, \C)$, for $ p\geq 0, q
\geq 0$ and $0$ otherwise, with differential $d =  \eta$; hence
 the term $E^{-p,q}_{2, y} $ is equal to $0$ for $p > 0$ and equal
to $H^{p+q}({\widetilde X_y}^*, \C)$ for $p = 0$ where
($E^{-p,q}_{1, y},\eta $) is the resolution of the cohomology of
Milnor fiber mentioned earlier, then the global isomorphism
follows
\begin{equation*}
\H^{i}( Y, \Psi^u_{ Y})\simeq \H^{i}({\widetilde X}^*, \C)^u
\end{equation*}
In the assertion ii) we use  the residue to define an isomorphism
on the first terms of the spectral sequence with respect to $W$
with the  HS  defined by $Y^i$ after a twist
 \begin{equation*}
\begin{split}
_WE^{p,q}_1= \H^{p+q}(Y, Gr^W_{-p}\Psi^u_Y, F) = \oplus_{q'\geq \,
sup \,(0, p)}H^{2p-2q'+q}(Y^{2q' - p + 1}, \C), F) (p-q')
\end{split}
\end{equation*}
 The assertion  iii) reduces to an isomorphism
\begin{equation*} Gr^W_i \Psi^u_Y \xrightarrow{\nu^i} Gr^W_{-i} \Psi^u_Y
\end{equation*}
which can be checked easily since
\begin{equation*}
\begin{split}
W_i (\Psi^u_Y)^{p,q}:= W_{i+2p+1}\Omega^{p+q+1}_X(Log Y)/W^Y_p = &
W_{i-2} (\Psi^u_Y)^{p+1,q-1} = \\
& \cdots= W_{-i} (\Psi^u_Y)^{p+i,q-i}
 \end{split}
\end{equation*}
while the two conditions  $(p+i \geq 0, p \geq 0)$ for
$W_i(\Psi^u_Y)^{p,q}$, become successively $ (p+i)-i = p \geq 0,
p+i \geq 0$ for $W_{-i}(\Psi^u_Y)^{p+i,q-i}$, hence they are
interchanged. We end the proof  in the next section. \end{proof}
\subsection{ Polarized Hodge-Lefschetz  structure}
 The first
correct proof of the assertion  iv) is given in \cite{Sa} in the
more general setting of polarized Hodge-Lefschetz modules. We
follow \cite{G-N} for an easy exposition in our case.
\subsubsection{ Hodge Lefschetz structure.} Two endomorphisms on a
finite dimensional bigraded real vector space $L = \oplus_{i,j \in
\Z} L^{i,j} $, $ l_1:L^{i,j}\to L^{i+2,j}$ and $ l_2: L^{i,j}\to
L^{i,j+2}$, define a  Lefschetz structure if they commute and if
moreover the morphisms obtained by composition
\[  l_1^i: L^{-i,j}\to
L^{i,j}, i > 0 \,\, {\rm and} \,\,  l_2^j: L^{i,-j}\to L^{i,j}, j
>0
\]
are isomorphisms.\\
It is classical to deduce from the classical representation
theory, as in hard Lefschetz theorem, that such structure
corresponds to a finite dimensional representation of the group
$SL(2, \R)\times SL(2, \R)$;  then
 a  primitive decomposition follows
 \[ L^{i,j} = \oplus_{r,s \geq 0} l_1^r l^s_2 L^{i-2r, j-2s}_0, \,\,
 {\rm where } \,
 L^{-i,-j}_0 = L^{-i,-j} \cap Ker\, l_1^{i+1} \cap Ker \,l_2^{j+1},
 i \geq 0, j \geq 0 \]
A   Lefschetz structure  is called {\it Hodge - Lefschetz
structure } if in addition $L^{i,j}$ underly  real Hodge
structures and $l_1, l_2$ are compatible with such structures.\\
A polarization of $L$ is defined by a  real bigraded bilinear form
$S: L \otimes L \to \R $ compatible with HS, s.t.
\[ S (l_i x, y) + S (x, l_i y) = 0, \, i = 1, 2 \]
It extends into a complex Hermitian form to $L \otimes \C$ such
that the induced form $S (x, Cl_1^il_2^j y)$ is symmetric positive
definite on $L^{-i,-j}_0$, where $C$ is the Weil operator
defined by the HS.\\
A differential $d: L \to L$  is a morphism compatible with H.S
satisfying:
\begin{equation*}
\begin{split}
&d:L^{i, j}\to L^{i+1, j+1}, i,j \in \Z, \, d^2 = 0, \, [d, l_i] =
0, i = 1,2,\\ & S (d x, y) = S (x, d y), x,y \in L.
\end{split}
\end{equation*}
\begin{thm}[\cite{Sa}, \cite{G-N}] Let $( L, l_1, l_2, S, d)$ be a
 bigraded
Hodge-Lefschetz structure with a differential $d$ and polarization
$S$, then the cohomology $(H^*( L, d),l_1, l_2, S)$ is a polarized
Hodge-Lefschetz structure.
\end{thm}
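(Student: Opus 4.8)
The plan is to run the classical ``harmonic forms'' argument of Hodge theory in this purely algebraic, finite-dimensional setting, following \cite{G-N} (the result of \cite{Sa} being a more general semisimplicity statement for polarized Hodge--Lefschetz modules). Fix the real structure on $L$ and let $C$ be the Weil operator of the Hodge structures $L^{i,j}$. Using $S$, the Lefschetz operators and the primitive decomposition, one first builds a positive definite Hermitian metric $h$ on $L_{\C}:=L\otimes\C$: one declares distinct bidegrees and the summands of the primitive decomposition $L^{i,j}=\oplus_{r,s\ge 0} l_1^r l_2^s L^{i-2r,j-2s}_0$ to be $h$-orthogonal, and on a primitive block $L^{-i,-j}_0$ one puts $h(x,y)=S(x,C\,l_1^i l_2^j\bar y)$ (with the usual sign normalisation), which is positive definite precisely by the positivity axiom of a polarization. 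Let $\Lambda_1:=l_1^{\dagger}$, $\Lambda_2:=l_2^{\dagger}$ and $\delta:=d^{\dagger}$ denote the $h$-adjoints, and let $\Delta_d:=d\delta+\delta d$ be the associated Laplacian. Since $L$ is finite dimensional one has the orthogonal decomposition $L_{\C}=\ke\Delta_d\oplus\im d\oplus\im\delta$, with $\ke d=\ke\Delta_d\oplus\im d$, so the \emph{harmonic subspace} $\HH:=\ke\Delta_d=\ke d\cap\ke\delta$ maps isomorphically onto $H^{*}(L,d)$.

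The heart of the argument is to show that $\Delta_d$ commutes with $l_1,l_2,\Lambda_1,\Lambda_2$ (i.e.\ is equivariant for the $SL(2,\R)\times SL(2,\R)$-action carried by the Lefschetz structure) and preserves the Hodge decomposition of each $L^{i,j}$. The second point will follow once we know that $\delta$, like $d$, is a morphism of Hodge structures. For the first point the key is a representation-theoretic identification of $\delta$. Since $d$ has bidegree $(1,1)$ and $[d,l_1]=[d,l_2]=0$, the operator $d$ is a highest weight vector for the adjoint action of $\mathfrak{sl}_2\times\mathfrak{sl}_2$ on $\mathrm{End}(L_\C)$, and it generates a $4$-dimensional irreducible submodule $M$ with basis $d,\ [\Lambda_1,d],\ [\Lambda_2,d],\ [\Lambda_1,[\Lambda_2,d]]$. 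Now the $S$-self-adjointness $S(dx,y)=S(x,dy)$ together with $S(l_ix,y)=-S(x,l_iy)$ and the fact that $C$ is real and commutes with $l_i,\Lambda_i$ imply that the $h$-adjoint $\delta=d^{\dagger}$ is obtained from $d$ by conjugation by an operator built out of $C$ and the $l_i,\Lambda_i$ --- all of which preserve the submodule $M$; hence $\delta\in M$, and being of bidegree $(-1,-1)$ it must be a nonzero multiple of the lowest weight vector, i.e.\ $\delta$ is proportional to $[\Lambda_1,[\Lambda_2,d]]$. In particular $\delta$, like the $\Lambda_i$, is a morphism of Hodge structures, so $\Delta_d$ is one too.

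Granting this identification of $\delta$, a short Jacobi-identity computation gives $[\delta,l_1]$ proportional to $[\Lambda_2,d]$, whence
\[
[\Delta_d,l_1]=d[\delta,l_1]+[\delta,l_1]d\ \propto\ d[\Lambda_2,d]+[\Lambda_2,d]d=[\Lambda_2,d^{2}]=0,
\]
and symmetrically $[\Delta_d,l_2]=0$; taking $\dagger$-adjoints yields $[\Delta_d,\Lambda_1]=[\Delta_d,\Lambda_2]=0$ as well. Thus $\HH=\ke\Delta_d$ is a bigraded $SL(2,\R)\times SL(2,\R)$-submodule, stable under $C$ and compatible with the Hodge decompositions. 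Consequently the primitive decomposition of $L$ restricts to one of $\HH$, so $l_1^{i}:\HH^{-i,j}\to\HH^{i,j}$ and $l_2^{j}:\HH^{i,-j}\to\HH^{i,j}$ are isomorphisms, each $\HH^{i,j}$ carries a Hodge structure compatible with $l_1,l_2$, and $S|_{\HH}$ is again a polarization (positivity on the primitive parts of $\HH$ is inherited from that on the primitive parts of $L$). Finally the isomorphism $\HH\xrightarrow{\sim}H^{*}(L,d)$ intertwines all this structure: it is compatible with $l_1,l_2$ and the Hodge decompositions because $l_i$ and $C$ preserve the orthogonal decomposition $L_{\C}=\ke\Delta_d\oplus\im d\oplus\im\delta$, and it carries $S|_{\HH}$ to the form induced by $S$ on cohomology, which is well defined precisely because $S(dx,y)=S(x,dy)$ forces $S(\im d,\ke d)=0$. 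Hence $(H^{*}(L,d),l_1,l_2,S)$ is a polarized bigraded Hodge--Lefschetz structure (with trivial differential).

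The main obstacle is the identification of $\delta$ in the second paragraph: one must set up the metric $h$ with the correct Lefschetz twists on each primitive block so that $l_i^{\dagger}=\Lambda_i$, and then check carefully that the $S$-self-adjointness of $d$ translates, after conjugation by $C$ and by elements of (the complexification of) $SL(2,\R)\times SL(2,\R)$, into the statement that $d^{\dagger}$ is the lowest weight vector of the irreducible $M$ generated by $d$; once this is in place everything else is formal, as the displayed computation shows. One can alternatively bypass the explicit formula for $\delta$ by showing directly that $\ke\Delta_d$ is $SL(2,\R)\times SL(2,\R)$-stable via the ``$SL_2$-rotation'' of $l_i$ into $\Lambda_i$, but the route above is the most economical.
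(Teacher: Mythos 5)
You should note at the outset that the paper does not prove this theorem at all: it states it with the citations \cite{Sa}, \cite{G-N} and immediately says ``We assume the theorem,'' applying it to the weight spectral sequence. So there is no internal proof to compare with; what you have written is essentially the standard argument of Deligne, as carried out in the cited references, and as a sketch it is correct. Your identification of $\delta$ as (a nonzero multiple of) $[\Lambda_1,[\Lambda_2,d]]$, the bracket computation $[\Delta_d,l_1]\propto d[\Lambda_2,d]+[\Lambda_2,d]d=[\Lambda_2,d^2]=0$, and the transfer of the Lefschetz isomorphisms, Hodge decompositions and positivity from $L$ to the harmonic subspace and thence to $H^*(L,d)$ are all sound; since $d\neq 0$ has bidegree $(1,1)$ and commutes with $l_1,l_2$, it is indeed a highest weight vector generating a $4$-dimensional irreducible, whose $(-1,-1)$-weight space is spanned by $[\Lambda_1,[\Lambda_2,d]]$.

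The two places you flag as delicate are exactly the ones that need tightening, and both have clean standard fixes. First, rather than building $h$ block by block with ad hoc sign normalisations (where it is then not automatic that $l_i^{\dagger}$ equals $\Lambda_i$ on the nose rather than a positive multiple on each isotypic piece), define $h(x,y)=S(x,C\,w_1w_2\,\bar y)$, where $w_a$ is the Weil element of the $a$-th $SL(2,\R)$; positivity of this form is the standard consequence of the polarization axioms, and then $l_i^{\dagger}=\Lambda_i$ follows from $\mathrm{Ad}(w_a^{-1})l_a=-\Lambda_a$ together with $S(l_ax,y)=-S(x,l_ay)$. Second, with this choice the vague phrase ``conjugation by an operator built out of $C$ and the $l_i,\Lambda_i$'' becomes the identity $\delta=(Cw_1w_2)^{-1}\,d\,(Cw_1w_2)$ (all operators involved are real, and $d$ is $S$-self-adjoint); conjugation by $C$ fixes $d$ and the $\Lambda_i$ because they are morphisms of Hodge structures, and $\mathrm{Ad}(w_1w_2)$ sends the highest weight vector of the $4$-dimensional module to its lowest weight vector, giving $\delta\propto[\Lambda_1,[\Lambda_2,d]]$ directly. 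With these two points made precise, your argument is a complete proof of the statement, of the same kind as the one in \cite{G-N} (and in \cite{Sa}), which the paper itself only quotes.
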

We assume the theorem and that we apply  to the weight spectral
sequence,
where $n$ = dim. $X$, as follows.\\
Let $K^{i,j,k}_{\C} = H^{i+j-2k+n}(Y^{2k - i + 1}, \C)(i-k)$, for
$k \geq sup \, (0 , i)$, and $K^{i,j,k}_{\C} = 0$ otherwise. Then
the residue induces an isomorphism  of $K^{i,j}_{\C} = \oplus_{k
\geq sup \, (0 , i)} K^{i,j,k}_{\C}$ with the terms of the
spectral sequence above: $ {}_WE_1^{r, q - r} \simeq K^{r,
q-n}_{\C}$. Since the special fiber $Y$ is projective, the
cup-product with an hyperplane section class  defines a morphism
$l_1 =  \smile c $ satisfying hard Lefschetz theorem on the
various smooth proper intersections $Y^i$ of the components of
$Y$,
 while $l_2$ is defined by the action of
 $N$ on $E_1$ deduced from the action of $\nu$ on the
complex $\Psi^u_Y$. The differential $d$ is defined on the terms
of the spectral sequence which are naturally polarized as
cohomology of smooth projective varieties. Then, all the
conditions to apply the above result on differential polarized
bigraded Hodge-Lefschetz structures are satisfied, so we can
deduce
\begin{cor} For  all $q, r \geq 0$ the endomorphism $N$ induces an
isomorphism\\ of HS
\[ N^r: Gr^W_{q+r}H^q({\widetilde X}^*, \Q)^u \to
Gr^W_{q-r}H^q({\widetilde X}^*, \Q)^u(-r)\]
\end{cor}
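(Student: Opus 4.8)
The plan is to recognize the first page of the weight spectral sequence of $\Psi^u_Y$ --- in the proper smooth case $Z=\emptyset$ of the Proposition, for which assertions i)--iii) have just been established --- as a differential polarized bigraded Hodge--Lefschetz structure, to apply the theorem quoted above (\cite{Sa}, \cite{G-N}) so as to pass to its cohomology ${}_WE_2$, and then to use the degeneration of this spectral sequence at rank $2$ to carry the resulting hard Lefschetz isomorphism for $N$ over to the weight graded of $H^q(\widetilde X^*,\Q)^u$.

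Concretely, I would let $L=\bigoplus_{i,j}K^{i,j}_\C$ be the bigraded space $K^{i,j,k}_\C$ described above and identify it, via the residue isomorphism of assertion ii) specialized to $Z=\emptyset$, with the $E_1$-term $\,{}_WE_1^{r,q-r}\,$ of the weight spectral sequence. Each summand is, up to a Tate twist, the cohomology of a smooth projective intersection $Y^J$ of components of $Y$, hence a pure Hodge structure; the polarizations of these $Y^J$ assemble to a polarization $S$ of $L$, the positivity coming from the Hodge--Riemann relations. The first Lefschetz operator is $l_1=\smile c$, cup product with the restriction to the $Y^J$ of a hyperplane class of the projective $X$: classical hard Lefschetz on each smooth projective $Y^J$ makes $l_1^i\colon K^{-i,j}_\C\to K^{i,j}_\C$ an isomorphism, and the projection formula gives $S(l_1x,y)+S(x,l_1y)=0$. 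The second operator is $l_2=N$, the endomorphism induced on $E_1$ by the map $\nu$ of the complex $\Psi^u_Y$; passing to hypercohomology in the complex-level isomorphism $Gr^W_i\Psi^u_Y\xrightarrow{\nu^i}Gr^W_{-i}\Psi^u_Y$ of assertion iii) shows $l_2^j\colon K^{i,-j}_\C\to K^{i,j}_\C$ is an isomorphism, and since the monodromy preserves the pairing, its logarithm $N$ satisfies $S(l_2x,y)+S(x,l_2y)=0$; moreover $[l_1,l_2]=0$ since $c$ is pulled back from $X$ while $\nu$ is multiplication by the new variable $U$. Finally $d=d_1$, the first differential of the weight spectral sequence: it raises the bidegree by $(1,1)$, is a morphism of Hodge structures, has $d^2=0$, commutes with $l_1$ and $l_2$, and is self-adjoint for $S$ because it is built from the Gysin and restriction maps among the $Y^J$, which are mutually adjoint for the cup-product pairings.

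Granting these verifications, $(L,l_1,l_2,S,d)$ is a differential polarized bigraded Hodge--Lefschetz structure, so the theorem yields that $(H^*(L,d),l_1,l_2,S)=({}_WE_2,l_1,l_2,S)$ is a polarized Hodge--Lefschetz structure; in particular $l_2=N$ satisfies hard Lefschetz on ${}_WE_2$. Since the weight spectral sequence degenerates at rank $2$ --- being isomorphic to the weight spectral sequence of a fibre $X_t$, $t\in D^*$ (cf.\ \cite{EI}) --- one has ${}_WE_2={}_WE_\infty$, which computes the weight graded pieces $Gr^W_\bullet H^q(\widetilde X^*,\C)^u$; reading the hard Lefschetz isomorphism for $l_2$ through this identification, and tracking the twist attached to $N\colon(V,W,F)\to(V,W,F)(-1)$, yields exactly $N^r\colon Gr^W_{q+r}H^q(\widetilde X^*,\C)^u\xrightarrow{\sim}Gr^W_{q-r}H^q(\widetilde X^*,\C)^u(-r)$, which is defined over $\Q$ because $N$ and $W$ are. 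The essential point --- and the main obstacle --- is that hard Lefschetz for $N$, available on $E_1$ by assertion iii), does not descend to $E_2$ by any formal device: one genuinely needs the companion operator $l_1$ and the polarization $S$ in the picture, and the real technical work is checking the Hodge--Lefschetz axioms, above all the self-adjointness of $d_1$ for $S$ (with its signs) and the alignment of the Tate twists in the $K^{i,j,k}_\C$ so that the output map has the asserted weight.
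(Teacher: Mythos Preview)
Your proposal is correct and follows essentially the same route as the paper: identify ${}_WE_1$ with the bigraded space $K^{i,j}_\C$ via the residue, take $l_1=\smile c$ and $l_2=N$, $d=d_1$, apply the Saito/Guill\'en--Navarro theorem on differential polarized Hodge--Lefschetz structures, and read off hard Lefschetz for $N$ on ${}_WE_2$. One small correction: your justification for degeneration at $E_2$ via comparison with the weight spectral sequence of a fibre $X_t$ is the argument for the $W^f$-spectral sequence in the open case; here, in the smooth proper case $Z=\emptyset$, degeneration of the $W$-spectral sequence at $E_2$ is simply the general fact for any mixed Hodge complex (Deligne's three-filtrations lemma), which holds because $(\Psi^u_Y,W,F)$ is a MHC.
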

This ends the proof in the smooth proper case.
\begin{rem}[Normal crossing divisor  case]
  Let $ X = \cup_{i \in I}X_i$   be embedded as a  NCD
 with  smooth irreducible components $X_i$, in a smooth  variety $V $
 projective over the
disc $D$,  such that the fiber $Y$ at $0$ and its union with $X$
is a NCD in $V$. Then the restriction of $f$
  to the intersections $X_J = \cap_{i\in J} X_i, \, J \subset I $,
   is a NCD $ Y_J \subset X_J$, and
  the limit MHC $\Psi^u_{Y_J}$  for various  $J,  {\emptyset \not= J \subset I}$,
   form a simplicial cohomological MHC on the semi-simplicial variety $X_*$
    defined by $X$.
  In this case the finite filtration $W^f$ on the direct image, coincides
  with the increasing filtration associated by change of indices to the canonical  decreasing
  filtration $L$ on the simplicial complex, that is $W^f_i = L^{-i}$,
   so that we can apply the general theory
  to obtain a cohomological LMHC defining the LMHS on the cohomology
$H^*(\widetilde X^*, \Q)$. This is an example of the
general singular case.\\
  If we add $X_{\emptyset} = V$ to the simplicial variety $X_*$
  we obtain the cohomology  with compact support of the general
  fiber of $V-X$ which is Poincar\'e dual to the cohomology of
  $V-X$,  the complement in $V$ of the NCD. This remark explain
   the parallel (in fact dual ) between  the logarithmic complex
   case and the  simplicial case.
  \end{rem}
\begin{proof}[ Proof of the proposition in the open smooth case] We consider
the maps \\
 $(\widetilde X^* - \widetilde Z^*) \xrightarrow {\widetilde j_Z}
       \widetilde X^* \xrightarrow {\widetilde j_Y} X$, then
 the assertion i) follows from the isomorphisms
\begin{equation*}
\begin{split}
&(\Psi^u_Y (Log Z), W^f) \simeq
 i_Y^* R \widetilde j_{Y,*}(\Omega^*_{\widetilde X^*}(Log \widetilde
 Z^*), W^{\widetilde Z^*})\\
  &\simeq i_Y^* R \widetilde j_{Y,*} (R \widetilde j_{Z,*}\C_{\widetilde X^* -
  \widetilde Z^*}, \tau) \simeq
  \Psi^u_f (R j_{Z,*}\C_{X^*-Z^*}, \tau)
\end{split}
\end{equation*}
Let $I_1 \subset I$ denotes the set of indices of the components
of $Z$, $Z^i$  the union of the intersections $Z^J$ for $J \subset
I_1, |J| = i$  and $a_{Z^i}: Z^i \to X$. The assertion ii) follows
from the corresponding  bifiltered  isomorphism residue along $Z$:
 \[ (Gr^{W^Z}_i \Omega^*_{X}(Log Y)(Log (Z), W, F)
 \simeq a_{Z^i,*}(\Omega^*_{Z^i}(Log Y \cap Z^i),W[i], F[-i]).\]
  More generally, we have residue
 isomorphisms $Res_Z$ and $Res_Y$ ( \cite {EI}, 3.3.2)
\begin{equation*}
\begin{split} Gr^W_m& (W^f_b/W^f_a)(\Psi^u_Y (Log Z), F)
\xrightarrow{Res_Z} Gr^W_{m-j}(\Psi^u_{Z^j \cap Y} [-j], F[-j])\\
&\xrightarrow{Res_Y} \oplus_{j\leq m+p, j\in [a, b], p \geq 0}
(\Omega^*_{Z^j \cap Y^{m-j+2p+1}}[-m-2p], F[-m-p])
\end{split}
\end{equation*}
where $Z^i\cap Y$ is the union of $Z^J \cap Y$, so we can deduce
the structure of LMHC from the proper case.\\
The isomorphism of complexes in the assertion iii),
 can be easily checked. While the assertion  iv) for a smooth
 proper $X \to D$,
is deduced via the above $Res_Z$ from the proper smooth projective
case
 $Z^j \to D$ for various $j$
as follows. The monodromy $\nu$ induces on the spectral sequence
the isomorphism for $p \leq 0$
\begin{equation*}(Gr^W_{i+b}({_{W^f}E_1^{p,i}}),
d_1)\xrightarrow{\nu^b} (Gr^W_{i-b}({_{W^f}E_1^{p,i}}), d_1)
 \end{equation*}
 which commutes with the differential $d_1$ equal to
  a Gysin morphism alternating with respect to
 the embeddings of components of $ Z^{-p}$ into
 $ Z^{-p-1}$. Since the isomorphism
\begin{equation*}\nu^b:(Gr^W_{2p+i+b}(H^{2p+i}( Z^{-p}, \C)_{p \in \Z},
Gysin)\xrightarrow{\sim} Gr^W_{2p+i-b}(H^{2p+i}( Z^{-p}, \C)_{p
\in \Z},Gysin)
 \end{equation*}
  has been checked in the proper case $ Z^{-p}$, we
deduce then iv)
\begin{equation*}
Gr^W_{i+b} Gr_i^{W^f}H^n( Y,\Psi^u_Y(Log Z))\xrightarrow{\nu^b}
Gr^W_{i-b} Gr_i^{W^f}H^n( Y,\Psi^u_Y(Log Z))
\end{equation*}
\end{proof}
\subsection{ Quasi-projective case }
  Let $f: V \to D$ be  quasi-projective. There exists a simplicial  smooth
  hypercovering of  $ V $  of the following type. First, we consider an
  extension into a
  projective morphism $f: X \to D$ by completing with $Z = X -V$,
then we consider    a  simplicial  smooth
  hypercovering   $ \pi: X_* \rightarrow X$  with $\pi_i :=
\pi_{|X_i}$  s.t. $Z_i :=
  \pi_i^{-1} (Z)$ consists of a NCD in $ X_i$. Let $f_i := f \circ \pi_i: X_i
  \to D$; we may suppose $Y_i$  and $Y_i \cup Z_i$  NCD in
  $X_i$ so to consider the simplicial cohomological limit MHC
$(\Psi^u_{Y_*}(Log Z_*), W^f, W, F)$ and
  its direct image $R \pi_*(\Psi^u_{Y_*}(Log Z_*), W^f, W, F)$ on
$X$, then the theorem results from the following proposition (
\cite{EI}, 3.26, 3.29)
 \begin{prop} The tri-filtered complex
 \begin{equation*}
 R \pi_*(\Psi^u_{Y_*}(Log Z_*), W^f, W, F)
 \end{equation*}
  satisfy the following
 properties\\
 i) Let $k_Z: Y-(Z\cap Y) \to Y, j_Z: X^* - Z^* \to X^*$, then
  there  exists  natural quasi-isomorphisms
\begin{equation*}R\pi_* \Psi^u_{Y_*}(Log Z_*)
 \xrightarrow{\sim} R k_{Z,*} (k_Z^* \Psi^u_f
(\C))\xrightarrow{\sim}\Psi^u_f (R j_{Z,*}\C_{X^*-Z^*})
\end{equation*}
ii) The graded part for $W^f$ is expressed in terms of the
cohomological limit MHC for the various  smooth maps $(X_i- Z_i)
\to D$
\begin{equation*}
\begin{split}
Gr_{-p}^{W^f} R\pi_*(\Psi^u_{Y_*}(Log Z_*), W, F) &\simeq \oplus_i
R\pi_{i,*}(Gr_{i-p}^{W^{Z_i}}\Psi^u_{X_i}(Log Z_i)[-i], W[-i],
F) \simeq \\
&\oplus_i R \pi_{i,*}(\Psi^u_{Z_i^{i-p} \cap Y_i}[p-2i], W[-p],
F[p-i])
\end{split}
\end{equation*}
\noindent The spectral sequence with respect to $W^f$ is given by
the  twisted LMHS on the cohomology of
 $({\widetilde Z}_i^{i-p})^* = Z_i^{i-p}\times_D
{\widetilde D}^*$
\begin{equation*}
\begin{split}
 &{_{W^f}}E^{p,q}_1 (R\Gamma (Y, R\pi_*(\Psi^u_{Y_*}(Log Z_*), W^f, W,
F)) \simeq \\
&\oplus_i \H^{p+q}(Z_i^{i-p} \cap Y_i, (\Psi^u_{Z_i^{i-p} \cap
Y_i}[p-2i], W[-p], F[p-i]))\simeq \\
&\oplus_i (H^{2p+q-2i}((\widetilde Z_i^{i-p})^*, \C)^u, W, F)(p-i)
\end{split}
\end{equation*}
iii) The monodromy $\nu$ shift the weight by $-2$: $\nu (W_i
\Psi^u_Y) \subset W_{i-2} \Psi^u_Y$ and preserves $W^f$. It
induces an isomorphism
\begin{equation*} \nu^i: Gr^W_{a+i}Gr^{W^f}_a R\pi_* \Psi^u_{Y_*}(Log Z_*)
 \xrightarrow {\sim}
Gr^W_{a-i}Gr^{W^f}_a R\pi_*\Psi^u_{Y_*}(Log Z_*)
\end{equation*}
iv)  The induced iterated monodromy action ${ \nu^i}$ defines an
isomorphism
 \begin{equation*} {\nu}^i: Gr^W_{a+i}Gr^{W^f}_a H^n( Y,
R\pi_*\Psi^u_{Y_*}(Log Z_*)) \xrightarrow {\sim}
Gr^W_{a-i}Gr^{W^f}_a H^n( Y,R \pi_*(\Psi^u_{Y_*} (Log Z_*))
\end{equation*}
\end{prop}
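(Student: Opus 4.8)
The strategy is to reduce the whole statement to the smooth case of \S2.5, applied term by term to the morphisms $f_i:X_i\to D$ of the simplicial hypercovering, and then to assemble the pieces by means of the diagonal direct image machinery of \S2.3 (only finitely many $X_i$ being needed, all the simplicial sums below are finite and the filtrations stay regular). I would establish (i) first. By part (i) of the Proposition of \S2.5 applied to $f_i$, on each $X_i$ the complex $\Psi^u_{Y_i}(Log Z_i)$ is quasi-isomorphic to $Rk_{Z_i,*}(k_{Z_i}^*\Psi^u_{f_i}(\C))$ and to $\Psi^u_{f_i}(Rj_{Z_i,*}\C_{X_i^*-Z_i^*})$. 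Since each $\pi_i$ is proper, $R\pi_{i,*}$ commutes with the nearby cycle functor and with $Rj_{Z,*}$, so that $R\pi_*\Psi^u_{Y_*}(Log Z_*)\simeq \Psi^u_f(R\pi_*Rj_{Z_*,*}\C)$; cohomological descent for the hypercovering $V_*\to V$ (restricted over $D^*$ and pulled back to $\widetilde D^*$) identifies $R\pi_*Rj_{Z_*,*}\C$ with $Rj_{Z,*}\C_{X^*-Z^*}$, which gives the right-hand quasi-isomorphism of (i); the middle one follows from the compatibility of $\Psi^u_f$ with the open pushforward $k_{Z,*}$. Here it is essential that $Y_i\cup Z_i$ is a NCD in $X_i$ with $Y_i$ and $Z_i$ having no common component, so that the various nearby cycle complexes match up.

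For (ii) I would feed $K=\Psi^u_{Y_*}(Log Z_*)$ into the Lemma of \S2.3, which computes the $W$- and $W^f$-graded pieces of the diagonal direct image $R\pi_*K$ as a direct sum over the simplicial degree of shifted graded pieces of the simplicial terms. On each simplicial term $W^f$ restricts to the weight filtration $W^{Z_i}$ along $Z_i$, so the $W^f$-graded pieces are computed by the residue $Res_{Z_i}$ of part (ii) of the Proposition of \S2.5, identifying $Gr^{W^{Z_i}}_{i-p}\Psi^u_{X_i}(Log Z_i)$ with $\Psi^u_{Z_i^{i-p}\cap Y_i}$ up to the shift and twist displayed. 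Summing over the simplicial degree with the shifts $[-i]$ prescribed by the diagonal filtration $\delta(W,L)$ yields the formula for $Gr^{W^f}_{-p}R\pi_*(\cdots)$; taking hypercohomology over $Y$, equal to the simplicial hypercohomology over $Y_*$, and invoking the smooth-case identification of unipotent nearby cycle cohomology with the twisted cohomology of $(\widetilde Z_i^{i-p})^*$, gives the asserted ${}_{W^f}E^{p,q}_1$. For (iii), the endomorphism $\nu$ acts on $\Psi^u_{Y_*}(Log Z_*)$ term by term as in \S2.5, shifting the internal bidegree by $(1,-1)$, hence shifting $W$ by $-2$ and preserving each $W^{Z_i}=W^f$; by functoriality it induces an endomorphism of $R\pi_*$ shifting $\delta(W,L)$ by $-2$ and preserving $\delta(W^f,L)$, and the iterated isomorphism $\nu^i$ on $Gr^W Gr^{W^f}$ of the complex is the direct sum over the simplicial degree of the isomorphisms of part (iii) of the Proposition of \S2.5. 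That $\nu$ induces the logarithm of the geometric monodromy on $H^*(\widetilde V^*,\C)^u$ follows from the corresponding statement on each $X_i$ together with the compatibility of cohomological descent with the monodromy action.

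The main obstacle is (iv): the complex-level isomorphism (iii) does not descend to cohomology in a formal way, since the weight spectral sequence for $W$ need not degenerate and the property characterizing $W$ as the relative monodromy filtration of $\nu$ with respect to $W^f$ on the abutment is genuinely non-formal — this is exactly the point where the degeneration theory needs an input beyond homological algebra. I would argue as in \S2.6. The $W^f$-spectral sequence degenerates at $E_2$ (identifying it, as in \cite{EI}, with the weight spectral sequence of a general fiber $X_t-Z_t$), so that $Gr^{W^f}_a H^n(Y,R\pi_*(\cdots))\simeq {}_{W^f}E^{-a,n+a}_2$ is the $d_1$-cohomology of ${}_{W^f}E^{\bullet,\bullet}_1$. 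By (ii) this $E_1$-term is a direct sum of twisted cohomology groups of the smooth projective varieties $Z_i^{i-p}\cap Y_i$, carrying the cup product $l_1$ with a hyperplane section class (which satisfies hard Lefschetz) and the induced monodromy $l_2=N$, while $d_1$ is built from the Gysin maps along the inclusions $Z_i^{i-p}\subset Z_i^{i-p-1}$ and the simplicial face maps; together with the natural polarizations these form a differential bigraded polarized Hodge-Lefschetz structure. Applying the Hodge-Lefschetz theorem of \S2.6 shows that its $d_1$-cohomology is again polarized Hodge-Lefschetz; $\nu$ acting on $E_1$ commutes with $d_1$, so its induced action on $E_2$ is the operator $l_2$ of that structure, and reading off the Lefschetz isomorphisms for $l_2$ gives precisely (iv). The same argument also yields that $R\pi_*(\Psi^u_{Y_*}(Log Z_*),W^f,W,F)$ is a cohomological LMHC, whence $W$ is the relative monodromy filtration of $\nu$ with respect to $W^f$, the geometric existence assertion one is after. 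The delicate points, which I would carry out following \cite{EI}, are the bookkeeping of the multi-grading — the two weight filtrations $W$ and $W^f$, the Hodge filtration $F$, the simplicial degree and the two Lefschetz operators — and the verification that $d_1$ is a bona fide Hodge-Lefschetz differential, of bidegree $(1,1)$, squaring to zero, commuting with $l_1$ and $l_2$ and self-adjoint for the polarizations, which relies on the explicit residue and Gysin maps of \S2.5.
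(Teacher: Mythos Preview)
Your plan is correct and follows essentially the same route as the paper: reduce everything to the smooth case of \S2.5 applied termwise on the hypercovering, assemble via the diagonal direct image of \S2.3, and obtain (iv) by feeding the $E_1$-page of the $W^f$-spectral sequence, realized as a double complex with Gysin and simplicial differentials, into the polarized Hodge--Lefschetz machinery of \S2.6. The paper's own proof is terser---it simply records the double complex structure of $d_1$ (Gysin in one direction, simplicial face maps in the other) and then says ``(iv) is deduced from the smooth case above''---but the content is the same as what you spell out; in particular your observation that (iv) cannot be read off formally from (iv) on each $X_i$ and genuinely requires re-running the Hodge--Lefschetz argument on the combined $d_1$ is the right diagnosis of where the work lies.
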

The proof is by  reduction to the smooth proper case, namely the
various intersections $Z_i^{j}$ of the components  of the NCD
$Z_i$ in $X_i$ as in the smooth open case. The spectral sequence
is expressed as a double complex as in the case of the diagonal
direct image in general. In particular the differential $ d_1:
{_{W^f}}E^{n-i-1,i}_1 \to {_{W^f}}E^{n-i,i}_1$ is written in terms
of alternating Gysin maps  associated to $(\widetilde
Z_j^{i+j-n})^* \to (\widetilde Z_j^{i+j-n-1})^*$ and simplicial
maps $d'$ associated to $(\widetilde Z_j^{i+j-n})^* \to
(\widetilde Z_{j-1}^{i+j-n})^*$  in the double complex  (\cite
{HIII}, 8.1.19) and (\cite {EI},3.30.1) written as
$$
\begin{array}{ccccc}
      H^{2n-2j -i}((\widetilde Z_j^{i+j-n})^*, \C)^u&\xrightarrow {\d = Gysin} &
 H^{2n+2 -2j -i}((\widetilde Z_j^{i+j-n-1})^*, \C)^u  \\
      \uparrow d' &&\uparrow d' \\
H^{2n -2j -i}((\widetilde Z_{j-1}^{i+j-n+1})^*, \C)^u &
\xrightarrow{\d = Gysin} & H^{2n+ 2 -2j -i}((\widetilde
Z_{j-1}^{i+j-n-1})^*, \C)^u
\end{array}
$$
The isomorphism iii) follows from the same property on each $X_i$
while the isomorphism iv) is deduced from the smooth case above.
\subsection{Alternative construction, existence and uniqueness}
We deduce the limit structure on cohomology of a quasi projective
family from the case of a relative open NCD in a projective smooth
family. Instead of general simplicial variety the result follows
from the simplicial variety defined by this special case.
 \subsubsection{Hypothesis} Let $f: X \to D$ be a projective
 family, $i_Z: Z \to X$ a closed embedding and $i_X: X \to P_D$
 a closed embedding in a smooth family $h$ of projective spaces over
 a disc  $D$
 s.t. $ h \circ i_X = f$. By Hironaka desingularization we
 construct diagrams
 $$\begin{array}{ccccc}
    Z''_0 & \rightarrow & X''_0 & \rightarrow & P''_0 \\
     \downarrow & &\downarrow  &  &\downarrow  \\
    Z'_0 & \rightarrow & X'_0 & \rightarrow & P'_0 \\
     \downarrow & & \downarrow  &  & \downarrow  \\
     Z_0 & \rightarrow& X_0 & \rightarrow & P_0  \\
      & \searrow & \downarrow & \swarrow &   \\
      &  & 0 &  &   \\
   \end{array}
  \quad \quad \begin{array}{ccccc}
     Z'' & \rightarrow & X'' & \rightarrow & P''_D \\
     \downarrow & &\downarrow  &  &\downarrow  \\
    Z' & \rightarrow & X' & \rightarrow & P'_D \\
     \downarrow & & \downarrow  &  & \downarrow  \\
     Z & \rightarrow& X & \rightarrow & P_D  \\
      & \searrow & \downarrow & \swarrow &   \\
      &  & D &  &   \\
   \end{array}$$
first by blowing up centers over $Z$ so to obtain a smooth space
$p: P'_D \to P_D$ such that $P'_0:= p^{-1}(P_0)$,  $Z':=
p^{-1}(Z)$ and  $P'_0 \cup Z'$ are all NCD; set  $X':= p^{-1}(X)$,
then $$p|:X'-Z' \xrightarrow{\sim} X-Z, \quad p|:P'_D - Z'
\xrightarrow{\sim} P_D - Z$$
 are isomorphisms since the modifications are all over $Z$.
 Next, by blowing up centers over $X'$ we obtain a smooth space
$q: P''_D \to P'_D$ such that $X'':= q^{-1}(X')$, $P''_0:=
q^{-1}(P'_0)$, $Z'':= q^{-1}(Z')$ and  $P''_0 \cup X''$ are all
NCD, and $q|:P''_D - X'' \xrightarrow{\sim} P'_D - X'$ is an
isomorphism. For $D$ small enough, $X'', Z''$ and  $Z'$ are
relative NCD over $D^*$. Hence we deduce the diagrams
$$\begin{array}{ccccc}
    X'' - Z'' & \xrightarrow{i''_X} & P''_D - Z'' &
     \xleftarrow{j''} & P''_D - X''\\
     q_X\downarrow & &q\downarrow  &  &q\downarrow\!\wr  \\
   X' - Z' & \xrightarrow{i'_X} & P'_D - Z' & \xleftarrow{j'} & P'_D - X'\\
   \end{array} $$
Since all modifications are above $X'$, we still have an
isomorphism induced by $q$ at right.  For dim.$P_D = d$ and all
integers $i$, the morphism $q^*: H^{2d-i}_c( P''_D - Z'', \Q) \to
H^{2d-i}_c( P'_D - Z', \Q)$ is well defined on cohomology with
compact support since $q$ is proper; its Poincar\'e dual is called
the trace morphism $Tr q: H^i( P''_D - Z'', \Q) \to H^i( P'_D -
Z', \Q)$ and satisfy the relation $ Tr q\, \circ \, q^*\, = \, Id
$. Moreover, the trace morphism is defined as a morphism of
sheaves $q_* \Z_{P''_D - Z''}
 \to \Z_{P'_D - Z'}$ \cite{V2} , hence an induced trace morphism
 $(Tr q){|(X''-Z'')}: H^i( X'' - Z'', \Q) \to H^i( X' - Z', \Q)$ is well
 defined. Taking the inverse image on a universal covering
 $ \widetilde D^*$, we get a diagram of universal fibers
$$ \begin{array}{ccccc}
  (\widetilde  X'' - \widetilde Z'')^* & \xrightarrow{i''_X}
   &(\widetilde P''_D - \widetilde Z'')^* &
     \xleftarrow{j''} & (\widetilde P''_D -\widetilde  X'')^* \\
     q_X\downarrow & &q\downarrow  &  & q\downarrow\!\wr \\
  (\widetilde X' - \widetilde Z')^* & \xrightarrow{i'_X}
  &(\widetilde P'_D - \widetilde Z')^* & \xleftarrow{j'}&
  (\widetilde  P'_D -\widetilde   X')^*\\
   \end{array}$$
\begin{prop} With the  notations of the above diagram, we have
short exact sequences
\begin{equation*}
\begin{split}
0 \to H^i((\widetilde P''_D - \widetilde Z'')^*, \Q)
\xrightarrow{(i''_X)^* - Tr q}&
 H^i(\widetilde X'' - \widetilde Z'')^*, \Q)\oplus
  H^i(\widetilde P'_D - \widetilde Z')^*, \Q) \\
  & \xrightarrow{(i'_X)^* - (Tr q){|(X''- Z'')}}
 H^i(\widetilde X' - \widetilde Z')^*, \Q) \to 0\\
 \end{split}
\end{equation*}
\end{prop}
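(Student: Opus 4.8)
The plan is to recognise the square with rows $X''-Z''\xrightarrow{i''_X}P''_D-Z''$ and $X'-Z'\xrightarrow{i'_X}P'_D-Z'$ and vertical maps $q_X,q$ as a blow-up square, and to extract from it a Mayer--Vietoris triangle which the trace morphism of \cite{V2} then splits; this is the standard descent mechanism for a proper modification, of the kind used throughout \cite{HIII}. By construction $q\colon P''_D\to P'_D$ is a composition of blow-ups with centres over $X'$, so $q$ is proper, $Z'\subset X'$, $X''=q^{-1}(X')$, $Z''=q^{-1}(Z')$, and $q$ restricts to an isomorphism $P''_D-X''\xrightarrow{\sim}P'_D-X'$; deleting the divisors keeps this, so the square is cartesian, $q$ is proper on it, and $q$ is an isomorphism over the open complement $(P'_D-Z')-(X'-Z')=P'_D-X'$. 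Base changing the whole square along the universal cover $\widetilde D^*\to D^*$ preserves all of these properties (a cartesian square stays cartesian, a proper map proper, an isomorphism an isomorphism), and the trace morphism $\mathrm{Tr}\,q\colon q_*\Z_{P''_D-Z''}\to\Z_{P'_D-Z'}$ pulls back to the trace of the base-changed $q$, still satisfying $\mathrm{Tr}\,q\circ q^*=\mathrm{Id}$ and, after the further base change along $i'_X$, $(\mathrm{Tr}\,q)|\circ q_X^*=\mathrm{Id}$. The operations used below are purely topological, so they apply to the universal fibres although these are not algebraic, and all cohomology groups in sight are finite dimensional since $\widetilde D^*$ is contractible.

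Next I would write the Mayer--Vietoris triangle of the base-changed blow-up square. With $i\colon(\widetilde X'-\widetilde Z')^*\hookrightarrow(\widetilde P'_D-\widetilde Z')^*$ the closed embedding and $U$ its open complement, over which $q$ is an isomorphism, there is on $(\widetilde P'_D-\widetilde Z')^*$ a distinguished triangle
\[
\Q_{(\widetilde P'_D-\widetilde Z')^*}\longrightarrow i_*\Q_{(\widetilde X'-\widetilde Z')^*}\oplus Rq_*\Q_{(\widetilde P''_D-\widetilde Z'')^*}\longrightarrow i_*Rq_{X,*}\Q_{(\widetilde X''-\widetilde Z'')^*}\xrightarrow{+1}.
\]
It exists because the square is cartesian and $q$ is an isomorphism over $U$: by proper base change the cones of $\Q\to i_*\Q_{(\widetilde X'-\widetilde Z')^*}$ and of $Rq_*\Q_{(\widetilde P''_D-\widetilde Z'')^*}\to i_*Rq_{X,*}\Q_{(\widetilde X''-\widetilde Z'')^*}$ are both concentrated on $(\widetilde X'-\widetilde Z')^*$ and are canonically identified there. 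Taking $R\Gamma$ gives the long exact sequence relating $H^i$ of $(\widetilde P'_D-\widetilde Z')^*$, of $(\widetilde X'-\widetilde Z')^*\oplus(\widetilde P''_D-\widetilde Z'')^*$ and of $(\widetilde X''-\widetilde Z'')^*$, with maps $\big((i'_X)^*,\,q^*\big)$ and $q_X^*-(i''_X)^*$.

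Then the trace morphism splits this. It is a retraction of $q^*\colon\Q_{(\widetilde P'_D-\widetilde Z')^*}\to Rq_*\Q_{(\widetilde P''_D-\widetilde Z'')^*}$ and, by functoriality of the trace, restricts along $i$ to a retraction of $q_X^*$, so $Rq_*\Q_{(\widetilde P''_D-\widetilde Z'')^*}\cong\Q_{(\widetilde P'_D-\widetilde Z')^*}\oplus C_1$ and $Rq_{X,*}\Q_{(\widetilde X''-\widetilde Z'')^*}\cong\Q_{(\widetilde X'-\widetilde Z')^*}\oplus C_2$ with $C_1\cong i_*C_2$; hence all connecting homomorphisms of the long exact sequence vanish and it breaks into short exact pieces. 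The base-change identity $(i'_X)^*\circ\mathrm{Tr}\,q=(\mathrm{Tr}\,q)|\circ(i''_X)^*$ then rewrites the resulting short exact sequence in the stated form, the left map being restriction to $\widetilde X''-\widetilde Z''$ together with the trace $\mathrm{Tr}\,q$, and the right map being restriction from $\widetilde P'_D-\widetilde Z'$ minus the relative trace $(\mathrm{Tr}\,q)|$, signs being chosen so that the composite vanishes.

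For the exactness: injectivity on the left is immediate from $\mathrm{Tr}\,q\circ q^*=\mathrm{Id}$, surjectivity on the right from $(\mathrm{Tr}\,q)|\circ q_X^*=\mathrm{Id}$, and exactness in the middle follows by a dimension count once one knows $H^i(C_1)\cong H^i(C_2)$ (which holds since $C_1\cong i_*C_2$ and $i$ is a closed embedding). I expect the only genuine obstacle to be establishing the identification $C_1\cong i_*C_2$ of the exceptional parts upstairs and downstairs: it is precisely here that the cartesianness of the square and the isomorphism $q|\colon P''_D-X''\xrightarrow{\sim}P'_D-X'$ are used, and verifying that this input is preserved after base change to the non-algebraic universal cover is the one point that requires care.
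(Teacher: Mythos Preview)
Your argument is correct and follows the same strategy as the paper: produce a Mayer--Vietoris long exact sequence from the blow-up square (using that $q$ is a proper modification which is an isomorphism over the complement of $X'-Z'$), then use the trace identities $\mathrm{Tr}\,q\circ q^*=\mathrm{Id}$ and $(\mathrm{Tr}\,q)|\circ q_X^*=\mathrm{Id}$ to break it into short exact pieces.

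The only difference is one of packaging. You build the distinguished triangle with the \emph{pullback} maps $q^*,q_X^*$ and then, in a separate step, rewrite the resulting short exact sequences in terms of the \emph{trace} maps; this forces you to reprove injectivity, surjectivity, and middle exactness by a dimension count. The paper instead asserts a long exact sequence already containing the maps $(i''_X)^*-\mathrm{Tr}\,q$ and $(i'_X)^*-(\mathrm{Tr}\,q)|$ (this is the Mayer--Vietoris triangle built from the trace morphisms $\mathrm{Tr}\,q$ and $(\mathrm{Tr}\,q)|$, whose cones are supported on $X'-Z'$ and agree there by proper base change of the trace). Once that sequence is in hand, the trace identities give surjectivity of the second map directly, hence vanishing of the connecting homomorphisms, and the splitting into short exact sequences is immediate; no separate dimension argument is needed. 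Your route is slightly longer but equally valid, and your care about the base change to the non-algebraic universal cover is well placed and handled correctly.
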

Since we have a vertical  isomorphism $q$ at right of the above
diagram, we deduce a long exact sequence of cohomology spaces
containing the sequences of the proposition; the injectivity of
$(i''_X)^* - Tr q$ and  the surjectivity of $(i'_X)^* - (Tr
q){|X''-Z''}$ are  deduced from $ Tr q \circ q^* = Id $ and $ (Tr
q){|X''- Z''} \circ q^*{|X'- Z'} = Id $, hence the long exact
sequence of cohomology deduced from the diagram splits into short
exact sequences.
\begin{cor} The cohomology $H^i((\widetilde X -
\widetilde Z)^*, \Z)$,  is isomorphic to $H^i((\widetilde X' -
\widetilde Z')^*, \Z)$ since $X-Z \simeq X'-Z'$, carry the limit
MHS isomorphic to  the cokernel of $(i''_X)^* - Tr q$ acting as a
morphism of limit MHS.
\end{cor}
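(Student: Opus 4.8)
The plan is to realise every cohomology space appearing in the diagram by a cohomological limit mixed Hodge complex (LMHC), to verify that $(i''_X)^*$ and $Tr\,q$ are morphisms of such complexes, and then to identify the limit MHS on $H^i((\widetilde X-\widetilde Z)^*)$ with the cokernel of their difference.

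First I would attach LMHC's to the building blocks. The families $P''_D\to D$ and $P'_D\to D$ are smooth and projective, with $Z''$ and $Z'$ relative NCD's at infinity and with $P''_0\cup Z''$, $P'_0\cup Z'$ NCD's; so the Theorem of \S2.5, applied to $P''_D$ and to $P'_D$, produces cohomological LMHC's $\Psi^u_Y(Log\,Z)$ computing the limit MHS on $H^i((\widetilde P''_D-\widetilde Z'')^*)$ and on $H^i((\widetilde P'_D-\widetilde Z')^*)$. The subvarieties $X''\subset P''_D$ and $X'\subset P'_D$ are NCD's, with $P''_0\cup X''$ (and the analogous union) again NCD's, so each defines a semi-simplicial smooth projective variety over $D$; applying the construction of \S2.7 to these, together with the induced relative NCD's $Z''_*$, $Z'_*$ and the special fibres, one obtains simplicial cohomological LMHC's whose diagonal direct images $R\pi_*\Psi^u_{Y_*}(Log\,Z_*)$ compute the limit MHS on $H^i((\widetilde X''-\widetilde Z'')^*)$ and on $H^i((\widetilde X'-\widetilde Z')^*)$, by the Proposition of \S2.7. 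This is exactly the point of the section: the general quasi-projective situation is reduced to smooth projective families and to NCD subvarieties of them, both already treated.

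Next I would check that $(i''_X)^*$ and $Tr\,q$ are morphisms of cohomological LMHC's. The restriction $(i''_X)^*$ is induced at the complex level by pullback of logarithmic forms along $X''\hookrightarrow P''_D$, hence is compatible with $W^f$, with the weight $W$ (diagonalised over the simplicial index), with $F$ and with the endomorphism $\nu$. The trace $Tr\,q$ is the Poincar\'e dual of $q^*$ for the modification $q\colon P''_D-Z''\to P'_D-Z'$ (proper since $Z''=q^{-1}(Z')$, birational, so $\dim P''_D=\dim P'_D$); it therefore has weight $0$, needs no Tate twist, and commutes with the monodromy because $q^*$ does, by functoriality of the fibration over $D^*$. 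Realising $Tr\,q$ as a morphism of sheaves $q_*\Z\to\Z$ \cite{V2} and pushing it through the nearby-cycle functor $\Psi^u_f$ and the logarithmic/Steenbrink complexes yields a morphism of cohomological LMHC's respecting $W^f$, $W$, $F$ and $\nu$; restricting along $X''-Z''$ gives $(Tr\,q)|_{X''-Z''}$ likewise. Hence $(i''_X)^*-Tr\,q$ is a morphism of cohomological LMHC's, and its mixed cone $C$ — taken with the diagonal weight of \S2.3 — is again one.

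Finally I would extract the limit MHS. By the Proposition preceding the corollary, $(i''_X)^*-Tr\,q$ is injective on $H^i$ and $(i'_X)^*-(Tr\,q)|_{X''-Z''}$ is surjective, the splittings coming from $Tr\,q\circ q^*=Id$ and $(Tr\,q)|_{X''-Z''}\circ q^*|_{X'-Z'}=Id$; so the long exact sequence of the cone $C$ degenerates into the short exact sequence of that Proposition, which gives $H^i(C)\simeq H^i((\widetilde X'-\widetilde Z')^*)$ as limit MHS, with $H^i(C)=\coke\big((i''_X)^*-Tr\,q\big)$ computed in the additive category of limit MHS of the Remark on the category of limit MHS. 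Since the modification $p$ induces an isomorphism of varieties $X'-Z'\xrightarrow{\sim}X-Z$, hence of the families over $D^*$, it induces an isomorphism of limit MHS $H^i((\widetilde X-\widetilde Z)^*)\simeq H^i((\widetilde X'-\widetilde Z')^*)$, and composing yields the assertion. The \emph{main obstacle} is the middle step: showing that $Tr\,q$ really lifts to a morphism of limit mixed Hodge complexes — compatibly not only with $W$ and $F$ but with the finite weight filtration $W^f$ and the monodromy $\nu$ — and, since the category of limit MHS is not abelian ($W^f$ need not be strict), that the cokernel formed there is the one matching the limit MHS of \S2.7; this rests on the strictness of $W$ and $F$ on each $Gr^{W^f}$-piece and on the relative monodromy filtration $W$ passing to the subquotients occurring in the short exact sequence.
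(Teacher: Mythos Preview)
Your approach is essentially correct and parallels the paper's, but you work harder than necessary at the crucial step you yourself flag as the main obstacle.

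The paper does not lift $Tr\,q$ and $(i''_X)^*$ all the way to morphisms of cohomological LMHC's and then take a mixed cone. Instead it argues directly on cohomology: the left term of the short exact sequence carries a limit MHS as the complement of the relative NCD $Z''$ in the smooth projective family $P''_D\to D$ (the open smooth case of \S2.5), and the middle term is the sum of two such structures, the second being the NCD case of the Remark in \S2.6 applied to $X''\subset P''_D$ with $Z''$ removed. Both maps $(i''_X)^*$ and $Tr\,q$ are then morphisms of limit MHS on cohomology (functoriality of $\Psi^u_f$ for proper maps, plus Poincar\'e duality for the trace), so the cokernel is computed in the additive category of limit MHS.

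For the point you identify as delicate --- that this cokernel really is the limit MHS and not something weaker, given that the category of limit MHS is not abelian because $W^f$ need not be strict --- the paper's argument is short and different from what you sketch. The short exact sequence of the Proposition is not special to the limit: by the same trace/restriction identities it is isomorphic, for every $t\in D^*$, to the analogous short exact sequence of ordinary MHS on the cohomology of the fibers $(P''_D-Z'')_t$, $(X''-Z'')_t\oplus(P'_D-Z')_t$, $(X'-Z')_t$. On these fibers $W^f$ is nothing but Deligne's weight filtration, hence strict. Thus the sequence is strict for $W^f$ as well as for $W$, it stays exact after applying $Gr^W_*Gr^{W^f}_*$, and the quotient inherits the full limit MHS structure (including the relative monodromy condition on $W$ with respect to $W^f$) from the first two terms. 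This fiberwise comparison replaces the complex-level cone argument you propose and makes the strictness of $W^f$ transparent.
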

 The cokernel is defined in the additive category of limit MHS.
 We remark here that the exact sequence is strict not only for the weight
 $W$, but also for $W^f$ since
 it is isomorphic to a similar exact sequence for each fiber at a
 point $t \in D^*$, where $W^f$ is identified with the weight filtration
 on the respective cohomology  groups over the fiber at $t$. Hence the sequence remains exact
 after taking the graded part $Gr^W_*Gr^{W^f}_*$ of each term.
   The left term carry a
limit MHS as the special case of the complementary of the  NCD:
$Z'' \to D$ over $D$ into the  smooth proper variety $P''_D \to D$
, while the middle term is the complementary of the intersection
of the NCD: $Z'' \to D$ over $D$ with the NCD: $X'' \to D$ over
$D$. Both cases can be treated by the above special cases without
the general theory of simplicial varieties. Hence we deduce a the
limit MHS at right as a quotient. This shows that the limit
structure is uniquely defined by the construction on NCD and
dually the logarithmic case for smooth families.
\section{Admissible variation of mixed Hodge structure}
The degeneration properties of  VMHS of geometric origin on a
punctured disc are not necessarily satisfied for general VMHS as
it has been the case for VHS with the results of Schmid. The
notion of admissible VMHS  introduced in \cite{S-Z} over a disc,
assume all the degeneration properties of the geometric case
satisfied by definition. Such definition has been extended in
\cite{K} to analytic spaces and is satisfactory for natural
operations such as the direct image by a projective morphism  of
varieties \cite{Sa}. We mention here the main local properties of
admissible VMHS over the complement of a normal crossing divisor
(DCN) proved by Kashiwara in \cite{K}.

 As an application of this concept we describe a natural MHS on the cohomology of an
  admissible VMHS. In this setting we recall the definition
  of normal
functions and we explain recent results on the algebraicity of the
zero set of normal functions to answer a question raised by
Griffiths and Green.\\
 The results apply in
general for a VMHS with quasi-unipotent local monodromy at the
points of degeneration of the NCD , however we  assume the local
monodromy unipotent, to simplify the exposition and the proofs.

 \subsection{Definition and results}  We consider  a VMHS
$(\LL, W, F)$ on the complement $X^*$ of a NCD in an analytic
manifold $X$  with  unipotent local monodromy and we denote by
$(\LL_X, \nabla)$ Deligne's canonical extension of $\LL \otimes
\OO_{X^*}$ into an analytic vector bundle  on $X$ with a flat
connection having logarithmic singularities \cite {HI}.
 The filtration by sub-local systems $W$
define a filtration by canonical extensions of $W \otimes
\OO_{X^*}$, sub-bundles of $ \LL_X$, denoted $ \WW_X$.
 The graded object $Gr_k^{\WW_X}{\LL_X}$ is the
canonical extension of $Gr_k^W\LL$ and we know that the Hodge
filtration by sub-bundles extends on $Gr_k^{\WW_X}\LL_X$ by
Scmid's result \cite{Sc}.
\begin{defn}(\cite{S-Z}, 3.13)
 A graded-polarizable variation of mixed Hodge structure
$(\LL,W,\FF_{D^*})$ over the punctured unit disc $D^*$ with local
 monodromy $T$, is called {\it pre-admissible} if\\
i) The Hodge filtration $\FF_{D^*}\subset \LL_{D^*}$ extends to a
filtration $\FF_D$ of $\LL_D$ by sub-bundles inducing for each $k$
on
$Gr_k^{\WW_D}\LL_D$, Schmid's extension of the Hodge filtration.\\
 ii) Let $W^0 := \WW_D(0)$, $F_0 := \FF_D(0)$  denote the
 filtrations
  of the fibre $L_0:= \LL_X(0)$
  at $0 \in D$, $T$ the local monodromy at $0$, $N=\log T$, then
the following coonditions are satisfied: $N F_0^p \subset F_0^{p-1}$ for all $p \in \Z$ and  the weight
filtration $M(N,W^0)$ relative to $W^0$ exists.
\end{defn}
Notice that the extension of the filtration $\FF_{D^*}$ to $\LL_D$
cannot be deduced in general from the various Schmid's extensions
to $Gr_k^{\WW_D}\LL_D$.

  We remark that the filtrations $M := M(N,W^0)$ and $F_0$ at the origin
  define a MHS:
\begin{lem}[Deligne]
The data $(L_0,M,F_0)$ defined at the origin by the pre-admissible
VMHS: $(\LL,W,\FF)$ over $D^*$ in $D$, is a MHS.

 The endomorphism
$N$ is compatible with the MHS of type $(-1,-1)$.
\end{lem}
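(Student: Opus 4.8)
The aim is to establish that $F_0$ induces a pure Hodge structure of weight $m$ on $\mathrm{Gr}^M_m L_0$ for every $m$; granting this, the assertion that $N$ is a morphism of type $(-1,-1)$ is immediate, since $N M_i\subset M_{i-2}$ by the defining property of the relative monodromy filtration, $N F_0^p\subset F_0^{p-1}$ by hypothesis ii) of pre-admissibility, and $N$ is rational so that it respects $\overline{F_0}$ as well, up to the shift. The plan for the main point is to transport the statement to the $W^0$-graded pieces, where Schmid's theorem is available, and then to glue along $W^0$.

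On the graded pieces the situation is known. For each $l$ the local system $\mathrm{Gr}^W_l\LL$ underlies a graded-polarizable VHS of weight $l$ on $D^*$ with unipotent monodromy, and by Schmid's nilpotent orbit and $SL_2$-orbit theorems \cite{Sc} its degeneration puts on $\mathrm{Gr}^{W^0}_l L_0$ a MHS whose weight filtration is the monodromy filtration of the induced nilpotent $\bar N_l$ re-indexed to be centered at $l$, whose Hodge filtration is the one induced by $F_0$, and for which $\bar N_l$ is a $(-1,-1)$-morphism. On the other hand, the characterizing isomorphisms $N^{k}\colon\mathrm{Gr}^M_{l+k}\mathrm{Gr}^{W^0}_l L_0\xrightarrow{\sim}\mathrm{Gr}^M_{l-k}\mathrm{Gr}^{W^0}_l L_0$ of the relative monodromy filtration say precisely that $M$ induces on $\mathrm{Gr}^{W^0}_l L_0$ this same centered monodromy filtration. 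Hence $(\mathrm{Gr}^{W^0}_l L_0,M,F_0)$ is a MHS for every $l$.

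It remains to glue, which I would do by induction on the length of $W^0$. If $W^0$ has only one nonzero graded there is nothing to prove. Otherwise choose $a$ with $0\neq W^0_aL_0\neq L_0$ and set $A:=W^0_aL_0$, $B:=L_0/A$; both are stable under $N$, and $W^0$ induces on $A$ and on $B$ filtrations of strictly smaller length. The point on which everything hinges is that $M(N,W^0)$ is compatible with this two-step truncation: it restricts to the relative monodromy filtration of $(N|_A,W^0|_A)$ on $A$ and coinduces that of $(N|_B,W^0|_B)$ on $B$, and the inclusion $A\hookrightarrow L_0$ is strict for $M$; together with the functorial compatibility of $F_0$ with $A\hookrightarrow L_0$ and $L_0\to B$, the inductive hypothesis then gives that $(A,M,F_0)$ and $(B,M,F_0)$ are MHS. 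Applying $\mathrm{Gr}^M_m$ to the $M$-strict exact sequence $0\to A\to L_0\to B\to 0$ yields an exact sequence of Hodge structures of weight $m$, so $\mathrm{Gr}^M_m L_0$ is an extension of the weight-$m$ structure $\mathrm{Gr}^M_m B$ by the weight-$m$ structure $\mathrm{Gr}^M_m A$; since morphisms of Hodge structures are strict, such an extension is again pure of weight $m$. Thus $F_0$ puts a pure Hodge structure of weight $m$ on $\mathrm{Gr}^M_m L_0$, which is exactly the condition for $(L_0,M,F_0)$ to be a MHS.

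The main obstacle is the purely linear-algebraic input used in the previous paragraph: the compatibility of the relative monodromy filtration $M(N,W^0)$, together with the relevant strictness, with the subobjects and quotients cut out by $W^0$. This is not formal, because $M(-,W)$ is not an exact functor; the standard way through it, due to Deligne and carried out in \cite{S-Z}, exploits that every $W^0$-graded carries the ordinary monodromy filtration of its induced nilpotent in order to build a compatible family of splittings on which the strictness can be checked directly. One can organize the whole argument equivalently around Deligne's canonical decomposition of a candidate MHS, but it rests on the same fact. Once that input is granted, the appeal to Schmid's theorem on the gradeds, the induction, the rigidity of pure Hodge structures under extension, and the $(-1,-1)$-type of $N$ are all routine.
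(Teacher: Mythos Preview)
Your approach matches the paper's, which simply records properties (i)--(iii) and defers to Deligne's argument in the appendix to \cite{S-Z}; your inductive framework on the $W^0$-length is a reasonable way to organize the gluing, and the extension-of-pure-Hodge-structures step is correct once the filtrations line up.

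One correction to your diagnosis of the obstacle. The compatibility of $M(N,W^0)$ itself with the $W^0$-truncation is \emph{automatic}: if $A=W^0_aL_0$, then for $l\le a$ one has $Gr^{W^0}_lA=Gr^{W^0}_lL_0$, so the restricted filtration $M\cap A$ already satisfies the characterizing isomorphisms $N^k\colon Gr^{M}_{l+k}Gr^{W^0}_lA\xrightarrow{\sim}Gr^{M}_{l-k}Gr^{W^0}_lA$, whence $M\cap A=M(N|_A,W^0|_A)$; the quotient case is similar. So ``$M(\,\cdot\,,W)$ is not an exact functor'' is not the relevant obstruction here. The genuine subtlety is $(M,F_0)$-\emph{bi}strictness: in your extension step you implicitly use that the filtration $F_0$ induces on $Gr^M_mA$ via $A$ agrees with the one it induces via $Gr^M_mL_0\supset Gr^M_mA$, and this three-filtration compatibility is not formal. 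Your pointer to Deligne's compatible splittings in \cite{S-Z} is exactly the device that resolves this --- but it is fixing an $F_0$-issue, not an $M$-functoriality issue.
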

The proof due to  Deligne is stated in the appendix to \cite{S-Z}.
The result follows from the following properties:\\
i) $(L_0, W^0, F_0, N)$ satisfy $N F_0^p \subset F_0^{p-1}$ and $N
 W_k^0 \subset  W_k^0$,\\
ii) the relative filtration $M(N, W^0)$ exists,\\
iii) for each $k$, $(Gr_k^{W^0}L_0, M, F_0)$  is a MHS.

The admissibility property in the next definition by Kashiwara
coincide over $D^*$ with the above definition   in the unipotent
case (but not the quasi-unipotent case) as proved in \cite{K}.
\begin{defn}(\cite {K},1.9) Let $X$ be  a complex  analytic space   and
$U \subset X$ a non singular open subset, complement of a closed
analytic subset. A graded polarizable
 variation of mixed Hodge structure $(\LL,W,\FF_U)$
on $U$ is called {\it admissible}  if for every analytic morphism
$f:D\to X$ on a disc  which maps $D^*$ to $U$, the inverse
$(f_{|D^*})^*(\LL,W,\FF_U)$ is a pre-admissible variation on
$D^*$.
 \end{defn}
In the case of locally unipotent admissible VMHS, Kashiwara notice
that pre-admissible VMHS in the disc are necessarily admissible.

The following criteria in \cite {K} states that  admissibility can
be tested in codimension one:
\begin{thm}(\cite {K}, 4.5.2)
 Let $X$ be  a complex  manifold,
$U \subset X$ the complement of a NCD    and  let $Z$ be a closed
analytic subset of codimension $\geq 2$ in $X$. An admissible
VMHS:
 $(\LL,W,\FF_U)$
on $U$ whose restriction to $U - Z$ is admissible in $X - Z$, is
necessarily admissible in $X$.
\end{thm}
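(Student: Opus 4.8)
The plan is to check the curve test in the definition of admissibility: for every analytic $f\colon D\to X$ with $f(D^*)\subset U$, the pullback $(f_{|D^*})^*(\LL,W,\FF_U)$ must be pre-admissible on $D^*$. Fix such an $f$. The preimage $f^{-1}(Z)$ is an analytic subset of the disc $D$, hence either all of $D$ or discrete; in the latter case, after shrinking $D$ we may assume $f^{-1}(Z)$ is empty or equal to $\{0\}$, while the former case $f(D)\subset Z$ forces $Z$ to have an irreducible component meeting $U$ (since $f(D^*)\subset U$), and is disposed of by a small perturbation of $f$ inside $U$ over $D^*$ that makes $f^{-1}(Z)$ discrete. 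If $f^{-1}(Z)=\emptyset$ then, after shrinking, $f(D)\subset X-Z$ and $f(D^*)\subset U\cap(X-Z)=U-Z$; since $(\LL,W,\FF_U)_{|U-Z}$ is admissible in $X-Z$, its curve test applied to $f$ already gives pre-admissibility of the pullback. The only remaining case is $f^{-1}(Z)=\{0\}$, i.e. $x_0:=f(0)\in Z$ with $f(D^*)\subset U-Z$; the rest of the plan treats this case.

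For such an $f$ the idea is to pull back a structure that already lives on all of $X$. Recall that Deligne's canonical extension $\LL_X$ of $\LL\otimes\OO_{X^*}$, its weight sub-bundles $\WW_X$, and the nilpotent residue endomorphisms $\NN_i$ of the logarithmic connection along the components of $Y=X-U$ are defined on all of $X$, and restrict on $X-Z$ to the corresponding data for the pair $(X-Z,U-Z)$. The admissibility of $(\LL,W,\FF_U)_{|U-Z}$ in $X-Z$ gives, via Kashiwara's structure results, that over $X-Z$ the Hodge sub-sheaves $\FF^p$ extend from $U-Z$ to sub-bundles $\FF^p_{X-Z}\subset\LL_{X-Z}$ inducing Schmid's extension on each $Gr^{\WW}_k$, and that along every stratum of $Y\cap(X-Z)$ the relative monodromy filtration of the relevant combination of the $\NN_i$ with respect to $W$ exists. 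The key step is to propagate this across $Z$: because $\operatorname{codim}_X Z\ge 2$, each coherent subsheaf $\FF^p_{X-Z}$ has a unique coherent extension $\FF^p_X\subset\LL_X$, and the nilpotent--orbit estimates valid on $X-Z$ bound the associated flag-variety-valued period map near $Z$, so a Riemann/Hartogs removable-singularity argument shows that $\FF^p_X$ is again a holomorphic sub-bundle of $\LL_X$ inducing Schmid's extension on the graded pieces, and that the identities expressing that $W$ is the relative weight filtration of the $\NN_i$ persist at points of $Z$ by analytic continuation. Pulling $(\LL_X,\WW_X,\FF^\bullet_X,\{\NN_i\})$ back along $f$ then finishes the case: the monodromy $T$ of $f^*\LL$ is a product $\prod_i T_i^{a_i}$ of the (unipotent) local monodromies, hence unipotent, $N=\log T$ is up to the standard normalization the combination $\sum_i a_i\NN_i$ of residues at $x_0$, the conditions $N F_0^p\subset F_0^{p-1}$ and the existence of $M(N,W^0)$ at the origin $0\in D$ follow from the corresponding facts for $(\LL_X,\ldots)$ at $x_0$, and the extension of $\FF_{D^*}$ over $D$ inducing Schmid's extension on $Gr^{\WW}$ follows by the same pullback. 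Thus $(f_{|D^*})^*(\LL,W,\FF_U)$ is pre-admissible, and since $f$ was arbitrary, $(\LL,W,\FF_U)$ is admissible in $X$.

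The hard part is the propagation across $Z$ in the second paragraph. Extending a coherent subsheaf of a locally free sheaf across an analytic set of codimension $\ge 2$ is formal, but showing that the extension $\FF^p_X$ remains a holomorphic sub-bundle at points of $Z$, that it still induces Schmid's extension fibrewise there, and that the relative-weight-filtration equalities survive at $Z$, cannot be obtained by commutative algebra alone: it rests on the quantitative asymptotic analysis (the nilpotent orbit and $SL(2)$-orbit theorems in the admissible setting) supplied by the hypothesis on $X-Z$, which is exactly what makes the flag-valued period map bounded, hence holomorphically extendable, near $Z$. This is the technical core of Kashiwara's argument; by contrast the curve-test reduction of the first paragraph and the bookkeeping of the monodromies along the pulled-back disc are routine.
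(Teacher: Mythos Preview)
The paper does not supply its own proof; it cites \cite{K} and says only that the result ``is stated and checked locally in terms of nilpotent orbits localized at $y$'', pointing a few paragraphs later (in \S3.2) to the infinitesimal core, \cite{K}~4.4.1: a mixed nilpotent orbit $(L;W,F,\overline F;N_1,\dots,N_l)$ is already an IMHS as soon as the relative filtration $M(N_j,W)$ exists for each individual $j$. In the paper's account, then, the passage from codimension one to higher codimension is a theorem in the linear algebra of commuting nilpotents on a filtered space carrying Hodge data (canonical splittings of $Gr^M$, the $*$-operation $N*W$, distributivity of the filtrations), not a complex-analytic extension statement.

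Your outline takes a different route --- curve test plus a Hartogs-type extension of $\FF$ across $Z$ --- and has two weak points. First, disposing of the case $f(D)\subset Z$ by ``a small perturbation of $f$'' is not legitimate: admissibility demands the curve test for \emph{every} $f$, and perturbing $f$ changes the test. (This case is in fact harmless once the extended data $(\LL_X,\WW_X,\FF_X,\{\NN_i\})$ exists on all of $X$, by the same pull-back argument you give when $f^{-1}(Z)=\{0\}$; no perturbation is needed.) Second and more substantively, you assert that ``the identities expressing that $W$ is the relative weight filtration of the $\NN_i$ persist at points of $Z$ by analytic continuation''. This is precisely what does \emph{not} follow from analyticity alone: the existence of $M(N,W)$ is not a closed condition on families of $(L,W,N)$ in general, and producing $M(\sum_{j\in J}N_j,W)$ at a deep point from the existence of each $M(N_j,W)$ at nearby codimension-one points is exactly the content of \cite{K}~4.4.1. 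Your last paragraph rightly flags this as the hard step but casts it as a period-map boundedness / removable-singularity argument; in Kashiwara's treatment, as summarized in the paper, it is proved instead through the structure theory of IMHS, with no Hartogs step. So your reduction is broadly sensible, but both the handling of the degenerate curve case and, more importantly, the diagnosis of what the technical core is and how it is established, diverge from the argument the paper is pointing to.
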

 In particular the existence of the relative
weight filtration at a point $y\in Y_2$  follows  from its
existence at the nearby points on $Y-Y_2$. Such result is  stated
and checked locally in terms of nilpotent orbits localized at
$y$.\\
Next, we cite the following  fundamental result for admissible
variations of MHS

\begin{thm}
Let $\LL$ be an admissible graded polarized VMHS: $(\LL,W,\FF)_{X-Y}$ on the  complement    of  a
NCD: $Y$   in a complex compact smooth algebraic variety $X$,  $j: X- Y \to X$, $Z$ a sub-NCD of $Y$,  $U:= X-Z$, then for all  degrees, the cohomology groups $\H^k(U, j_{!*}\LL)$  of the intermediate extension, carry a canonical mixed Hodge structure.
\end{thm}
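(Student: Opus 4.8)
The plan is to construct, by hand, a tri-filtered complex on $X$ which restricts over $U$ to a cohomological filtered mixed Hodge complex computing $\H^k(U,j_{!*}\LL)$, following closely the pattern of Sections~2.2--2.7. Its underlying complex will be a logarithmic de Rham complex attached to Deligne's canonical extension $\LL_X$ of $\LL\otimes\OO_{X-Y}$; the Hodge filtration $F$ will be induced by the Hodge filtration of the VMHS, through Schmid's extension on the graded pieces, and the weight filtration will be a convolution (relative monodromy filtration) of the weight filtration $\WW$ of $\LL$, of the \emph{finite} weight filtration $W^f$ counting logarithmic poles along the removed divisor $Z$, and of the monodromy weight filtrations of the residues $\NN_i$ of $\nabla$ along the components of $Y-Z$. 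The construction only makes sense because every relative monodromy filtration occurring in it exists, and this is exactly what admissibility provides, through Kashiwara's pre-admissibility hypothesis together with his criterion \cite{K} that admissibility may be tested in codimension one.

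First I would pin down the local model. Write $Y=\cup_{i\in I}Y_i$ and $Z=\cup_{i\in I_Z}Y_i$, so that $Y-Z$ is a NCD in $U$ with components $Y_i\cap U$ for $i\in I\setminus I_Z$. On a polydisc $U(y)$ meeting exactly the components $Y_i$, $i\in J$, the restriction of $j_{!*}\LL$ should be quasi-isomorphic to a Koszul-type subcomplex of $(\Omega^*_X(\log Y)\otimes\LL_X)|_{U(y)}$: along the components indexed by $J\cap I_Z$ one keeps all logarithmic poles, whereas in each multidegree $K\subset J\setminus I_Z$ one keeps, inside $\LL_X$ modulo torsion, the subspace cut out by the relative monodromy filtration $M(N_K,\WW)$ of $N_K=\sum_{i\in K}\NN_i$, shifted by the truncation defining the intermediate extension. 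For pure variations this is the classical Kashiwara--Kawai/Cattani--Kaplan--Schmid description; the same recipe applies verbatim to an admissible VMHS because, by admissibility, each $M(N_K,\WW)$ exists and, together with $F$, underlies the local mixed Hodge structures studied in Section~3.2.

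Next I would globalize and recognize a mixed Hodge complex. Since $\LL_X$, the connection $\nabla$, its residues $\NN_i$ along the $Y_i$, the sub-bundles $\WW_X$, and Schmid's Hodge filtration on each $Gr^{\WW}_m\LL_X$ are all globally defined, the local models patch to a tri-filtered complex $(K,W^f,W,F)$ on $X$ with $\H^k(U,K|_U)\simeq\H^k(U,j_{!*}\LL)$. To see that $R\Gamma(U,K|_U)$ is a filtered mixed Hodge complex in the sense of Section~2.2, I would compute the graded pieces for $W^f$ and $W$: by iterated residue isomorphisms, exactly as in the propositions of Sections~2.5 and~2.7, each $Gr^W_m Gr^{W^f}_n(K|_U)$ is a finite direct sum, over strata $Y^K\cap U$, of shifted and twisted cohomological Hodge complexes with coefficients in the polarizable variations $Gr^{\WW}_\bullet\LL$ restricted to $Y^K\cap U$, to which Schmid's theory \cite{Sc} and the polarized Hodge--Lefschetz formalism of Section~2.6 apply. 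The rational, indeed integral, structure comes from the perverse-sheaf description of $j_{!*}\LL_\Q$, the weight being identified on the nearby-cycle factors with a monodromy weight filtration, so that $W$ is defined over $\Q$. The filtered mixed Hodge complex machinery of Section~2.2 --- the lemma that $R\Gamma$ of a cohomological FMHC is an FMHC, and the proposition endowing its cohomology with a MHS for which $W^f$ induces a filtration by sub-MHS --- then equips $\H^k(U,j_{!*}\LL)$ with a MHS. Canonicity, i.e. independence of the polydisc cover and of the auxiliary resolutions, would be obtained as in Section~2.8, by comparing two such complexes through a common refinement, or by invoking the uniqueness of a relative monodromy filtration whenever it exists.

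I expect the main obstacle to be the verification that $Gr^W(K|_U)$ is genuinely a direct sum of \emph{polarizable} Hodge complexes --- equivalently, that the associated graded of the intermediate extension with admissible coefficients is pure along every stratum. This is where Kashiwara's local study of admissible nilpotent orbits (Section~3.2) is essential: one needs the relative weight filtrations $M(N_K,\WW)$ to be compatible with $F$ and with the polarizations of $Gr^{\WW}\LL$, so that the residue computations land in honest polarized variations of Hodge structure, after which the differential polarized Hodge--Lefschetz theorem of Section~2.6 shows that the differentials of the weight spectral sequence are morphisms of Hodge structure. A secondary difficulty, handled by Kashiwara's codimension-one criterion \cite{K}, is to guarantee that the relative monodromy filtrations needed at the deepest strata $Y^K$ meeting $Z$ really do exist, and that admissibility of $\LL$ on $X-Y$ relative to $Y$ furnishes everything required over the open set $U$; the direct image and functoriality results of Saito \cite{Sa} may be invoked if a more structural argument is preferred.
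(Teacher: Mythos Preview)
Your overall shape --- logarithmic de Rham complex of Deligne's extension, Hodge filtration from the extended $\FF_X$, a weight built out of $\WW$ and the monodromies --- is correct, but the mechanism you propose is not the one that makes the argument go through, and two of your identifications are off.

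First, the local model of $j_{!*}\LL$ is not cut out by relative monodromy filtrations $M(N_K,\WW)$. At $y\in Y^*_M$ the fibre is the subcomplex $IC(L)=s(N_JL,N_\cdot)_{J\subset M}$ of the Koszul complex $\Omega(L,N_\cdot)$, i.e.\ one takes \emph{images} $N_JL=\prod_{j\in J}N_j(L)$, not pieces of a relative weight filtration. Your description would give something else.

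Second, and more seriously, the weight filtration is not produced by the FMHC/LMHC machinery of Sections~2.2--2.7, and the key input is not Schmid's nilpotent orbit theorem nor the polarized Hodge--Lefschetz theorem of Section~2.6. Those sections concern \emph{limit} MHS for a one-parameter degeneration; here there is no distinguished parameter, and the role of ``monodromy weight'' is played by Kashiwara's star operation $N*W$ from \cite{K}. The filtration $W^J$ on $L$ is defined recursively by $W^J=N_{i_1}*(\cdots(N_{i_j}*W)\cdots)$ for $J=\{i_1,\dots,i_j\}$, and $W_k(\Omega(L,N_\cdot)):=s(W^J_{k-|J|},N_\cdot)_{J\subset M}$. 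The crucial structural fact, proved in \cite{K}, is the decomposition
\[
Gr^W_k\,\Omega(L,N_\cdot)\;\simeq\;\bigoplus_{J\subset M} IC\bigl(P^J_{k-|J|}(L)\bigr)[-|J|],
\]
where $P^J_k(L)\subset Gr^{W^J}_kL$ is a primitive-type subspace which, together with the induced $F$, is a \emph{polarized nilpotent orbit}, hence a polarized VHS on $Y_J^*$. Globally this reads
\[
Gr^W_k\bigl(\Omega^*_X(\log Y)\otimes\LL_X\bigr)[n]\;\simeq\;\bigoplus_{J\subset I}(i_{Y_J})_*\,j_{!*}\,\PP^J_{k-|J|}(\LL)[n-|J|].
\]
So the graded pieces are not ``Hodge complexes with coefficients in $Gr^{\WW}_\bullet\LL$'' as you write; they are \emph{intersection complexes of pure polarized VHS on closed strata}. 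What then certifies that $(IC(Y_J,\PP^J),F)$ is a Hodge complex is the purity theorem for intersection cohomology of a polarized VHS, due to Kashiwara--Kawai \cite{KI} and Cattani--Kaplan--Schmid \cite{CKS}, not the Hodge--Lefschetz argument of Section~2.6.

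For the open set $U=X-Z$ with $Z=\cup_{i\in I_1}Y_i$, one does not introduce a separate finite filtration $W^f$ by poles along $Z$. One takes the subcomplex $\Omega^*(\LL,Z)\subset\Omega^*_X(\log Y)\otimes\LL_X$ whose fibre at $y\in Y^*_M$ is generated by $\tilde v\wedge_{j\in J}\frac{dz_j}{z_j}$ with $v\in N_{J_2}L$, $J_2=J\cap(M\setminus I_1)$; this computes $Rj''_*j'_{!*}\LL$. The induced $W$ and $F$ still give a MHC because, using again the $N*W$ formalism and the splitting of $Gr^{W^J}_{k-|J|}(N_{J_2}L)$, one obtains
\[
Gr^W_k\,\Omega^*(\LL,Z)[n]\;\simeq\;\bigoplus_{J_1\subset I_1}(i_{Y_{J_1}})_*\,j_{!*}\,\PP^{J_1}_{k-|J_1|}(\LL)[n-|J_1|],
\]
and the same purity theorem applies. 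Your anticipated ``main obstacle'' is real, but its resolution is the $N*W$ decomposition and IC-purity, not the degeneration apparatus you invoke.
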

This result follows from M. Saito's general theory of mixed Hodge
modules \cite{Sa} but it is  obtained here directly via the
logarithmic complex. In both cases it relies heavily on the local
study of VMHS by Kashiwara in \cite{K} that is highlighted in the
next section. We use also the  purity of the intersection
cohomology in \cite{KI}, \cite{CKS}.
 The curve case is treated in \cite{S-Z}.

\subsubsection{Properties}  1)
 We describe below a logarithmic de Rham complex 
 ${\Omega}^*_X (Log Y) \otimes {\mathcal L}_X$ 
  with coefficients in
$\LL_X$ on which the weight filtration is defined  in terms of the  local study in \cite {K} 
while the Hodge filtration is easily defined and compatible with the result in \cite{KII}.\\
2) We realize the cohomology of $ U = X-Z$ as the cohomology of   
 a cohomological mixed Hodge complex (MHC): $\Omega^*(\LL, Z)$ subcomplex of ${\Omega}^*_X (Log Y) \otimes {\mathcal L}_X$ containing the intermediate extension $ j_{!*}\LL$  of $\LL$ as a sub-MHC:  $IC(X, \LL)$ such that the quotient complex define a strucyure of MHC on $i_Z^! j_{!*}\LL [1]$.\\
 3)  If the weights of $  j_{!*}\LL$ are $\geq a$, then the weights  on $\H^i(U,  j_{!*}\LL)$ are $\geq a+i$.\\
 4) The MHS is defined dually on $\H^i_c(U,  j_{!*}\LL)$  (resp. $\H^i(Z,  j_{!*}\LL)$) of weights $\leq a + i $  if  the weights of $ j_{!*}\LL \LL$ are $\leq a$. \\
 5) Let $H $ be a smooth hypersurface intersecting transversally $Y \cup Z$ such that $H \cup Y \cup Z$ is a NCD, then the Gysin isomorphism $ i_H^* \Omega^*(\LL, Z) \simeq i_H^!\Omega^*(\LL, Z)[2] $
 is defined as an isomorphism of MHC with a shift in indices.  
  
\subsection{Local study of Infinitesimal Mixed Hodge structures
after Kashiwara} The two global results above  are determined by
the study of the local properties of VMHS.  We state here  the
local version of the definitions and results in \cite{K}, but
  we skip the proofs,  as they
are technically complex, although based on invariants in linear
algebra, as
 in the general  case of VMHS.\\
 An extensive study of infinitesimal mixed Hodge structures (IMHS)
 is needed to  state and  check locally the decomposition property of
 the graded complex for the
 weight filtration of the logarithmic complex.
\subsubsection{ Infinitesimal Mixed Hodge structure (IMHS)}
  It is convenient in analysis
to consider complex MHS $(L,W,F, {\overline F})$ where we do not
need $W$ to be rational but we suppose the three filtrations
$W,F,{\overline F}$ opposed \cite {HII}. In particular a complex
HS of weight $k$ is given as $(L,F, {\overline F})$ satisfying $L
\simeq \oplus_{p+q = k} L^{pq}$, where $L^{pq} = F^p \cap
{\overline F}^q$. In the case of a MHS with underlying rational
structure on $W$ and $L$,  ${\overline F}$  on $L$ is just the
conjugate of $F$ with respect to the rational
structure.\\
To define polarization,  we recall that the conjugate space
${\overline L}$ of a complex vector space $L$, is the same group
$L$ with a different complex structure, such that the identity map
on the group $L$ defines a real linear map $
 \sigma :  L \rightarrow {\overline L}$ and the
product by scalars satisfy the relation $ \forall \lambda \in \C,
v \in L $, $\lambda \times_{{\overline L}}\sigma ( v ) := \sigma
({\overline \lambda}\times_ L v )$, then the complex structure on
${\overline L}$ is unique. A morphism $f: V \to V'$ defines a
morphism ${\overline f}: {\overline V} \to {\overline V'}$
satisfying ${\overline f}(\sigma ( v )) = \sigma (  f(v))$.
 \subsubsection{Hypothesis} We consider a complex vector space $L$ of finite
dimension, two filtrations $ F, \overline F $ of $L$ by complex
sub-vector spaces, an integer $k$ and a non-degenerate linear map
$S: L \otimes {\overline L} \to \C$ satisfying
  \[   S(x,  \sigma
 (y) ) = (-1)^k \overline { S(y, \sigma(x) )}\, {\rm for }\,  x,y \in L \quad
 {\rm and }\quad S( F^p,
\sigma({\overline F}^q)) = 0 \,\, {\rm for } \,\,  p+q > k. \]
 Let $(N_1, \ldots, N_l)$ be a set of
mutually commuting nilpotent endomorphisms of $L$ s.t.
\[S(N_j x, y) + S(x, N_j y) = 0\quad  {\rm and }\quad N_j F^p \subset F^{p-1},
\, N_j {\overline F}^p \subset {\overline F}^{p-1}.\]
  Recall that   by definition, a MHS is of weight $w$ if
   the HS on $ Gr^W_k$ is
 of weight $w+k$.
\begin{defn}[Nilpotent Orbit]
The above data is called a (polarized)  nilpotent orbit of weight
$w$, if the following
 equivalent conditions are satisfied \cite {K}\\
1) There exists a real number $c$ s.t. $( L, (e^{i \sum t_j N_j})
F, , (e^{- i \sum t_j N_j}) \overline  F)$ is a  H.S of weight
$w $  polarized by   $S$ for all $t_j > c$.\\
2) The weight filtration $W $ of $ N = \sum_j t_j N_j $ with $t_j
> 0$ for all $j$, does not depend on the various $t_j $;  $(W,
F)$ define a MHS on $L$ of weight $w$ and the bilinear form $S_k$
s.t. $ S_k (x, y) = S( x, N^k y)$ polarizes the induced HS  of
weight $w+k$ on the primitive subspace $ P_k = Ker(N^{k+1}:Gr^W_k
\to Gr^W_{-k-2})$.
\end{defn}
 Henceforth, all nilpotent orbits  are polarized.\\
  We  consider now a filtered version of the above data
    $(L; W; F; \overline  F; N_1, \ldots, N_l)$  with an
increasing filtration $W$ s.t.  $N_j W_k \subset  W_k$
 but without any given fixed bilinear product $S$.
\begin{defn} [Mixed nilpotent orbit]
The above data $(L; W, F; \overline  F; N_1, \ldots, N_l)$ is
called a mixed nilpotent orbit (graded polarized) if for each
integer $i$,\\
 $ (Gr^W_i L; F_{|}; \overline F_{|}; (N_1)_{|},
\ldots, (N_l)_{|})$, with the restricted structures, is a nilpotent
orbit  of weight $i$ for some polarization $S_i$.
\end{defn}
 This structure is called pre-infinitesimal mixed Hodge module in
 (\cite {K}, 4.2).
 A pre-admissible VMHS: $(f^*\LL,W,\FF)_{|D^*}$ on $D^*$ defines
such structure  at $0 \in D$.
\begin{defn} [IMHS](\cite {K}, 4.3)
A mixed nilpotent orbit $(L; W, F; \overline  F; N_1, \ldots,
N_l)$ is called an infinitesimal mixed Hodge structure (IMHS) if
the following conditions are satisfied:\\
 i) For each $J \subset I = \{1,
\ldots,l\}$, the monodromy filtration $ M(J)$ of $\sum_{j \in J}
N_j$ relative to $W$ exists and satisfy $N_j M_i(J) \subset
M_{i-2}(J)$ for all $ j \in J$ and $i \in \Z$.\\
ii) The filtrations $M(I), F, \overline  F $ define a graded
polarized MHS. The filtrations $W$ and $ M(J)$ are compatible with the
MHS as well the morphisms $ N_i$ are of type ($-1,-1$).
\end{defn}
IMHS are called IMHM in
 \cite {K}; Deligne remarked, the fact  that if the relative monodromy  filtration
$M(\sum_{i \in I} N_i,W)$ exists  in the case of a mixed nilpotent orbit, then it is necessarily
 the weight filtration of a MHS. \\
The following criteria  is the infinitesimal statement which
corresponds to the result that admissibility may be checked  in
codim.$1$.
\begin{thm}(\cite {K}, 4.4.1) A mixed nilpotent orbit $(L; W, F; \overline  F; N_1, \ldots,
N_l)$ is an IMHS if the monodromy  filtration of $N_j$ relative to
$W$ exists for any $j =1, \ldots,l$.
\end{thm}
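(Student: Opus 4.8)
The plan is to prove the theorem by induction on the number $l$ of commuting nilpotents, peeling off one $N_j$ at a time by replacing the weight filtration $W$ by a relative monodromy filtration; the assertion actually carried through the induction is the sharper one that, under the hypotheses, the relative monodromy filtration $M(N_1+\cdots+N_l,W)$ of the full sum exists, the remaining pieces of the IMHS structure then following by bookkeeping. For the base case $l=1$ the only nonempty subset is $J=\{1\}$, $M(\{1\})=M(N_1,W)$ exists by hypothesis, and $N_1M_i(\{1\})\subset M_{i-2}(\{1\})$ is built into the definition of a relative monodromy filtration; since each $Gr^W_kL$ is a nilpotent orbit of weight $k$, the lemma of Deligne recalled above --- equivalently the $D^*$--version in the appendix to \cite{S-Z} --- says that the relative monodromy filtration of a mixed nilpotent orbit, once it exists, is automatically the weight filtration of a graded polarized MHS $(L,M(N_1,W),F,\overline F)$ with $N_1$ of type $(-1,-1)$, which is exactly the IMHS assertion for $l=1$.

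For the inductive step, assume the result for mixed nilpotent orbits with fewer than $l$ commuting nilpotents and set $\widetilde W:=M(N_l,W)$, which exists by hypothesis. Granting the crux described in the next paragraph --- that $(L;\widetilde W;F;\overline F;N_1,\ldots,N_{l-1})$ is again a mixed nilpotent orbit and that the one-variable filtrations $M(N_j,\widetilde W)$ exist for $j=1,\ldots,l-1$ --- the induction hypothesis yields that $M(N_1+\cdots+N_{l-1},\widetilde W)$ exists. I then invoke the standard compatibility (``associativity'') lemma for relative monodromy filtrations of commuting nilpotents: if $M(N',W)$ exists and is preserved by a commuting nilpotent $N$, and $M(N,M(N',W))$ exists, then $M(N+N',W)$ exists and equals $M(N,M(N',W))$. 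With $N'=N_l$ and $N=N_1+\cdots+N_{l-1}$ this gives
\[
 M(N_1+\cdots+N_l,W)=M(N_1+\cdots+N_{l-1},\widetilde W),
\]
so $M(I)$ exists, and by Deligne's remark it is the weight filtration of a graded polarized MHS $(L,M(I),F,\overline F)$. For an arbitrary $J\subset I$ a sub-collection of a nilpotent orbit is again a nilpotent orbit, so $(L;W;F;\overline F;(N_j)_{j\in J})$ is a mixed nilpotent orbit inheriting the codimension-one hypothesis, and the same argument produces $M(J)=M(\sum_{j\in J}N_j,W)$; peeling off the variables of $J$ in any order and using at each stage the defining property of a relative monodromy filtration gives the inclusions $N_jM_i(J)\subset M_{i-2}(J)$ for $j\in J$ (condition i)) and, taking $J=I$, that each $N_i$ is of type $(-1,-1)$ for $(L,M(I),F,\overline F)$. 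The remaining strict-compatibility statements, that $W$ and the $M(J)$ are sub-filtrations compatible with this MHS, come out of the same construction, using that a relative monodromy filtration, where it exists, is strictly compatible with the ambient filtrations.

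The hard part is the crux invoked above: showing that $\widetilde W=M(N_l,W)$ carries a mixed nilpotent orbit for $N_1,\ldots,N_{l-1}$ and that the filtrations $M(N_j,\widetilde W)$ exist. By the defining property of $M(N_l,W)$, the filtration $\widetilde W$ induces on each $Gr^W_iL$ precisely the monodromy weight filtration of $N_l|_{Gr^W_iL}$ centered at $i$, so --- that being a nilpotent orbit --- each $Gr^{\widetilde W}_kGr^W_iL$ is a polarizable Hodge structure; but assembling $Gr^{\widetilde W}_kL=\bigoplus_iGr^{\widetilde W}_kGr^W_iL$ together with the induced $N_1,\ldots,N_{l-1}$ into a single nilpotent orbit requires controlling how the off-diagonal components of the $N_j$ interact with the $N_l$-grading, and in particular seeing that each $N_j$ preserves $\widetilde W$ --- information invisible to the linear algebra of $N_l$ alone. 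This is where the several-variable $SL_2$-orbit theorem of Cattani--Kaplan--Schmid \cite{CKS} enters: approximating the nilpotent orbit $(Gr^W_iL,F,\overline F,N_1,\ldots,N_l)$ by an $SL_2^l$-orbit produces a splitting of $W$ refined compatibly with $\widetilde W$, a commuting family of $sl_2$-triples, and compatible polarizations, from which the mixed-nilpotent-orbit structure on $\widetilde W$ and the existence of the new one-variable relative monodromy filtrations can be read off. Carrying this through --- intertwined with the estimates of the $SL_2$-orbit theorem and with the compatibility of the induced polarizations --- is the technically delicate core of Kashiwara's argument, and I expect it to be the principal obstacle.
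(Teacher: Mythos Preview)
The paper does not prove this theorem; it only cites \cite{K}, remarks that ``Its proof is embedded in surprisingly important properties of IMHS valuable for their own sake,'' and then records two such properties --- the canonical splitting $Gr^M_lL\simeq\bigoplus_kGr^W_kGr^M_lL$ (\cite{K}, 3.2.9) and the transitivity $M(J_1\cup J_2)=M\bigl(N_1,\,M(J_2)\bigr)$ (\cite{K}, 5.2.5). So your proposal must be judged against Kashiwara's actual argument rather than anything in this paper.

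Your inductive scheme --- peel off $N_l$, replace $W$ by $\widetilde W=M(N_l,W)$, and iterate --- is indeed the backbone of Kashiwara's proof, and your base case is correct. The gap is the step you label ``the standard compatibility (`associativity') lemma'': the implication
\[
\bigl(M(N',W)\text{ exists and }M(N,M(N',W))\text{ exists}\bigr)\ \Longrightarrow\ M(N+N',W)\text{ exists and equals }M(N,M(N',W))
\]
is \emph{not} a general fact of filtered linear algebra, and you should not invoke it as standard. The second defining condition for $M(N+N',W)$ requires control over the filtration induced on $Gr^W_iL$, whereas $M(N,M(N',W))$ is defined relative to $M(N',W)$ and a priori says nothing about its trace on $Gr^W_iL$. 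The version of this transitivity appearing in the paper (Prop.\ \cite{K}, 5.2.5) is stated \emph{for an IMHS}, i.e.\ as a consequence of the very structure you are trying to establish; so as written your argument is circular. What makes the step go through in Kashiwara is precisely the canonical decomposition 3.2.9 together with the graded-distinguished-pair machinery around $N*W$ --- these are the ``surprisingly important properties'' the paper alludes to --- and they have to be proved in tandem with the theorem, not quoted as input.

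A second, smaller point: you propose the Cattani--Kaplan--Schmid several-variable $SL_2$-orbit theorem as the engine for the crux. Kashiwara's route is different and more algebraic: he uses the canonical splitting (\cite{K}, 3.2.9) and the filtration calculus of $N*W$ to transport the nilpotent-orbit structure to $\widetilde W$ and to produce the one-variable relative filtrations $M(N_j,\widetilde W)$, rather than invoking the analytic $SL_2$-orbit asymptotics. Your approach via \cite{CKS} may well be workable, but it is not what \cite{K} does, and you should be aware that the transcendental estimates would then have to substitute for a substantial amount of purely algebraic bookkeeping that Kashiwara carries out explicitly.
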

 This result is not used here and it is directly satisfied in most
  applications. Its proof is embedded
in  surprisingly important properties of IMHS valuable for their own sake.

\subsubsection{Properties of IMHS} We   describe now fundamental
properties frequently needed in various constructions in mixed Hodge theory with
degenerating coefficients. \\
 We start with an important property of a relative
weight filtration, used in various proofs.
 \begin{thm}(\cite {K}, 3.2.9) Let $(L,W, N)$ be a filtered
 space with a nilpotent endomorphism with a relative monodromy
   filtration $M(N,W)$, then for each $l$, there exists a canonical
decomposition
\[  Gr^M_l L \simeq \oplus_k Gr^W_k Gr^M_l L\]
\end{thm}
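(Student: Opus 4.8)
The plan is to argue by induction on the length of the weight filtration $W$, i.e.\ on the number of indices $k$ with $Gr^W_k L\neq 0$. When this length is $\leq 1$ there is nothing to prove: then $W$ induces the trivial filtration on each $Gr^M_l L$, and the asserted decomposition is the tautology $Gr^M_l L = Gr^W_{k_0}Gr^M_l L$ for the single relevant weight $k_0$. For the inductive step, assume $L\neq 0$ and let $c$ be the largest index with $Gr^W_c L\neq 0$; put $L' := W_{c-1}L$ and $L'' := L/L' = Gr^W_c L$, both stable under $N$. Using the characterizing isomorphisms of $M = M(N,W)$ together with the uniqueness of a relative monodromy filtration, one first checks that $M$ induces on $L'$ the relative monodromy filtration $M' := M(N|_{L'},W|_{L'})$, and induces on $L''$ — on which $W$ is trivial of weight $c$ — the ordinary monodromy filtration $\bar M$ of $N|_{L''}$ centred at $c$. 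Since $M_l L' = M_l L\cap L'$ and $L' = W_{c-1}L$, applying $Gr^M_l$ to $0\to L'\to L\to L''\to 0$ gives a \emph{short} exact sequence
\[
0\longrightarrow Gr^{M'}_l L'\longrightarrow Gr^M_l L\longrightarrow Gr^{\bar M}_l L''\longrightarrow 0,
\]
in which the subobject is exactly the step $W_{c-1}Gr^M_l L$ and the quotient is exactly $Gr^W_c Gr^M_l L$. By the inductive hypothesis applied to $(L',W|_{L'},N|_{L'})$ the filtration $W$ on $Gr^{M'}_l L'$ splits canonically, so it remains only to produce a canonical section $s_l\colon Gr^W_c Gr^M_l L\to Gr^M_l L$ of the displayed sequence; combining the two splittings then yields $Gr^M_l L\simeq\bigoplus_{k\leq c}Gr^W_k Gr^M_l L$, as wanted.

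To build $s_l$ I would use the Lefschetz $\mathrm{sl}_2$-structure that $N$ puts on $\bigoplus_l Gr^{\bar M}_l L''$: since $\bar M$ is the monodromy filtration of $N|_{L''}$ centred at $c$, the space $Gr^{\bar M}_{c+k}L''$ is the weight-$k$ component of an $\mathrm{sl}_2$-module, with primitive subspaces $\bar P_m = \ke\bigl(N^{m+1}\colon Gr^{\bar M}_{c+m}L''\to Gr^{\bar M}_{c-m-2}L''\bigr)$ for $m\geq 0$ and primitive decomposition $Gr^{\bar M}_{c+k}L'' = \bigoplus_{a\geq\max(0,-k)}N^a\bar P_{k+2a}$. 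The idea is to lift each primitive string into $Gr^M_\bullet L$: one shows that a primitive class $\bar v\in\bar P_m$ has a distinguished lift $\widetilde v\in Gr^M_{c+m}L$, singled out canonically among all lifts (the required correction by an element of $W_{c-1}Gr^M_{c+m}L = Gr^{M'}_{c+m}L'$, imposing suitable vanishing under iterated $N$, being both possible and unique), and then sets $s_l(N^a\bar v) := N^a\widetilde v$ and extends linearly. That this is well defined and a section of the surjection reduces to the primitive decomposition above together with the compatibility of the $N$-action with the exact sequence; and any section automatically produces a complement to $W_{c-1}Gr^M_l L$.

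The crux — and the place where the full strength of the relative monodromy axioms for $L$, $L'$ and $L''$ is needed — is precisely the existence and uniqueness of that correction, equivalently the statement that the sub-$\mathrm{sl}_2$-structure carried by $L'$ and the Lefschetz structure on $L''$ interlock inside $Gr^M_\bullet L$ so that the primitive lift is canonically pinned down. This is a relative Lefschetz-type assertion about the maps $N^a$ intersected with the steps of $W$ and $M$, and I would establish it either by a secondary descending induction on $c-k$ or by invoking Kashiwara's structural analysis of relative monodromy filtrations on sub- and quotient objects (\cite{K}, \S 3.2); it is the technically delicate heart of the argument, the remaining steps being formal.
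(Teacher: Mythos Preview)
The paper does not actually prove this theorem: it states it, cites Kashiwara \cite{K}, 3.2.9, and adds only the one-line remark that ``in the proof, Kashiwara describes a natural subspace of $Gr^M_l L$ isomorphic to $Gr^W_k Gr^M_l L$ in terms of $W$ and $N$.'' So there is no detailed argument in the paper to compare against; Kashiwara's own proof is not inductive on the length of $W$ but instead writes down the summands directly as explicit subspaces built from $W$, $M$ and powers of $N$, and verifies the splitting by hand.

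Your inductive framework is reasonable and the preliminary steps are correct: the induced filtration $M\cap L'$ on $L'=W_{c-1}L$ is indeed $M(N_{|L'},W_{|L'})$, the quotient filtration on $L''=Gr^W_cL$ is the monodromy filtration centred at $c$, and the displayed short exact sequence is exact with the identifications you claim. The problem is the step you yourself flag as ``the crux'': you never specify the condition that is supposed to single out the lift $\widetilde v$ of a primitive $\bar v\in\bar P_m$, nor do you argue why such a condition admits a solution, let alone a unique one. Saying ``imposing suitable vanishing under iterated $N$'' is not a construction; for an arbitrary lift $v$ one has $N^{m+1}v\in Gr^{M'}_{c-m-2}L'$, and it is neither clear that one can correct $v$ by an element of $Gr^{M'}_{c+m}L'$ to kill this (the map $N^{m+1}:Gr^{M'}_{c+m}L'\to Gr^{M'}_{c-m-2}L'$ need not be surjective), nor that doing so would determine $v$ uniquely (that map need not be injective either). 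Falling back on ``invoking Kashiwara's structural analysis of relative monodromy filtrations'' is circular, since that analysis is exactly the content of the theorem you are trying to prove. As written, the argument has a genuine gap at its central point; to close it you would have to produce the explicit splitting formula --- which is essentially what Kashiwara does directly, without the induction.
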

In the proof, Kashiwara describes  a natural subspace of $Gr^M_l
L$ isomorphic to $Gr^W_k Gr^M_l L$ in terms of $W$ and $N$.

 In the case of an  IMHS as above,  $(Gr^M_l Gr^W_k L, F_|,
\overline F_|)$ and  $(Gr^W_k Gr^M_l L, F_|, \overline F_|)$ are
endowed with induced H.S of weight $l$; the isomorphism in
Zassenhaus lemma between the two groups is compatible with H.S. in
this case, and $( Gr^M_l L, F_|, \overline F_|)$ is a direct sum
of H.S. of weight $l$ for various $k$.  Deligne's remark  that the
relative weight filtration $M(\sum_{i\in I} N_i,W)$  is the weight
filtration of a MHS, may be deduced from this result. Another
application is the proof of
\begin{prop}(\cite {K},5.2.5) Let $(L; W, F; \overline F; N_1, \ldots, N_l)$
be an  IMHS and for $J \subset \{1,\ldots,l\}$ set $M(J) $ the
relative weight of $ N \in  C(J) = \{ \Sigma_{j\in J}t_jN_j, t_j >
0\}$. Then, for
 $J_1, J_2 \subset \{1,\ldots,l\}$, $N_1 \in C(J_1)$,
 $M(J_1 \cup J_2) $ is the
weight filtration of $N_1$ relative to $M(J_2)$.
\end{prop}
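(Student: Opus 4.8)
The plan is to deduce the identity from the uniqueness of relative monodromy filtrations together with Kashiwara's canonical decomposition (\cite{K}, 3.2.9), by reducing everything to the pure (polarized nilpotent orbit) case on the $W$--graded pieces. Fix a representative $N_2=\sum_{j\in J_2}N_j\in C(J_2)$ and observe that $N_1+N_2\in C(J_1\cup J_2)$; since $M(J)$ does not depend on the chosen element of $C(J)$, we have $M(J_2)=M(N_2,W)$ and $M(J_1\cup J_2)=M(N_1+N_2,W)$. A relative monodromy filtration is uniquely determined when it exists (the Lemma above, \cite{WII}, 1.6.13), so it suffices to verify that $M:=M(J_1\cup J_2)$ is the relative weight filtration of $N_1$ with respect to $M(J_2)$, i.e. that $N_1M_i\subset M_{i-2}$ and that for all $l$ and all $k\geq 0$ the map
\[
N_1^{\,k}\colon Gr^M_{l+k}Gr^{M(J_2)}_l L\xrightarrow{\sim}Gr^M_{l-k}Gr^{M(J_2)}_l L
\]
is an isomorphism; this will in particular establish the existence of $M(N_1,M(J_2))$.

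The first condition is immediate from the IMHS axioms (\cite{K}, 4.3(i)): $N_jM_i\subset M_{i-2}$ for every $j\in J_1\cup J_2$, and $N_1$ is a positive combination of the $N_j$ with $j\in J_1$. One also needs $N_1$ to be compatible with $M(J_2)$, i.e. $N_1M(J_2)_i\subset M(J_2)_i$; for $j\in J_1\cap J_2$ this is again \cite{K}, 4.3(i) (with an even stronger shift by $-2$), and for $j\in J_1\setminus J_2$ it follows from the functoriality of the relative monodromy filtration applied to $N_j\colon(L,W,N_2)\to(L,W,N_2)$, a map commuting with $N_2$ and preserving $W$.

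For the isomorphism I would pass to the $W$--graded pieces. Since $M(J_2)=M(N_2,W)$ is a relative weight filtration, (\cite{K}, 3.2.9) supplies a canonical decomposition $Gr^{M(J_2)}_lL\simeq\bigoplus_m Gr^W_mGr^{M(J_2)}_lL$; because the summands are built intrinsically from $W$ and $N_2$, and $N_1$ commutes with $N_2$ while preserving both $W$ and $M(J_2)$, this decomposition is stable under $N_1$ and compatible with the filtration induced by $M$. Hence the displayed isomorphism may be checked on each $Gr^W_mGr^{M(J_2)}_lL$. On $Gr^W_mL$ the filtration $W$ is trivial, so $M$ and $M(J_2)$ restrict there to the monodromy filtrations of $N_1+N_2$ and of $N_2$ centered at $m$, and $(Gr^W_mL,F,\overline F,(N_j)_{|})$ is a polarized nilpotent orbit. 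Thus the point to be checked on $Gr^W_mL$ is that the monodromy filtration of $N_1+N_2$ equals the monodromy filtration of $N_1$ relative to that of $N_2$ — the compatibility of monodromy weight filtrations for commuting nilpotents underlying a polarized Hodge structure, available from the several--variable $SL(2)$--orbit theorem (\cite{CKS}). A short bookkeeping with the induced centerings (each of $M$ and $M(J_2)$ being shifted by $m$ on $Gr^W_mL$) then shows that the graded isomorphism for $N_1$ relative to $M(J_2)$ reassembles into the one for $N_1+N_2$ relative to $W$.

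The main obstacle I anticipate is precisely this last step: checking that the canonical subspaces of (\cite{K}, 3.2.9) are genuinely $N_1$--stable and that all the weight shifts — the centering of the induced monodromy filtrations on $Gr^W_mL$ and the twist hidden in $Gr^W_mGr^{M(J_2)}_l$ — match up consistently, so that the pure--case statement really does glue. Once this and the nilpotent orbit case are in hand, uniqueness of the relative monodromy filtration closes the argument.
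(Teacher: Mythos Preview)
The paper does not give its own proof of this proposition; it simply cites \cite{K}, 5.2.5, notes that it is ``another application'' of Kashiwara's canonical decomposition (\cite{K}, 3.2.9), and offers a geometric interpretation. Your proposal follows exactly the route the paper indicates: use uniqueness of relative monodromy filtrations to reduce to checking the graded isomorphism, invoke the canonical decomposition $Gr^{M(J_2)}_l L\simeq\bigoplus_m Gr^W_m Gr^{M(J_2)}_l L$ to pass to the $W$--graded pieces, and there appeal to the pure polarized nilpotent orbit theory (Cattani--Kaplan, \cite{CKS}) for the statement that $M(N_1+N_2)$ is the filtration of $N_1$ relative to $M(N_2)$. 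This is the correct architecture and is precisely Kashiwara's strategy.

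You have also correctly isolated the one genuine technical point: verifying that the canonical summands of \cite{K}, 3.2.9 are $N_1$--stable and compatible with the filtration induced by $M(J_1\cup J_2)$, so that the reduction to $Gr^W_m$ is legitimate. In Kashiwara's paper this is handled by the explicit description of those summands in terms of $W$ and $N_2$ (hence preserved by anything commuting with $N_2$ and respecting $W$), together with the Zassenhaus compatibility $Gr^W_mGr^{M(J_2)}_l\simeq Gr^{M(J_2)}_lGr^W_m$ noted just above the proposition in the present paper; the same device applies to $M(J_1\cup J_2)$ since it too is a relative monodromy filtration with respect to $W$. With that in hand your argument goes through, and the centering bookkeeping you mention is routine (both $M(J_2)$ and $M(J_1\cup J_2)$ restrict on $Gr^W_m$ to the absolute monodromy filtrations shifted by $m$).
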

The geometric interpretation of this result in the complement of a
NCD $Y_1 \cup Y_2$ is that the degeneration to $Y_1$ followed by
the degeneration along $Y_1$ to a point $ x \in Y_1 \cap Y_2$
yields the same limit MHS as the degeneration to $Y_2$ then along
$Y_2$ as well the direct degeneration along a curve in the
complement of $Y_1 \cup Y_2$.
\subsubsection{Abelian category of IMHS} The  morphisms of
two mixed nilpotent orbit (resp. IMHS)
  $(L; W, F; \overline F; N_1, \ldots, N_l)$ and
$(L'; W', F'; \overline F'; N_1', \ldots, N_l')$    are defined to
be compatible with both, the filtrations and the nilpotent
endomorphisms.

\begin{prop} i) The category of mixed nilpotent orbits is abelian.\\
ii)  The category of  IMHS is abelian. \\
For all $J \subset I$, $M(J)$ and $W$ are filtrations by sub-MHS
of the MHS defined by $M(I)$ and $F$ and the various functors
 defined by the filtrations $W_j, Gr_j^W, M_j(J),
Gr_j^{M(J)}$,  are exact functors.
\end{prop}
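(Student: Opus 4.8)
The plan is to verify the axioms of an abelian category for each of the two categories, the whole matter reducing to a \emph{strictness} statement. Both categories are plainly additive: they have a zero object, finite biproducts formed on the underlying vector spaces, and abelian $\mathrm{Hom}$-groups with bilinear composition. Hence it is enough to show that any morphism $f\colon L\to L'$ has a kernel and a cokernel, formed on the underlying vector spaces and equipped with the induced filtrations (by restriction for $\ke f\subset L$, which is $N_j$-stable since $f$ commutes with the $N_j$, and by passage to the quotient for $\coke f$) together with the restricted/induced nilpotent endomorphisms, and that the canonical arrow $\mathrm{Coim}\,f\to\mathrm{Im}\,f$ is an isomorphism \emph{in the category}. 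This last point is equivalent to saying that $f$ is strictly compatible with every structural filtration: $W$, $F$, $\overline F$, and — in the IMHS case — each relative monodromy filtration $M(J)$. So two things must be established: strictness in all these directions, and the fact that the data induced on $\ke f$ and $\coke f$ again verify the defining conditions.

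The ``pure'' directions come from classical Hodge theory. By definition each $Gr^W_kL$ is a nilpotent orbit of weight $k$, so by condition 2) of that definition it underlies a complex mixed Hodge structure $(Gr^W_kL,\mu^{(k)},F)$ of weight $k$, where $\mu^{(k)}$ is the monodromy filtration of $N=\sum_jt_jN_j$ centred at $k$, with the graded pieces $Gr^{\mu^{(k)}}$ polarized by the forms $S_k$. The category of polarizable complex Hodge structures of a fixed weight is semisimple abelian: the $S$-orthogonal complement of a sub-Hodge-structure $L'$ is again one, and the signature condition forces $L'\cap (L')^{\perp}=0$, so $S$ restricts to a polarization. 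Combined with Deligne's theorem that a morphism of (complex) mixed Hodge structures is strict for $W$, $F$ and $\overline F$, this shows that a morphism of nilpotent orbits is strict for $\mu$, $F$ and $\overline F$ and that its kernel and cokernel, with the induced data, are again nilpotent orbits; hence $Gr^W f$ is strict and has nilpotent-orbit kernel and cokernel. In the IMHS case, condition ii) of the definition says $(L,M(I),F)$ is a mixed Hodge structure, so $f$ is \emph{a fortiori} a morphism of these mixed Hodge structures and is strict for $M(I)$, $F$ and $\overline F$.

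The real work is strictness for the weight filtration $W$ itself and, for IMHS, for the filtrations $M(J)$; this does not follow formally and is where Kashiwara's structure theory is used. For mixed nilpotent orbits I would proceed by induction on the length of $W$: writing $0\to W_{b-1}L\to L\to Gr^W_bL\to 0$ with $W_{b-1}L$ a mixed nilpotent orbit (its graded pieces are the $Gr^W_kL$, $k\le b-1$) and $Gr^W_bL$ a nilpotent orbit, a morphism $f$ maps this sequence to the corresponding one for $L'$; combining the previous step with a functorial splitting of $W$ — a relative version of Deligne's $(p,q)$-bigrading, built from the nilpotent-orbit bigradings on the $Gr^W_kL$ — propagates strictness from the two ends to $f$ and produces the induced mixed-nilpotent-orbit structures on $\ke f$ and $\coke f$. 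For IMHS one must moreover control the $M(J)$ relative to $W$: here one invokes Kashiwara's canonical decomposition $Gr^{M(J)}_lL\cong\bigoplus_kGr^W_kGr^{M(J)}_lL$ (\cite{K}, 3.2.9), which is \emph{functorial}, and Proposition 5.2.5 of \cite{K} identifying $M(J_1\cup J_2)$ with the weight filtration of $N_1\in C(J_1)$ relative to $M(J_2)$; functoriality of the splitting lets one transfer the strictness already known for $M(I)$ and $W$ to strictness for every $M(J)$, and to verify condition i) of the IMHS definition for $\ke f$ and $\coke f$. I expect this transfer of strictness among the many filtrations $W$, $M(I)$ and the $M(J)$ to be the main obstacle — everything else is formal diagram-chasing or classical Hodge theory.

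Once strictness is in hand the remaining assertions are immediate. Since the two categories are then abelian and $f$ is strict for $W$ and each $M(J)$, the subspaces $W_jL$ and $M_j(J)L$ with their induced data are \emph{subobjects} of $L$; in particular each inclusion $W_jL\hookrightarrow L$, $M_j(J)L\hookrightarrow L$ is a morphism of IMHS, hence of the underlying mixed Hodge structure $(L,M(I),F)$, so $W_jL$ and $M_j(J)L$ are sub-mixed-Hodge-structures of it — i.e. $W$ and $M(J)$ are filtrations by sub-MHS. Finally, the functors $W_j$, $Gr^W_j$, $M_j(J)$ and $Gr^{M(J)}_j$ are exact, because ``take the $j$-th step of a fixed filtration'' and ``take the $j$-th graded piece'' are exact operations on a complex of \emph{strict} morphisms, and strictness of every morphism of IMHS for $W$ and the $M(J)$ is exactly what has been proved.
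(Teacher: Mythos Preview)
The paper itself supplies no proof of this proposition; it is stated as a summary of results from Kashiwara \cite{K}, so there is no in-paper argument to compare your proposal against directly.

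Your outline is structurally correct and you locate the crux accurately: everything reduces to strictness of morphisms for $W$ (and, in the IMHS case, for each $M(J)$), and strictness would follow from a \emph{functorial} splitting of $W$. But you have only named this splitting --- ``a relative version of Deligne's $(p,q)$-bigrading, built from the nilpotent-orbit bigradings on the $Gr^W_kL$'' --- rather than constructed it, and that sentence is exactly where the mathematics lives. For a mixed nilpotent orbit there is \emph{a priori} no MHS on $L$ itself, only on each $Gr^W_kL$ via its monodromy filtration; so Deligne's bigrading of $L$ is not directly available, and one must prove that the Deligne decompositions of the graded pieces lift compatibly and functorially to $L$. In Kashiwara's treatment this lifting is the technical heart of the section, carried out via the $\mathrm{SL}_2$-orbit analysis of the nilpotent-orbit structure on each graded piece. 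Your induction on the length of $W$ has the right shape, but the inductive step cannot close without this lifting lemma, which you have honestly flagged as an ``expected obstacle'' rather than resolved. The same remark applies to your handling of the $M(J)$: invoking the canonical decomposition (\cite{K}, 3.2.9) and Proposition 5.2.5 is the right move, but you still owe the verification that this decomposition is compatible with morphisms and with $F,\overline F$ in the way your strictness transfer requires. In short: your roadmap matches Kashiwara's, but the one genuinely non-formal step has been asserted rather than proved.
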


We define
 a corresponding mixed nilpotent orbit structure $Hom$ on the
  vector space
 $ Hom (L,L')$
with the following classically defined filtrations:
 $Hom(W,W')$,  $Hom(F,F')$
 and  $Hom(\overline F,\overline
F')$, and the natural   endomorphisms denoted:\\ $Hom (N_1,N_1'),
\ldots, Hom (N_l,N_l')$ on $ Hom (L,L')$. Similarly a structure
called the tensor product is defined.
\begin{rem}
 Among the specific properties of the filtrations of IMHS,
  we mention the distributivity used in various proofs.
 In general three subspaces $A, \, B, \, C$ of a vector space
 do not satisfy  the following distributivity property:
 $(A+B ) \cap C = (A \cap C) + (B \cap C)$.
  A family of filtrations $F_1, \ldots, F_n$ of a
vector space $L$ is said to be distributive if for all $p, q, r
\in \Z$,  have
 \[ (F_i^p + F_j^q )\cap F^r_k  = (F_i^p \cap F^r_k) + ( F_j^q \cap
 F^r_k) \]
 In the case of an IMHS $ (W, F, N_i, i\in I)$, for   $J_1 \subset \cdots \subset
 J_k \subset I$,
the family of filtrations $\{W, F, M(J_1),\ldots, M(J_k)\}$
  is
 distributive (\cite{K}, prop.  5.2.4).
 \end{rem}
\subsection {Deligne-Hodge theory on the cohomology of a smooth
variety} We describe now a weight filtration on the logarithmic
complex with coefficients in the canonical extension of an
admissible VMHS on the complement of a NCD, based on the local
study in \cite {K}.

\subsubsection*{Hypothesis}  Let $X$ be a smooth and compact complex
algebraic variety, $Y = \cup_{i\in I} Y_i$ a NCD in $X$ with
smooth irreducible components $Y_i$, and  $(\LL, W, F)$ a
  graded polarized VMHS on $U:= X-Y$ {\it  admissible } on $X$ with
unipotent local monodromy along $Y$.

\subsubsection*{Notations} We denote by $(\LL_X, \nabla)$ Deligne's canonical
 extension of
$\LL \otimes \OO_{U}$ into an analytic vector bundle  on $X$ with
a flat connection having logarithmic singularities.
 The filtration by sub-local systems $W$ of $\LL$
define a filtration by canonical extensions denoted $ \WW_X
\subset \LL_X$, while by definition of admissibility the
filtration $\FF_U$ extends by sub-bundles  $\FF_X \subset \LL_X$.

 The aim of this section  is to deduce from the local study in \cite {K}
 the following global result

 \begin{thm}
 There exists  a weight filtration
 $W$ and a  filtration $F$ on  the logarithmic complex with coefficients in $\LL_X$
 such that the bi-filtered complex
\[
(\Omega^*_X(Log Y)\otimes \LL_X, W, F)
 \]
 underlies a structure of mixed Hodge complex and induces
  a canonical MHS on the cohomology
 groups $H^i(U, \LL)$ of $U:= X-Y$.
\end{thm}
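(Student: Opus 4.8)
The plan is to endow the logarithmic de Rham complex $\Omega^*_X(Log\,Y)\otimes\LL_X$ with the structure of a cohomological mixed Hodge complex on $X$ whose hypercohomology is $H^*(U,\LL)$, $U=X-Y$, and then to quote Deligne's theorem \cite{WII} that the hypercohomology of a cohomological MHC carries a functorial MHS, with the Hodge spectral sequence degenerating at $E_1$ and the weight one at $E_2$. First I would record the comparison isomorphism: by Deligne's study of the canonical extension recalled above (the Logarithmic extension theorem) and the holomorphic Poincar\'e lemma with logarithmic poles along $Y$, the natural morphism $\LL_\C\to\Omega^*_X(Log\,Y)\otimes\LL_X$ over $U$ extends to a quasi-isomorphism $Rj_*\LL_\C\xrightarrow{\sim}\Omega^*_X(Log\,Y)\otimes\LL_X$, so that $\H^i(X,\Omega^*_X(Log\,Y)\otimes\LL_X)\cong H^i(U,\LL_\C)$; the underlying integral and rational complexes are $Rj_*\LL_\Z$ and $Rj_*\LL_\Q$, with cohomology of finite type since $X$ is compact. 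The Hodge filtration $F$ is the twisted filtration b\^ete, $(F^p)^q:=\Omega^q_X(Log\,Y)\otimes\FF_X^{p-q}$; the infinitesimal Griffiths transversality in the definition of a VMHS, extended across $Y$ by admissibility (so that $\FF_X$ is a filtration by subbundles), makes $\nabla$ a filtered morphism, so $F$ is well defined on the complex. Both steps are essentially formal.

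The real content is the weight filtration $W$. Following Steenbrink--Zucker \cite{S-Z} in the one-variable case and Kashiwara \cite{K} in general, $W$ must interpolate between Deligne's combinatorial ``order of logarithmic pole'' filtration of $\Omega^*_X(Log\,Y)$ and the relative monodromy weight filtrations attached to the nilpotent residues $\NN_i=\Res_i(\nabla)$ along the components $Y_i$. I would build $W$ locally first: on a polydisc $U(y)$ meeting exactly the components $Y_i$, $i\in J$, through $y$, one forms on each term a convolution of the pole-order filtration with the relative monodromy filtration $M\big(\sum_{i\in J}\NN_i,\WW_X\big)$ of the sum of the local nilpotent residues with respect to the weight filtration of the VMHS --- whose very existence at $y$ is what admissibility supplies, via Kashiwara's local theory of infinitesimal mixed Hodge structures. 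The compatibilities $N_j M_i(J)\subset M_{i-2}(J)$, the distributivity of the families $\{\WW_X,\FF_X,M(J_1),\dots,M(J_k)\}$, and the canonical decomposition $Gr^M_l\simeq\oplus_k Gr^{\WW}_k Gr^M_l$ of Theorem 3.2.9 of \cite{K} guarantee that these local prescriptions are independent of the chart and of $y$, so that they glue to a global filtration $W$ on $\Omega^*_X(Log\,Y)\otimes\LL_X$, already defined over $\Q$ on $Rj_*\LL_\Q$.

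It remains to identify $Gr^W_k$ and to check the axioms. With $Y^r=\coprod_{|J|=r}Y^J$ the disjoint union of the $r$-fold intersections and $a_r\colon Y^r\to X$, iterated residue maps exhibit $Gr^W_k(\Omega^*_X(Log\,Y)\otimes\LL_X)$ as a finite direct sum, indexed by the strata $Y^J$ and primitivity indices, of complexes of the form $a_{r,*}(IC(Y^J,\VV_J))[\text{shift}](\text{Tate twist})$, where $\VV_J$ is a graded piece of a relative monodromy filtration on the variation restricted to $Y^J$, a polarizable VHS by the IMHS property. By the purity of intersection cohomology with VHS coefficients (\cite{CKS}, \cite{KI}), applied on the lower-dimensional $Y^J$ --- the recursion along the strata being an induction on $\dim X$, with base cases Schmid's one-variable degeneration and the smooth proper case --- each summand is a cohomological Hodge complex pure of weight $k$. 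Hence $(\Omega^*_X(Log\,Y)\otimes\LL_X,W,F)$, with its integral and rational structures and comparison isomorphisms, is a cohomological MHC; applying $R\Gamma(X,-)$ and Deligne's theorem gives the asserted canonical MHS on $H^i(U,\LL)$, with filtrations $W[i]$ and $F$, while the strictness of morphisms of MHC yields functoriality and independence of the auxiliary choices.

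The step I expect to be the main obstacle is the weight filtration: both its well-definedness --- the coherence of the local convolutions and their patching --- and the purity of $Gr^W_k$ depend entirely on the local structure theory of IMHS in \cite{K}, namely the existence of the relative monodromy filtrations along every stratum, their $(-1,-1)$-compatibility with the mixed Hodge structure, the distributivity of the relevant families of filtrations, and the Zassenhaus-type decomposition of the $M$-graded pieces. Once those local inputs are in hand, the rest is the standard mixed Hodge complex formalism applied one stratum at a time; without them one can write down neither a consistent global $W$ nor a proof that its graded pieces are weight-pure.
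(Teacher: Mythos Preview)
Your overall strategy matches the paper's: endow $\Omega^*_X(Log\,Y)\otimes\LL_X$ with filtrations $W,F$ making it a cohomological MHC, identify $Gr^W_k$ as a sum of intersection complexes on the strata $Y_J$ with polarized VHS coefficients, and invoke the purity theorem of \cite{CKS}, \cite{KI}. The Hodge filtration and the comparison with $Rj_*\LL$ are as you describe.

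The gap is in your construction of $W$. What you write --- a convolution of the pole-order filtration with the single relative monodromy filtration $M\bigl(\sum_{i\in J}\NN_i,\WW_X\bigr)$ at each point --- is not what is needed, and will not produce the correct graded pieces. The paper's filtration is built from Kashiwara's \emph{star operation}: for each subset $K$ of the local index set $M$ one forms $W^K:=N_{i_1}*\bigl(\cdots(N_{i_{|K|}}*W)\cdots\bigr)$, where $(N*W)_k:=NW_{k+1}+M_k(N,W)\cap W_k$; then on the $K$-th term $L(K)$ of the local Koszul model one puts $W_r:=W^K_{r-|K|}$. The point is that $W^K$ is strictly intermediate between $W$ (shifted) and $M(K,W)$, and \emph{different} subsets $K\subset M$ carry \emph{different} filtrations $W^K$ at the same point $y$. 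This is exactly what makes the differentials $N_i\colon W^K_r\to W^{K\cup\{i\}}_{r-1}$ filtered and what forces the graded-distinguished-pair decomposition $Gr^{N*W}_k\simeq\mathrm{Im}(N\colon Gr^W_{k+1}\to Gr^{N*W}_k)\oplus\mathrm{Ker}(I\colon Gr^{N*W}_k\to Gr^W_{k+1})$ that underlies the splitting of $Gr^W_k$ into intersection complexes. A single $M(\sum N_i,W)$ shifts by $2$ under each $N_i$, not by $1$, so the Koszul differentials would not even be filtered for your proposed $W$.

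Correspondingly, the local systems appearing in $Gr^W_k$ are not arbitrary graded pieces of the relative monodromy filtration: they are the \emph{primitive} bundles $\PP^J_k(\LL):=\bigcap_{K\subsetneq J}\mathrm{Ker}\bigl(I_{J,K}\colon Gr^{\WW^J}_k\to Gr^{\WW^K}_{k+|J-K|}\bigr)$ on $Y^*_J$, which are then shown (via the star calculus, \cite{K} 5.6) to be pure of weight $k$ for $M(\sum_{j\in J}N_j)$ and hence polarized VHS. The decomposition reads $Gr^W_k(\Omega^*_X(Log\,Y)\otimes\LL_X)[n]\simeq\bigoplus_{J\subset I}(i_{Y_J})_*\,j_{!*}\PP^J_{k-|J|}(\LL)[n-|J|]$, with degree shifts but no Tate twists. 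Once you replace your ``convolution with $M(\sum N_i,W)$'' by the iterated $N*W$ construction and the primitive parts $\PP^J_k$, the rest of your argument --- coordinate-independence, rational structure, and the appeal to purity of intersection cohomology --- goes through as in the paper.
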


 The filtration $F$ is  classically  deduced on the
logarithmic complex from the above bundles $\FF_X$:
 \[
 F^p  = 0 \rightarrow
F^p{\mathcal L}_X \cdots \rightarrow
 {\Omega}^i_X (Log Y) \otimes
F^{p-i}{\LL}_X \rightarrow \cdots \dots \]
 Before giving a proof,
we need  to describe the weight filtration $W$.

\subsubsection {Local  definition of the weight $W$ on the
logarithmic complex}
 For $J
\subset I$, let $ Y_J:= \cap_{i\in J} Y_i$, $ Y_J^*:= Y_J -
\cup_{i\notin J} (Y_i \cap Y_J)$ ($ Y_{\emptyset}^*:= X - Y$) and
let the various $j:Y_J^* \to X$ denote uniformly the embeddings.
Let $X(y) \simeq D^{m+l}$ be a neighborhood of a point $y$ in $Y$,
and $U(y) = X(y) \cap U \simeq {(D^*)}^m \times D^l$ where $D$ is
a complex disc, denoted with a star when the origin is deleted.
The fundamental group $\pi_1 (U(y))$ is a free abelian group
generated by $n$ elements representing  classes of closed paths
around the origin, one for each $D^* $ in the various axis with
one dimensional  coordinate $z_i$  ( the hypersurface $Y_i$ is
defined locally by the equation $z_i=0$ ). Then the local system
$\mathcal L$ corresponds to a representation of $ {\pi}_1(U(y))$
in a vector space  $L$ defined by the action of commuting
unipotent automorphisms $T_i$ for $i \in [1,m ]$ indexed by the
local components $Y_i$ of $Y$ and called monodromy action around
$Y_i$.

Classically $L$ is viewed as the fibre of $\mathcal {L}$ at the
base point of the fundamental group $\pi_1 (X^*)$, however to
represent the fibre  of  Deligne's extended bundle at $y$, we view
{\it  $L$ as the vector space    of multivalued sections of
$\mathcal {L}$} (that is the sections of the inverse of $\mathcal
{L}$  on a universal covering of $U(y)$).

\smallskip
 \noindent   The logarithm
of the unipotent monodromy $ N_i:= -\frac{1}{2 i \pi} Log
T_i  $ is a nilpotent endomorphism.
 Recall the embedding   $ L \to \LL_X(y) : v \to  \widetilde v  $
defined by the formulas
\begin{equation}
 \widetilde v (z)  = (exp ( \Sigma_{j \in J} (
log z_{j}) N_{j})). v, \quad \nabla \widetilde v = \Sigma_{j \in
J}
 \widetilde { N_{j}.v } \otimes \frac
{dz_j}{z_j}
 \end{equation}
 \noindent where a basis of $L$ is sent on a basis of
${\LL}_{X,y}$ and  the action of $N_i$ on $L$  is determined by
 the residue of the connection.\\
 {\it  Local  description of ${R j}_{*}{\mathcal L} $}.\\
 The fibre at $y$ of the complex ${\Omega}^*_X (Log Y) \otimes
{\LL}_X $  is quasi-isomorphic to a Koszul complex as follows. We
associate to $(L, N_i), i \in [1,m ]$ a strict simplicial vector
space such that for all sequences $(i.) = (i_1 < \cdots < i_p)$

\smallskip
\centerline {$L(i.) = L , \qquad N_{i_j} \colon
 L(i. - i_j) \rightarrow  L(i.)$}
 Another   notation is  $ s(L(J), N.)_{J \subset
[1, m ]} $
 where  $J$ is identified with the strictly increasing
sequence of its elements and where $ L (J) = L $.
\begin {defn}  The  simple complex defined by the
simplicial vector space above is the Koszul complex (or the
exterior algebra) defined by $(L, N_i)$ and denoted by $\Omega (L,
N.)$.
 \end {defn}

We remark that $\Omega (L, N.)$ is quasi-isomorphic to the Koszul
complex $\Omega (L, Id - T.)$ defined by $(L, Id - T_i),  i \in
[1,m ]$.

\begin {lem}  For $M \subset I$ and  $y \in Y^{*}_M$, the
above correspondence $v \mapsto \tilde {v} $, from $L$ to
${\mathcal L }_{X,y}$,  extended to $L(i_1, \ldots, i_j) \to
({\Omega}^*_X (Log Y) \otimes {\mathcal L}_X)_y $ by $v \mapsto
\tilde {v} \frac {dz_{i_1}}{z_{i_1}}\wedge \ldots \wedge  \frac
{dz_{i_j}}{z_{i_j}}$, induces quasi-isomorphisms
 \begin{equation}
({\Omega}^*_X (Log Y) \otimes {\mathcal L}_X)_y \cong
 \Omega (L,  N_j, j \in M )\cong s(L(J), N.)_{J \subset
[1, m ]}
 \end{equation}
  hence  for $j: U \to X$ we have $({R j}_* {\mathcal L})_y \cong
 \Omega (L,  N_j, j \in M) $.
\end {lem}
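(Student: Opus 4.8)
The assertion is local at $y$, so I would work in a polydisc neighbourhood $X(y)\simeq D^{m+l}$ with coordinates $z_1,\dots,z_{m+l}$ chosen so that $Y$ is defined near $y$ by $z_1\cdots z_m=0$; after relabelling $M=\{1,\dots,m\}$ and $U(y)=X(y)\cap U\simeq (D^*)^m\times D^l$. Fixing a basis $v_1,\dots,v_r$ of the space $L$ of multivalued sections, the $\widetilde v_1,\dots,\widetilde v_r$ form an $\OO_{X(y)}$-frame of $\LL_X$ near $y$ and, by the construction of the canonical extension recalled above, $\nabla\widetilde v_k=\sum_{j\in M}\widetilde{N_jv_k}\otimes\frac{dz_j}{z_j}$. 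Consequently the restriction to $X(y)$ of the logarithmic complex is identified with $\bigl(\Omega^*_{X(y)}(\log Y)\otimes_{\C}L,\,D\bigr)$, where $D=d\otimes\mathrm{id}+\sum_{j\in M}\frac{dz_j}{z_j}\wedge N_j$: an ordinary logarithmic de Rham complex twisted by the commuting nilpotents $N_1,\dots,N_m$.

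Next I would write down the comparison map. For $J=(i_1<\dots<i_p)\subset M$ put $\frac{dz_J}{z_J}=\frac{dz_{i_1}}{z_{i_1}}\wedge\dots\wedge\frac{dz_{i_p}}{z_{i_p}}$ and send $v\in L(J)$ to the germ of $\widetilde v\,\frac{dz_J}{z_J}$. Since $\frac{dz_J}{z_J}$ is $d$-closed, $D$ applied to this germ produces only the terms $\pm\,\widetilde{N_jv}\,\frac{dz_{J\cup\{j\}}}{z_{J\cup\{j\}}}$ for $j\in M\setminus J$, which is, up to the standard Koszul signs, the differential of $s(L(J),N_\bullet)_{J\subset M}=\Omega(L,N_\bullet)$. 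Hence this correspondence is a morphism of complexes $\phi\colon\Omega(L,N_\bullet)\to\bigl(\Omega^*_X(\log Y)\otimes\LL_X\bigr)_y$, and everything reduces to proving $\phi$ is a quasi-isomorphism.

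To prove that, I would first use the holomorphic Poincar\'e lemma in the transverse variables $z_{m+1},\dots,z_{m+l}$ --- a contraction commuting with the rest of $D$ and with the $N_j$ --- to reduce to the case $l=0$. For $l=0$ I would then compute the cohomology of the stalk complex one logarithmic direction at a time, viewing it as the total complex of the $m$-fold multicomplex whose differential in the $z_j$-direction is $\theta_j+N_j$, with $\theta_j:=z_j\partial_{z_j}$. The key point is that $\theta_j+N_j$ is invertible on the subspace of $\C\{z_1,\dots,z_m\}$ of power series with no $z_j$-free term: its inverse $\sum_{k\ge0}(-1)^k\theta_j^{-(k+1)}N_j^{k}$ is a \emph{finite}, convergent sum, since $N_j$ is nilpotent and $\theta_j^{-1}$ (which divides monomial coefficients by positive integers) preserves convergence; whereas on the $z_j$-independent part $\theta_j+N_j$ acts simply as $N_j$. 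Iterating over $j=1,\dots,m$ collapses the complex onto the Koszul complex of $(N_1,\dots,N_m)$ acting on $L$, i.e. onto $\Omega(L,N_\bullet)$, the surviving cohomology classes being represented exactly by the germs $\widetilde v\,\frac{dz_J}{z_J}$ in the image of $\phi$; so $\phi$ is a quasi-isomorphism. I expect this last step to be the main obstacle: one must disentangle the analytic differential $d$ from the linear-algebra nilpotents $N_j$, checking both that inverting $\theta_j+N_j$ raises no convergence issue and that passing to cohomology is compatible with the coordinate-wise decomposition --- all of it controlled by nilpotence of the $N_j$. As a check on the answer, $U(y)$ deformation retracts onto $(S^1)^m$, so $H^*(U(y),\LL)=H^*(\Z^m;L)$ is computed by the Koszul complex of $(\mathrm{id}-T_1,\dots,\mathrm{id}-T_m)$, which is isomorphic to $\Omega(L,N_\bullet)$ because each $\mathrm{id}-T_j$ differs from $N_j$ by a commuting invertible factor.

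Finally, Deligne's logarithmic comparison theorem (\cite{HI}) gives that $\Omega^*_X(\log Y)\otimes\LL_X$ represents $Rj_*\LL$; taking stalks at $y$ and combining with the quasi-isomorphism just established yields $(Rj_*\LL)_y\cong\Omega(L,N_j,\ j\in M)$, which finishes the proof.
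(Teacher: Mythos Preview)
Your argument is correct and is the standard way to establish this quasi-isomorphism: trivialize $\LL_X$ near $y$ by the frame $\widetilde v_k$, identify the logarithmic differential with $d+\sum_{j\in M}\frac{dz_j}{z_j}\wedge N_j$, strip off the holomorphic transverse directions by the Poincar\'e lemma, and then use that $z_j\partial_{z_j}+N_j$ is invertible on the part of $\OO_{X,y}$ with positive $z_j$-degree (nilpotence of $N_j$ making the Neumann series finite) to contract onto the constants. Your sanity check via $H^*(\Z^m;L)$ and the observation that $\mathrm{Id}-T_j$ and $N_j$ differ by a commuting unipotent factor is exactly the remark the paper makes just before the lemma.

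The paper itself gives \emph{no proof} of this lemma: it is stated and immediately followed by the sentence ``This description of $(Rj_*\LL)_y$ is the model for the description of the next various perverse sheaves,'' treating the result as well known (it goes back to Deligne \cite{HI}). So there is nothing to compare your approach against; you have supplied a complete argument where the paper leaves a gap for the reader. One small stylistic point: rather than iterating ``one direction at a time,'' you could phrase the reduction in a single step by filtering the stalk complex by total degree in $z_1,\dots,z_m$ and observing that the associated graded has the Koszul complex $\Omega(L,N_\bullet)$ as its degree-zero piece while all higher pieces are acyclic --- but this is the same computation packaged differently.
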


\n  This description of  $({R j}_* {\mathcal L })_y$ is the model
for the description of the next various perverse sheaves.\\
{\it The intermediate extension ${ j}_{!*}{\mathcal L} $.}\\
   Let $ N_J = \prod_{j \in J}  N_j$ denotes a composition
   of endomorphisms of $L$, we consider  the strict
    simplicial  sub-complex
of the de Rham logarithmic complex  defined by $N_J L := Im N_J$
in $L(J) = L$.
\begin {defn} The  simple complex defined by the above
simplicial sub-vector space  is the intersection complex of $L$
denoted by
\begin{equation}
IC (L)\colon = s(N_J L, N.)_{J \subset M}, \ N_J L \colon= N_{i_1}
N_{i_2}\ldots N_{i_j} L , J = \{i_1, \ldots, i_j\} \end{equation}
\end {defn}
 Locally the
germ of the intermediate extension $ j_{!*} {\mathcal L}$ of
${\mathcal L}$   at a point $y \in Y^*_M$  is quasi-isomorphic to
the above complex \cite {CKS}, \cite {KI}
\begin{equation}{ j}_{!*} ({\mathcal L})_y \simeq IC
(L) \simeq s(N_J L,  N.)_{J \subset M}
 \end{equation}
\subsubsection{ Definition of $N*W$ }
 Let $(L, W, N)$ denotes an
increasing filtration
  $W$ on a vector
 space $L$ with a nilpotent endomorphism $N$ compatible with $W$ s.t.
 the relative monodromy filtration $M(N,W)$ exists, then
 a new filtration $N * W$  of $L$ is defined (\cite{K},  3.4)
 by the formula
\begin{equation}
 (N*W)_k := N W_{k+1} + M_k(N,W) \cap W_k = N W_{k+1} +
M_k(N,W) \cap W_{k+1}
 \end{equation}
 where the last equality follows from (\cite{K}, Prop 3.4.1).\\
For each index  $k$, the endomorphism $ N:
 L \to L$ induces a morphism $N: W_k \to (N* W)_{k-1}$ and
 the identity $I$ on $L$ induces a morphism $I: (N* W)_{k-1} \to W_k $.
 We remark two important properties of $N*W$ ( \cite {K},
 lemma 3.4.2):\\
i) The  relative weight filtration exists and: $M(N, N*W) = M (N,
W)$.\\
ii) We have the decomposition property
\begin{equation}
 \begin{split}
 Gr^{N*W}_k &\simeq \hbox
 {Im}\, (N: Gr^W_{k+1} L \to Gr^{N*W}_k)L )\oplus
 \hbox {Ker}\,(I  : Gr^{N*W}_k L \to
  Gr^W_{k+1}L )\\
 &{Im}\, (N: Gr^W_{k+1} L \to Gr^{N*W}_k L) \simeq
 {Im}\, (N: Gr^W_{k+1} L \to Gr^W_{k+1} L).
  \end{split}
   \end{equation}
   referred to, as $W$ and
$N*W$ form a graded distinguished pair.
\subsubsection{ The filtration $W^J$ on an IMHS $L$}
 The fiber $\LL(y)$ at
a point $y \in Y^*_M$ defines an IMHS: $(L, W, F, N_i, i\in M)$;
in particular  the relative weight filtration   $M(J, W) $ of $ N
\in C(J) = \{ \Sigma_{j\in J}t_jN_j, t_j > 0\}$ exists for all
 $ J \subset M.$
A basic lemma \cite {K}, cor. 5.5.4)asserts that:
\begin{lem} $ (L, N_1*W, F, N_i, i\in M)$
and $ (M(N_j,W), F, N_i, i\in M-\{j\})$ are IMHS.
\end{lem}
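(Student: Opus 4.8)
The plan is to verify, for each of the two data, the three clauses in the definition of an IMHS: that it is a graded-polarized mixed nilpotent orbit; that for every subset $J$ of the relevant index set the relative monodromy filtration $M(J)$ of a generic element of $C(J)$ exists and satisfies $N_kM_i(J)\subset M_{i-2}(J)$; and that $M$ of the full index set together with $F$ and $\overline F$ defines a graded-polarized MHS with all the $N_i$ of type $(-1,-1)$. Throughout I would use freely the canonical splitting of a relative monodromy filtration (\cite{K}, 3.2.9), the chain-compatibility of relative monodromy filtrations (\cite{K}, 5.2.5), the distributivity of the families $\{W,F,M(J_1),\dots,M(J_k)\}$, and the two recorded properties of the operation $N_1*W$: that $M(N_1,N_1*W)=M(N_1,W)$ and that $W$ and $N_1*W$ form a graded distinguished pair.

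I would treat the second datum $(M(N_j,W),F,\overline F,N_i,\ i\in M-\{j\})$ first, since it is the cleaner one. Here the new weight filtration is $M(N_j,W)$. By (\cite{K}, 3.2.9), $Gr^{M(N_j,W)}_l L\simeq\bigoplus_k Gr^W_k\,Gr^{M(N_j,W)}_l L$ compatibly with $F$ and $\overline F$; and since each $(Gr^W_k L,F,\overline F,N_i)$ is a nilpotent orbit of weight $k$, the filtration that $M(N_j,W)$ induces on $Gr^W_k L$ is the monodromy filtration of $N_j$ centred at $k$, whose graded pieces are polarized Hodge structures of the corresponding weights. Hence each summand, and so $Gr^{M(N_j,W)}_l L$ itself, is a polarized Hodge structure of weight $l$, i.e. we have a graded-polarized mixed nilpotent orbit. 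For $J\subset M-\{j\}$, applying (\cite{K}, 5.2.5) to the chain $\{j\}\subset J\cup\{j\}$ identifies $M(J\cup\{j\})$ with the relative monodromy filtration of a generic element of $C(J)$ with respect to $M(\{j\})=M(N_j,W)$; therefore $M\bigl(\sum_{i\in J}N_i,\,M(N_j,W)\bigr)$ exists and equals $M(J\cup\{j\})$, and the inclusions $N_kM_i\subset M_{i-2}$ and the compatibility of all the filtrations with the MHS are read off from the IMHS hypothesis on the original datum for the subset $J\cup\{j\}\subset M$. Taking $J=M-\{j\}$ yields $M\bigl(M-\{j\},\,M(N_j,W)\bigr)=M(M)$, so $(M(M),F,\overline F)$ is the ambient graded-polarized MHS and the remaining $N_i$ stay of type $(-1,-1)$.

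For the first datum $(L,N_1*W,F,\overline F,N_i,\ i\in M)$, with $1\in M$, I would first establish the mixed-nilpotent-orbit property using the graded-distinguished-pair decomposition: $Gr^{N_1*W}_k L$ is the direct sum of $\im\bigl(N_1\colon Gr^W_{k+1}L\to Gr^W_{k+1}L\bigr)$ and a subobject of $Gr^{N_1*W}_k L$ dying in $Gr^W_{k+1}L$, and feeding in the $N_1$-primitive decompositions of the $Gr^W_m L$ — legitimate because these are nilpotent orbits — one exhibits $Gr^{N_1*W}_k L$ as a sum of Tate twists of sub-quotient Hodge structures of the original graded pieces, hence a polarized Hodge structure of weight $k$ (the polarizations transported through the primitive decomposition from those polarizing the original $Gr^W$). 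Next, for $J\subset M$ one must produce $M\bigl(\sum_{j\in J}N_j,\,N_1*W\bigr)$: combining the chain-compatibility (\cite{K}, 5.2.5) for $\{1\}\subset J\cup\{1\}$ with $M(N_1,N_1*W)=M(N_1,W)$ and the compatibility of the operation $*$ with relative monodromy filtrations (\cite{K}, 3.4), one shows this filtration exists and, up to the resulting identifications, coincides with $M(J\cup\{1\})$; then $N_kM_i(J,N_1*W)\subset M_{i-2}(J,N_1*W)$ and the MHS-compatibility follow from the original IMHS property for $J\cup\{1\}$. Finally $M(M)$ is unchanged on the relevant graded pieces, so $(M(M),F,\overline F)$ remains the same graded-polarized MHS and the $N_i$ remain of type $(-1,-1)$.

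The step I expect to be the main obstacle is exactly the compatibility, in the first assertion, of $N_1*W$ with the formation of the relative monodromy filtrations of the \emph{other} $N_j$: that $M\bigl(\sum_{j\in J}N_j,\,N_1*W\bigr)$ exists and, crucially, that one still has $N_kM_i(J,N_1*W)\subset M_{i-2}(J,N_1*W)$ and that the attached canonical splittings remain Hodge. Unlike the second assertion, this does not reduce cleanly to (\cite{K}, 5.2.5); it is a delicate statement about several commuting nilpotents and their relative filtrations, and it is the technical core of Kashiwara's treatment in \cite{K}, \S\S 3.4, 5.5. I would organize that part by induction on $|M|$ (equivalently on the total index of nilpotency), using distributivity to carry the Hodge splittings from $Gr^W$ across to $Gr^{N_1*W}$ at each stage and the graded-distinguished-pair property to track the Hodge data through the change of weight filtration.
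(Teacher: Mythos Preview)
The paper does not give a proof of this lemma; it is stated as a citation of \cite{K}, cor.~5.5.4, and then used. So there is no ``paper's own proof'' to compare with: you have supplied more argument than the paper does, and your treatment of the second datum $(M(N_j,W),F,\overline F,N_i,\ i\in M-\{j\})$ is essentially correct and is exactly the route one takes --- the canonical splitting (\cite{K}, 3.2.9) gives the mixed nilpotent orbit, and (\cite{K}, 5.2.5) identifies the needed relative filtrations with those already known from the ambient IMHS.

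There is, however, a concrete slip in your handling of the first datum. You assert that for $J\subset M$ one has $M\bigl(\sum_{j\in J}N_j,\,N_1*W\bigr)=M(J\cup\{1\})$. This is correct only when $1\in J$, where it is the identity $M(K,W^{\{1\}})=M(K,W)$ for $\{1\}\subset K$. When $1\notin J$ the right formula is
\[
M\Bigl(\sum_{j\in J}N_j,\,N_1*W\Bigr)\;=\;N_1* M(J,W),
\]
i.e.\ the relation $M(J_1,W^{J_2})=M(J_1,W)^{J_2}$ specialized to $J_2=\{1\}$, which is \emph{not} $M(J\cup\{1\},W)$ in general. (Your invocation of (\cite{K}, 5.2.5) for the chain $\{1\}\subset J\cup\{1\}$ produces $M(J\cup\{1\})=M(N_J,\,M(N_1,W))$, a statement about $M(N_1,W)$, not about $N_1*W$; these two filtrations differ.) The existence conclusion survives --- $N_1*M(J,W)$ is a perfectly good filtration --- but the identification you wrote is wrong, and the subsequent verification that each $N_k$ drops this filtration by $2$ and that the splitting is Hodge must be redone with the correct formula. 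This is exactly the content of (\cite{K}, 5.5.5) (the commutation $N_i*N_j*W=N_j*N_i*W$ and the companion compatibilities), and it is not a consequence of (\cite{K}, 5.2.5) alone. You correctly flagged this neighbourhood as the main obstacle; the point is that your proposed identification there is the wrong one, so the induction you sketch would have to be reorganised around $N_1*M(J,W)$ rather than $M(J\cup\{1\})$.
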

In particular, an increasing filtration $W^J$  of $L$   may be defined recursively by the star
operation
\begin{equation}
 W^J :=  N_{i_1}*( \ldots (N_{i_j}*
W)\ldots ) \,\,\hbox {for} \,\, J = \{i_1, \ldots, i_j \}
\end{equation}
(denoted $\Psi_J
*W$  in
  \cite {K}, (5.8.2), see also \cite {A}.)
It describes the
  fibre of the proposed weight filtration on $\LL(y)$ for
  $y \in Y^*_J, J \subset M$. The  filtration $W^J $ does not
depend on the order of composition of the respective
transformations $N_{i_k}*$
 since in the case of an IMHS : $N_{i_p}*
 (N_{i_q}* W )= N_{i_q}* ( N_{i_p}* W) $ for all $i_p, i_q \in J$
  according to (\cite{K},
  Prop 5.5.5). The star operation has the following
  properties for all $J_1, J_2 \subset M $ and
  $J \subset K \subset M$:
  $$ M(J_1, W^{J_2} ) = M(J_1, W)^{J_2}, \quad
  M(K,  W^J) =  M(K, W).$$
\begin{defn}
 The filtration $W$ on the de Rham complex
  associated to an IMHS $(L, W, F, N_i, i \in M)$ is defined as the
 Koszul complex
  \[
  W_k(\Omega (L, N.)):= s(W^J_{k-|J|}, N.)_{J \subset M}
 \]
\end{defn}
where for each index  $i \in M - J$, the endomorphism $ N_i:
 L \to L$ induces a morphism $N_i: W^J_k \to (N_{i}* W^J)_{k-1}$
 ( careful to the same notation for  $W$   on $L$ and on $\Omega (L, N.)$).
 It is important to add the canonical inclusion
  $I:(N_{i}* W^J)_{k-1} \to W^J_k$ to the above data.
  For example, for $\vert M \vert =
2$, the data with alternating  differentials $N_i$, is written as
follows:
\[
\begin{array}{ccc}
  L & \begin{array}{c}\xrightarrow{N_1}\\
  \xleftarrow {I}\\
  \end{array} & L \\
  I \uparrow \downarrow N_2&  & I \uparrow \downarrow N_2 \\
  L &   \begin{array}{c}\xrightarrow{N_1}\\
  \xleftarrow {I}\\
  \end{array} & L\\
\end{array} \quad
\begin{array}{ccc}
  W_k & \begin{array}{c}\xrightarrow{N_1}\\
  \xleftarrow {I}\\
  \end{array} & (N_{1}* W)_{k-1} \\
  I \uparrow \downarrow N_2&  & I \uparrow \downarrow N_2 \\
 (N_{2}*W)_{k-1} &   \begin{array}{c}\xrightarrow{N_1}\\
  \xleftarrow {I}\\
  \end{array} & (N_{1}*N_{2}* W)_{k-2} \\
\end{array}
\]
\subsubsection{ Local decomposition of  $W$}
The proof of the decomposition involves a general description of
perverse sheaves in the normally crossing divisor case (\cite {K},
section 2),
  in terms of the following de Rham data $DR(L)$
   deduced from the IMHS:
\[
 DR(L):= \{L_J, W^J, F^J,  N_{K, J } = L_J \to L_K,
  I_{J, K} = L_K \to L_J \}_{K \subset J \subset M}
 \]
where for all $J \subset M$, $L_J := L$, $F_J:= F$,  $W^J$ is the
filtration defined above,  $ N_{K,J }:= \prod_{i \in J-K} N_i$ and
$I_{J, K}:= Id : L \to L$.  A set of  properties stated  by
Kashiwara (\cite{K}, 5.6)
 are satisfied. In particular :\\
\n  {\it For each  $J \subset M$, the data: $  (L_J, W^J, F_J,
\{N_j, j \in M \})$, is an IMHS} \\
 which is essentially proved in (\cite {K}( 5.8.5) and (5.8.6)). \\
 The following result is satisfied by the data and it
 is a basic step to check the structure of MHC on the logarithmic
 de Rham complex

\begin{lem}

Let $(L, F, N_i, i\in M)$ be an IMHS, and
 for $K \subset J \subset M$, let $A := J-K$, $N_A = W_i^K L
 \to W_{i- |A|}^J L$ denotes the composition of the linear
  endomorphisms $N_j, j\in A$ and $I_A : W_{i- |A|}^J L
  \hookrightarrow  W_i^K L$ the inclusion, then we have
\begin{equation*}
  Gr^{W^J}_{i- |A|}L \simeq
 \hbox {Im} ( Gr_i N_A: Gr^{W^K}_i L \to Gr^{W^J}_{i- |A|}L)
  \oplus   \hbox{ Ker}( Gr_iI_A: Gr^{W^J}_{i- |A|}L \to
  Gr^{W^K}_i L).
 \end{equation*}
 
 \end{lem}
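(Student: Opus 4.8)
The plan is to prove the decomposition by induction on $|A| = |J-K|$, reducing everything to the one--step decomposition attached to a single star operation.

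For the base case $|A| = 1$, write $A = \{a\}$. Since the iterated star operation is order--independent on an IMHS, $W^J = W^{K\cup\{a\}} = N_a * W^K$, and by the cited lemma $(L, W^K, F, N_i)_{i\in M}$ is again an IMHS; in particular the relative monodromy filtration $M(N_a, W^K)$ exists, so $N_a * W^K$ is defined and $W^K$ together with $N_a*W^K$ form a graded distinguished pair. I would then invoke the displayed decomposition for a graded distinguished pair, with $W$ replaced by $W^K$, $N$ by $N_a$ and $k$ by $i-1$: this is exactly the assertion of the lemma in this case, and I would also carry along the refinement $\im(Gr\,N_a\colon Gr^{W^K}_i L \to Gr^{W^J}_{i-1}L) \simeq \im(Gr\,N_a\colon Gr^{W^K}_i L \to Gr^{W^K}_i L)$ provided by the same statement.

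For the inductive step $|A|\geq 2$, I would fix $a\in A$ and set $K'' := J-\{a\}$, $A' := A-\{a\} = K''-K$, so $K\subset K''\subset J$, $|A'| = |A|-1$, and (again by order--independence) $W^J = N_a * W^{K''}$. Applying the inductive hypothesis to $K\subset K''$ gives
\[
Gr^{W^{K''}}_{i-|A'|}L \simeq \im(Gr\,N_{A'}\colon Gr^{W^K}_i L \to Gr^{W^{K''}}_{i-|A'|}L) \oplus \ke(Gr\,I_{A'}\colon Gr^{W^{K''}}_{i-|A'|}L \to Gr^{W^K}_i L),
\]
and applying the base case to $K''\subset J$ (note $i-|A| = (i-|A'|)-1$) gives
\[
Gr^{W^J}_{i-|A|}L \simeq \im(Gr\,N_a\colon Gr^{W^{K''}}_{i-|A'|}L \to Gr^{W^J}_{i-|A|}L) \oplus \ke(Gr\,I_a\colon Gr^{W^J}_{i-|A|}L \to Gr^{W^{K''}}_{i-|A'|}L).
\]
Since the $N_j$ commute, $N_A = N_a\circ N_{A'}$ and $I_A = I_{A'}\circ I_a$, so $\im(Gr\,N_A) = Gr\,N_a(\im(Gr\,N_{A'}))$ and $\ke(Gr\,I_A) = (Gr\,I_a)^{-1}(\ke(Gr\,I_{A'}))$, and what remains is to splice the two displayed decompositions into the single decomposition for $N_A$, $I_A$.

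The splicing is the heart of the argument, and I expect it to be the main obstacle: one has to show that $Gr\,N_a$ restricts to an isomorphism of the canonical summand $\im(Gr\,N_{A'})$ of $Gr^{W^{K''}}_{i-|A'|}L$ onto $\im(Gr\,N_A)$, and that under $Gr\,I_a$ the space $\ke(Gr\,I_A) = (Gr\,I_a)^{-1}(\ke(Gr\,I_{A'}))$ maps to the complementary summand $\ke(Gr\,I_{A'})$, so that the two direct--sum decompositions are mutually compatible and assemble into $Gr^{W^J}_{i-|A|}L = \im(Gr\,N_A)\oplus\ke(Gr\,I_A)$. For this I would not use the bare direct sums alone but the full IMHS package: the refined form of the graded distinguished pair just recorded, the compatibilities $M(J_1, W^{J_2}) = M(J_1, W)^{J_2}$ and $M(K, W^J) = M(K, W)$, the invariance $M(N_a, N_a*W^{K''}) = M(N_a, W^{K''})$, and, crucially, the distributivity of the family of filtrations $\{W, F, M(J_1),\dots,M(J_k)\}$ attached to an IMHS. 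Granting these, the checks $\im(Gr\,N_A)\cap\ke(Gr\,I_A) = 0$ and $\im(Gr\,N_A)+\ke(Gr\,I_A) = Gr^{W^J}_{i-|A|}L$ reduce to bookkeeping on representatives, and the induction closes. I would close by remarking that this local decomposition is precisely the input needed to split the graded pieces $Gr^W_\bullet(\Omega^*_X(\mathrm{Log}\,Y)\otimes\LL_X)$, stratum by stratum, into intermediate extensions of Hodge complexes, which is how the mixed Hodge complex structure in the theorem is ultimately verified.
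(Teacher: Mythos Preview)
Your proposal is correct and follows essentially the same route as the paper: induction on $|A|$, with the inductive step reduced to the one--element graded distinguished pair decomposition $Gr^{W^{K\cup a}}_{i-1}L \simeq \hbox{Im}(Gr\,N_a)\oplus\hbox{Ker}(Gr\,I_a)$ coming from the $N_a*W^K$ construction, and the remaining splicing deferred to Kashiwara's machinery (the paper cites \cite{K}, 5.6.7 and lemma 5.6.5, exactly the package you invoke). One small imprecision: in the splicing paragraph you ask that $Gr\,N_a$ restrict to an \emph{isomorphism} of $\hbox{Im}(Gr\,N_{A'})$ onto $\hbox{Im}(Gr\,N_A)$; surjectivity is automatic but injectivity is not, and is neither true in general nor needed --- what you actually need (and what the distinguished--pair identity $\hbox{Im}(Gr\,N_a:V\to V')\simeq\hbox{Im}(Gr\,N_a:V\to V)$ via $I_a$ gives) is that $I_a$ carries $P':=\hbox{Im}(Gr\,N_a)$ isomorphically into $V$, so that $P'\cap\hbox{Ker}(I_A)$ and $\hbox{Im}(N_A)\subset P'$ can be read off inside $V$ from the inductive decomposition there.
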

 
The proof by induction  on the length $|A|$ of $A$  (\cite {K}
5.6.7 and lemma 5.6.5), is based at each step
 for  $a \in J-K$, on the decomposition:
 \[
 Gr^{W^{K \cup
a}}_{i-1} L \simeq \hbox
 {Im}\, (Gr_i N_a: Gr^{W^K}_i L \to Gr^{W^{K \cup a}}_{i-1})L
  \oplus
 \hbox {Ker}\,( Gr_i I_{a} :
  Gr^{W^{K \cup a}}_{i-1}L  \to Gr^{W^K}_i L).
  \]
\begin{cor}[\cite {K}, 5.6.10] 

i)  Set
 \[ P^J_k(L):= \cap_{K \subset J, K \not= J}
 \hbox {Ker}\, (I_{J,K}: Gr^{W^J}_k \to Gr^{W^K}_{k+ \vert J-K\vert}) \subset
Gr^{W^J}_k \] then $P^J_k(L)$ has pure weight $k$ with respect to
the weight filtration $M(
\sum_{j \in J} N_j, L)$.\\
ii)$P^J_k(L)$  with the action of $N_j$ for $j \in J$, is  an
infinitesimal VHS.\\
 iii ) We have: $ Gr^{W^J}_k L
\simeq \oplus_{K \subset J} N_{J-K} P^K_{k- \vert J-K\vert}(L)$.

\end{cor}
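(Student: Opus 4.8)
The plan is to deduce the three assertions together by induction on $|J|$, using the preceding decomposition lemma (the ``graded distinguished pair'' statement $Gr^{W^{K\cup a}}_{\bullet}L\simeq\im(Gr\,N_a)\oplus\ke(Gr\,I_a)$) as the one–step engine, and the fact that each $(L_J,W^J,F_J,\{N_j\,(j\in M)\})$ is again an IMHS as the mechanism that keeps the induction running. Throughout I work inside the abelian category of IMHS, so that kernels, images, and direct sums automatically inherit the Hodge and weight data and the structural maps $N_j$, $I_{J,K}=\mathrm{Id}$ are morphisms.

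First, assertion (iii). When $J=\emptyset$ it reads $Gr^{W}_kL=P^\emptyset_k(L)$, which is the definition (empty intersection). For $|J|\ge1$ fix $a\in J$, put $J'=J\setminus\{a\}$, and note $W^J=N_a*W^{J'}$. The one–step lemma then gives a canonical splitting
\[
Gr^{W^J}_kL\;\simeq\;\im\!\bigl(Gr\,N_a:Gr^{W^{J'}}_{\bullet}L\to Gr^{W^J}_kL\bigr)\;\oplus\;\ke\!\bigl(Gr\,I_a:Gr^{W^J}_kL\to Gr^{W^{J'}}_{\bullet}L\bigr),
\]
where $\bullet$ is the corresponding shifted weight. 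The kernel summand is precisely $\ke(I_{J,J'})$, one of the $2^{|J|}-1$ kernels whose intersection defines $P^J_k$; it corresponds to the subsets $K$ with $a\in K$, and restricting the one–step lemma to it and iterating over the remaining elements of $J$ isolates the ``fully primitive'' piece $P^J_k(L)=\bigcap_{K\subsetneq J}\ke(I_{J,K})$. The image summand corresponds to the subsets $K\subseteq J'$; feeding in the inductive hypothesis for the IMHS $(L,W^{J'},F,N_\bullet)$ (whose associated tower is $W^{J'\cup(\cdot)}$) and applying $N_a=N_{J\setminus(K\cup a)}\cdots$ term by term reproduces the summands $N_{J\setminus K}P^K_{\bullet}(L)$ with the stated index shift. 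Distributivity of the family $\{W,F,M(J_1),\dots\}$ (the Remark above) is what guarantees these various $\im/\ke$ pieces stay in direct sum and are compatible with all the filtrations.

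For (i) and (ii): since $(L_J,W^J,F_J,\{N_j\})$ is an IMHS, $Gr^{W^J}_kL$ carries a canonical HS and the maps $N_j$, $I_{J,K}$ are morphisms of HS, so $P^J_k(L)$, being an intersection of kernels of HS–morphisms, is a sub–HS. Its purity with respect to $M\bigl(\sum_{j\in J}N_j,\cdot\bigr)$ follows because on the fully primitive stratum the filtration $W^J$ and the relative monodromy filtration $M(N_J,W^J)=M(N_J,W)$ of $N_J:=\sum_{j\in J}N_j$ agree in the relevant range — a consequence of repeated use of the graded distinguished pair identity $M(N,N*W)=M(N,W)$ — so $P^J_k$ lands in a single $M(N_J)$–graded piece, of weight $k$. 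For (ii) one checks that the $N_j$ ($j\in J$) preserve $P^J_k$ (they commute with the $N_{J\setminus K}$ and with the inclusions, hence with the defining kernels), are of type $(-1,-1)$ for the HS on $P^J_k$, and, restricting the polarizing forms $S_i$ of the IMHS, that $S_i(x,N_J^{\,r}y)$ polarizes the $N_J$–primitive part: this is exactly criterion (2) in the definition of a nilpotent orbit, i.e. the datum of an infinitesimal VHS.

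The main obstacle is the bookkeeping in step (iii): one must verify that iterating the one–step lemma yields a decomposition that is (a) independent of the order in which the elements of $J$ are removed, which rests on the commutation $N_{i_p}*(N_{i_q}*W)=N_{i_q}*(N_{i_p}*W)$ valid for an IMHS, and (b) simultaneously compatible with $W$, $F$ and all the $M(J_1)$, so that each summand is an honest sub–MHS; here distributivity of $\{W,F,M(J_1),\dots,M(J_k)\}$ is essential, and correctly tracking the weight shift $k\mapsto k\mp|J\setminus K|$ through each $N_b*$–step is the delicate point. Once these identifications are in place, assertions (i) and (ii) are a routine application of $SL_2$/nilpotent–orbit linear algebra to the primitive pieces $P^J_k(L)$.
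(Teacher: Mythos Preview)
The paper itself does not prove this corollary; it simply records ``This corollary is proved in \cite{K}; the statement (iii) is proved in (\cite{K}, 2.3.1).'' So there is no in-paper argument to compare against. The implicit route, however, is: first establish the \emph{full} preceding lemma (the decomposition $Gr^{W^J}_{i-|A|}L\simeq \im(Gr\,N_A)\oplus\ke(Gr\,I_A)$ for \emph{every} pair $K\subset J$, not only $|A|=1$), then feed this into Kashiwara's abstract combinatorial result \cite{K}~2.3.1 on ``distinguished $n$-ples,'' which produces the primitive decomposition (iii) from that family of pairwise splittings.

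Your strategy is different: you try to run the induction on $|J|$ for (iii) directly from the \emph{one-step} lemma alone. The image half of your inductive step is fine: applying $N_a$ to the inductive decomposition of $Gr^{W^{J'}}_{k+1}L$ really does yield the summands indexed by $K\subset J'$ (i.e.\ $a\notin K$), with the correct index shift. The gap is the kernel half. You write ``restricting the one-step lemma to it and iterating over the remaining elements of $J$,'' but the one-step lemma, as stated, applies to objects of the form $Gr^{W^{K}}_{\bullet}L$; the subspace $\ke(I_{J,J'})\subset Gr^{W^J}_kL$ is not \emph{a priori} of that form, so you cannot simply ``restrict and iterate.'' What is actually needed is that the splittings $Gr^{W^J}_kL=\im(Gr\,N_a)\oplus\ke(Gr\,I_a)$ for the various $a\in J$ are simultaneously compatible (the associated idempotents commute). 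That compatibility is precisely the content packaged in \cite{K}~2.3.1, and it does not follow from the distributivity of the family $\{W,F,M(J_1),\dots\}$ that you invoke---distributivity is a statement about filtrations of $L$, not about the canonical splittings of the graded pieces. Without this, your $2^{|J|}$-piece decomposition is not established.

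Your arguments for (i) and (ii) are also too quick. For (i) you assert that on $P^J_k$ the filtrations $W^J$ and $M(\sum_{j\in J}N_j,W)$ ``agree in the relevant range,'' citing $M(N,N*W)=M(N,W)$. That identity tells you the relative monodromy filtration is unchanged under $*$, but it does not by itself show that the intersection of kernels defining $P^J_k$ sits in a single $M$-graded piece; one needs the canonical splitting $Gr^M_lL\simeq\oplus_k Gr^W_kGr^M_lL$ (the paper's Theorem from \cite{K}~3.2.9) together with the identification of $P^J_k$ inside it. For (ii), the claim that the $N_j$ ($j\in J$) preserve $P^J_k$ is not obvious: $N_j$ shifts the $W^J$-degree, so it does not even act on $Gr^{W^J}_kL$ without further argument; what one actually uses is that $P^J_k$, with its induced Hodge filtration, together with the residual $N_j$'s for $j\in M\setminus J$, forms a nilpotent orbit---this is a statement about the $N_j$ \emph{outside} $J$ acting on a pure object, and is what \cite{K}~5.6.10 proves.
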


 This corollary is proved in \cite {K}; the statement (iii)
 is proved in (\cite {K}, 2.3.1).

\begin{lem}[\cite {K} prop. 2.3.1] 

The graded vector space of the filtration $W_k, k\in \Z$  on $\Omega (L, N.)$
satisfy  the decomposition  property into a direct sum of
Intersection  complexes
\begin{equation}
 Gr^W_k(\Omega (L, N.))\simeq
\oplus IC(P^J_{k-|J|}(L)[-|J|))_{J \subset M}
\end{equation}

\end{lem}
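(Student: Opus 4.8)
The plan is to obtain the decomposition from the term-by-term statement already available in Corollary (\cite{K}, 5.6.10), upgraded to an assertion about complexes. First I would unwind the definitions: by construction $W_k(\Omega(L,N.))$ is, in simplicial degree $p$, the sum $\bigoplus_{|J|=p}W^J_{k-p}L$, so that $Gr^W_k(\Omega(L,N.))$ is the complex whose degree-$|J|$ term is $Gr^{W^J}_{k-|J|}L$, with differentials induced by the $N_i$. Applying Corollary (\cite{K}, 5.6.10(iii)) with weight index $k-|J|$ gives
\[
Gr^{W^J}_{k-|J|}L\;\simeq\;\bigoplus_{K\subseteq J}N_{J-K}\,P^K_{k-|K|}(L),
\]
the summand indexed by $K$ being the image of the primitive space $P^K_{k-|K|}(L)$ under the composite of the $N_j$, $j\in J-K$. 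Regrouping these summands by $K$ rather than by $J$ produces, for each $K\subseteq M$, a graded subspace $C(K)\subseteq Gr^W_k(\Omega(L,N.))$ whose degree-$(|K|+|B|)$ component is $N_B\,P^K_{k-|K|}(L)$ for $B\subseteq M-K$.

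Next I would check that each $C(K)$ is a subcomplex and identify it. The Koszul differential on the degree-$|J|$ term is $\sum_{j\notin J}\pm N_j\colon Gr^{W^J}_{k-|J|}L\to Gr^{W^{J\cup j}}_{k-|J|-1}L$; since $N_j\circ N_{J-K}=N_{(J\cup j)-K}$, it carries the $K$-summand in degree $|J|$ into the $K$-summand in degree $|J|+1$, \emph{provided} the decomposition of Corollary (\cite{K}, 5.6.10) is stable under the endomorphisms $N_j$, $j\notin J$. Granting that, reindexing $C(K)$ by $B=J-K\subseteq M-K$ identifies it with the Koszul complex of $P^K_{k-|K|}(L)$ under the operators $N_j$, $j\in M-K$, placed in degrees $\ge|K|$, i.e.\ with $IC\bigl(P^K_{k-|K|}(L)\bigr)[-|K|]$. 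Summing over $K\subseteq M$ and invoking the term-by-term statement for exhaustiveness then yields the claimed decomposition.

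Thus the crux, and the step I expect to be the main obstacle, is precisely the stability of the decomposition under \emph{all} of the differentials $N_j$ simultaneously: that the $K$-grading of the graded pieces $Gr^{W^J}_\bullet L$ is respected by the Koszul differentials in every direction at once. I would prove this by induction on $|M|$. Fixing $a\in M$, one writes $\Omega(L,N.)$ as the total complex of $\bigl[\Omega(L,N_j;\ j\in M-a)\xrightarrow{N_a}\Omega(L,N_j;\ j\in M-a)\bigr]$, the source carrying the filtrations $W^J$ and the target the (shifted) filtrations $N_a*W^J$ for $J\subseteq M-a$; the identities $(N_a*W)^J=N_a*W^J$ make both filtration systems again of the form required by the inductive hypothesis. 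One then uses that $W^J$ and $N_a*W^J$ form a \emph{graded distinguished pair}, so that $Gr^{N_a*W^J}_\bullet L$ splits as $\im(N_a)\oplus\ke(I)$, with $\im(N_a)$ matched with the corresponding summand of $Gr^{W^J}_\bullet L$ and $\ke(I)$ producing the primitive data in which the index $a$ now participates. Under this splitting the two-term complex $\bigl[Gr^{W^J}_\bullet L\xrightarrow{N_a}Gr^{N_a*W^J}_\bullet L\bigr]$ decomposes as $\bigl[Gr^{W^J}_\bullet L\xrightarrow{N_a}\im(N_a)\bigr]\oplus\bigl[0\to\ke(I)\bigr]$; by the inductive hypothesis the first piece reproduces, one simplicial degree higher in the $a$-direction, the $IC$'s of the $P^K_\bullet(L)$ with $a\notin K$, and the second piece assembles into the $IC$'s of the $P^K_\bullet(L)$ with $a\in K$, shifted by $[-1]$ in the $a$-direction.

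The delicate point in this induction — where the real work lies — is that the splitting supplied by the distinguished pair in the $a$-direction must be compatible, term by term, with the splittings produced by the inductive hypothesis in the remaining directions, so that the pieces assemble into genuine subcomplexes rather than merely a direct-sum decomposition at each level. I would handle this exactly as Kashiwara does: using the commutation of star operations $N_i*(N_j*W)=N_j*(N_i*W)$ (\cite{K}, 5.5.5), the identities $M(K,W^J)=M(K,W)$ and $M(J_1,W^{J_2})=M(J_1,W)^{J_2}$ for the relative weight filtrations, and above all the distributivity of the families $\{W,F,M(J_1),\dots,M(J_k)\}$ (\cite{K}, 5.2.4) — distributivity being precisely what allows the various direct-sum decompositions attached to the different multi-indices to refine one another consistently. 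Once this compatibility is secured, the induction closes and the decomposition (\cite{K}, prop. 2.3.1) follows.
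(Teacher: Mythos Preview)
Your approach---unwind the definition of $W_k(\Omega(L,N.))$, apply the term-by-term decomposition of Corollary~(\cite{K}, 5.6.10\,(iii)), regroup by $K$, and check compatibility with the Koszul differentials---is correct and is exactly what the paper does: its entire proof is the sentence ``The lemma follows from the corollary.''

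Where you diverge from the paper is in dramatically overestimating the difficulty of the differential compatibility. You call it ``the crux'' and propose an induction on $|M|$ invoking distinguished pairs, the commutation of star operations, and distributivity of the filtration families. None of this is needed here. The point is that the summands in the decomposition
\[
Gr^{W^J}_{k-|J|}L \;\simeq\; \bigoplus_{K\subset J} N_{J-K}\,P^K_{k-|K|}(L)
\]
are not abstract isomorphs but \emph{canonical subspaces}: each is the image of the composite $P^K_{k-|K|}(L)\hookrightarrow Gr^{W^K}_{k-|K|}L \xrightarrow{\,N_{J-K}\,} Gr^{W^J}_{k-|J|}L$. Since the $N_i$ commute on $L$ and each $N_j$ ($j\notin J$) is compatible with the passage $W^J\to W^{J\cup j}$ on graded pieces, one has immediately
\[
N_j\bigl(N_{J-K}\,P^K_{k-|K|}(L)\bigr) \;=\; N_{(J\cup j)-K}\,P^K_{k-|K|}(L),
\]
which is precisely the $K$-summand in degree $|J|+1$. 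So the $K$-grading is preserved by every Koszul differential at once, by inspection. The machinery you invoke (distinguished pairs, distributivity, the identities $M(K,W^J)=M(K,W)$) is what Kashiwara uses to \emph{establish} the corollary in the first place; once you have the corollary with its canonical summands, the passage to the complex-level statement is a one-line observation, not a second induction.
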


The lemma follows  from the corollary. It is the local statement
of the structure of MHC on the logarithmic de Rham complex.

\subsubsection { Global definition and  properties of the weight $W$} 

The local study
ended with  the local decomposition into Intersection complexes.
We develop now the corresponding global results.
 Taking the residue of the connection, we
define nilpotent analytic linear endomorphisms of $\LL_{Y_i}$
compatible with the filtration $\WW_{Y_i}$ by sub-analytic
bundles:
 \[ \NN_i:= Res_i (\nabla): \LL_{Y_i}  \to \LL_{Y_i}, \quad \NN_J =
 \NN_{i_1}\cdots\NN_{i_j}:\LL_{Y_J} \to \LL_{Y_J}  \]
{\it The pure Intersection complex
of a polarized  VHS}.\\
 We introduce the global Intersection complex
 $IC (X, {\mathcal L})$ as the sub-complex of
 ${\Omega}^*_X (Log Y) \otimes {\mathcal L}_X$ whose  terms in each
 degree are $\OO_X-$modules with singularities along the strata $Y_J$
 of $Y$, defined in terms of the analytic nilpotent endomorphisms
 $ \NN_i$ and $ \NN_J$ for subsets $J \subset I$ of the set $I$ of
 indices of  the components of $Y$:

\begin {defn} 
The Intersection complex
$IC (X, \LL)$ is the sub-analytic complex
 of ${\Omega}^*_X (Log Y) \otimes {\mathcal L}_X$ whose fibre at
a point $y \in Y^*_M$ is defined in terms of a set of
  coordinates $z_i, i\in M, $
defining equations of $Y_M$,  as an ${\Omega}^*_{X,y}$ sub-module,
generated by the sections $\tilde {v} \wedge_{j \in J} \frac
{dz_j}{z_j}$ for $\tilde {v} \in N_J \LL(y)$ and $J \subset M$
($N_\emptyset = Id$).
 
 \end {defn}
 This definition is independent of the
choice of coordinates; moreover the restriction of the section is
still defined in the sub-complex near $y$, since $N_J L \subset
N_{J-i}L $ for all $i \in J$.\\
For example, for $M = \{1,2\}$ at $y \in Y_M^*  $, the sections in
$({\Omega}^2_X (Log Y) \otimes {\mathcal L}_X)_y \cap IC (X,
\LL)_y$ are generated by $\tilde {v} \frac {dz_1}{z_1}\wedge \frac
{dz_2}{z_2}$ for $\tilde {v} \in N_J\LL(y)$, $\tilde {v} \frac
{dz_1}{z_1}\wedge dz_2$ for $\tilde {v} \in N_1\LL(y)$, $\tilde
{v} dz_1 \wedge \frac {dz_2}{z_2} $ for $\tilde {v} \in N_2\LL(y)$
and $\tilde {v} dz_1 \wedge dz_2 $ for $\tilde {v} \in \LL(y)$. We
deduce from the local result

\begin {lem} The intersection complex $IC (X, {\mathcal L})[n]$
shifted by $n :=$dim $X$, where $\LL$ is locally unipotent
polarized on $X-Y$, is quasi-isomorphic to the unique auto-dual
complex on $X$, intermediate extension ${ j}_{!*}{\mathcal L}[n]$
satisfying \\
$R Hom({ j}_{!*}{\mathcal L}[n],\Q_X[2n]) \simeq {
j}_{!*}{\mathcal L}[n]$.
  \end {lem}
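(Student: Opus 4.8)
The plan is to reduce the statement to the stalkwise descriptions already available and then to a Verdier self-duality property of the local Koszul model; the global complex $IC(X,\LL)$ being a genuine subcomplex of $\Omega^*_X(Log\,Y)\otimes\LL_X$, almost everything can be read off inside this one complex.

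First I would observe that the inclusion of $\OO_X$-complexes $IC(X,\LL)\hookrightarrow\Omega^*_X(Log\,Y)\otimes\LL_X$, composed with the comparison quasi-isomorphism $\Omega^*_X(Log\,Y)\otimes\LL_X\simeq Rj_*\LL$ (whose local form $({Rj}_*\LL)_y\cong\Omega(L,N_j,j\in M)$ is the Lemma above on the fibre of the logarithmic complex), furnishes a canonical morphism $IC(X,\LL)[n]\to Rj_*\LL[n]$ which is the identity of $\LL[n]$ over $X-Y$. Next I would compute its stalk at a point $y\in Y^*_M$: under the identification $v\mapsto\tilde v\wedge_{j\in J}dz_j/z_j$ of that same Lemma, $(\Omega^*_X(Log\,Y)\otimes\LL_X)_y$ is carried onto $s(L(J),N.)_{J\subset M}$, and by the very definition of $IC(X,\LL)$ --- generated near $y$ by the sections with $\tilde v\in N_J\LL(y)$ --- the subcomplex $IC(X,\LL)_y$ maps isomorphically onto $s(N_JL,N.)_{J\subset M}=IC(L)$, which by \cite{CKS} and \cite{KI} is precisely the stalk $(j_{!*}\LL)_y$. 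In particular the de Rham cohomology sheaves of $IC(X,\LL)[n]$ are constructible, with stalks on each stratum $Y^*_M$ those of $j_{!*}\LL[n]$, so the support bound $\HH^k(IC(X,\LL)[n])|_{Y^*_M}=0$ for $k>-\dim Y^*_M$ holds, being visible on $IC(L)$.

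The second step is to upgrade these stalkwise identifications to a global quasi-isomorphism $IC(X,\LL)[n]\xrightarrow{\sim}j_{!*}\LL[n]$. Since $j_{!*}\LL[n]$ is the unique perverse extension of $\LL[n]$ admitting no nonzero subobject or quotient object supported on $Y$, it is enough to check that $IC(X,\LL)[n]$ is perverse with that strict-support property; the support half was just noted, and I would obtain the cosupport half from Verdier self-duality of $IC(X,\LL)[n]$. Granting this, the canonical map $IC(X,\LL)[n]\to Rj_*\LL[n]$ factors uniquely through $j_{!*}\LL[n]$ and, being a stalkwise quasi-isomorphism there, is a quasi-isomorphism. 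For the self-duality $R\operatorname{Hom}(j_{!*}\LL[n],\Q_X[2n])\simeq j_{!*}\LL[n]$ I would argue that Verdier duality on the smooth variety $X$ of dimension $n$ sends $j_{!*}\LL[n]$ to $j_{!*}(\LL^\vee[n])$ and that the polarization of the VHS $\LL$ gives $\LL^\vee\simeq\LL$ as local systems (Tate twists not affecting the underlying complex); in the explicit picture this corresponds to the fact that the pairing $S$ on $L$ identifies the model $s(N_JL,N.)_{J\subset M}$ with its Verdier dual, $N_JL$ being paired with its $S$-orthogonal exactly as in the primitive decomposition for the commuting nilpotents $N_j$ supplied by the polarized (mixed) nilpotent orbit.

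The hard part will be precisely this self-duality, equivalently the cosupport half of perversity: one must show that the explicitly written subcomplex $IC(X,\LL)$ is genuinely auto-dual --- hence the intermediate extension, and not merely a complex restricting correctly to the open stratum, a property $j_!\LL[n]$ and $Rj_*\LL[n]$ share on $X-Y$ as well. This is where graded polarizability is indispensable and where the nontrivial linear algebra of polarized mixed nilpotent orbits of \cite{CKS} and \cite{K} enters; once the local model $IC(L)$ is known to be self-dual and to satisfy the perverse support bounds, the passage to the global complex is formal, everything taking place inside the single complex $\Omega^*_X(Log\,Y)\otimes\LL_X$.
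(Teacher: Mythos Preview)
Your approach is essentially the paper's: the paper gives no detailed proof, simply writing ``we deduce from the local result'' after having recorded (with citation to \cite{CKS}, \cite{KI}) the stalk formula $(j_{!*}\LL)_y\simeq IC(L)=s(N_JL,N.)_{J\subset M}$. Your proposal is a careful unpacking of what that deduction entails, and your final paragraph correctly locates the genuine content in the cosupport/self-duality statement for the local model, which is exactly what \cite{CKS} and \cite{KI} supply.

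One expository point worth tightening: in your second paragraph you say you will obtain the cosupport half for $IC(X,\LL)[n]$ from its Verdier self-duality, but the argument you then give is for the self-duality of $j_{!*}\LL[n]$ (via $\DD j_{!*}\simeq j_{!*}\DD$ and the polarization), which presupposes the identification you are trying to prove. The noncircular route is the one you sketch in your last paragraph: take as input from \cite{CKS}, \cite{KI} that the local model $IC(L)$ already satisfies both perverse support and cosupport bounds (this is the substance of their local purity theorem, not a formality), conclude that $IC(X,\LL)[n]$ is perverse with strict support on $X$, and then invoke uniqueness of the intermediate extension. The self-duality displayed in the lemma is then a consequence, not an input.
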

The shift by $n$ is needed for the compatibility with the
definitions in \cite{BBD}. The next theorem is proved in \cite{KI}
and \cite{CKS}
  \begin{thm} Let $(\LL, F)$ be a polarized VHS of weight $a$, then the sub-complex
  $(IC(X,\LL),F)$ of the logarithmic
  complex with induced filtration $F$ is a Hodge complex which
   defines a pure HS of weight $a+i$ on the Intersection cohomology
$IH^i(X,\LL)$.
\end{thm}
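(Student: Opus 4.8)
The plan is to compute $IH^i(X,\LL)$ as an $L^2$-cohomology group and to transport across the boundary divisor the Hodge theory available on the open variety $U=X-Y$. First I fix the Hodge filtration on the complex: in degree $q$ let $F^p(IC(X,\LL))$ be $IC(X,\LL)\cap(\Omega^q_X(\log Y)\otimes F^{p-q}\LL_X)$, so that locally at $y\in Y_M^*$ it is spanned by the forms $\widetilde v\wedge\bigwedge_{j\in J}\frac{dz_j}{z_j}$ with $\widetilde v$ a section of $F^{p-q}\LL_X$ lying in $N_J\LL(y)$, together with the analogous forms in which some factors $dz_j$ replace $dz_j/z_j$. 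Griffiths transversality $\nabla(\FF^p)\subset\Omega^1_X\otimes\FF^{p-1}$ shows this is a subcomplex, and the graded pieces $F^p/F^{p+1}$ are complexes of coherent $\OO_X$-modules whose differentials are $\OO_X$-linear up to the residue endomorphisms $\NN_j$. From the preceding lemma we already have the quasi-isomorphism $IC(X,\LL)[n]\simeq j_{!*}\LL[n]$, hence $\H^i(X,IC(X,\LL))\cong IH^i(X,\LL)$; what remains is to put on this hypercohomology a Hodge structure pure of weight $a+i$ whose Hodge filtration is induced by $F$, i.e. to check that $(IC(X,\LL),F)$ is a cohomological Hodge complex.

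The analytic core is the following comparison. Equip $U$ with a complete K\"ahler metric $\omega$ of Poincar\'e type near $Y$ (on a coordinate polydisc $(D^*)^m\times D^l$ the model $\sum_{j\le m}\frac{i\,dz_j\wedge d\bar z_j}{|z_j|^2(\log|z_j|)^2}+\sum_{j>m}i\,dz_j\wedge d\bar z_j$), and endow $\LL$ with the Hodge metric $h$ attached to the polarization $S$. The norm estimates of Schmid and of Cattani--Kaplan--Schmid express the $h$-norm of a flat multivalued section near $Y$ in terms of the relative monodromy (weight) filtration: a section $v\in N_JL$ has Hodge norm bounded, up to a power of $\log|z_j|$, so that $\widetilde v\wedge\bigwedge_{j\in J}\frac{dz_j}{z_j}$ is square integrable for $(\omega,h)$, whereas sections not of this type fail to lie in $L^2$. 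This identifies, inside $j_*(\Omega^*_U\otimes\LL)$, the $L^2$ de Rham complex of $(\LL,h)$ relative to $(\omega,h)$ with $IC(X,\LL)$, compatibly with the filtration $F$; this comparison is essentially the content of \cite{CKS}, \cite{KI}.

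With the comparison in hand I would invoke $L^2$-Hodge theory on the complete K\"ahler manifold $(U,\omega)$ with the flat bundle $(\LL,\nabla,h)$. Because $\omega$ is K\"ahler and complete and the curvature of $h$ enjoys the Hodge-theoretic (Nakano-type) boundedness coming from the polarized VHS, the K\"ahler identities hold and the Laplacians $\Delta_d$, $\Delta_\partial$, $\Delta_{\bar\partial}$ coincide up to a factor $2$; reduced $L^2$-cohomology is then represented by harmonic forms and carries a bidegree decomposition $H^i_{(2)}=\bigoplus_{p+q=i}H^{p,q}_{(2)}$ with $\overline{H^{p,q}_{(2)}}=H^{q,p}_{(2)}$ via $S$-conjugation, and shifting the grading by the weight $a$ of $\LL$ produces a structure pure of weight $a+i$ in degree $i$. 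Finiteness of $L^2$-cohomology, needed for the harmonic representation and the strictness of $F$, follows from the identification with $IH^i(X,\LL)$ and the compactness of $X$. Hard Lefschetz and a polarization on $IH^*(X,\LL)$ come, as usual, from cup product with a K\"ahler class of $X$ together with the pairing $\int_U S(\,\cdot\,,\overline{\,\cdot\,})\wedge\omega^{n-i}$. One concludes that $F$ is the Hodge filtration of this pure structure, i.e. that $(IC(X,\LL),F)$ is a Hodge complex defining a pure HS of weight $a+i$ on $IH^i(X,\LL)$.

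The main obstacle is precisely this analytic input: that the $L^2$-complex for the Poincar\'e and Hodge metrics coincides with the combinatorially defined $IC(X,\LL)$, and that $L^2$-harmonic theory applies (K\"ahler identities with controlled curvature, finiteness). Both rest on the sharp asymptotics of the Hodge norm near $Y$, that is on the several-variable $SL(2)$-orbit theorem; by contrast the subcomplex structure of $F$, the quasi-isomorphism to $j_{!*}\LL$ and the formal Hodge-theoretic bookkeeping are routine. An alternative, more combinatorial route is the Kashiwara--Kawai argument: one feeds the local decomposition $Gr^W_k\Omega(L,N.)\simeq\bigoplus_{J\subset M}IC(P^J_{k-|J|}(L))[-|J|]$ into an induction on $\dim X$ and on the number of components of $Y$, reducing the purity of $IH^i(X,\LL)$ to the purity of variations of Hodge structure on the lower-dimensional strata $Y_J$ and to hard Lefschetz for those.
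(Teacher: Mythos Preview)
Your proposal is correct and follows essentially the same route the paper indicates: the paper does not give a self-contained proof but refers to the $L^2$-cohomology approach of \cite{CKS} and \cite{KI}, together with the identification of the algebro-geometric filtration $F$ on $IC(X,\LL)$ with the $L^2$ Hodge filtration via \cite{KII}. Your outline---Poincar\'e-type metric, Hodge-norm asymptotics from the $SL(2)$-orbit theorem, identification of the $L^2$ complex with $IC(X,\LL)$, and $L^2$ K\"ahler--Hodge theory---is exactly this argument fleshed out, and your closing remark on the Kashiwara--Kawai alternative also matches the paper's references.
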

The proof is in  terms of $L^2-$cohomology or square integrable
forms with coefficients in Deligne's extension $\LL_X$ and an
adequate metrics. The filtration $F$ on $IC(X, \LL)$ defined in an
algebro-geometric way yields the same Hodge filtration as in
$L^2-$cohomology  as proved elegantly in \cite {KII} using the
auto-duality of the Intersection cohomology.\\
{\it The global filtration  $W^J$.}
 The relative monodromy weight filtrations
$\MM (J, \WW_{Y_J}) := \MM (\sum_{i\in J}\NN_i, \WW_{Y_J}) $ of
$\sum_{i\in J}\NN_i$ with respect to the restriction $\WW_{Y_J}$
of $\WW_X$ on $\LL_X$ to $Y_J$, exist for all $J \subset I$, so
that we can define the global filtrations
\[ (\NN_i*  \WW_{Y_i})_k:= \NN_i \WW_{Y_i, k+1}+\MM_k(\NN_i,
\WW_{Y_i})\cap \WW_k = \NN_i \WW_{Y_i, k+1}+\MM_k(\NN_i,
\WW_{Y_i})\cap \WW_{k+1}\]
 and for all $J \in I$
 an increasing filtration
$\WW^J$ of $\LL_{Y_J}$ recursively by the star operation
\begin{equation*}
 \WW^J:=   \NN_{i_1 }*( \ldots (\NN_{i_j } *
\WW)\ldots ) \,\,\hbox {for} \,\, J = \{i_1, \ldots, i_j \}
\end{equation*}
\begin{defn}
The filtration $ W $ on the de Rham complex with coefficients in
the canonical extension $\LL_X$ defined by $\LL$
  is constructed by a decreasing induction on the dimension  of
  the strata $Y^*_J$ as follows:\\
  i) On $U:= X-Y$, the sub-complex  $(W_r)_{|U}$ coincides
with ${\Omega}^*_U \otimes (\WW_r)_{|U} \subset
\Omega^*_U \otimes \LL_U$.\\
 ii) We suppose $W_r$ defined on the complement of
the closure of the union of strata $\cup_{|M|=m} Y_M$, then for
each point $y \in Y^*_M$ we define
 $W_r$  locally in a neighborhood of $y \in Y^*_M$, on
 $({\Omega}^*_X (Log Y) \otimes {\LL}_X)_y$,
 in terms of the IMHS $(L, W, F, N.)$ at $y$ and a set
  of coordinates $z_i$ for $ i\in M, $
 defining a set of local equations of  $Y_M$ at $y$:\\
$ W_r $ is generated as an ${\Omega}^*_{X,y}-$ sub-module
  by the germs of the sections $\wedge_{j \in J} \frac
{dz_j}{z_j} \otimes \tilde {v} $ for $v \in W_{r-|J|}^JL$ where
$\tilde {v} $ is the corresponding germ of $(\LL_X)_y$.
\end{defn}

 The  definition of $W$ above is independent of the choice
of coordinates on a neighborhood $U(y)$, since if we choose a
different coordinate $z'_i = f z_i$ instead of $z_i$ with $f$
invertible holomorphic at $y$, we check first that the submodule
$W_{r-|J|}^J (\LL_X)_y$ of $\LL_{X,y}$ defined by the image of $
W_{r-|J|}^JL$ is independent of the coordinates as in the
construction of the canonical extension. Then we check that for a
fixed $ \alpha \in W_{r-|J|}^J \LL_{X,y} $, since the difference $
\frac {dz'_i}{z'_i} - \frac {dz_i}{z_i} = \frac {d f}{f}$ is
holomorphic at $y$, the difference of the sections $\wedge_{j \in
J} \frac {dz'_j}{z'_j}\otimes \alpha - \wedge_{j \in J} \frac
{dz_j}{z_j} \otimes \alpha $  is still a section of the
${\Omega}^*_{X,y}-$sub-module generated
  by the germs of the sections $\wedge_{j \in (J-i)} \frac
{dz_j}{z_j} \otimes W_{r-|J|}^J (\LL_X)_y $.\\
Finally, we remark that the sections defined by induction at $y$
restrict to sections already defined by induction on $(U(y)- Y^*_M
\cap U(y))$.\\
 {\it The bundles $\PP^J_k(\LL_{Y_J})$}. Given a subset $J
\subset I$, the filtration $\WW^J$ induces a filtration by
sub-analytic bundles of $\LL_{Y_J}$, then we introduce the
following analytic bundles
 \[ \PP^J_k(\LL_{Y_J}):= \cap_{K \subset J, K \not= J}
 \hbox {Ker}\, (I_{J,K}: Gr^{\WW^J}_k \LL_{Y_J} \to
 Gr^{\WW^K}_{k+ \vert J-K\vert}) \LL_{Y_J} \subset
Gr^{\WW^J}_k \LL_{Y_J} \] where $I_{J,K}$ is induced by the
natural inclusion
 $\WW^J_k \LL_{Y_J} \subset \WW^K_{k+ \vert J-K \vert}
 \LL_{Y_J}$. In particular $\PP^{\emptyset}_k(\LL_X)
 = Gr^{\WW}_k \LL_X $ and $\PP^J_k(\LL_{Y_J}) = 0$ if
 $  Y_J^* = \emptyset $.

\begin{prop} i) The weight $W[n]$ shifted by $n:=$dim.$X$  is
a filtration by perverse sheaves defined over $\Q$, sub-complexes
of $Rj_* \LL[n]$. \\
ii) The bundles $\PP^J_k(\LL_{Y_J})$ are  Deligne's extensions of
local systems $\PP^J_k(\LL)$ on $Y_J^*$. \\
 iii) The graded
perverse sheaves for the weight filtration, satisfy the
 decomposition property into intermediate extensions for all $k$
 \[
 Gr^W_k (\Omega^*_X(Log Y)\otimes \LL_X)[n] \simeq \oplus_{J \subset
 I} (i_{Y_J})_* j_{!*}\PP^J_{k-|J|}(\LL)[n-|J|].
 \]
 where $j$ denotes uniformly the inclusion of $Y_J^*$ into $Y_J$
 for each $J \subset I$,
   $\PP^J_p(\LL)$ on $Y_J^*$  is a polarized VHS
pure   with respect to the weight  induced by $\MM( \sum_{j \in J}
\NN_j, \LL_{Y_J^*})$.
\end{prop}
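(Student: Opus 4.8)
The plan is to deduce the three assertions from the local descriptions already established, namely: (a) the quasi-isomorphism $\Omega^*_X(\log Y)\otimes\LL_X\simeq Rj_*\LL$ together with the identification of the stalk of $(\Omega^*_X(\log Y)\otimes\LL_X,W)$ at a point $y\in Y_M^*$ with the filtered Koszul complex $(s(W^J_{k-|J|},N.)_{J\subset M},\ \Omega(L,N.))$ built from the IMHS $(L,W,F,N_i,i\in M)$ on the fibre $\LL(y)$; (b) Kashiwara's local splitting $Gr^W_k(\Omega(L,N.))\simeq\bigoplus_{J\subset M}IC(P^J_{k-|J|}(L)[-|J|])$; and (c) the purity and infinitesimal--VHS structure of the primitive parts $P^J_k(L)$. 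Along the way I would record that $\Omega^*_X(\log Y)\otimes\LL_X[n]$ is a perverse sheaf: the stalk computation for a punctured polydisc shows $\mathcal H^q(Rj_*\LL)$ is supported in codimension $\geq q$, so $Rj_*\LL[n]\in{}^{p}D^{\leq 0}(X)$, while $\mathbb D_X(Rj_*\LL[n])=j_!(\LL^\vee[n])\in{}^{p}D^{\leq 0}(X)$ because $j_!$ is right $t$-exact, hence $Rj_*\LL[n]\in{}^{p}D^{\geq 0}(X)$ as well.

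For (ii): near $Y_J^*$ the bundle $\LL_{Y_J}$ with its induced flat connection is Deligne's canonical extension of the local system $s_J^*\LL$, and the iterated star filtration $\WW^J=\NN_{i_1}*(\ldots(\NN_{i_j}*\WW)\ldots)$ is, by the existence of the relative monodromy filtrations $\MM(J,\WW_{Y_J})$ (admissibility) and the basic lemma that $N_i*W$ again defines an IMHS, a filtration of $\LL_{Y_J}$ by sub-analytic bundles. Hence $\PP^J_k(\LL_{Y_J})$, being an intersection of kernels of bundle maps inside $Gr^{\WW^J}_k\LL_{Y_J}$, is again a sub-quotient bundle, so it is the canonical extension of a local system $\PP^J_k(\LL)$ on $Y_J^*$. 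That $\PP^J_k(\LL)$ carries a polarized VHS, pure of weight $k$ for the filtration induced by $\MM(\sum_{j\in J}\NN_j,\LL_{Y_J^*})$, is the fibrewise content of Kashiwara's Corollary \cite{K}, 5.6.10 (i)--(ii); its Hodge filtration is the restriction of the one extended by admissibility and the polarization is induced from the graded polarization of the ambient VMHS, all of which vary holomorphically because they already do so on $\LL_{Y_J}$ before passing to the primitive quotient.

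For (iii) and the perversity in (i): I would construct, working on a neighbourhood of each closed stratum and then patching, a morphism of complexes
\[
Gr^W_k(\Omega^*_X(\log Y)\otimes\LL_X)[n]\longrightarrow\bigoplus_{J\subset I}(i_{Y_J})_*\, j_{!*}\PP^J_{k-|J|}(\LL)[n-|J|],
\]
whose $J$-component is obtained by the residue along $Y_J$ (the coefficient of $\bigwedge_{j\in J}dz_j/z_j$), which identifies the $Y_J$-supported part of $Gr^W_k$ with the Koszul/intersection complex of $\PP^J_{k-|J|}(\LL_{Y_J})$ built from the residues $\NN_i$, $i\notin J$, followed by the already-proved identification of the global intersection complex $IC(Y_J,\PP^J_{k-|J|}(\LL))[n-|J|]$ with $(i_{Y_J})_*\, j_{!*}\PP^J_{k-|J|}(\LL)[n-|J|]$. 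By Kashiwara's local splitting this morphism is an isomorphism on every stalk, hence a quasi-isomorphism, which is (iii); in particular $Gr^W_k(\ )[n]$, being a direct sum of shifted intermediate extensions of polarized VHS, is perverse. Since the filtration is finite ($W_r=0$ for $r\ll0$ and $W_r=\Omega^*_X(\log Y)\otimes\LL_X$ for $r\gg0$, because each $W^J$ on $L$ is finite) and $\mathrm{Perv}(X)$ is abelian and stable under extensions in $D^b_c(X)$, an induction on $r$ over the triangles $W_{r-1}\to W_r\to Gr^W_r$ shows each $W_r[n]$ is perverse and the inclusions $W_{r-1}[n]\hookrightarrow W_r[n]\hookrightarrow Rj_*\LL[n]$ are monomorphisms in $\mathrm{Perv}(X)$. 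Rationality over $\Q$ holds because $W^J$ is produced from the rational weight filtration $W$ of the VMHS and the operators $N_j=-\frac{1}{2\pi i}\log T_j$, which have rational matrices since the $T_j$ are unipotent and integral, by the canonical operations $\MM(N,W)$ and $N*W$; feeding the same Koszul recipe into $Rj_*\LL_\Q[n]$ yields a $\Q$-form of $W[n]$ whose complexification is the filtration constructed analytically.

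The step I expect to be the main obstacle is the globalization in the last paragraph: Kashiwara's decomposition is a statement about a single fibre, so one must verify both that the residue morphisms and the splitting are natural enough to glue over the strata $Y_J^*$, and that the fibrewise primitive parts $P^J_k(L)$ really assemble into the Deligne extension $\PP^J_k(\LL_{Y_J})$ of a \emph{global} polarized VHS on $Y_J^*$, not merely of a family of local ones. This is precisely where Kashiwara's study of the de Rham data $DR(L)$ and the distributivity and purity properties of the filtrations $W^J$ (\cite{K}, 5.6--5.8 and Corollary 5.6.10), together with the earlier identification $IC(X,\LL)[n]\simeq j_{!*}\LL[n]$ applied to each $Y_J$, must be invoked; once the comparison morphism exists globally, checking that it is a stalkwise isomorphism is pure linear algebra of infinitesimal mixed Hodge structures and completes the proof.
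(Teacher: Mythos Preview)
Your proposal is correct and follows essentially the same route as the paper: both arguments rest on Kashiwara's local decomposition $Gr^W_k(\Omega(L,N.))\simeq\bigoplus_{J\subset M} IC(P^J_{k-|J|}(L))[-|J|]$, identify the summands globally with $j_{!*}\PP^J_{k-|J|}(\LL)[n-|J|]$, and then read off perversity and rationality.  The only noteworthy difference in presentation is that the paper obtains the cosupport condition on $W_k[n]$ by invoking the Verdier auto-duality of the graded pieces $Gr^W_k$, whereas you reach perversity by the more elementary induction along the triangles $W_{r-1}\to W_r\to Gr^W_r$ using extension-closure of $\mathrm{Perv}(X)$; both are valid, and your version makes the monomorphism claim in (i) explicit.
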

The proof is essentially based on the local study above which
makes sense over $\Q$ as $\LL$ and $ W $ are defined over $\Q$. In
particular, we deduce that the various graded  complexes $Gr^W_k
(\Omega^*_X(Log Y)\otimes \LL_X)[n]$ are Intersections complexes
over $\C$ from which we deduce that  the extended filtration $W_k$
on the de Rham complex satisfy the condition of support of
perverse sheaves with respect to the stratification defined by
$Y$. Similarly, the proof apply to the
Verdier dual of $W_k$ as the complexes $Gr^W_k$ are auto-dual.\\
 We need to prove that  the local rational structure
 of the complexes $W_k$ glue into a global rational structure,
 as perverse sheaves may be glued as the usual sheaves, although
 they are not concentrated
 in a unique degree.
 Since the total complex $Rj_* \LL$ is defined over $\Q$, the
 gluing isomorphisms induced on the various extended $W_k$
 are also defined over $\Q$.
 Another proof of the existence of the rational structure is based
 on Verdier's specialization \cite{EII}. The next result is
 compatible with \cite{WII}, cor 3.3.5).
\begin{cor}  The de Rham logarithmic mixed Hodge complex of
an admissible VMHS of weight $\omega \geq a$ induces on the
cohomolgy $\H^i (X-Y, \LL)$ a MHS of weight $ \omega \geq a+i$.
\end{cor}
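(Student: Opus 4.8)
The plan is to read off the bound from the weight spectral sequence of the logarithmic mixed Hodge complex, using part (iii) of the preceding proposition together with the purity of intersection cohomology. Since $X$ is compact, the logarithmic complex computes $\H^i(X,Rj_*\LL)=\H^i(X-Y,\LL)$, and by the preceding theorem $(\Omega^*_X(Log Y)\otimes\LL_X,W,F)$ is a mixed Hodge complex; I would work with its weight spectral sequence
\[
{}_WE_1^{p,q}=\H^{p+q}\bigl(X,Gr^W_{-p}(\Omega^*_X(Log Y)\otimes\LL_X)\bigr)\ \Longrightarrow\ \H^{p+q}(X-Y,\LL).
\]
As for any mixed Hodge complex the term ${}_WE_1^{p,q}$ is a pure Hodge structure of weight $q$, the differentials are morphisms of MHS, and the sequence degenerates at $E_2$; hence $Gr^W_m\H^i(X-Y,\LL)$ is a subquotient of ${}_WE_1^{i-m,m}$, and it is enough to show that ${}_WE_1^{p,q}$ can be nonzero only when $q\geq a+i$ with $i=p+q$, i.e.\ only when $-p\geq a$.

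It therefore suffices to prove that $Gr^W_k$ of the (shifted) logarithmic complex vanishes for $k<a$. By part (iii) of the preceding proposition this graded piece is $\bigoplus_{J\subset I}(i_{Y_J})_*j_{!*}\PP^J_{k-|J|}(\LL)[\,n-|J|\,]$, where each $\PP^J_{k-|J|}(\LL)$ is a polarized variation of Hodge structure on the stratum $Y_J^*$, pure for $\MM(\sum_{j\in J}\NN_j,\LL_{Y_J^*})$; the purity theorem for the intersection cohomology of a polarized VHS (quoted above from \cite{KI},\cite{CKS},\cite{KII}), applied on the compact $Y_J$, is precisely what makes these summands — and hence the whole complex — a genuine mixed Hodge complex with the purity of $E_1$ used in the first paragraph. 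So I am reduced to showing $\PP^J_{k-|J|}(\LL)=0$ for all $J$ whenever $k<a$.

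This is where the hypothesis ``$\LL$ has weights $\geq a$'', i.e.\ $\WW_{a-1}=0$, is propagated through the construction of the weight filtration. The star operation raises the vanishing threshold: from $(\NN_i*\WW)_b=\NN_i\WW_{b+1}+\MM_b(\NN_i,\WW)\cap\WW_b$ one sees that $\WW_{a-1}=0$ forces $(\NN_i*\WW)_{a-2}=0$ (both summands vanish for $b\leq a-2$), and iterating gives $\WW^J_{a-1-|J|}=0$ for every $J\subset I$. Since the part of $W_k(\Omega^*_X(Log Y)\otimes\LL_X)$ supported along $Y_J$ is generated by the germs $\wedge_{j\in J}\tfrac{dz_j}{z_j}\otimes\tilde v$ with $v\in\WW^J_{k-|J|}L$, this part vanishes as soon as $k<a$, so in the decomposition above every primitive piece $\PP^J_{k-|J|}(\LL)\subset Gr^{\WW^J}_{k-|J|}$ is zero for $k<a$. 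Combining the three steps, ${}_WE_1^{p,q}$ is pure of weight $q\geq a+i$ in total degree $i$, whence $Gr^W_m\H^i(X-Y,\LL)=0$ for $m<a+i$, which is the assertion (consistent with Deligne's bound \cite{WII}, 3.3.5, in the constant-coefficient case). The only non-formal input is the admissibility of $\LL$: it is what guarantees, via Kashiwara's local theory, the existence of the relative monodromy filtrations $\MM(\NN_i,\WW)$ and the good behaviour (commutation, graded distinguished pair property) of the iterated filtrations $\WW^J$ used in the threshold computation. I expect that computation — tracing the lower bound $a$ through the recursive star operations — to be the main technical point; everything else is the mixed Hodge complex formalism plus the already-established purity of intersection cohomology.
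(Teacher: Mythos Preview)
Your proof is correct and takes essentially the same approach as the paper, whose entire argument is the single remark that the weight $W_k$ on the logarithmic complex vanishes for $k<a$. You have simply unpacked this claim, tracing the vanishing $\WW^J_{a-1-|J|}=0$ through the recursive star operation, which is precisely the justification the paper leaves implicit.
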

Indeed, the weight $W_k$ on the logarithmic  complex vanishes for
$k \leq a$.
\begin{cor}  The Intersection complex $(IC(X, \LL)[n], W, F)$
of an admissible VMHS, with induced filtration as an embedded
sub-complex of the de Rham logarithmic mixed Hodge complex, is a
mixed Hodge complex satisfying for all $k$:
 \[Gr ^W_k IC(X, \LL) =
IC(X, Gr^W_k \LL).
 \]
\end{cor}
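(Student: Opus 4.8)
The plan is to reduce the statement to the pure case already established above: first compute the graded pieces $Gr^W_k IC(X,\LL)$, identify them with the intersection complexes of the graded variations $Gr^W_k\LL$, which are polarized VHS, and then transport the Hodge-complex property from the preceding theorem on polarized VHS. By the Definition preceding this corollary, $IC(X,\LL)$ sits inside $\Omega^*_X(Log Y)\otimes\LL_X$ as the analytic sub-complex whose fibre at $y\in Y^*_M$ is the Koszul sub-complex $s(N_J L,N.)_{J\subset M}\subset\Omega(L,N.)$, the $J$-component being $N_J L\subset L$, and it inherits $W$ and $F$ from the ambient logarithmic de Rham mixed Hodge complex. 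Since $W_k\Omega(L,N.)$ has $J$-component $W^J_{k-|J|}L$, the inherited weight filtration on $IC(L)$ has $J$-component $N_J L\cap W^J_{k-|J|}L$. Now $N_j$ preserves $W$, and by the star-operation formulas $N_J$ carries $W_i$ into $W^J_{i-|J|}$, so the image of $N_J L$ in $Gr^{W^J}_{k-|J|}L$ equals the image of $\overline N_J: Gr^W_k L\to Gr^{W^J}_{k-|J|}L$; by Kashiwara's graded distinguished pair property of $(W,N*W)$, iterated to $(W,W^J)$ (\cite{K}, 3.4.1--3.4.2, 5.6.5--5.6.10), this image is a direct summand of $Gr^{W^J}_{k-|J|}L$ canonically isomorphic, via the natural map $I$, to $\mathrm{Im}(\overline N_J: Gr^W_k L\to Gr^W_k L)=\overline N_J\,Gr^W_k L$. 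Component by component this yields $Gr^W_k IC(L)\simeq s(\overline N_J\,Gr^W_k L,N.)_{J\subset M}$, which is exactly the fibre of $IC(X,Gr^W_k\LL)$.

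These component-wise isomorphisms are induced by the graded of the residue endomorphisms $\NN_J$, so they commute with the Koszul differentials --- which act by $\overline N_j$ on both sides --- and, being intrinsic, are independent of the chosen local coordinates, hence glue to a global isomorphism $Gr^W_k IC(X,\LL)\simeq IC(X,Gr^W_k\LL)$; equivalently, this is the $J=\emptyset$ summand $j_{!*}\PP^\emptyset_k(\LL)[n]=j_{!*}(Gr^W_k\LL)[n]$ of the decomposition $Gr^W_k(\Omega^*_X(Log Y)\otimes\LL_X)[n]\simeq\oplus_{J\subset I}(i_{Y_J})_*j_{!*}\PP^J_{k-|J|}(\LL)[n-|J|]$ of the previous proposition, the other summands being supported on the strata $Y_J$ with $J\neq\emptyset$ and not occurring in the sub-complex $IC(X,\LL)$. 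Granting the formula $Gr^W_k IC(X,\LL)=IC(X,Gr^W_k\LL)$, the mixed Hodge complex structure follows. The rational level is $IC(X,\LL)[n]\simeq j_{!*}\LL[n]$ with the restriction of $W[n]$; both are defined over $\Q$ since the inductive local description of $W$ makes sense over $\Q$, $\LL$ and $W$ being rational. For the Hodge level, $F$ induces on $\LL_X$ the Hodge filtration whose restriction to $Gr^{\WW_X}_k\LL_X$ is Schmid's extension, by admissibility; hence the $F$ inherited by $IC(X,\LL)$ induces on $Gr^W_k IC(X,\LL)=IC(X,Gr^W_k\LL)$ the algebro-geometric Hodge filtration of the intersection complex of the polarized VHS $Gr^W_k\LL$, of weight $k$ by graded polarizability, which by the preceding theorem on polarized VHS --- and the identification of this filtration with the $L^2$ one (\cite{KII}, using \cite{KI} and \cite{CKS}) --- is a Hodge complex defining a pure Hodge structure of weight $k+i$ on $IH^i(X,Gr^W_k\LL)$. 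Thus $(IC(X,\LL)[n],W,F)$ is a sub-mixed Hodge complex of the logarithmic de Rham mixed Hodge complex and the displayed identity holds.

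The main obstacle is the local step of the first computation: showing that the weight filtration inherited by the intersection sub-complex has graded pieces equal to the intersection complex of the graded infinitesimal Hodge structure $Gr^W_k L$. This is not a formality --- it rests entirely on the linear algebra of infinitesimal mixed Hodge structures developed in \cite{K}, namely the interplay of $W$, the iterated star operations $W^J$, the relative monodromy filtrations $M(J,W)$ and the distributivity of the family $\{W,F,M(J_1),\dots,M(J_k)\}$ (\cite{K}, 5.2.4, 5.5.5, and 5.6), and on checking that the resulting component-wise identifications are canonical enough to commute with the Koszul differentials and be coordinate-independent, so that the purely local statement globalizes. Once that is in place, transporting the known properties of the pure case through the isomorphism $Gr^W_k IC(X,\LL)=IC(X,Gr^W_k\LL)$ gives both the mixed Hodge complex structure and the compatibility asserted by the corollary.
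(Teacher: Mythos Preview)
Your proposal is correct and follows essentially the same route as the paper: reduce to the local identity $Gr^{W^J}_{k-|J|}(N_J L)\simeq\mathrm{Im}(N_J:Gr^W_k L\to Gr^W_k L)$, establish it via Kashiwara's graded distinguished pair property for $(W,N*W)$ iterated along $J$, and then invoke the pure case for the Hodge-complex axioms. The only presentational difference is that the paper carries out the iteration explicitly as an induction on $|J|$, isolating a ``graded split sequence'' lemma (the exact sequence $0\to Gr^{N*W}_k(NL)\to Gr^{N*W}_k L\to Gr^{N*W}_k(L/NL)\to 0$ splits, with $Gr^{N*W}_k(NL)\simeq\mathrm{Im}(N:Gr^W_{k+1}L\to Gr^W_{k+1}L)$) which supplies the strictness you need when you write ``the image of $N_J L$ in $Gr^{W^J}_{k-|J|}L$ equals the image of $\overline N_J$''; your sentence ``$N_J$ carries $W_i$ into $W^J_{i-|J|}$, so \ldots'' only gives one inclusion, and the other is exactly what that lemma (equivalently \cite{K}, 3.4.2--3.4.3) provides, so it would be cleaner to state this step as an induction rather than a single citation.
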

 The  existence of  relative filtrations is important
  since in general the
 intersection complex of an extension of two local systems, is not the
  extension of their intersection complex.
   We need  to check, for each
   $J \subset I$ of
length $j$, the following property of the induced filtration
   $\WW^J \cap \NN_J \LL_{Y^J}$ on $\NN_J \LL_{Y^J}$:
\[Gr_{k-j}^{\WW^J} (\NN_J\LL_{Y^J}) \simeq \hbox {Im}\,(\NN_J :
Gr^{\WW}_k \LL_{Y^J} \rightarrow Gr^{\WW}_k \LL_{Y^J}).\]
 The problem is local.
 We prove the following   statement by induction
 on the length $j$ of  $J$:
  For  each $J$ of  length $j > 0$, we have a split exact sequence:
\[
 0 \to Gr^{W^J}_{k-j} (N_JL) \to Gr^{W^J}_{k-j} L
\to  Gr^{W^J}_{k-j}(L/ N_JL)  \to 0
\]
and an isomorphism: $Gr^{W^J}_{k-j} (N_JL) \simeq
  (\hbox {Im}\,N_J : Gr^W_k L \rightarrow Gr^W_k L) $.\\
 To prove the step of the induction, we need to deduce  for all $N_i $ with
  $i \notin J$ a split exact sequence for $J \cup i$:
\[ 0 \to Gr^{W^{J\cup i}}_{k-j-1} (N_iN_JL) \to Gr^{W^{J\cup i}}_{k-j-1} L
\to  Gr^{W^{J\cup i}}_{k-j-1}(L/ N_iN_JL)  \to 0
\]
  and moreover, isomorphisms:
\begin{equation*}
\begin{split} Gr^{W^{J\cup i}}_{k-j-1} (N_iN_JL)\simeq &
  (\hbox {Im}\,N_i : Gr^{W^J}_{k-j} (N_JL) \rightarrow
  Gr^{W^J}_{k-j} (N_JL))\simeq\\
  &(\hbox {Im}\,N_iN_J : Gr^W_k L \rightarrow Gr^W_k L).
\end{split}
 \end{equation*}
 To this end we apply the following lemma:
\begin{lem}[Graded split sequence]  Let $ (L, W)$ be a filtered vector space
and $ M := M(N, W)$, then the  filtrations induced by $N*W$ on the
terms of the exact sequence : $ 0 \to NL \to L \to L/NL  \to 0 $
   satisfy the following properties:
\begin{equation}
 Gr^{N*W}_k N L  \simeq
  \hbox {Im}\,(N : Gr^W_{k+1}L \rightarrow Gr^W_{k+1}L ), \,
  Gr^{N*W}_k (L/NL) \simeq Gr^M_k (L/NL)
\end{equation}
 Moreover, the associated graded  exact sequence
\begin{equation} 0 \to Gr^{N*W}_k (NL) \to Gr^{N*W}_k L
\to  Gr^{N*W}_k(L/ NL)  \to 0
  \end{equation}
 is split with the splitting defined by the isomorphism
  \[\hbox {Ker}\,(I: Gr^{N*W}_k L
\to  Gr^W_{k+1}L)\simeq Gr^M_k (L/NL).\]
\end{lem}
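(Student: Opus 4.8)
The plan is to pin down explicitly the two filtrations that $N*W$ induces on the outer terms of the exact sequence $0\to NL\to L\to L/NL\to 0$, and then to read off all three assertions of the lemma from the decomposition property of the ``graded distinguished pair'' $(W,N*W)$ already recorded above. First I would give $NL$ the filtration induced by $N*W$ and $L/NL$ the quotient filtration, and claim that, writing $M=M(N,W)$,
\[
(N*W)_k\cap NL = N\,W_{k+1}, \qquad \bigl((N*W)_k+NL\bigr)/NL = (M_k+NL)/NL .
\]
The inclusion $N W_{k+1}\subseteq (N*W)_k\cap NL$ is immediate from the definition $(N*W)_k=NW_{k+1}+M_k\cap W_{k+1}$. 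The reverse inclusion, and the identification of the quotient filtration, rest on the strictness features of the relative monodromy filtration --- namely $M_k\cap NL=N\,M_{k+2}$, together with the fact that modulo $NL$ the subspace $M_k$ is ``absorbed'' by $W_{k+1}$ --- which are exactly the statements established in Kashiwara's analysis of the star operation in \cite{K} (section 3.4). I would either invoke these or reprove them directly from the characterising property of $M(N,W)$.

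Granting this, the first isomorphism of the lemma is formal: a subspace equipped with its induced filtration maps injectively on associated graded objects, so (using the displayed identity also at level $k-1$) $Gr^{N*W}_k NL = NW_{k+1}/NW_k$ embeds into $Gr^{N*W}_k L$ with image exactly $\mathrm{Im}(N\colon Gr^W_{k+1}L\to Gr^{N*W}_kL)$; by the decomposition property this image is canonically identified with $\mathrm{Im}(N\colon Gr^W_{k+1}L\to Gr^W_{k+1}L)$. The second isomorphism is more direct still: since $N$ acts as $0$ on $L/NL$, the relative monodromy filtration of $N$ on $L/NL$ with respect to the image of $W$ is that image itself, while the quotient filtration induced by $N*W$ has just been identified with the image of $M$; hence $Gr^{N*W}_k(L/NL)\simeq Gr^M_k(L/NL)$.

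Finally, for a short exact sequence of filtered vector spaces in which the subobject carries the induced and the quotient the quotient filtration, the associated graded sequence is automatically exact, so $0\to Gr^{N*W}_k NL\to Gr^{N*W}_k L\to Gr^{N*W}_k(L/NL)\to 0$ is exact. The splitting is then furnished by the decomposition $Gr^{N*W}_k L \simeq \mathrm{Im}(N\colon Gr^W_{k+1}L\to Gr^{N*W}_kL)\oplus \mathrm{Ker}(I\colon Gr^{N*W}_kL\to Gr^W_{k+1}L)$: the first summand is precisely the image of $Gr^{N*W}_k NL$ identified above, so the complementary summand $\mathrm{Ker}(I\colon Gr^{N*W}_kL\to Gr^W_{k+1}L)$ maps isomorphically onto $Gr^{N*W}_k(L/NL)\simeq Gr^M_k(L/NL)$, which is the asserted description of the splitting. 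The one genuinely non-formal point, and the place where I expect the work to concentrate, is the very first step --- the explicit description of the induced and quotient filtrations --- because that is where the interplay between the $*$-operation and the strictness of relative monodromy filtrations is used; everything afterwards is bookkeeping with the decomposition property that has already been stated.
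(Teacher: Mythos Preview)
Your approach is essentially the same as the paper's: both rest on the graded distinguished pair decomposition of $Gr^{N*W}_k L$ together with the finer identifications from Kashiwara's section~3.4 (the paper cites specifically \cite{K}, cor.~3.4.3, for the isomorphisms $Gr^W_{k+1}Gr^{N*W}_k L\simeq \mathrm{Im}\,N$ on $Gr^W_{k+1}L$ and, for $a\le k$, $Gr^W_a Gr^{N*W}_k L\simeq \mathrm{Coker}\,N$ on $Gr^W_a Gr^M_k L$). The only organisational difference is that you first pin down the induced and quotient filtrations at the subspace level and then pass to graded pieces, whereas the paper works directly on the graded level and recovers the identification of the quotient filtration with $M_k(L/NL)$ at the end; both routes invoke the same external input and you have correctly flagged the one non-formal step.
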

The assertion is deduced from
 the   graded distinguished  pair decomposition of $Gr^{N*W}_* L$, and the
  following isomorphisms proved in (\cite{K}, cor 3.4.3): \\
$Gr^W_{k+1} Gr^{N*W}_k L  \simeq
  (\hbox {Im}\,N : Gr^W_{k+1}L \rightarrow Gr^W_{k+1}L $), and for $a \leq
  k$: $Gr^W_a Gr^{N*W}_k L \simeq$\\
  $ Gr^W_a(\hbox {Ker}\,I  :
Gr^{N*W}_k L \to  Gr^W_{k+1}L) \simeq (\hbox {Coker} \, N
   : Gr^W_a Gr^M_{k+2}L \to
   Gr^W_a Gr^M_k L )$.\\
from which we deduce the isomorphism:
 $$\hbox {Ker}\,I:
Gr^{N*W}_k L \to Gr^W_{k+1}L
 \simeq \hbox {Coker}N: W_k Gr^M_{k+2}L \to
   W_k Gr^M_k L
   \simeq W_k Gr^M_k L/NL$$
    The quotient filtration $ Q_k:= ((N*W)_k + N L) / N L$
   is isomorphic to $M_k (L/NL)$,
   since $Q_k / Q_{k-1} \simeq  Gr^M_k (L/NL)$ is
   isomorphic to $W_k Gr^M_k (L/NL) \simeq Gr^M_k (L/NL)$,
    as  the morphism $ Gr^W_{a+2} Gr^M_k L
    \xrightarrow{N} Gr^W_a Gr^M_k L$ is surjective for $a > k$.

\subsubsection {MHS on  cohomology groups of the Intersection complex}

  Let $I_1$ be a finite  subset
of $ I$  and let  $Z := \cup_{i \in I_1} Y_i$ be a sub-NCD of $Y$.
We describe next a MHS on the hypercohomology $\H^*(X-Z,
j_{!*}\LL)$. \\
Let $j': (X - Y) \to (X - Z), \,j'': (X - Y_1) \to X  $ s.t. $j =
j'' \circ j'$. The fibre at a point $y \in Z$ of the logarithmic
de Rham  complex is isomorphic to the Koszul complex
$\Omega(L,N_i, i\in M)$ for some subset $M$ of $I$. To describe
the fibre of the complex $Rj''_* (j'_{!*}\LL)_y$ as a sub-complex,
we consider $M_1:= M\cap I_1$ and $M_2:= M - M_1$, and for $J
\subset M$: $J_1 = J \cap M_1$ and $ J_2 = J \cap M_2$.

\begin {defn} With the above notations,  we define a sub-analytic complex
 ${\Omega}^*(\LL, Z)$ of the logarithmic de Rham complex
 ${\Omega}^*_X (Log Y) \otimes {\mathcal L}_X$ locally at
a point $y \in Y^*_M$  in terms of a set of
  coordinates $z_i, i\in M, $
 equations of $Y_M$:
The fiber ${\Omega}^*(\LL, Z)$ is generated
   as an ${\Omega}^*_{X,y}$ sub-module,
by the sections $\tilde {v} \wedge_{j \in J} \frac {dz_j}{z_j}$
for each  $J \subset M$ and $v \in N_{J_2} L$.
 \end {defn}

 \begin {lem} We have:
  $(Rj''_* (j'_{!*}\LL)_y \simeq ({\Omega}^*(\LL, Z)_y$
\end{lem}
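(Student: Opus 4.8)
The plan is to localise at $y$, make the complex $\Omega^*(\LL,Z)$ completely explicit as a Koszul–type complex, and then recognise that complex as the stalk of $Rj''_*(j'_{!*}\LL)$ by combining the local description of the intermediate extension already used above with Deligne's logarithmic resolution of $Rj_*$, the two being organised according to the product structure of the local model. Concretely, I would choose a neighbourhood $X(y)\cong D^{m+l}$ of $y\in Y^*_M$ adapted to the coordinates $z_i$, $i\in M$, with $U(y)\cong (D^*)^m\times D^l$, and split $M=M_1\sqcup M_2$, $M_1=M\cap I_1$ (the local components of $Z$), $M_2=M\setminus M_1$ (the local components of $Y-Z$). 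By the lemma already established above, $({\Omega}^*_X(Log\,Y)\otimes\LL_X)_y\cong \Omega(L,N_j,\,j\in M)\cong s(L(J),N.)_{J\subset M}$, the identification sending a generator $\tilde v\wedge_{j\in J}\tfrac{dz_j}{z_j}$ to the element $v$ placed at the vertex $J$ of the exterior (Koszul) complex with differentials the $N_i$. Unwinding the definition of $\Omega^*(\LL,Z)$, its stalk at $y$ goes over under this identification to the subcomplex $s(N_{J_2}L,N.)_{J\subset M}$, with $J_2:=J\cap M_2$; this is indeed a subcomplex, since for $i\in M_1$ one has $N_i(N_{J_2}L)\subset N_{J_2}L$ (as $i\notin M_2$) and for $i\in M_2\setminus J$ one has $N_i(N_{J_2}L)=N_{J_2\cup\{i\}}L$. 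So the lemma is equivalent to the quasi-isomorphism $(Rj''_*\,j'_{!*}\LL)_y\simeq s(N_{J_2}L,N.)_{J\subset M}$.

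Next I would determine $j'_{!*}\LL$ near $y$. Since the $z_i$ with $i\in M_1$ are invertible on $X-Z$, near $y$ the space $X-Z$ is $(D^*)^{M_1}\times D^{M_2}\times D^l$ and $j'$ is the inclusion of the complement of the normal crossing divisor $\bigcup_{i\in M_2}\{z_i=0\}$. Applying the identification of the intersection complex with the intermediate extension (the lemma above giving $j_{!*}(\LL)_y\simeq IC(L)$, now used for the variety $X-Z$ and the divisor $(Y-Z)\cap(X-Z)$) yields $j'_{!*}\LL[n]\simeq IC(X-Z,\LL)[n]$, where $IC(X-Z,\LL)$ is the sub-complex of the logarithmic de Rham complex on $X-Z$ generated, near $y$, over ${\Omega}^*_{X-Z}$ by the sections $\tilde v\wedge_{j\in J_2}\tfrac{dz_j}{z_j}$ with $J_2\subset M_2$ and $v\in N_{J_2}L$. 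Comparing recipes, $(\Omega^*(\LL,Z))|_{X-Z}=IC(X-Z,\LL)$: through a point of $X-Z$ only components of $Y-Z$ can pass, so there $J=J_2$ and the two generating families coincide. Hence $j'_{!*}\LL\simeq(\Omega^*(\LL,Z))|_{X-Z}$ near $y$, and it remains to show $\Omega^*(\LL,Z)_y\simeq \bigl(Rj''_*\,((\Omega^*(\LL,Z))|_{X-Z})\bigr)_y$.

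Finally I would push forward by $j''\colon X-Z\hookrightarrow X$, which near $y$ fills in the $M_1$-directions. In the local product model $D^{m+l}=D^{M_1}\times(D^{M_2}\times D^l)$ the complex $\Omega^*(\LL,Z)$ has full logarithmic poles along $M_1$ (the coefficient condition $v\in N_{J_2}L$ involves only $M_2$), while away from $M_1$ its coefficients and differentials are internal to the $D^{M_2}\times D^l$-factor, twisted by the unipotent monodromies $T_i=\exp(-2i\pi N_i)$, $i\in M_1$. The stalk formula $(Rj_*\LL)_y\cong$ full Koszul complex, used above, in its relative form over the $D^{M_2}\times D^l$-factor then computes $Rj''_*$ of the restriction as the total complex of the $M_1$-Koszul complex $\bigl(\bigotimes_{i\in M_1}[\,\cdot\xrightarrow{N_i}\cdot\,]\bigr)$ applied coefficientwise to $IC(X-Z,\LL)$; reindexing its vertices by $J=J_1\sqcup J_2\subset M$ gives precisely $s(N_{J_2}L,N.)_{J\subset M}=\Omega^*(\LL,Z)_y$ by the first step, which completes the argument.

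The step that requires genuine care is this last identification, i.e. the commutation of the intermediate extension in the $M_2$-directions with the logarithmic pushforward $Rj''_*$ in the $M_1$-directions. One must check that $j'_{!*}\LL$ is, in the local product model, of the form "(local system along the $M_1$-directions) twisted-tensor ($M_2$-intersection complex on $D^{M_2}\times D^l$)", so that $Rj''_*$ genuinely acts only on the $(D^*)^{M_1}$-factor and produces no spurious cohomology or support; equivalently, that the intermediate extension across the $M_2$-divisors is compatible with the external product structure of the polydisc and is determined by its restriction to the generic $M_1$-locus together with the monodromy twist. Once that compatibility is in hand, the bookkeeping of bidegrees in the resulting double Koszul complex is routine, and one obtains $(Rj''_*(j'_{!*}\LL))_y\simeq(\Omega^*(\LL,Z))_y$.
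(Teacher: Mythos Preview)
Your proposal is correct and follows essentially the same route as the paper: split $M=M_1\sqcup M_2$, identify $j'_{!*}\LL$ locally with the $M_2$-intersection complex $IC(U_2,L)=s(N_{J_2}L,N_i,\,i\in M_2)_{J_2\subset M_2}$ on which the $N_i$, $i\in M_1$, act, and then recognise the double Koszul complex $s(IC(U_2,L),N_i)_{i\in M_1}$ simultaneously as $\Omega^*(\LL,Z)_y$ and as $(Rj''_*(j'_{!*}\LL))_y$. The paper dispatches the ``step requiring care'' in your last paragraph in one line, via the spectral sequence with $E_2^{p,q}\simeq R^pj''_*\,H^q(j'_{!*}\LL)$, which does the bookkeeping of the product structure for you and confirms that the $M_1$-pushforward sees only a local system in those directions.
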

 
 The intersection of a neighborhood  of $y$ with $X-Z$
 is homeomorphic to $ U =
U^*_1 \times U_2 := (D^*)^{n_1} \times D^{n_2} $. At a point $ z =
(z_1,z_2)\in U $, the fiber of the Intersection complex is
isomorphic with a Koszul complex: $(j'_{!*}\LL)_{z} \simeq IC(U_2,
L):= s(N_{J_2} L, N_i, i\in M_2 )_{J_2 \subset M_2}$ on which
$N_i, i \in M_1,$ acts. By comparison with the logarithmic de Rham
complex along $Z$, we consider the Koszul double complex: $
s(IC(U_2, L), N_i)_{i \in M_1}$ which is quasi-isomorphic to the
fibre $(Rj''_* (j'_{!*}\LL))_y $, since the terms of its classical
spectral sequence are $E_2^{p,q} \simeq R^p j''_* H^q
((j'_{!*}\LL)_y)$.
\begin{ex} In the $3-$dimensional space, let $Y_3$ be  defined by $z_3
= 0$ and $D$ a small disc at the origin of $\C$, then $(Rj''_*
(j'_{!*}\LL))_y = \R\Gamma (D^3-(D^3\cap Y_3), \LL)$ is defined by
the diagram, with  differentials defined by  $N_i$ with a + or -
sign:
\[\begin{array}{ccccc}
  L & \xrightarrow{N_1, N_2} & L \oplus L & \xrightarrow{N_1, N_2} & L\\
   \downarrow N_3&  &  \downarrow N_3&  &  \downarrow N_3 \\
 N_3L & \xrightarrow{N_1, N_2} & N_3L \oplus N_3L
 & \xrightarrow{N_1, N_2} & N_3L\\
\end{array}
\]
\end{ex}

\begin{prop} i) The filtrations $W$ and $F$ of the logarithmic
de Rham complex induce on $\Omega^*(\LL, Z)$ a structure of MHC
defining a MHS on the hypercohomology $\H^*(X-Z, j_{!*}\LL)$ such
that the graded perverse sheaves for the weight filtration,
satisfy the
 decomposition property into intermediate extensions for all $k$
 \[
 Gr^W_k \Omega^*(\LL, Z)[n] \simeq \oplus_{J_1 \subset
 I_1} (i_{Y_{J_1}})_* j_{!*} \PP^{J_1}_{k-|J_1|}(\LL)[n-|J_1|].
 \]
 where $j$ denotes uniformly the inclusion of $Y_{J_1}^*$ into $Y_{J_1}$
 for each $J_1 \subset I_1$ where $ I_1 \subset I$ and  $Z:= \cup_{i\in I_1} Y_i$.
  In particular, for $J_1 = \emptyset$,
 we have $j_{!*}Gr^W_k(\LL)[n]$, otherwise the summands are
 supported by $Z$.
\end{prop}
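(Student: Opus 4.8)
The plan is to reduce the whole statement to a local computation of $Gr^W_k\,\Omega^*(\LL,Z)$, and then to run the same argument that handled the two extreme cases $Z=Y$ (the full logarithmic complex, preceding Proposition) and $Z=\emptyset$ (the intersection complex, preceding Corollary). By the Lemma just above, there is a quasi-isomorphism $\Omega^*(\LL,Z)\simeq Rj''_*\,(j'_{!*}\LL)$, and at a point $y\in Y^*_M$ the fibre is the Koszul double complex $s(IC(U_2,L),N_i)_{i\in M_1}$, built from the local intersection complex $IC(U_2,L)=s(N_{J_2}L,N_i)_{i\in M_2}$ in the directions $M_2=M\setminus M_1$ transverse to $Z$ and the full Koszul differentials $N_i$, $i\in M_1$, in the directions along $Z$ (here $M_1=M\cap I_1$, and for $J\subset M$ we write $J_\ell=J\cap M_\ell$). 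The Hodge filtration $F$ of $\Omega^*_X(Log Y)\otimes\LL_X$ restricts to $\Omega^*(\LL,Z)$ in the evident algebro-geometric way; for the weight filtration $W$ one checks that its restriction is generated, on the degree-$J$ piece, by the sections $\tilde v\wedge_{j\in J}dz_j/z_j$ with $v\in W^J_{r-|J|}L\cap N_{J_2}L$, and that this behaves well under $Gr^W$. The tool for this is exactly the split exact sequences established just above the statement,
\[
0\to Gr^{W^J}_{k-|J|}(N_JL)\to Gr^{W^J}_{k-|J|}L\to Gr^{W^J}_{k-|J|}(L/N_JL)\to 0,\qquad Gr^{W^J}_{k-|J|}(N_JL)\simeq\operatorname{Im}\bigl(N_J\colon Gr^W_kL\to Gr^W_kL\bigr),
\]
applied (iterated over the elements of $J_2$) with the $M_2$-factor $N_{J_2}$ only, together with the distributivity of the IMHS filtrations $\{W,F,M(J_1),\dots\}$, which guarantees that $Gr^W$ is exact and commutes with forming the simple complex.

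With these strictness properties in hand I would compute $Gr^W_k\,\Omega^*(\LL,Z)_y$ directly. For a fixed $J_1\subset M_1$, iterating the split exact sequence in the $M_2$-variables identifies the part of the complex lying over $J_1$ with the intersection complex along $M_2$ of the vector space $Gr^{W^{J_1}}_{k-|J_1|}L$. Then, in the $M_1$-directions, the local decomposition (Lemma~\cite{K}, prop.~2.3.1, with Cor.~\cite{K}~5.6.10(iii)) expresses $Gr^{W^{J_1}}_{\bullet}L$ as a sum of terms $N_{J_1-K_1}\PP^{K_1}_{\bullet}(L)$, and a rearrangement of the resulting double sum — matching the index shifts exactly as in the proof of the earlier global Proposition — collects the contributions into
\[
Gr^W_k\,\Omega^*(\LL,Z)_y\;\simeq\;\bigoplus_{J_1\subset M_1}IC\bigl(\PP^{J_1}_{k-|J_1|}(L)\bigr)[-|J_1|],
\]
where $IC(\cdot)$ is now the intersection complex in all directions transverse to $Y_{J_1}$. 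This is precisely the fibre at $y$ of $\bigoplus_{J_1\subset I_1}(i_{Y_{J_1}})_*\,j_{!*}\PP^{J_1}_{k-|J_1|}(\LL)[n-|J_1|]$ after the global shift by $n$, since locally $j_{!*}$ of a local system on $Y^*_{J_1}$ is exactly such an intersection Koszul complex; the $J_1=\emptyset$ summand is $j_{!*}Gr^W_k\LL$, and for $J_1\neq\emptyset$ the summand is supported on $Y_{J_1}\subset Z$.

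It remains to globalize and to identify the mixed-Hodge-complex structure. By the preceding Proposition~(iii), each $\PP^{J_1}_p(\LL)$ is a polarized VHS on $Y^*_{J_1}$, pure for the monodromy weight $\MM(\sum_{j\in J_1}\NN_j,\LL_{Y^*_{J_1}})$; by the cited Theorem of \cite{KI},\cite{CKS}, the intersection complex $j_{!*}\PP^{J_1}_p(\LL)[n-|J_1|]$ with its induced $F$ is a Hodge complex. Hence $Gr^W_k\,\Omega^*(\LL,Z)[n]$ is a direct sum of Hodge complexes, which is exactly the defining property of a cohomological mixed Hodge complex; so $(\Omega^*(\LL,Z),W[n],F)$ is one, and $R\Gamma$ endows $\H^*(X,Rj''_*(j'_{!*}\LL))=\H^*(X-Z,j_{!*}\LL)$ with a MHS. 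The rational structure globalizes because $\Omega^*(\LL,Z)$ is carved out of $Rj_*\LL$ by the $\Q$-defined conditions $v\in N_{J_2}L$, exactly as $\LL$, $W$ and the ambient weight filtration are; the local decomposition isomorphisms, being induced by the ambient gluing data, therefore glue, as in the earlier global Proposition.

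The step I expect to be the main obstacle is the two-directional bookkeeping in the second paragraph: reconciling the way $W$ meets the directions transverse to $Z$ (intersection complex, governed by the split exact sequences) with the way it meets the directions along $Z$ (full Koszul, governed by the star operation $N*W$ and its primitive decomposition), and verifying that $Gr^W$ genuinely commutes with both Koszul constructions and lands on the stated direct sum. This rests entirely on Kashiwara's local machinery — distributivity of the filtrations $\{W,F,M(J),\dots\}$, the graded-distinguished-pair property of $W$ and $N*W$, and the local decomposition lemma — after which the mixed-Hodge-complex axioms and the gluing of the rational structure are routine, following verbatim the case $Z=Y$.
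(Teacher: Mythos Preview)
Your proposal is correct and follows essentially the same route as the paper's own proof: work locally at $y\in Y^*_M$, write the fibre as the double Koszul complex $s(s(N_{J_2}L,N_i)_{J_1\subset M_1})_{J_2\subset M_2}$, use the iterated graded split-sequence lemma in the $M_2$-directions to identify $Gr^{W^{(J_1,J_2)}}_{k-|J_1|-|J_2|}(N_{J_2}L)\simeq N_{J_2}\,Gr^{W^{J_1}}_{k-|J_1|}L$, then apply the primitive decomposition (Cor.~\cite{K}~5.6.10) in the $M_1$-directions to obtain $\bigoplus_{J_1\subset M_1}IC(P^{J_1}_{k-|J_1|}(L))[-|J_1|]$. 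Your discussion of globalization and the MHC axioms is more explicit than the paper's, but the local computation---which is the heart of the argument---is identical.
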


The proof is local and based on the properties of relative
filtrations (it is wrong otherwise). If the complex is written at
the point $y$ as a double complex
$$s(s(N_{J_2}L,N_i, i\in
M_1)_{J_1 \subset M_1})_{J_2 \subset M_2}= s(\Omega(N_{J_2}L,N_i,
i\in M_1)_{J_2 \subset M_2}$$ for each  term of  index $ J =
(J_1,J_2)$, the  filtration on $N_{J_2}L$ is induced by $
W^{(J_1,J_2)} $ on $L$, hence:
 \begin{equation*}
 Gr_{k-|J_1|-|J_2|}^{W^{(J_1,J_2)}}
N_{J_2}L \simeq N_{J_2}Gr_{k-|J_1|}^{W^{J_1}} L \simeq
 \oplus_{K \supset J_1}N_{J_2} N_{K-J_1} P^{K}_{k-|K|}(L)
  \end{equation*}
where the first isomorphism is obtained by iterating the formula
for $J$ of length $1$ in the lemma on the graded split sequence,
and the second isomorphism follows the decomposition of the second
term. Then, we can write $Gr^W_k$ of the double complex as:
$\oplus_{J_1 \subset M_1} IC(P^{K}_{k-|K|}(L)[-|J_1|]$.

\begin{rem} 1) We may always suppose that $\LL$ is a VMHS on  $X- (Y \cup Z)$ (that is to enlarge $Y$)  and consider $ Z$ as a subspace  of $Y$ equal to a union of components of $Y$.\\
2) If $Z$ is a union of intersections of components of $Y$,  these techniques
should apply to construct a sub-complex of the logarithmic de Rham
complex endowed with the structure of MHC with the induced
filtrations and hypercohomology $\H^*(X-Z, j_{!*}\LL)$; for
example, we give below the fibre of the complex at the
intersection of two lines in the plane, first when $Z = Y_1$, then
for $Z = Y_1 \cap Y_2$:
\end{rem}
\[\begin{array}{ccc}
  L & \xrightarrow{N_1} & L \\
   \downarrow N_2&  &  \downarrow N_2 \\
  N_2 L &   \xrightarrow{N_1}&   N_2 L\\
\end{array} \qquad
\begin{array}{ccc}
  L & \xrightarrow{N_1} & L \\
   \downarrow N_2&  &  \downarrow N_2 \\
  N_2 L &   \xrightarrow{N_1}& N_1 L \cap N_2 L\\
\end{array}
\]
\subsubsection{Thom-Gysin isomorphism}    Let $H $ be a smooth hypersurface intersecting transversally $Y $ such that $H \cup Y $ is a NCD, then  $ i_H^* j_{!*}\LL$ is isomorphic to the intermediate extension $ ( (j_{Y\cap H})_{!*}^* (i_H^*\LL)$   of the restriction of $\LL$ to  $H$ 
   and the residue with respect to $H$ induces an isomorphism $R_H: i_H^* ( \Omega^*(\LL, H)/ j_{!*}\LL) \simeq i_H^*j_{!*}\LL [-1]$ inverse to Thom-Gysin isomorphism $ i_H^*j_{!*}\LL  [-1] \simeq i_H^!j_{!*}\LL [1] \simeq  i_H^*(\Omega^*(\LL, H)/ j_{!*}\LL)$. \\
  Moreover, if $H $  intersects transversally $Y \cup Z$ such that $H \cup Y \cup Z$ is a NCD, then we have    a triangle
$$  (i_H)_* i_H^!\Omega^*(\LL, Z) \to  \Omega^*(\LL, Z) \to  \Omega^*(\LL, Z \cup H) \stackrel{[1]}{\rightarrow} $$
 hence the isomorphism of the quotient complex with the cohomology with support:
 $ ( \Omega^*(\LL, Z \cup H)/  \Omega^*(\LL, Z )) \simeq i_H^!\Omega^*(\LL, Z)[1] $ induced by  the connection, the   
  isomorphism of the restriction  $ i_H^* \Omega^*(\LL, Z)$ with  the complex $ \Omega^*( i_H^* \LL, Z \cap H)$ 
  constructed directly on $H$,  the residue with respect to $H$:  $\Omega^*(\LL, Z \cup H) \to 
 i_{H,*} \Omega^*( i_H^* \LL, Z \cap H)[1]$ vanishing  on $ \Omega^*(\LL, Z )$ and inducing 
  an inverse to  the Thom-Gysin isomorphism $ i_H^* \Omega^*(\LL, Z ) \simeq 
  i_H^!\Omega^*(\LL, Z)[2] $  are all  compatible with the filtrations up to  a shift in degrees.  
\subsubsection{Duality and Cohomology with compact support}

We recall first, Verdier's dual of a bifiltered complex. Let
$(K,W, F)$ be a  complex with two filtration on  a smooth compact
K\"{a}ler or complex algebraic variety $X$, and $\omega_X:=
\Q_X[2n](n)$ a dualizing complex with the trivial filtration and a
Tate twist of  the filtration by dim.$X = n$ and a degree shift by
$2n$ (so that the weight remains $0$on a complex on a smooth
compact K\"{a}ler or complex algebraic variety $X$. We denote by
$\DD (K)$ the complex  dual to $K$ with filtrations:
 \[ \DD (K):= RHom ( K,\Q_X[2n](n)), \,  W_{-i} \DD (K):=
  \DD (K/W_{i-1}), \, F^{-i}
\DD (K):= \DD (K/F^{i+1}) \]
 then we have:
$\DD Gr^W_i K \simeq Gr^W_{-i} \DD K$ and $\DD Gr_F^i K \simeq
Gr_F^{-i} \DD K$. The
 dual of a mixed Hodge complex is a MHC.\\
 In the case of $K = \Omega^*_X(Log
Y)\otimes \LL_X [n] =
 Rj_* \LL[n]$,
 the dual $\DD K = j_!\LL[n]$ with the dual structure of MHC defines the
 MHS on cohomology with compact support.
 \begin{cor} i) An admissible VMHS $\LL$ of weight $\omega \leq a$
 induces on the
cohomolgy  with compact support   $\H_c^i (X-Y, \LL)$ a MHS of
weight $ \omega \leq a+i$.\\
ii) The cohomology $\H^i (Y, j_!\LL)$ carry a MHS of weight $
\omega \leq a+i$.
\end{cor}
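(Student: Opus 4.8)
The plan is to deduce both assertions from the corresponding lower‑weight‑bound statement for $Rj_*\LL$ proved just above (an admissible VMHS of weight $\omega\ge a$ induces on $\H^i(X-Y,\LL)$ a MHS of weight $\omega\ge a+i$) by applying Verdier duality in the form set up in the paragraph on duality and cohomology with compact support.

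First I would treat i). Let $\LL^{\vee}$ denote the dual VMHS. Duality is available inside the formalism (the category of IMHS being abelian with an internal $Hom$, hence with duals), so $\LL^{\vee}$ is again a graded‑polarizable admissible VMHS on $X-Y$ with unipotent local monodromy; and since dualizing reverses the range of weights, $\LL$ of weight $\omega\le a$ forces $\LL^{\vee}$ to have weight $\omega\ge -a$. Apply the preceding corollary to $\LL^{\vee}$: the logarithmic de Rham mixed Hodge complex $(\Omega^*_X(Log Y)\otimes\LL^{\vee}_X,W,F)$ has its weight filtration vanishing in the range $W_k=0$ for $k\le -a$, i.e. $Gr^W_k=0$ for $k<-a$. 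Now dualize. By the displayed formula $\DD\,Gr^W_iK\simeq Gr^W_{-i}\DD K$, the dual complex $\DD\bigl(\Omega^*_X(Log Y)\otimes\LL^{\vee}_X[n]\bigr)=j_!\LL[n]$ is again a mixed Hodge complex, it is precisely the one computing $\H^*_c(X-Y,\LL)$, and its $W$‑graded pieces vanish for $j>a$. Reading off the weight of the MHS induced on the cohomology of a mixed Hodge complex whose weights are $\le a$ then gives that $\H^i_c(X-Y,\LL)$ carries a MHS of weight $\le a+i$ (the shift by $n$ and the Tate twist compensating in the chosen normalization).

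For ii) the argument runs in the same way: $\H^i(Y,j_!\LL)$ is computed by the mixed Hodge complex on $Y$ obtained, up to a shift, by Verdier‑dualizing the logarithmic complex attached to $\LL^{\vee}$ along $Y$ — the complex that fits into a distinguished triangle of mixed Hodge complexes together with $j_!\LL$ and $Rj_*\LL$, as in the construction of the quotient of $\Omega^*(\LL,Z)$ by $j_{!*}\LL$ described above. Applying $\DD$ to the lower weight bound produced by the preceding corollary for the corresponding complex attached to $\LL^{\vee}$ yields a weight filtration concentrated in degrees $\le a$, hence a MHS of weight $\le a+i$ on $\H^i(Y,j_!\LL)$.

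The one genuine point — and it is exactly what the preceding paragraph supplies — is the compatibility of Verdier duality with the mixed Hodge complex structure: that $W_{-i}\DD K:=\DD(K/W_{i-1}K)$ and $F^{-i}\DD K:=\DD(K/F^{i+1})$ again define a MHC, that $\DD Gr^W_iK\simeq Gr^W_{-i}\DD K$, and that $\DD(Rj_*\LL^{\vee}[n])=j_!\LL[n]$ carries the MHC structure computing compact‑support cohomology. Granting this, both estimates are pure bookkeeping with weights and degrees and require no further input about the local behaviour of $\LL$ along $Y$ beyond what the previous corollary already used; so the expected obstacle lies not in this corollary itself but in that duality statement, which is here assumed.
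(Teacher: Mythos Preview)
Your argument for i) is correct and is essentially the paper's proof: dualize to $\LL^*$ of weight $\ge -a$, apply the preceding lower-bound corollary to the logarithmic de Rham MHC of $\LL^*$, and Verdier-dualize back to obtain the MHC $j_!\LL[2n](n)$ with weights $\le a$, whence the bound on $\H^i_c(X-Y,\LL)$.

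Your treatment of ii) has a genuine gap. First a textual point: the printed statement almost certainly contains a misprint, since $i^*j_!\LL=0$ makes $\H^i(Y,j_!\LL)$ vacuously zero; the paper's own argument immediately below the corollary treats $\H^i(Y,j_{!*}\LL)$ instead, in agreement with property 4) announced in \S3.1.1. More importantly, even granting that correction, the triangle you invoke is the wrong one. The cone of $j_!\LL\to Rj_*\LL$ is $i_*i^*Rj_*\LL$, and its cohomology on $Y$ is $\H^*(Y,i^*Rj_*\LL)$ --- neither $\H^*(Y,j_!\LL)$ nor $\H^*(Y,j_{!*}\LL)$. The paper's route passes instead through cohomology \emph{with supports}: one first bounds the weights of $\H^i_Y(X,j_{!*}\LL)$ from below, using that $i^!j_{!*}K[1]\simeq Rj_*K/j_{!*}K$ sits inside the logarithmic MHC above the bottom weight piece $j_{!*}K$, and then applies the Verdier duality between $i^!$ and $i^*$ --- that is, between $\H^*_Y(X,-)$ and $\H^*(Y,-)$ --- together with the self-duality of $j_{!*}$, to convert this into the desired upper bound for $\H^i(Y,j_{!*}\LL)$. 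This $i^!/i^*$ step is what is missing from your sketch; the $j_!/Rj_*$ adjunction by itself does not produce it.
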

This result is
 compatible with \cite{WII}, thm 3.3.1).\\
 i)  The dual admissible
 VMHS $\LL^*$  of weight $\omega \geq -a$,
 and its
 de Rham logarithmic mixed Hodge complex  is of weight $\omega \geq
 -a$, with dual  quasi-isomorphic to
 $j_! \LL [2n](n)$ and
weight $\omega \leq a$. It induces on the cohomolgy $\H_c^i (X-Y,
\LL)$ a MHS of weight $ \omega \leq a+i$.\\
 ii) The weights on $\H_Y^i (X, j_{!*}\LL)$ satisfy $ \omega \geq a+i$
and by duality: $ \H^i (Y, j_{!*}\LL)$, has weights  $ \omega \leq
a+i$.
\subsubsection{The dual filtration $N!W$}
We introduce the filtration  (\cite {K}, 3.4.2)
  \begin{equation*}
 (N!W)_k := W_{k-1} + M_k(N,W) \cap N^{-1}W_{k-1}.
 \end{equation*}
 The following  morphisms are induced by $Id$ (resp. $N$) on $L$:\\
$I:  W_{k-1}  \to  (N!W)_k$ and
 $N: (N!W)_k  \to  W_{k-1}$
satisfying $N \circ I = N $ and $I \circ N = N$ on $L$. Now we
prove the duality with $N*W$.
\begin{lem}  Let $W^*$ denotes the filtration on the vector space
$L^*:= Hom(L, \Q)$ dual to a filtration $W$ on $L$, then for all
$a$
\begin{equation*}
 (N^*!W^*)_a = (N*W)^*_a \subset L^*.
 \end{equation*}
\end{lem}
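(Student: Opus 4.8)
The statement to prove is a purely linear-algebraic duality between the star-operation $N*W$ and the dual construction $N!W$, namely that the dual filtration $(N*W)^*$ on $L^*$ coincides with $(N^*!W^*)$ built from the transpose endomorphism $N^*$ and the dual filtration $W^*$. The plan is to unwind both definitions explicitly in terms of the subspaces $W_{k+1}$, $M_k(N,W)$, and the image/preimage of $N$, and then to match them using the standard rules for orthogonal complements under duality. The key elementary facts I will use are: for subspaces $A,B \subset L$ one has $(A+B)^\perp = A^\perp \cap B^\perp$ and $(A\cap B)^\perp = A^\perp + B^\perp$ inside $L^*$; that $(W_k)^\perp = (W^*)_{-k}$ in the indexing convention of the paper (so that $W^*$ is genuinely dual); that $(NA)^\perp = (N^*)^{-1}(A^\perp)$ and $(N^{-1}A)^\perp = N^*(A^\perp)$; and, crucially, that the relative monodromy filtration is self-dual in the sense $M(N,W)^* = M(N^*, W^*)$, up to the appropriate re-indexing. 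This last point is itself a lemma of Deligne (it follows from the uniqueness characterization of $M(N,W)$ in Lemma~1.21 of the excerpt, since the dual data $(L^*, W^*, N^*)$ satisfies the dual isomorphism conditions and hence $M(N,W)^\perp$ satisfies the defining property of $M(N^*,W^*)$), so I would either cite it or include a one-line verification.

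With those tools in hand the computation is mechanical. Starting from
\[
(N*W)_k = N W_{k+1} + M_k(N,W)\cap W_{k+1},
\]
take orthogonal complements: $(N*W)_k^\perp = (N W_{k+1})^\perp \cap \bigl(M_k(N,W)\cap W_{k+1}\bigr)^\perp = (N^*)^{-1}(W_{k+1}^\perp)\cap \bigl(M_k(N,W)^\perp + W_{k+1}^\perp\bigr)$. Now substitute $W_{k+1}^\perp = (W^*)_{-k-1}$ and $M_k(N,W)^\perp = M_{-k}(N^*,W^*)$, which gives
\[
(N*W)_k^\perp = (N^*)^{-1}\bigl((W^*)_{-k-1}\bigr)\cap\bigl(M_{-k}(N^*,W^*) + (W^*)_{-k-1}\bigr).
\]
On the other side, by definition $(N^*!W^*)_{-k} = (W^*)_{-k-1} + M_{-k}(N^*,W^*)\cap (N^*)^{-1}(W^*)_{-k-1}$. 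So the identity to be checked reduces to the purely formal set-theoretic equality, valid for any subspaces $P := M_{-k}(N^*,W^*)$, $Q := (W^*)_{-k-1}$ and any linear map $N^*$ with $N^* P \subset$ the two-step-down piece of $M$:
\[
(N^*)^{-1}(Q)\cap(P+Q) \;=\; Q + P\cap (N^*)^{-1}(Q).
\]
The inclusion $\supseteq$ is immediate. For $\subseteq$, take $x$ with $N^* x \in Q$ and write $x = p + q$ with $p\in P$, $q\in Q$; then $N^* p = N^* x - N^* q$, and one needs $N^* q \in Q$, i.e. $N^* Q \subset Q$ (compatibility of $N$ with $W$), whence $N^* p \in Q$, so $p \in P\cap(N^*)^{-1}(Q)$ and $x = q + p$ lies in the right-hand side. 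This is exactly the modular-law type manipulation that underlies Prop.~3.4.1 of~\cite{K}.

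The only genuine subtlety — and the step I expect to be the main obstacle — is bookkeeping the index shifts and sign conventions so that ``dual filtration'' means the right thing and $M(N,W)^* = M(N^*,W^*)$ comes out with matching indices; a single off-by-one here breaks the whole identification. I would therefore open the proof by fixing once and for all the pairing $L\otimes L^* \to \Q$, the convention $(W_k)^\perp = (W^*)_{-k}$ for an increasing filtration, and the observation that with these conventions $N M_k \subset M_{k-2}$ dualizes to $N^* M_{-k} \subset M_{-k-2}$ and the Lefschetz isomorphisms $N^r: \mathrm{Gr}^M_{l+r}\mathrm{Gr}^W_l \xrightarrow{\sim}\mathrm{Gr}^M_{l-r}\mathrm{Gr}^W_l$ dualize to the analogous statement for $(N^*, W^*, M(N^*,W^*))$, so that Lemma~1.21's uniqueness forces $M(N,W)^\perp = M(N^*,W^*)$. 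After that the argument is the three-line modular-law computation above, applied with $P = M_{-k}(N^*,W^*)$ and $Q = (W^*)_{-k-1}$, together with the second description $(N*W)_k = N W_{k+1} + M_k(N,W)\cap W_k$ to double-check consistency. I would close by noting that this duality is what makes $N!W$ on $L^*$ the correct ``weight'' filtration dual to $N*W$, which is precisely what is needed to dualize the logarithmic complex and obtain the mixed Hodge structure on cohomology with compact support.
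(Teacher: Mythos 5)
Your proof is correct and follows essentially the same route as the paper's: both rest on the self-duality $M(N,W)^\perp = M(N^*,W^*)$ of the relative monodromy filtration and then compute the annihilator of $(N*W)_k$ directly, and your modular-law identity $(N^*)^{-1}(Q)\cap(P+Q)=Q+P\cap (N^*)^{-1}(Q)$ is exactly the content of the paper's hands-on construction of the decomposition $\varphi=\gamma+\delta$ (which likewise uses $N^*Q\subset Q$, i.e.\ compatibility of $N$ with $W$). Just be sure the dual-filtration convention you fix (the paper's is $W^*_{-k-1}=W_k^\perp$, so that $Gr^{W^*}_{-k}\cong (Gr^W_k)^*$) is applied uniformly to $W^*$, $M^*$ and $(N*W)^*$; with that done your index bookkeeping closes and the identity holds for all $a$.
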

Let  $M_i:= M_i(N,W)$, it is auto-dual as a filtration of the
vector space $L$: $ M^*_i= M_i(N^*,W^*)$. To prove the inclusion
of the left term into the right term for $a = -k$, we must write
an element $\varphi \in L^*$ vanishing on $ N W_k + M_{k-1} \cap
W_{k-1} = N W_k + M_{k-1} \cap W_k$ as a sum of elements $\varphi
= \gamma + \delta $ where $ \delta \in W^*_{-k-1}$ vanish on $W_k$
and $\gamma \in M^*_{-k} \cap (N^*)^{-1}W^*_{-k-1}$ vanish on $N
W_k + M_{k-1}$. We construct $\gamma$ such that  $\gamma( N W_k +
M_{k-1})= 0$ and $\gamma_{\vert W_k} = \varphi_{\vert W_k}$  which
is possible as $\varphi (M_{k-1} \cap W_k) = 0$; then we put
$\delta = \varphi - \gamma$. The opposite inclusion is clear.\\
Now we deduce from the decomposition orthogonal to the case of
$N*W$:
\begin{cor}
 We have the decomposition:
 \[
 Gr^{N!W}_k \simeq \hbox
 {Im}\, (I: Gr^W_{k-1} \to Gr^{N!W}_k) \oplus
 (\hbox {Ker}\, (N : Gr^{N!W}_k  \to
  Gr^W_{k-1}).
  \]
\end{cor}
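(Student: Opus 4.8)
The plan is to derive this corollary formally from the star-operation decomposition
\[
Gr^{N*W}_m \simeq \im\,(N\colon Gr^W_{m+1}\to Gr^{N*W}_m)\oplus \ke\,(I\colon Gr^{N*W}_m\to Gr^W_{m+1})
\]
established above, by applying it to the dual triple $(L^{*},W^{*},N^{*})$ and transposing back. Since $M(N,W)$ exists, its dual $M(N^{*},W^{*})=M(N,W)^{*}$ exists too (auto-duality of the relative monodromy filtration, already used in the proof of the previous lemma), so the star operation is defined for $(L^{*},W^{*},N^{*})$ and the displayed decomposition applies there. The first step is to record the identity $(N!W)^{*}=N^{*}*W^{*}$ of filtrations on $L$: it is the preceding lemma $(N^{*}!W^{*})_a=(N*W)^{*}_a$ applied with $(L,N,W)$ replaced by $(L^{*},N^{*},W^{*})$, which gives $(N!W)_a=(N^{*}*W^{*})^{*}_a$, followed by passing to dual filtrations and using $(F^{*})^{*}=F$ in finite dimension. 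In particular $Gr^{(N!W)^{*}}_{-k}L^{*}\cong (Gr^{N!W}_k L)^{*}$ and $Gr^{W^{*}}_{-k+1}L^{*}\cong (Gr^W_{k-1}L)^{*}$.

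The second step is to match the structural maps. On graded pieces, the morphisms $I\colon W_{k-1}\to (N!W)_k$ and $N\colon (N!W)_k\to W_{k-1}$ induced by $\mathrm{id}_L$ and by $N$ are, under the two identifications just recorded, exactly the transposes of the maps $I$ and $N$ that occur in the star-operation decomposition for $(L^{*},W^{*},N^{*})$ at index $-k$; this is a direct check on the defining formulas, matching $N\colon W^{*}_m\to (N^{*}*W^{*})_{m-1}$ with the transpose of $N\colon (N!W)_{-m+1}\to W_{-m}$, and likewise for $I$. Consequently, the star decomposition for $(L^{*},W^{*},N^{*})$ at index $-k$, read inside $(Gr^{N!W}_k L)^{*}$, splits that space as the image of the transpose of $N\colon Gr^{N!W}_k L\to Gr^W_{k-1}L$ plus the kernel of the transpose of $I\colon Gr^W_{k-1}L\to Gr^{N!W}_k L$.

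The third step is to dualize this splitting of subspaces of $(Gr^{N!W}_k L)^{*}$ back to a splitting of $Gr^{N!W}_k L$. Using the elementary finite-dimensional identities that the annihilator of the image of a transpose equals the kernel of the original map, and the annihilator of the kernel of a transpose equals the image of the original map, together with the fact that a direct sum $V_1\oplus V_2$ of subspaces of a vector space dualizes to $V_2^{\perp}\oplus V_1^{\perp}$ in the dual, the above splitting becomes precisely
\[
Gr^{N!W}_k \simeq \im\,(I\colon Gr^W_{k-1}\to Gr^{N!W}_k)\oplus \ke\,(N\colon Gr^{N!W}_k\to Gr^W_{k-1}),
\]
which is the assertion. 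I expect the only delicate point to be the bookkeeping of the index shifts in the dual filtration and the verification that the maps $I,N$ of the bang operation transpose to those of the star operation; once these are in place, the corollary is a purely formal consequence of the star decomposition together with linear duality, requiring no further structural input on $M(N,W)$.
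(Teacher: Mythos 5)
Your proposal is correct and follows essentially the same route as the paper, which deduces the corollary from the graded distinguished pair decomposition of $Gr^{N*W}_*$ by duality via the preceding lemma $(N^*!W^*)_a=(N*W)^*_a$ (the paper compresses this into the phrase ``we deduce from the decomposition orthogonal to the case of $N*W$''). Your write-up simply makes explicit the index bookkeeping and the identification of the transposed maps $I$ and $N$, which is the intended argument.
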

{\it Properties of the  iterated filtrations ${\overline W}$.}\\
 We associate to each IMHS with nilpotent
  endomorphisms $N_i$ for $i \in M$ and to each subset $J \subset
  M$,  an increasing filtration ${\overline W}^J$ of $L$
recursively by the $!$ operation
\begin{equation*}
 {\overline W}^J :=  N_{i_1 }!( \ldots (N_{i_j }!
W)\ldots ) \,\,\hbox {for} \,\, J = \{i_1, \ldots, i_j \}
\end{equation*}
since for all $i$ and $j$, $N_i!N_j W = N_j!N_i! W$. This family
satisfy the data in (\cite {K}, prop. 2.3.1)
determined by the following  morphisms,  defined  for $K \subset J$:\\
$I_{K,J}: {\overline W}^K_k  \to
 {\overline W}^J_{k +  \vert J -K\vert }$ and
 $N_{J,K}: {\overline W}^J_k  \to
 {\overline W}^K_{k -  \vert J - K\vert }$,
so we deduce
\begin{lem} i)  Set for each $J \subset M$,
 \[ Q^J_k(L):= \cap_{K \subset J, K \not= J}
 \hbox {Ker}\, (N_{J,K}: Gr^{{\overline W}^J}_k \to
 Gr^{{\overline W}^K}_{k - \vert J - K \vert}),
   \]
    then $Q^J_k(L)$ has pure weight with respect to the
weight $M(
\sum_{j \in J} N_j, L)$.\\
ii) We have: $ Gr^{{\overline W}^J}_k \simeq \oplus_{K \subset J}
I_{K,J} (Q^K_{k - \vert J-K\vert}(L))$.\\
iii) Let $(L^*, W^*)$ be dual to $(L,W)$, then $Q^J_k(L^*) \simeq
(P^J_{-k}(L))^*$.
\end{lem}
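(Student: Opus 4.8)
The plan is to deduce all three assertions from the corresponding facts already established for the iterated $*$-filtrations $W^J$ and their primitive parts $P^J_k(L)$ (the corollary (\cite{K},~5.6.10) above), by transporting them across the single-step duality $(N^*!W^*)_a=(N*W)^*_a$ proved just before. The logical order is to prove (iii) first and then read off (i) and (ii) from it.

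\emph{Proof of (iii).} First I would iterate the single-step duality along $J=\{i_1,\dots,i_j\}$. At each stage one is applying $N_i*$ (resp. $N_i!$) to data which, by the basic lemma that $(L,N_1{*}W,F,N_\bullet)$ is again an IMHS (and its Verdier dual), still carries a relative monodromy filtration, so the hypotheses of the single-step lemma are met repeatedly; hence $N^*_{i_1}!(\cdots!W^*)$ on $L^*$ is the filtration dual to $N_{i_1}*(\cdots*W)$ on $L$, i.e. $Gr^{\overline W^J}_k L^*\cong (Gr^{W^J}_{-k}L)^*$ canonically and compatibly with the Hodge filtrations. Under this identification the structure morphisms of the $!$-datum on $L^*$ are the transposes of those of the $*$-datum on $L$: $N_{J,K}$ transposes to $N_{K,J}=\prod_{i\in J-K}N_i$ and $I_{K,J}$ transposes to $I_{J,K}$. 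Taking annihilators gives $Q^J_k(L^*)=\bigcap_{K\subsetneq J}\ke(N_{J,K})=\bigl(\sum_{K\subsetneq J}\im N_{K,J}\bigr)^{\perp}$ inside $(Gr^{W^J}_{-k}L)^*$. By part (iii) of the corollary (\cite{K},~5.6.10), the direct-sum decomposition of $Gr^{W^J}_{-k}L$ into the pieces $N_{J-K}P^K_\bullet(L)$ exhibits $\sum_{K\subsetneq J}\im N_{K,J}$ as exactly the summand complementary to $P^J_{-k}(L)$; hence its annihilator is $(P^J_{-k}(L))^*$, which is (iii).

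\emph{Proof of (i) and (ii).} Every finite-dimensional IMHS $L$ equals $(L^*)^*$ and all the constructions are natural, so (iii) applied with $L$ replaced by $L^*$ gives $Q^J_k(L)\cong (P^J_{-k}(L^*))^*$. Part (i) of the corollary (\cite{K},~5.6.10) says $P^J_{-k}(L^*)$ is pure of weight $-k$ for $M(\sum_{j\in J}N^*_j,L^*)$; since dualizing negates the weight and dualizes the monodromy filtration ($M(\sum N^*_j,L^*)$ being dual to $M(\sum N_j,L)$), $Q^J_k(L)$ is pure of weight $k$ for $M(\sum_{j\in J}N_j,L)$, which is (i). For (ii), dualize the decomposition of $Gr^{W^J}_{-k}L^*$ in part (iii) of that corollary term by term: the dual of the summand attached to $K$ is $(N_{J-K}P^K_\bullet(L^*))^*$, which by (iii) together with the transposition of structure maps is $I_{K,J}\bigl(Q^K_{k-|J-K|}(L)\bigr)$, and the resulting decomposition of $(Gr^{W^J}_{-k}L^*)^*=Gr^{\overline W^J}_k L$ is the assertion (ii).

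The main obstacle is the index bookkeeping in the iteration of the single-step duality: one has to check that each $N_i*$ and each $N_i!$ preserves the full IMHS package (all the required relative monodromy filtrations exist, and the $N_j$ remain morphisms of the induced Hodge structures of type $(-1,-1)$), so that the iterated duality holds \emph{with the correct shifts of the grading indices}, and that the transposes of the $!$-datum's morphisms $I_{K,J},N_{J,K}$ match exactly the $*$-datum's morphisms $I_{J,K},N_{K,J}$. Once this compatibility is in hand — equivalently, once one has verified that the datum $\{L_J=L,\ \overline W^J,\ F,\ N_{J,K},\ I_{K,J}\}$ is the Verdier dual, in the sense of (\cite{K}, prop.~2.3.1), of the datum $DR(L)$ — assertions (i)–(iii) are a purely formal transcription of the already established statements about $W^J$ and $P^J$.
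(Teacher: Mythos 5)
Your proposal is correct and follows essentially the route the paper itself intends: the single-step duality $(N^*!W^*)_a=(N*W)^*_a$ is iterated so that the $!$-datum $\{\overline W{}^J, I_{K,J}, N_{J,K}\}$ on $L$ becomes the transpose of the $*$-datum $DR(L^*)$, and the three assertions are then read off by dualizing the corresponding statements of the corollary (\cite{K},~5.6.10) for $W^J$ and $P^J_k$ (the paper merely says ``so we deduce'' after checking that the family satisfies the data of (\cite{K}, prop.~2.3.1), which is exactly the compatibility you isolate at the end). Your identification of $Q^J_k(L^*)$ as the annihilator of $\sum_{K\subsetneq J}\mathrm{Im}\,N_{J-K}$, hence of the complement of the $P^J$-summand, is the correct mechanism and fills in the step the paper leaves implicit.
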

\begin{rem}  By local duality at a point $x \in Y$, $\DD_x i_x^* Rj_* \LL
\simeq i_x^! \DD Rj_* \LL \simeq i_x^! j_! \DD \LL$.
 Hence the filtration ${\overline {W^*}}$ is on the
 cohomology of $j_!  \LL^*$ at the point $x$.
\end{rem}
\subsubsection {Complementary results}
We consider from now on a pure Hodge complex  of weight $\omega
(K) = a$;
 its dual $\DD K$ is pure of weight $\omega (\DD K) = - a$.
 For a pure polarized VHS $\LL$ on $X-Y$
of weight $\omega = b$, hence $\LL[n]$ of weight $\omega = a = b+n
$, the polarization defines an isomorphism $\LL (a)[n] \simeq \R
Hom (\LL[n], \Q_{X-Y}(n)[2n])\simeq \LL^*[n]$, then Verdier's
auto-duality of the intersection complex reads as follows:
\begin{equation*}
\begin{split}
&j_{!*}\LL[n] \simeq (\DD(j_{!*}\LL[n]) (-a)\\
 &\H^{-i}(X,j_{!*}\LL[n])^* = H^i(Hom(\R
\Gamma(X,j_{!*}\LL[n]),
\C) \simeq  H^i(X, \DD(j_{!*}\LL[n])\simeq \\
&H^i(X, j_{!*}\DD(\LL[n])
 \simeq H^i(X, j_{!*}(\LL^*[n])\simeq H^i(X, j_{!*}(\LL[n])(a)
\end{split}
 \end{equation*}
For $K = \LL[n]$, the exact sequence :$ i^!j_{!*} K \to j_{!*}K
\to j_*K $ yields an isomorphism $ i^!j_{!*}K[1] \simeq j_* K/
j_{!*}K$, hence
$$\DD (i^* j_*K/i^* j_{!*}K ) \simeq \DD(i^!j_{!*}K[1]) \simeq
i^*  j_{!*} \DD K [-1]$$
  For $K $ pure of weight $\omega (K) = a$, $W_a j_*K =
 j_{!*}K$, $\omega (\DD K) = -a$,
  and  for $r > a$
  $$ W_{-r} i^*  j_{!*} \DD K [-1]
 \simeq \DD  (i^* j_*K/W_{r-1}).$$
  We deduce from the polarization:
   $\DD K \simeq K(a)$ where (a) drops the weight by $-2 a$,
    a definition of the weight on $i^* j_{!*}K$ for $r > a$
  \begin{equation*}
 W_{-r+2a} i^* j_{!*}K \simeq
   \DD  (i^* j_*K/W_{r-1})[1],
   \quad GR^W_{-r+2a}i^*  j_{!*}K \simeq
    \DD Gr^W_r (i^* j_*K)[1].
 \end{equation*}
\begin{cor} i) The  complex $(i^*  j_{!*} K [-1]
, F)$ with restricted Hodge filtration and the weight filtration
for $ k > 0$
 \[  W_{a-k}i^* j_{!*} K [-1]
\simeq \DD  (i^* j_*K/W_{a+k-1}),\]
 satisfying $ Gr^W_{a-k}i^* j_{!*} K [-1]\simeq
 \DD Gr^W_{a+k} (i^* j_*K/i^* j_{!*}K)$ has the structure of a MHC.\\
(In particular, the cohomology $\H^i(Y,  j_{!*} K)$ carry a MHS of
weight $\omega \leq a +i$).
 \end{cor}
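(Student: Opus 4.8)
The idea is to identify $i^{*}j_{!*}K[-1]$, up to a Tate twist, with the Verdier dual of a mixed cone built from the two mixed Hodge complexes already at hand, so that the weight filtration of the statement is forced by the duality isomorphisms of the preceding subsubsection.

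First I would assemble the cone. By the theorem on the logarithmic de Rham complex, $Rj_{*}K=\Omega^{*}_X(Log\,Y)\otimes\LL_X[n]$ is a mixed Hodge complex with weight $W$ and Hodge filtration $F$; write $i^{*}j_{*}K:=i^{*}Rj_{*}K$ for its restriction to $Y$. Since $\LL$ is a polarized VHS of weight $b$, so that $K=\LL[n]$ is pure of weight $a=b+n$, the subcomplex $j_{!*}K=IC(X,\LL)[n]$ is a pure Hodge complex of weight $a$ and the inclusion $j_{!*}K\hookrightarrow Rj_{*}K$ is a morphism of mixed Hodge complexes. Let $\mathcal Q:=\mathrm{cone}(j_{!*}K\to Rj_{*}K)$; it is a mixed Hodge complex, and it is acyclic over $X-Y$, hence supported on $Y$, so $\mathcal Q\simeq i_{*}i^{*}\mathcal Q$ with $i^{*}\mathcal Q=\mathrm{cone}(i^{*}j_{!*}K\to i^{*}Rj_{*}K)\simeq i^{!}j_{!*}K[1]$ by the standard triangle $i^{!}j_{!*}K\to i^{*}j_{!*}K\to i^{*}Rj_{*}K\xrightarrow{+1}$. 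Thus $i^{!}j_{!*}K[1]$ inherits a mixed Hodge complex structure on $Y$.

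Now I would dualise. The Verdier dual of a mixed Hodge complex is again one, so $\DD\bigl(i^{!}j_{!*}K[1]\bigr)$ is a mixed Hodge complex; moreover $\DD(i^{!}j_{!*}K)=i^{*}\DD(j_{!*}K)=i^{*}j_{!*}(\DD K)\simeq i^{*}j_{!*}K(a)$, using that the intermediate extension commutes with $\DD$ and that the polarization gives $\DD K\simeq K(a)$. Hence $\DD\bigl(i^{!}j_{!*}K[1]\bigr)\simeq\bigl(i^{*}j_{!*}K[-1]\bigr)(a)$ is a mixed Hodge complex, and therefore so is $i^{*}j_{!*}K[-1]$. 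Its weight filtration is then dictated by $W_{-r}\DD M=\DD(M/W_{r-1}M)$ applied to $M=i^{!}j_{!*}K[1]\simeq i^{*}j_{*}K/i^{*}j_{!*}K$: untwisting by $(a)$ and using $W_{a}(i^{*}j_{*}K)=i^{*}j_{!*}K$ one gets, for $k>0$,
\[
W_{a-k}\,i^{*}j_{!*}K[-1]\simeq\DD\bigl(i^{*}j_{*}K/W_{a+k-1}\bigr),\qquad
Gr^{W}_{a-k}\,i^{*}j_{!*}K[-1]\simeq\DD\,Gr^{W}_{a+k}\bigl(i^{*}j_{*}K/i^{*}j_{!*}K\bigr),
\]
which is exactly the filtration asserted and is consistent with the isomorphisms for $i^{*}j_{!*}K$ recorded earlier. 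The Hodge filtration that comes out of this construction is the restriction of $F$ from the logarithmic complex; checking that this algebro-geometric restriction agrees with the filtration produced by the duality — that is, that the normalisation of $F$ on $IC(X,\LL)$ is the right one — is the one step that is not formal, and is where I expect the real work, the $L^{2}$-description of the Hodge filtration (as in \cite{KII}) being the tool for it.

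Finally, for the last assertion: taking $k=1$ above shows $W_{a-1}\,i^{*}j_{!*}K[-1]$ is the whole complex, so the weights of $i^{*}j_{!*}K[-1]$ are $\le a-1$; consequently the mixed Hodge structure on $\H^{j}(Y,i^{*}j_{!*}K[-1])$ has weight $\le a-1+j$, and since $\H^{j}(Y,i^{*}j_{!*}K[-1])=\H^{j-1}(Y,j_{!*}K)$ this yields that $\H^{i}(Y,j_{!*}K)$ carries a mixed Hodge structure of weight $\omega\le a+i$.
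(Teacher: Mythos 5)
Your proposal follows essentially the same route as the paper: the attaching triangle identifies $i^!j_{!*}K[1]$ with the quotient $i^*j_*K/i^*j_{!*}K$ of the logarithmic mixed Hodge complex, Verdier duality together with the polarization isomorphism $\DD K\simeq K(a)$ and the auto-duality of the intermediate extension transfers the MHC structure to $i^*j_{!*}K[-1]$, and the weight filtration is read off from $W_{-r}\DD M=\DD(M/W_{r-1}M)$ using $W_a\,i^*j_*K=i^*j_{!*}K$; your final weight-bound computation coincides with the paper's remark following the corollary. Your aside that the compatibility of the restricted algebro-geometric $F$ with the dual filtration is the only non-formal point (resolved via the $L^2$-description in \cite{KII}) is a fair observation the paper handles by its earlier appeal to that reference.
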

We remark: $ W_{a-1} i^* j_{!*} K [-1]\simeq i^* j_{!*} K [-1] =
\DD (i^* j_*K/  j_{!*} K) $, hence: \\
 $\H^i(Y,  j_{!*} K)=
\H^{i+1}(Y, j_{!*} K [-1])$ has weight $\omega \leq a-1 + i+1$.\\
 By the
above corollary, we have
\begin{equation*}
\begin{split}
&GR^W_{-r+2a}i^*  j_{!*}K \simeq
    \DD
 (Gr^W_r (\Omega^*_X(Log Y)\otimes \LL_X)[n])[1] \simeq\\
 & \oplus_{J \subset
 I} j_{!*}(\PP^J_{r-|J|}(\LL ))^*[n+1-|J|].
\end{split}
\end{equation*}

\begin{ex} For a polarized unipotent VHS on a punctured
disc $D^*$,  $i^* j_{!*}K [-1]$ is the complex $(L \xrightarrow
{N} NL)$ in degree $0$ and $1$, quasi-isomorphic to $ \hbox {ker}N
$, while $ (i^* j_*K/  j_{!*} K \simeq L/NL $ and $\DD (i^* j_*K/
j_{!*} K)\simeq (L/NL)^* $. The  isomorphism is induced by the
 polarization $Q$ as follows
 \[
\begin{array}{ccccc}
  (L/NL)^* &\simeq& (L^* &\xrightarrow{N^*} &(NL)^*)\\
  \uparrow \simeq&&\uparrow \simeq&& \uparrow \simeq\\
   \hbox {ker}N &\simeq& ( L
  & \xrightarrow{-N}&NL \\
\end{array}
\]
where $-N$ corresponds to $N^*$ since  $Q(Na,b) + Q (a, Nb) = 0$.
The isomorphism we use is $
  \hbox {ker}N \simeq (L/NL)^*$ in degree $0$; we set
  for $k > 0 $ :\\
 $M(N)_{a-k+1}(\hbox {ker} N) := W_{a-k}(L \to NL)\simeq
  \DD  (i^* j_*K/W_{a+k-1})
  = (L/(NL+M_{a+k-2}))^*$.
\end{ex}
\subsubsection {Final remark on the decomposition theorem} We do not cover here
the ultimate results in Hodge  theory with coefficients in an IMHS
(see \cite{BBD} and \cite{Sa}).

 We explain here basic points in the theory.
 In the presence of a projective morphism $f: X \to V$ and a
polarized VHS of weight $a$ on the smooth open subset $X-Y$ with
intermediate extension $j_{!*}\LL$ on $X$,  a theory of perverse
filtration ${^p\tau}_i(Rf_*j_{!*}\LL)$ on the direct image on $V$
has been developed in \cite{BBD}. It defines on the cohomology of
the inverse of an open algebraic set $U \subset V$, a perverse
filtration ${^p\tau}_i\H^r( f^{-1}(U), j_{!*}\LL)$. \\
 We suppose $X$ smooth, $Y$ a NCD and we consider open sets complement
  to some NCD $Z$ sub-divisor of $Y$, since
 we may reduce the problem to such case  by
 desingularizing $X$.\\
1) If $X_U := f^{-1}(U)$ is the complement of  a sub-NCD of $Y$,
 the theory asserts that
the subspaces ${^p\tau}_i$ are sub-MHS of $\H^r( f^{-1}(U), j_{!*}\LL)$.\\
2) Let $v \in V$ be a  point in the zero-dimensional strata of a
Thom-Whitney stratification compatible with $f$ and
$Rf_*j_{!*}\LL$ and suppose $X_v := f^{-1}(v)$  a sub-NCD of $Y$,
then the local-purity theorem in \cite{DG} corresponds to the
following semi- purity property of the weights $\omega$ on the
cohomology of a tubular neighborhood $T_v $ of $f^{-1}(v)$,
inverse image $
f^{-1}(B_v)$ of a neighborhood $B_v $ of $v$:\\
 i) $\omega >  a+r $ on ${^p\tau}_{\leq r} \H^r(T_v-X_v,j_{!*}\LL) $, and
 dually\\
 ii)  $\omega
\leq a+r $ on $ \H^r(T_v-X_v,j_{!*}\LL) / {^p\tau}_{\leq r}
\H^r(T_v -X_v,j_{!*}\LL) $.\\
3) The corresponding decomposition theorem on $B_v -v$ states the
isomorphism with the cohomology of a perverse cohomology:
\begin{equation*} Gr^{^p\tau}_i \H^r(T_v-X_v,j_{!*}\LL)\simeq
\H^{r-i}(B_v-v,{^p\HH}^i(Rf_*j_{!*}\LL)).
 \end{equation*}
 4) By
 iterating the cup-product with he class $\eta$ of a
  relative hyperplane section, we have Lefschetz type
 isomorphisms of perverse cohomology sheaves
\[ {^p\HH}^{-i}(Rf_*j_{!*}\LL[n])
\xrightarrow{\eta^i}{^p\HH}^i(Rf_*j_{!*}\LL[n]).
\]
In \cite{Sa}, the proof is carried via the techniques of
differential modules and is based on extensive use of Hodge theory
on the
perverse sheaves of near-by and vanishing cycles.
 A direct proof  may be obtained by
 induction on the strata on $V$.
 If we suppose the decomposition theorem   and Lefschetz
 types isomorphisms on $V- v$,  one may prove directly that
  the perverse filtration on the  cohomology $
 \H^r(T_v-X_v,j_{!*}\LL)$
  is  compatible with MHS, following the use of the monodromy filtration
 $W(\eta)$  of the nilpotent endomorphism defined by cup-product with
   the class $\eta$ of a relative hyperplane section of type $(1,1)$
 on the total cohomology $\oplus_i
 \H^r(T_v-X_v,j_{!*}\LL)$.
 This filtration $W(\eta)$ is  compatibile with  MHS and the
  graded part
 $Gr^{W(\eta)}_i$  coincide with $Gr^{^p\!\tau}_{-i}$. Then it
 makes sense to prove the semi-purity property, from which we
 deduce the extension of the decomposition over the point $v$ and
 check the Lefschetz isomorphism, which complete the inductive
 step ( for a general strata we intersect with a transversal section
 so to reduce to the case of a zero dimensional strata).


\begin{thebibliography}{A}

\bibitem{A} D. Arapura,  \emph{ Mixed Hodge structures associated
to geometric variations}, ArXiv:math/0611837v3, Apr 2008.
\bibitem{BBD} A.A. Beilinson, J. Bernstein, P. Deligne, \emph{ Analyse et
Topologie sur les espaces singuliers, Vol.I}, 100, Ast\'erisque,
France, 1982.
\bibitem{B-P} A. Brosnan, G. Pearlstein, \emph{ On the algebraicity of the
zero locus of an admissible normal function}, arXiv:0910.0628v1,
[math.AG], 4 Oct. 2009.
\bibitem{B-P-S} A. Brosnan, G. Pearlstein, C. Schnell, \emph{ On the algebraicity of the
The locus of Hodge classesin an admissible variation of mixed
Hodge structure}, arXiv:1002.4422v1, [math.AG], 23 Fev. 2010.
\bibitem{CKS} E. Cattani, A. Kaplan, W. Schmid, \emph{$L^2$ and
intersection cohomologies}, Inv. Math. 87, 217-252, 1987.
\bibitem{C} C.H. Clemens, \emph{ Degeneration of K\"{a}hler manifolds},
Duke Math. J. 44(2), 215-290, 1977.
\bibitem{SGA7} P. Deligne, N. Katz, \emph{ Groupes de Monodromie en
G\'eom\'etrie Alg\'ebrique}, Springer Lecture Notes in Math. 340,
1973.
\bibitem{HI} P.  Deligne, \emph{Equations diff\'erentielles \`a points
 singuliers r\'eguliers}, Lect. Notes, 163, Springer
Verlag Berlin, 1970.
\bibitem{HII}P.  Deligne, \emph{ Th\'eorie de Hodge II}, Publ. Math. IHES,
40, 1972,  5-57.
\bibitem{HIII}P.  Deligne, \emph{ Th\'eorie de Hodge III}, Publ. Math. IHES,
44, 1975, 6-77.
\bibitem{WII} P. Deligne, \emph{ Conjecture  de Weil II},
Publ. Math. IHES, 52, 1980.
\bibitem{DG} P. Deligne , O. Gabber:
\emph{Th\'eor\`eme de puret\'e d'apr\`es Gabber}, Note written by
Deligne and distributed at IHES, 1981.
\bibitem{EI} F. El Zein, \emph{ Th\'eorie de Hodge des cycles
\'evanescents}, Ann. scient. Ec. Norm. Sup. t. 19, 1986, 107-184.
  Notes C.R. Acad. Sc., Paris, s\'erie I, t. 295, 1982,
   p. 669-672), t. 292 and 296, 1983,  51-54 and  199 - 202.
\bibitem{EII} F. El Zein, Le Dung Trang, L. Migliorini, \emph{
A topological construction of the weight filtration},  Manuscripta
mathematica. 133 (2010) n° 1-2, 173–182.
\bibitem{g-m} M. Goresky, R. MacPherson, {\emph Stratified Morse
Theory}, Ergebnisse der Mathematik, 3.folge. Band 2,
Springer-Verlag, Berlin Heidelberg 1988.
\bibitem{G} P. Griffiths, \emph{ Topics in transcendental Algebraic Geometry
}, Annals of Math. studies, 106, Princeton university press, 1984.
\bibitem{G-S} P. Griffiths, W. Schmid, \emph{ Recent developments in Hodge theory}, in
Discrete subgroups of Lie groups, Oxford university press, 1973.
\bibitem{K} M.  Kashiwara, \emph{ A study of variation of mixed Hodge
 structure},   Publ. RIMS, Kyoto univ. 22, 991-1024, 1986.
 \bibitem{KI} M. Kashiwara, T. Kawai, \emph{  Poincar\'e lemma for a
variation of Hodge structure}, Publ. RIMS, Kyoto univ. 23,
345-407, 1987.
\bibitem{KII} M. Kashiwara, T. Kawai, \emph{ Hodge structure and
Holonomic Systems}, Proc. Japan Acad. 62 Ser.A, 1-4, 1986.
\bibitem{K-O} N. M. Katz,   T. Oda, {\emph On the differentiation of
de Rham cohomology classes with respect to parameters}, J. Math.
Kyoto Univ. 8,  pp. 199- 213, 1968.
\bibitem{L} A. Landman, \emph{ On the Picard Lefschetz
transformation acquiring general singularities}, Trans.Amer. Math.
Soc. 181, 89 - 126, 1973.
\bibitem{L-T} D.T. L\^e, B. Teissier, \emph{ Cycles \'evanescents et
conditions de Whitney II}, Proc. Symp. Pure Math. 40, part 2,
Amer. Math. Soc. Providence R. I. 1983, 65-103.
\bibitem{M} B.  Malgrange, \emph{ Regular Connections, after Deligne, in Algebraic D-modules,
by Borel A.},  Perspective in Math. 2, Academic Press, Boston,
1987.
\bibitem{G-N} A. V. Navarro, F. Guill\'en, \emph{ Sur le th\'eor`eme local des cycles
invariants}, Duke Math. J. 61, 133 - 155, 1990.

\bibitem{Sa}  M. Saito, (1) \emph{ Modules de Hodge polarisables},
 Publ. RIMS, Kyoto univ., 24 (1988), 849-995. (2)\emph{ Mixed Hodge
Modules},  Publ. RIMS, Kyoto univ., 26 (1990), 221-333.

\emph{ Mixed Hodge Modules}, 26,  1990, 221-333.
\bibitem{Sc} W. Schmid, \emph{ Variation of Hodge
 structure: the singularities of the period mapping}, Invent. Math. 22
  211- 319, 1973.
\bibitem{S} J.H.M. Steenbrink, \emph{ Limits of Hodge structures},
Inventiones Math. 31, 229-257, 1976.
\bibitem{S-Z} J.H.M. Steenbrink, S. Zucker, \emph{ Variation  of mixed Hodge
 structures I}, Inventiones Math. 80, 1985, 489-542.
\bibitem{V} J.L. Verdier, \emph{ Stratifications de Whitney et
th\'eor\`eme de Bertini-Sard}, Inventiones Math. 36, 1976,
295-312.
\bibitem{V2} J.L. Verdier, \emph{ Dualit\'e dans la cohomologie des espaces
localement compacts }, S\'eminaire Borbaki 300, 1965.
\bibitem{Vo} C. Voisin, \emph{ Hodge
theory and complex algebraic geometry},I-II volumes 76, 77,
Cambridge univ. press, 2007.

\end{thebibliography}
\end{document}